\newtheorem{theo}{Theorem}[section]
\newtheorem{lemma}[theo]{Lemma}
\newtheorem{coro}[theo]{Corollary}
\newtheorem{prop}[theo]{Proposition}
\newtheorem{rmk}[theo]{Remark}
\newtheorem{defi}[theo]{Definition}
\newcommand{\R}{\mathbb{R}}
\numberwithin{equation}{section}
\newcommand{\curl}{\mathbf{curl}}
\title{
 Semi-group theory for the Stokes operator with Navier-type boundary conditions on $L^{p}$-spaces}
\author{Hind Al Baba\ \and Ch\'erif Amrouche\
\and Miguel Escobedo}
\begin{document}

\maketitle

\begin{abstract} In this article we consider the Stokes problem with Navier-type boundary conditions on a domain $\Omega$, not necessarily simply connected. Since under these conditions the Stokes problem has a non trivial kernel, we also study the solutions lying in the orthogonal of that kernel. We prove the analyticity of several  semigroups generated by the Stokes operator  considered in different functional spaces. We  obtain strong, weak and very weak solutions for the time dependent Stokes problem with the Navier-type boundary condition under different hypothesis on the initial data $\boldsymbol{u}_0$ and external force $\boldsymbol{f}$. Then, we study the fractional and pure imaginary powers of several operators related with our Stokes operators. Using the fractional powers,  we prove maximal  regularity results for the homogeneous Stokes problem. On the other hand, using the boundedness of the pure imaginary powers we deduce  maximal $L^{p}-L^{q}$ regularity for the inhomogeneous Stokes problem.
\end{abstract}

\renewcommand{\thefootnote}{}
\footnotetext{\hspace*{-.51cm}
Key words and phrases: Stokes operator, Navier boundary conditions, Analytical semi-group, fractional powers}
\tableofcontents
\section{Introduction}
We consider in a bounded cylindrical domain, $\Omega\times(0,T)$  the linearised evolution Navier-Stokes problem  
\begin{equation}\label{lens}
 \left\{
\begin{array}{cccc}
\frac{\partial\boldsymbol{u}}{\partial t} - \Delta \boldsymbol{u
}\,+\,\nabla\pi=\boldsymbol{f},& 
\mathrm{div}\,\boldsymbol{u}= 0 &\qquad \textrm{in} &\Omega\times (0,T), \\
&\boldsymbol{u}(0)=\boldsymbol{u}_{0}& \qquad \textrm{in} &
\Omega,
\end{array}
\right.
\end{equation}
where $\Omega $ is a bounded domain of  ${\mathbb{R}}^3$, not necessarily simply connected, whose  boundary $\Gamma $ is of class $C^{2,1}$.
Problem \eqref{lens} describes the motion of a viscous incompressible fluid in $\Omega$. The velocity of motion is denoted by $\boldsymbol{u}$ and the associated pressure by $\pi$.
 Given data are the external force $\boldsymbol{f}$ and   the initial velocity $\boldsymbol{u}_{0}$.
 
 Stokes and Navier-Stokes equations are often studied with Dirichlet boundary conditions $$\boldsymbol{u}=\boldsymbol{0}\qquad \textrm{on}\,\,\, \Gamma$$ when the boundary $\Gamma$ represents a fixed wall. This condition was formulated by G. Stokes \cite{Stokes} in 1845, but as stated in \cite{Serrin} this condition is not always realistic since it doesn't reflect necessarily the behaviour of the fluid on or near the boundary.   
 
Even before, H. Navier \cite{Navier} suggested in 1827 alternative boundary conditions more precisely a type of slip boundary conditions with friction on the wall based on a proportionality between the tangential components of the normal dynamic tensor and the velocity
\begin{equation}\label{Navierboundcond}
\boldsymbol{u}\cdot\boldsymbol{n}=0,\qquad
2\,\nu\left[ \mathbb{D}\boldsymbol{u}\cdot\boldsymbol{n}\right]_{\boldsymbol{\tau}}+\alpha\,\boldsymbol{u}_{\boldsymbol{\tau}}=0\qquad\textrm{on}\,\,\,\Gamma\times(0,T),
\end{equation}
where $\nu$ is the viscosity and $\alpha\geq 0$ is the coefficient of friction and $\mathbb{D}\boldsymbol{u}=\frac{1}{2}(\nabla\boldsymbol{u}+\nabla\boldsymbol{u}^{T})$ denotes the deformation tensor associated to the velocity field $\boldsymbol{u}$.  These Navier boundary conditions allows the fluid to slip and measure the friction on the wall. Observe that, formally, if $\alpha$ tends to infinity, the tangential component of the velocity will vanish and we recover the non slip boundary condition $\boldsymbol{u}=\boldsymbol{0}$ on $\Gamma$. 

An interesting particular arises  when the coefficient of friction $\alpha$ is zero.  This corresponds to a Navier-slip boundary condition without friction. This condition has been considered in particular in the mathematical literature on flows near rough walls \cite{Ami, fei, fei2, Bul, Jag, Jag2}.  We also mention that in the case of flat boundary and when $\alpha=0$ the second condition in \eqref{Navierboundcond} can be replaced by another boundary condition involving the vorticity 
\begin{equation}\label{nbc}
\boldsymbol{u}\cdot\boldsymbol{n}=0,\qquad
\boldsymbol{\mathrm{curl}}\,\boldsymbol{u}\times \boldsymbol{n} = \boldsymbol{0}\,\,\, \textrm{on} \qquad \Gamma\times (0,T).
\end{equation}
We call them Navier-type boundary conditions. For  a discussion on the No-Slip boundary condition in the physics literature we refer to \cite{Lau} and the references therein.

The relation between Navier conditions on rough boundary and the Dirichlet boundary condition boundary is studied by Casado in \cite{casado, casado2}.

In this paper we study the Stokes operator with the Navier-type boundary conditions (\ref{nbc}).
Our goal is to obtain a  semi-group theory  for the Stokes operator with Navier-type boundary conditions  as it already exists for other boundary conditions like Dirichlet and Robin. For instance K. Abe \& Y. Giga \cite{{GiGa0}}, W. Borchers \& T. Miyakawa \cite{Bor, Bor2}, R. Farwig \& H. Sohr \cite{FS},  Y. Giga \cite{GiGa1, GiGa2},  Y. Giga \& H. Sohr \cite{GiGa3, GiGa4}, J.  Saal \cite{Saal}, Y. Shibata \& R. Shimada \cite{Shibata1}, V. A. Solonnikov \cite{Solonnikov, Solonnikov2, Solonnikov3}). 

In what follows, if we do not state otherwise, $\Omega$ will be considered as an open bounded domain of $\mathbb{R}^{3}$ of class $C^{2,1}$. In some situation we suppose that $\Omega$ is of class $C^{1,1}$ in the case where the regularity is sufficient for the proof. Then a unit normal vector to the boundary can be defined almost everywhere it will be denoted by $\boldsymbol{n}$. The generic point in $\Omega$ is denoted by
$\boldsymbol{x}\,=\,(x_{1},\,x_{2},\,x_{3})$. 

We do not assume that $\Omega$ is simply-connected but we suppose
that it satisfies the following condition:

\textbf {  Condition H}: there exist $J$ connected open surfaces $\Sigma_{j}$, $1 \leq
j \leq J$, called ``cuts'', contained in $\Omega$, such that each
surface $\Sigma_{j}$ is an open subset of a smooth manifold, the
boundary of $\Sigma_{j}$ is contained in $\Gamma$. The
intersection $\overline{\Sigma}_{i}\cap\overline{\Sigma}_{j}$ is
empty for $i\neq j$ and finally the open set
$\Omega^{\circ}=\Omega\backslash\cup_{j=1}^{J}\Sigma_{j}$ is
simply connected and pseudo-$C^{1,1}$.

 For this Condition H see \cite{Am2} for instance. We denote by $\left[\cdot\right]_{j} $ the jump of a function over $\Sigma_{j}$, \textit{i.e.} the difference of the traces for $1\leq j \leq J$. (See figure below).

\begin{center}
\begin{figure}
\includegraphics[width=0.75\linewidth, height=5cm]{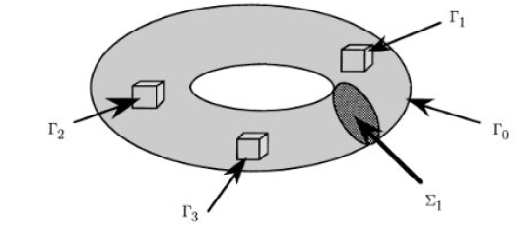}
\caption{The domain $\Omega$}
\label{fig.1}
\end{figure}
\end{center}
\subsection{Stokes problem with flux.}
When $\Omega $ is not simply-connected, the Stokes  operator with boundary condition \eqref{nbc} has a non trivial kernel $\boldsymbol{K}_{\tau}(\Omega)$ contained in all the $\boldsymbol{L}^{r}$ spaces for $r\in (1, \infty)$. This kernel is independent of $r$, it has been proved to be of finite dimension $J\geq 1$ (cf. \cite{Am2}, for $p=2$ and \cite{Am4} for $p\in (1, \infty)$) and it is spanned by the function $\widetilde{\boldsymbol{\mathrm{grad}}}\,q^{\tau}_{j}\,$, $1\leq j\leq J,\,$  where the function $\widetilde{\boldsymbol{\mathrm{grad}}}\,q^{\tau}_{j},$ $1\leq j\leq J$ is the extension by continuity of $\boldsymbol{\mathrm{grad}}\,q^{\tau}_{j}$ to $\Omega$ and  $q_{j}^{\tau}$ is the unique solution up to an additive
 constant to Problem \eqref{gradqj} below. On the other hand, it was proved in  \cite{Am4}, see also \cite{Albaba1}, that when $\Omega $ satisfies Condition H,  then,  for any function $\boldsymbol{u}\in \boldsymbol{L}^p(\Omega )$, divergent free and such that $\boldsymbol{u}\cdot \boldsymbol{n}=0$ on $\Gamma $,  to satisfy
\begin{equation}
\forall\,\,\boldsymbol{v}\in\boldsymbol{K}_{\tau}(\Omega),\quad\int_{\Omega}\boldsymbol{f}\cdot\overline{\boldsymbol{v}}\,\textrm{d}\,x=0,
\end{equation} 
 is equivalent to the condition
\begin{equation}
\langle\boldsymbol{u}(t)\cdot\boldsymbol{n}\,,\,1\rangle_{\Sigma_{j}}\,=\,0,\quad 1\leq
j\leq J,\quad0\leq t\leq\infty,\label{condition2}
\end{equation}
with $\langle\cdot\,.\,\cdot\rangle_{\Sigma_{j}}$ the duality product between $\boldsymbol{W}^{-\frac{1}{p},p}(\Sigma_{j})\,$ and $\,\boldsymbol{W}^{1-\frac{1}{p'},p'}(\Sigma_{j})$. 

\medskip

We will refer to the  problem \eqref{lens}, \eqref{nbc}, \eqref{condition2} as Stokes problem with flux condition. 
By the equivalence mentioned above, the addition of the  extra boundary condition   \eqref{condition2} makes  the Stokes operator invertible with bounded and compact inverse on the space of $L^p$ functions that are divergent free and satisfy $\boldsymbol{u}\cdot \boldsymbol{n}=0$ on $\Gamma $.

\subsection{Three types of solutions: strong, weak and very weak.}
All along this paper we are  interested in three different types of solutions for each of the two problems  \eqref{lens}, \eqref{nbc} and  \eqref{lens}, \eqref{nbc}, \eqref{condition2} defined above. The first, that we call strong solutions, are solutions $\boldsymbol{u}$ that belong to $L^p(0, T, \boldsymbol{L}^q(\Omega))$ type spaces. The second, called weak solutions, are solutions (in a suitable sense)  $\boldsymbol{u}(t)$ that may be writen for a.e. $t>0$, as $\boldsymbol{u}(t)=\boldsymbol{v}(t)+\nabla w(t)$ where 
$\boldsymbol{v}(t)\in L^p(0, T;  \boldsymbol{L}^{q}(\Omega)))$ and $w\in L^p(0, T; L^{q}(\Omega) )$. The third and last, called very weak, are  solutions $\boldsymbol{u}(t)$ that may be decomposed as before but where now $w\in L^p(0, T; W^{-1,\, q}(\Omega) )$.

The concept of very weak solutions was introduced by Lions and Magenes in \cite{LM}. Later on, Amann considered this type of solutions in a series of articles \cite{Amann1, Amann2} in the setting of Besov spaces. More recently this concept was modified by R. Farwig, G.P. Galdi and H. Sohr in \cite{FGS1, FGS, FGS2}, R. Farwig and H. Kozono in \cite{FKS1}, R. Farwig and H. Sohr  in \cite{ FS1} and G.P. Galdi and CHR. Simader in \cite{Galdi} to a setting in classical $L^{p}$-spaces. This concept has also been generalized by K. Schumacher \cite{Sch} to a setting in a weighted Lebesgue and Bessel potential spaces using arbitrary Muckenhoupt weights. The concept of very weak solutions is strongly based on duality arguments for strong solutions. Therefore the boundary regularity required in this theory is the same as for strong solutions.

\subsection{Analytic semigroups.}
In that general setting,  we study first the existence of analytic semigroups generated by  the  Stokes operators, defined on different functional spaces both for the  problem \eqref{lens}, \eqref{nbc} and for  \eqref{lens}, \eqref{nbc}, \eqref{condition2}. 

On the one hand, we consider  Stokes operators defined on the three different spaces $\boldsymbol{L}^p _{ \sigma , \tau  }(\Omega )$, $[\boldsymbol{H}^{p'}_{0}(\mathrm{div},\Omega)]'_{\sigma,\tau}$ and $[\boldsymbol{T}^{p'}(\Omega)]'_{\sigma,\tau}$ (cf. Section \ref{Functional framework} for precise definitions of these spaces).  They  lead respectively to  some strong, weak and very weak solutions of \eqref{lens}, \eqref{nbc}.  Similarly, we consider three Stokes operators with flux, defined respectively on  $\boldsymbol{X}_{p}$, $\boldsymbol{Y}_{p}$ and $\boldsymbol{Z}_{p}$ (cf. \eqref{Xp},  \eqref{Yp}, \eqref{Zp}) in Section  \ref{Stokes operator with flux boundary  conditions}), that   lead  to several solutions of \eqref{lens}, \eqref{nbc}, \eqref{condition2}. 

In the first  main result of this work,  we prove that each of these six operators generates an analytic semigroup on the corresponding functional space. More precisely:
\begin{theo}\label{theo1}\hfill \break
(i) The Stokes operators with Navier-type boundary conditions, $A_p$, $B_p$ and $C_p$,  generate a bounded analytic semi-group on  $\boldsymbol{L}^{p}_{\sigma,\tau}(\Omega)$, $[\boldsymbol{H}^{p'}_{0}(\mathrm{div},\Omega)]'_{\sigma,\tau}$ and $[\boldsymbol{T}^{p'}(\Omega)]'_{\sigma,\tau}$ respectively  for all $1<p<\infty$. \\
(ii) The Stokes operator with Navier-type boundary conditions and flux condition $A'_p$, $B'_p$ and $C'_p$ generate a bounded analytic semi-group on
$\boldsymbol{X}_{p}$, $\boldsymbol{Y}_{p}$ and $\boldsymbol{Z}_{p}$ respectively, for all $1<p<\infty$.
\end{theo}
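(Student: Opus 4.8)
The plan is to reduce each of the six statements to the standard sectoriality criterion: a densely defined closed operator generates a bounded analytic semigroup on a Banach space $X$ exactly when there is an angle $\theta\in(0,\pi/2)$ such that $\Sigma_\theta := \{\lambda\in\mathbb{C}\setminus\{0\} : |\arg\lambda| < \tfrac{\pi}{2}+\theta\}$ lies in the resolvent set and $\|(\lambda I - A)^{-1}\|_{\mathcal L(X)} \le C/|\lambda|$ uniformly for $\lambda\in\Sigma_\theta$. I would first treat the base operator $A_p$ on $\boldsymbol{L}^p_{\sigma,\tau}(\Omega)$ and derive the remaining five cases from it. The decisive structural feature of the Navier-type conditions \eqref{nbc} is that in the resolvent problem $\lambda\boldsymbol{u} - \Delta\boldsymbol{u} + \nabla\pi = \boldsymbol{f}$, $\mathrm{div}\,\boldsymbol{u}=0$, the pressure decouples: taking the divergence shows $\pi$ is harmonic, and the homogeneous boundary condition inherited from $\boldsymbol{u}\cdot\boldsymbol{n}=0$ and $\boldsymbol{\mathrm{curl}}\,\boldsymbol{u}\times\boldsymbol{n}=\boldsymbol0$ forces $\nabla\pi=0$. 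Consequently $A_p$ coincides with the part of the vector Laplacian $-\Delta$ (with boundary conditions $\boldsymbol{u}\cdot\boldsymbol{n}=0$, $\boldsymbol{\mathrm{curl}}\,\boldsymbol{u}\times\boldsymbol{n}=\boldsymbol0$) in the divergence-free space $\boldsymbol{L}^p_{\sigma,\tau}(\Omega)$, so the whole question collapses to an elliptic resolvent estimate for a Laplacian.

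For $p=2$ I would argue variationally: the sesquilinear form $a(\boldsymbol{u},\boldsymbol{v}) = \int_\Omega \boldsymbol{\mathrm{curl}}\,\boldsymbol{u}\cdot\overline{\boldsymbol{\mathrm{curl}}\,\boldsymbol{v}}\,\mathrm{d}x$ on the natural form domain is continuous, symmetric and nonnegative, and it is closed because under Condition H the graph norm is equivalent to the $\boldsymbol{H}^1$-type norm there; the associated self-adjoint nonnegative operator then generates a bounded analytic semigroup by the form method. The genuinely hard step is the passage to general $1<p<\infty$, where I would establish $\|\boldsymbol{u}\|_{\boldsymbol{L}^p}\le C|\lambda|^{-1}\|\boldsymbol{f}\|_{\boldsymbol{L}^p}$ on $\Sigma_\theta$ by localization: a partition of unity reduces the problem to a whole-space model and a half-space model with the flattened Navier-type conditions, each solved by Fourier (and tangential Fourier--Laplace) analysis. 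The main obstacle is verifying that the resulting solution operators are Fourier multipliers obeying Mikhlin/Marcinkiewicz bounds \emph{uniformly} in $\lambda\in\Sigma_\theta$, and absorbing the lower-order commutator terms produced by the cut-offs; for large $|\lambda|$ these are controlled by the leading estimate, while the bounded range of $\lambda$ is handled by compactness of the embedding together with analytic continuation of the resolvent.

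Once $A_p$ is sectorial for every $p$, I would obtain $B_p$ and $C_p$ by duality and extension. The spaces $[\boldsymbol{H}^{p'}_0(\mathrm{div},\Omega)]'_{\sigma,\tau}$ and $[\boldsymbol{T}^{p'}(\Omega)]'_{\sigma,\tau}$ are the natural dual spaces for the weak and very weak formulations, and $B_p$, $C_p$ are realized as the adjoints of the corresponding operators at exponent $p'$, equivalently as the closed extensions of $A_p$ to these larger spaces. Since all spaces involved are reflexive for $1<p<\infty$, the adjoint of a bounded analytic semigroup is again a bounded analytic semigroup, so the resolvent estimate transfers; consistency of the three resolvents on the common dense subspace $\boldsymbol{L}^p_{\sigma,\tau}(\Omega)$ then identifies $B_p$ and $C_p$ unambiguously.

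Finally, for part (ii) I would use that the flux condition \eqref{condition2} singles out, via the equivalence recalled in the introduction, the topological complement $\boldsymbol{X}_p$ of the finite-dimensional kernel $\boldsymbol{K}_\tau(\Omega)$ inside $\boldsymbol{L}^p_{\sigma,\tau}(\Omega)$, and analogously $\boldsymbol{Y}_p$, $\boldsymbol{Z}_p$. Because $\boldsymbol{K}_\tau(\Omega)$ is exactly the kernel of $A_p$ and has finite dimension $J$, the splitting $\boldsymbol{L}^p_{\sigma,\tau}(\Omega) = \boldsymbol{K}_\tau(\Omega)\oplus\boldsymbol{X}_p$ reduces $A_p$, and the bounded projection onto $\boldsymbol{X}_p$ commutes with the resolvent. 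Hence the restriction $A'_p := A_p|_{\boldsymbol{X}_p}$ inherits the sectorial bound on $\Sigma_\theta$ (now with $0$ removed from the spectrum, so the semigroup is in fact exponentially stable), and the same reduction applied to $B_p$ and $C_p$ produces $B'_p$ and $C'_p$, completing all six cases.
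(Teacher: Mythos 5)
Your overall architecture is sound and matches the paper's in several places: the observation that the Navier-type conditions force $\nabla\pi=\boldsymbol{0}$ so that the Stokes operator is the vector Laplacian on $\boldsymbol{L}^{p}_{\sigma,\tau}(\Omega)$ (Proposition \ref{sl}); the treatment of $B_p$ and $C_p$ by duality against the adjoint resolvent problem (this is exactly how Theorems \ref{weakexistenceth} and \ref{veryweakexistth} are proved, via the Green formulas \eqref{greenfrhdiv} and \eqref{greenfrtp2}); and the reduction of part (ii) to part (i) by splitting off the finite-dimensional kernel $\boldsymbol{K}_{\tau}(\Omega)$ and using that $\boldsymbol{X}_p$ is invariant under the resolvent (which is what Proposition \ref{dd2} checks, via $\int_{\Omega}\Delta\boldsymbol{u}\cdot\overline{\boldsymbol{v}}\,\mathrm{d}x=0$ for $\boldsymbol{v}\in\boldsymbol{K}_{\tau}(\Omega)$). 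Where you genuinely diverge is the core $L^p$ resolvent estimate \eqref{**}. You propose to prove the sectorial bound directly on a sector $\Sigma_\theta$ by localization to whole-space and half-space models plus uniform Mikhlin multiplier estimates, in the spirit of Farwig--Sohr; the paper instead (through its companion work \cite{Albaba}) establishes the estimate only on the half-plane $\mathrm{Re}\,\lambda\ge 0$, for $p\ge 2$ by pairing the equation with $|\boldsymbol{u}|^{p-2}\overline{\boldsymbol{u}}$ and exploiting the integral identity quoted in the introduction (then by duality for $p<2$), and opens up the half-plane to a sector a posteriori via Yosida's lemma (Proposition \ref{pr2}). Your route, if completed, yields an explicit sector $\Sigma_\varepsilon$ for every $\varepsilon>0$ and an explicit parametrix, but the multiplier/commutator analysis for the flattened boundary conditions is precisely where all the work is and you only name the difficulties rather than resolve them; the paper's route is shorter and avoids model problems entirely, at the price of a non-explicit angle. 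Two smaller remarks: your identification of $B_p$ and $C_p$ as literal adjoints at exponent $p'$ is looser than what the paper does (it estimates the dual norm directly and separately proves solvability of \eqref{wsp1} for data in the dual spaces, following \cite{Am3}), and in part (ii) the exponential stability you mention requires the additional input that $0\in\rho(A'_p)$ on $\boldsymbol{X}_p$ (the $\lambda=0$ solvability from \cite{Am3}), which is not needed for the statement of the theorem itself.
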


The proof of  Theorem \ref{theo1} uses a classical approach and starts with the study of  the resolvent of the Stokes operator and Stokes operator with flux conditions, both with boundary conditions (\ref{nbc}). A key observation is that the Stokes operator with Navier-type boundary conditions, with and without flux conditions are equal to the Laplace operator with Navier-type boundary conditions. 

For this reason the study of the Stokes operator is reduced  to that  of the three operators denoted $A_p$, $B_p$ and $C_p$, defined on the  spaces $\boldsymbol{L}^p _{ \sigma , \tau  }(\Omega )$, $[\boldsymbol{H}^{p'}_{0}(\mathrm{div},\Omega)]'_{\sigma,\tau}$ and $[\boldsymbol{T}^{p'}(\Omega)]'_{\sigma,\tau}$ and whose resolvent sets are  given by the solutions  of the system
\begin{equation}\label{*}
\left\{
\begin{array}{r@{~}c@{~}l}
\lambda \boldsymbol{u} - \Delta \boldsymbol{u}\,=\, \boldsymbol{f}, &&\mathrm{div}\,\boldsymbol{u} = 0 \,\,\, \qquad\qquad \mathrm{in} \,\,\, \Omega, \\
\boldsymbol{u}\cdot \boldsymbol{n} = 0, && \boldsymbol{\mathrm{curl}}\,\boldsymbol{u}\times\boldsymbol{n}=\boldsymbol{0}\qquad
\mathrm{on}\,\,\, \Gamma,
\end{array}
\right.
\end{equation}
where $\lambda\in\mathbb{C}^{\ast}$ such that $\mathrm{Re}\,\lambda\geq 0$ and $f$ belonging respectively  to  $\boldsymbol{L}^p _{ \sigma , \tau  }(\Omega )$, $[\boldsymbol{H}^{p'}_{0}(\mathrm{div},\Omega)]'_{\sigma,\tau}$ and $[\boldsymbol{T}^{p'}(\Omega)]'_{\sigma,\tau}$.
Similarly, the problem for the Stokes operator with flux conditions is reduced  to the study of the three operators denoted $A'_p$, $B'_p$ and $C'_p$, defined respectively on  $\boldsymbol{X}_{p}$, $\boldsymbol{Y}_{p}$ and $\boldsymbol{Z}_{p}$ and whose resolvent sets are  given by the solutions  of the problem:
\begin{equation}\label{*BIS}
\left\{
\begin{array}{r@{~}c@{~}l}
\lambda \boldsymbol{u} - \Delta \boldsymbol{u}\,=\, \boldsymbol{f}, &&\mathrm{div}\,\boldsymbol{u} = 0 \,\,\, \qquad\qquad \mathrm{in} \,\,\, \Omega, \\
\boldsymbol{u}\cdot \boldsymbol{n} = 0, && \boldsymbol{\mathrm{curl}}\,\boldsymbol{u}\times\boldsymbol{n}=\boldsymbol{0}\qquad
\mathrm{on}\,\,\, \Gamma,\\
\langle\boldsymbol{u}\cdot\boldsymbol{n}, 1\rangle_{\Sigma_{j}}\,=\,0,&& 1\leq
j\leq J.
\end{array}
\right.
\end{equation}
 where $\lambda\in\mathbb{C}^{\ast}$ such that $\mathrm{Re}\,\lambda\geq 0$ .

 We prove the existence of strong solutions of \eqref{*} satisfying the resolvent estimate
\begin{equation}\label{**}
\Vert\boldsymbol{u}\Vert_{\boldsymbol{L}^{p}(\Omega)}\,\leq\,
\frac{C(\Omega,p)}{\vert\lambda\vert} \,\Vert\boldsymbol{f}\Vert_{\boldsymbol{L}^{p}(\Omega)}.
\end{equation}
For $p=2$ one has estimate (\ref{**}) in a sector $\lambda\in\Sigma_{\varepsilon}$ for a fixed $\varepsilon\in\left] 0,\pi\right[ $ where:
\begin{equation*}
  \Sigma_{\varepsilon}=\{\lambda\in\mathbb{C}^{\ast};\,\,\vert\arg\lambda\vert\leq\pi-\varepsilon\}.
  \end{equation*}

We also show the existence of weak  and very weak solutions  and prove  estimates like  (\ref{**}) for the norms of $[\boldsymbol{H}^{p'}_{0}(\mathrm{div},\Omega)]'_{\sigma,\tau}$ and $[\boldsymbol{T}^{p'}(\Omega)]'_{\sigma,\tau}$.   We obtain similar results for the operators $A'_p$, $B'_p$ and $C'_p$.

There exists several results in the literature,  on the \textit{analyticity of the Stokes semi-group with Dirichlet boundary condition in $L^{p}$-spaces}. This question was already studied by  V. A. Solonnikov in \cite{Solonnikov}. In that work, the author proves the resolvent estimate (\ref{**}) for $\vert\arg\lambda\vert\leq\delta+\pi/2$ where $\delta\geq0$ is small. To derive this estimate \cite{Solonnikov} follows an idea of Sobolevskii \cite{Sobolevskii2} (see the proof \cite[Theorem 5.2]{Solonnikov}). New proofs and extension of the result of \cite{Solonnikov} have been proved by Giga \cite{GiGa1}, Sohr and Farwig \cite{FS} and others. 

In bounded domains the resolvent of the Stokes operator with Dirichlet boundary condition has been studied by  Giga  in \cite{GiGa1}. Using the theory of pseudo-differential operators, the results in \cite{GiGa1} extends those in  \cite{Solonnikov} in two directions. First,  the resolvent estimate \eqref{**} is proved for larger set of values of $\lambda$. More precisely  the  estimate \eqref{**} is  proved in \cite{GiGa1} for all $\lambda$ in the sector $\Sigma_{\varepsilon}$ for any $\varepsilon>0$. Second,  in \cite{GiGa1} the resolvent of the Stokes operator is obtained explicitly and this enables him to describe the domains of fractional powers of the Stokes operator with Dirichlet boundary condition. 

In exterior domains, Giga and Sohr \cite{GiGa3} approximate the resolvent of the Stokes operator with Dirichlet boundary condition with the resolvent of the Stokes operator in the entire space to prove this analyticity. 

 Later on, Farwig and Sohr \cite{FS} investigated the resolvent of the Stokes operator with Dirichlet boundary conditions  when $\mathrm{div}\,\boldsymbol{u}\neq 0$ in $\Omega$.  Their results include bounded and unbounded domains, for the whole and the half space the proof rests on multiplier technique. The problem is also investigated for bended half spaces and for cones by using perturbation criterion and referring to the half space problem.

 More recently, the analyticity of the Stokes semi-group with Dirichlet boundary condition is studied in spaces of bounded functions by Abe and Giga \cite{GiGa0} using a different approach. One of the keys to prove their result is the estimate:
 \begin{equation*}
\Vert N(\boldsymbol{u},\pi)\Vert_{\boldsymbol{L}^{\infty}(\Omega\times\left] 0,T_{0}\right[ )}\leq C\,\Vert\boldsymbol{u}_{0}\Vert_{\boldsymbol{L}^{\infty}(\Omega)}
\end{equation*}
where:
\begin{equation*}
N(\boldsymbol{u},\pi)(x,t)= \vert\boldsymbol{u}(x,t)\vert\,+\,t^{1/2}\,\vert\nabla\boldsymbol{u}(x,t)\vert\,+\,t\,\vert\nabla^{2}\boldsymbol{u}(x,t)\vert\,+\,t\,\vert\partial_{t}\boldsymbol{u}(x,t)\vert\,+\,\vert\nabla\pi(x,t)\vert.
\end{equation*} 

This estimate is obtained by means of a blow-up argument, often used in the study of non linear elliptic and parabolic equations.

The resolvent of the Stokes operator is also studied with \textit{Robin boundary conditions} by Saal \cite{Saal}, Shibata and Shimada  \cite{Shibata1}. In \cite{Saal}, Saal proved that the Stokes operator with Robin boundary conditions is sectorial and admits an $\boldsymbol{H}^{\infty}$-calculus on $\boldsymbol{L}^{p}_{\sigma,\tau}(\mathbb{R}^{3}_{+})$. The strategy for proving these results is firstly to construct an explicit solution for the associated Stokes resolvent problem. Next, the required resolvent estimates to conclude that such an operator is sectorial are obtained by using the rotation invariance in $(n-1)$-dimensions  of large parts of the constructed solution formula, followed by using the known bounded $\boldsymbol{H}^{\infty}$-calculus for the Poisson operator $(-\Delta_{\mathbb{R}^{2}})^{1/2}$ on $\boldsymbol{L}^{p}(\mathbb{R}^{2})$ and performing further computations. Shibata and Shimada proved in \cite{Shibata1} a generalized resolvent estimate for the Stokes equ
 ations with non-homogeneous Robin boundary conditions and divergence condition in $\boldsymbol{L}^{p}$-framework in a bounded or exterior domain by extending the argument of Farwig and Shor \cite{FS}. So that, their approaches in \cite{Shibata1} is different  from Saal \cite{Saal} and rather close to that in \cite{FS}.   

Concerning the \textit{Navier-type boundary conditions}, Mitrea and Monniaux \cite{Mi1} have studied the resolvent of the Stokes operator with Navier-type boundary conditions in Lipschitz domains and proved estimate (\ref{**}) using differential forms on Lipschitz sub-domains of a smooth compact Riemannian manifold. In addition,  when the boundary of $\Omega$ is sufficiently smooth, estimates of type (\ref{**}) are proved using that the boundary conditions (\ref{nbc}) are regular elliptic (e.g. \cite{tay}) and the so called  ``Agmon trick'' (e.g. \cite{Ag}). Moreover, when the domain $\Omega$ is of class $C^{\infty}$, \cite{Miya} shows that the Laplacian with the Navier-type boundary conditions (\ref{nbc}) on $\boldsymbol{L}^{p}(\Omega)$ leaves the space $\boldsymbol{L}^{p}_{\sigma,\tau}(\Omega)$ invariant and hence generates a holomorphic semi-group on $\boldsymbol{L}^{p}_{\sigma,\tau}(\Omega)$. In \cite{Geissert} the authors proved that the Stokes operator with Navier-type 
 boundary
  conditions admits a bounded $\mathcal{H}^{\infty}$-calculus in the case where the domain $\Omega$ is simply connected and this has many consequences in the associated parabolic problem. In \cite{Albaba} the authors proved the analyticity of the semi-group generated by the Stokes operator with these boundary conditions on $\boldsymbol{L}^{p}_{\sigma,\tau}(\Omega)$. For this reason they established estimate (\ref{**}) using a formula involving the boundary conditions (\ref{nbc}) and  that, for every $p\geq2$ and for every $\boldsymbol{u}\in\boldsymbol{W}^{1,p}(\Omega)$ such that
$\Delta\boldsymbol{u}\in\boldsymbol{L}^{p}(\Omega)$ one has
\begin{multline*}\label{calcul1}
-\,\int_{\Omega}|\boldsymbol{u}|^{p-2}\Delta\boldsymbol{u}\cdot\overline{\boldsymbol{u}}\,\mathrm{d}\,x\,=\,\int_{\Omega}|\boldsymbol{u}|^{p-2}|\nabla\boldsymbol{u}|^{2}\,\mathrm{d}\,x\,+\,4\,\frac{p-2}{p^{2}}\,\int_{\Omega}\Big|\nabla|\boldsymbol{u}|^{p/2}\Big|^{2}\,\mathrm{d}\,x\\
\,+\,(p-2)\,i\sum_{k=1}^{3}\,\int_{\Omega}|\boldsymbol{u}|^{p-4}\,\mathrm{Re}\,\Big(\frac{\partial\,\boldsymbol{u}}{\partial
x_{k}}\cdot\overline{\boldsymbol{u}}\Big)\mathrm{Im}\,\Big(\frac{\partial\,\boldsymbol{u}}{\partial
x_{k}}\cdot\overline{\boldsymbol{u}}\Big)\,\mathrm{d}\,x\,-\,\Big\langle\frac{\partial\,\boldsymbol{u}}{\partial\boldsymbol{n}},\,|\boldsymbol{u}|^{p-2}\boldsymbol{u}\Big\rangle_{\Gamma},
\end{multline*}
where $\langle.\,,\,.\rangle_{\Gamma}$ is the anti-duality between
$\boldsymbol{W}^{-1/p,p}(\Gamma)$ and
$\boldsymbol{W}^{1/p,p'}(\Gamma)$.

In this paper, we prove this resolvent estimate for the norms of $[\boldsymbol{H}^{p'}_{0}(\mathrm{div},\Omega)]'_{\sigma,\tau}$ and $[\boldsymbol{T}^{p'}(\Omega)]'_{\sigma,\tau}$ using a duality argument. The next step after establishing the analyticity of the semi-group is to solve the time dependent Stokes Problem \eqref{lens} with the Navier-type boundary conditions (\ref{nbc}).

Existence and uniqueness of solutions for the Stokes system with Navier-type boundary conditions has been proved by Yudovich \cite{Yu} in a two dimensional, simply connected bounded domain. These two-dimensional results are based on the fact that the vorticity is scalar and satisfies the maximum principle. However this technique can not be extended to the three-dimensional case since the standard maximum principle for the vorticity fails. On the other hand Mitrea and Monniaux \cite{Mi2} have employed the Fujita-Kato approach and  proved the existence of a local mild solution to Problem (\ref{lens}) and (\ref{nbc}).
 
 \subsection{Existence, uniqueness and maximal regularity of solutions.}
 
With the analyticity of the different semi-groups in hand we can solve the time dependent Stokes Problem \eqref{lens},  \eqref{nbc} and Stokes Problem with flux condition \eqref{lens}, \eqref{nbc}, \eqref{condition2}. We first deduce of course existence and uniqueness of several types of solutions, using the classical semi group theory. But one of our main goals is also  to  obtain  maximal regularity results in each of these three cases. To this end, following classical arguments (cf. in particular   \cite{GiGa4}), we are led to study the fractional and pure imaginary powers of the operators $I+L$ and $L'$ where $L= A_p, B_p, C_p$ and $L'= A'_p, B'_p, C'_p$.
\subsubsection{The non homogeneous problem.}

Consider first the non-homogeneous problems. When the external force $\boldsymbol{f}$ belongs to $L^{q}(0,T;\,\boldsymbol{L}^{p}_{\sigma,\tau}(\Omega))$ it is known  that the unique solution $\boldsymbol{u}$ of Problem \eqref{lens}, \eqref{nbc} satisfies $\boldsymbol{u}\in C(\left[ 0,\,T\right];\,\boldsymbol{L}^{p}_{\sigma,\tau}(\Omega) )$ for $T<\infty$ (cf. \cite{Pa}). For such $\boldsymbol{f}$ the analyticity of the semi-group is not sufficient to obtain a solution $\boldsymbol{u}$ satisfying what is called the maximal $L^{p}-L^{q}$ regularity property, \textit{i.e.}
\begin{equation*}
\boldsymbol{u}\in L^{q}(0,T;\,\boldsymbol{W}^{2,p}(\Omega)),\qquad \frac{\partial\boldsymbol{u}}{\partial t}\in L^{q}(0,T;\,\boldsymbol{L}^{p}_{\sigma,\tau}(\Omega)).
\end{equation*}
In order to have that property, one possibility is to impose further regularity on $\boldsymbol{f}$, such that local H$\mathrm{\ddot{o}}$lder continuity (see \cite{Pa}). The \textit{maximal $L^{p}$-regularity for the Stokes system with Dirichlet boundary conditions} was first studied by Solonnikov \cite{Solonnikov} when $0<T<\infty$. Solonnikov \cite{Solonnikov} constructed a solution $(\boldsymbol{u},\pi)$ of \eqref{lens} in $\Omega\times\left[ 0,\,T\right)$ satisfying the $L^{p}$ estimate
\begin{multline*}
\int_{0}^{T}\Big\Vert\frac{\partial\boldsymbol{u}}{\partial t}\Big\Vert^{p}_{\boldsymbol{L}^{p}(\Omega)}\,\mathrm{d}\,t\,+\,\int_{0}^{T}\Vert\nabla^{2}\boldsymbol{u}(t)\Vert^{p}_{\boldsymbol{L}^{p}(\Omega)}\,\mathrm{d}\,t\,+\,\int_{0}^{T}\Vert\nabla\pi(t)\Vert^{p}_{\boldsymbol{L}^{p}(\Omega)}\mathrm{d}\,t\\
\leq\,C(T,\Omega,p)\,\int_{0}^{T}\Vert\boldsymbol{f}(t)\Vert^{p}_{\boldsymbol{L}^{p}(\Omega)}\,\mathrm{d}\,t,
\end{multline*}
where the matrix $\nabla^{2}\boldsymbol{u}=(\partial_{i}\partial_{j}\boldsymbol{u})_{i,j=1,2,3}$ is the matrix of the second order derivatives of $\boldsymbol{u}$. When $\Omega$ is not bounded Solonnikov's estimate is not global in time because $C(T,\Omega,p)$ may tend to infinity as $T\rightarrow\infty$ . His approach is based on methods in the theory of potentials. Later on, Giga and Sohr  \cite{GiGa4} strengthened Solonnikov's result in two directions. First their estimate is global in time, \textit{i.e.} the above constant is independent of $T$. Second, the integral norms that they used may have different exponent $p,q$ in space and time. To derive such global $L^{p}-L^{q}$ estimate for the Stokes system with Dirichlet boundary conditions \cite{GiGa4} use the boundedness of the pure imaginary power of the Stokes operator. More precisely they use and extend an abstract perturbation result developed by Dore and Venni \cite{DV}.

Following the same strategy as in \cite{GiGa4} we prove maximal regularity for the inhomogeneous  Stokes problems by studying the pure imaginary powers of  $I+L$ and $L'$ for $L=A_p, B_p, C_p$. Among the earliest works on the boundedness of complex and pure imaginary powers of elliptic operators we refer to the work of R. Seeley \cite{Seeley}. In this work Seeley proved that an elliptic operator $A_{B}$ whose domain is defined by well posed boundary conditions has  bounded complex and imaginary powers in $L^{p}$ satisfying the estimate 
\begin{equation*}
\forall\,\, x\leq 0,\quad\forall\,\,y\in\mathbb{R},\qquad\Vert(A_{B})^{x+iy}\Vert_{\mathcal{L}(L^{p}(\Omega))}\leq\,C_{p}e^{\gamma\vert y\vert},
\end{equation*}
for some constant $C_{p}$ and $\gamma$.

Maximal $L^{p}-L^{q}$ regularity for the Stokes problem with homogeneous \textit{Robin boundary conditions} in $\mathbb{R}^{3}_{+}$ is obtained in
Saal \cite{Saal}  from the boundedness of the pure imaginary powers. However, the same  approach is not applicable to the non-homogeneous boundary condition case and for this reason Shimada \cite{Shimada} didn't follow Saal's arguments. Shimada \cite{Shimada} derive the maximal $L^{q}-L^{p}$ regularity for the Stokes problem with non-homogeneous Robin boundary conditions by applying Weis's operator-valued Fourier multiplier theorem to the concrete representation formulas of solutions to the Stokes problem.

Estimates  of the imaginary powers of the Stokes operator with Dirichlet boundary condition  have been proved in \cite{GiGa2, GiGa3, GiGa4}. That result is proved  in \cite{GiGa2} using the theory of pseudo-differential operators. When $\Omega=\mathbb{R}^{3}$,  this boundedness is proved in  \cite{GiGa3} using  Fourier transform and multiplier theorem. Furthermore, in the case of an exterior domain  the desired estimate is obtained in \cite{GiGa3} by comparing the pure imaginary powers of the Stokes operator with the corresponding powers of the Stokes operator in $\mathbb{R}^{3}$. Finally, in the half space such theorem for the Stokes operator with Dirichlet conditions  is obtained in \cite{GiGa4}  using the results in  \cite{Bor}. 

In our case,  the boundedness of the imaginary powers of $I+L$ and of $I+L'$ with $L=A_p, B_p, C_p$  essentially  follows from previous results in \cite{Geissert}.  The boundedness of the imaginary powers of $A'_p, B'_p, C'_p$  is then obtained using a scaling argument and passage to the limit following  \cite[Theorem A1]{GiGa4}. 

Using these properties  it is then possible to prove the second set of main results of this article, about the existence, uniqueness and maximal  regularity of strong, weak and very weak solutions of the non homogeneous Stokes problem \eqref{lens}, \eqref{nbc} and  Stokes problem with flux \eqref{lens}, \eqref{nbc}, \eqref{condition2}. We only  state in this Introduction  the result for the strong solutions of \eqref{lens}, \eqref{nbc} (cf. Theorem \ref{Existinhsphdiv} and Theorem \ref{Existinhsptp} for the weak and very weak solutions of the Stokes problem and  Theorem \ref{Theorem10},  Remark \ref{rmk10} for the results on the Stokes problem with flux):

\begin{theo}[Strong Solutions for the inhomogeneous Stokes Problem]\label{Exisinhnsplp}
Let $T\in (0, \infty]$, $1<p,q<\infty$, 
$\boldsymbol{f}\in\boldsymbol{L}^{q}(0,T;\boldsymbol{L}^{p}(\Omega))$ and $\boldsymbol{u}_{0}=\boldsymbol{0}$. The Problem (\ref{lens}) with (\ref{nbc}) has a unique solution $(\boldsymbol{u},\pi)$ such that
\begin{equation}\label{reglplqstokes1}
\boldsymbol{u}\in L^{q}(0,T_{0};\,\boldsymbol{W}^{2,p}(\Omega)),\,\,\,\,  T_{0}\leq T\,\,\,\textrm{if}\,\,\,T<\infty\,\,\,\, \mathrm{and }\,\,\,\,T_{0}<T\,\,\,\textrm{if}\,\,\,T=\infty,
\end{equation}
\begin{equation}\label{reglplqstokes2}
\pi\in L^{q}(0,T;\,W^{1,p}(\Omega)/\mathbb{R}),\qquad \frac{\partial\boldsymbol{u}}{\partial t}\in L^{q}(0,T;\,\boldsymbol{L}^{p}(\Omega))
\end{equation}
and
\begin{multline}\label{estlplqstokes}
\int_{0}^{T}\Big\Vert\frac{\partial\boldsymbol{u}}{\partial t}\Big\Vert^{q}_{\boldsymbol{L}^{p}(\Omega)}\,\mathrm{d}\,t\,+\,\int_{0}^{T}\Vert\Delta\boldsymbol{u}(t)\Vert^{q}_{\boldsymbol{L}^{p}(\Omega)}\,\mathrm{d}\,t\,+\,\int_{0}^{T}\Vert\pi(t)\Vert^{q}_{W^{1,p}(\Omega)/\mathbb{R}}\,\mathrm{d}\,t\\
\leq\,C(p,q,\Omega)\,\int_{0}^{T}\Vert\boldsymbol{f}(t)\Vert^{q}_{\boldsymbol{L}^{p}(\Omega)}\,\mathrm{d}\,t.
\end{multline}
\end{theo}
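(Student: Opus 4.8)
The plan is to recast Problem \eqref{lens}, \eqref{nbc} as an abstract Cauchy problem on $\boldsymbol{L}^p_{\sigma,\tau}(\Omega)$ and to apply the Dore--Venni theorem \cite{DV}, following the strategy of Giga and Sohr \cite{GiGa4} for the Dirichlet case. First I would use the Helmholtz-type decomposition associated with $\boldsymbol{L}^p_{\sigma,\tau}(\Omega)$ to split the force: writing $P$ for the projection of $\boldsymbol{L}^p(\Omega)$ onto $\boldsymbol{L}^p_{\sigma,\tau}(\Omega)$, one has $\boldsymbol{f}=P\boldsymbol{f}+\nabla\pi_{\boldsymbol{f}}$ with $\nabla\pi_{\boldsymbol{f}}=(I-P)\boldsymbol{f}$ and $\|\nabla\pi_{\boldsymbol{f}}\|_{\boldsymbol{L}^p(\Omega)}\le C\|\boldsymbol{f}\|_{\boldsymbol{L}^p(\Omega)}$ for a.e.\ $t$. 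Projecting the equation and using the key fact recalled after Theorem \ref{theo1} that $A_p$ coincides with $-\Delta$ on its domain (so that $P\Delta\boldsymbol{u}=\Delta\boldsymbol{u}$ for $\boldsymbol{u}\in D(A_p)$), the problem reduces to solving
\[
\frac{\partial\boldsymbol{u}}{\partial t}+A_p\boldsymbol{u}=P\boldsymbol{f},\qquad \boldsymbol{u}(0)=\boldsymbol{0},
\]
with $P\boldsymbol{f}\in L^q(0,T;\boldsymbol{L}^p_{\sigma,\tau}(\Omega))$.

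For this abstract equation I would invoke maximal $L^q$-regularity. The Dore--Venni theorem applies provided the underlying space is UMD and the generator has bounded imaginary powers with power angle strictly less than $\pi/2$. The space $\boldsymbol{L}^p_{\sigma,\tau}(\Omega)$ is a closed subspace of $\boldsymbol{L}^p(\Omega)$, hence UMD for $1<p<\infty$. Since $A_p$ has a nontrivial kernel $\boldsymbol{K}_\tau(\Omega)$ when $\Omega$ is not simply connected, $A_p$ itself is not invertible and one works instead with the shifted operator $I+A_p$, whose boundedness of imaginary powers $\|(I+A_p)^{is}\|\le C e^{\theta|s|}$, $\theta<\pi/2$, is the property quoted from \cite{Geissert}. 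Rewriting the equation as $\partial_t\boldsymbol{u}+(I+A_p)\boldsymbol{u}=P\boldsymbol{f}+\boldsymbol{u}$ and using that maximal regularity is stable under lower-order perturbations on finite intervals (together with a Duhamel bootstrap), I obtain $\partial_t\boldsymbol{u},\,A_p\boldsymbol{u}\in L^q(0,T;\boldsymbol{L}^p_{\sigma,\tau}(\Omega))$ with the associated estimate.

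Once $\boldsymbol{u}$ is known, the elliptic regularity for the Laplacian with Navier-type boundary conditions gives $D(A_p)\hookrightarrow\boldsymbol{W}^{2,p}(\Omega)$, whence \eqref{reglplqstokes1}; the pressure is recovered directly as $\pi=\pi_{\boldsymbol{f}}$, the Helmholtz potential of $\boldsymbol{f}$, since $\partial_t\boldsymbol{u}-\Delta\boldsymbol{u}+\nabla\pi_{\boldsymbol{f}}=P\boldsymbol{f}+(I-P)\boldsymbol{f}=\boldsymbol{f}$, which yields \eqref{reglplqstokes2} and the pressure term in \eqref{estlplqstokes}. Collecting the three contributions produces \eqref{estlplqstokes}.

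I expect the main obstacle to be the passage from the invertible operator $I+A_p$ back to $A_p$, combined with the time-interval issue. Because $0$ lies in the spectrum of $A_p$ (the semigroup does not decay on $\boldsymbol{K}_\tau(\Omega)$), a genuinely global-in-time $\boldsymbol{W}^{2,p}$ bound cannot hold when $T=\infty$: the term $\|\boldsymbol{u}\|_{\boldsymbol{L}^p(\Omega)}$ needed to upgrade $\|\Delta\boldsymbol{u}\|_{\boldsymbol{L}^p(\Omega)}$ to the full $\boldsymbol{W}^{2,p}$ norm is controlled only on finite subintervals, which is exactly why \eqref{reglplqstokes1} is stated for $T_0<T$ when $T=\infty$. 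Making the perturbation argument uniform and extracting the $T$-independent constant in \eqref{estlplqstokes} for the $\Delta\boldsymbol{u}$ and pressure terms, while restricting the full $\boldsymbol{W}^{2,p}$ statement to $T_0$, is the delicate bookkeeping step.
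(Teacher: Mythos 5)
Your overall architecture matches the paper's: you decouple the pressure through the weak Neumann problem (your Helmholtz projection $P$ is exactly the operator \eqref{helmholtzproj1}--\eqref{helmholtzproj2}, so $\pi=\pi_{\boldsymbol{f}}$ with $\Vert\pi(t)\Vert_{W^{1,p}(\Omega)/\mathbb{R}}\leq C\Vert\boldsymbol{f}(t)\Vert_{\boldsymbol{L}^{p}(\Omega)}$ is Lemma \ref{wn1}(i)), reduce to the abstract Cauchy problem for $A_{p}$ on the $\zeta$-convex space $\boldsymbol{L}^{p}_{\sigma,\tau}(\Omega)$, and feed the bounded imaginary powers of $I+A_{p}$ (Theorem \ref{pureimg1+lap}, from \cite{Geissert}) into a Dore--Venni type theorem, exactly as in \cite{GiGa4}. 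Up to that point you and the paper coincide.

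The gap is in your passage from the invertible operator $I+A_{p}$ back to $A_{p}$. Rewriting the equation as $\partial_{t}\boldsymbol{u}+(I+A_{p})\boldsymbol{u}=P\boldsymbol{f}+\boldsymbol{u}$ and absorbing $\boldsymbol{u}$ as a lower-order perturbation requires $\boldsymbol{u}\in L^{q}(0,T;\boldsymbol{L}^{p}_{\sigma,\tau}(\Omega))$, and for $T=\infty$ this fails in general: the semigroup acts as the identity on the kernel $\boldsymbol{K}_{\tau}(\Omega)$, so the kernel component of the mild solution is $\int_{0}^{t}(P\boldsymbol{f})_{K}(s)\,\mathrm{d}s$, which for $(P\boldsymbol{f})_{K}(s)=(1+s)^{-\alpha}\boldsymbol{v}$ with $1/q<\alpha<1$ and $\boldsymbol{v}\in\boldsymbol{K}_{\tau}(\Omega)$ grows like $t^{1-\alpha}$. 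On finite intervals your bootstrap works, but the constant it produces grows with $T$, so it cannot deliver the $T$-independent constant in \eqref{estlplqstokes} --- which is the whole point of the theorem, since $\partial_{t}\boldsymbol{u}$ and $\Delta\boldsymbol{u}$ are controlled globally in time and only the full $\boldsymbol{W}^{2,p}$ norm is restricted to $T_{0}<T$. The device you are missing is the paper's exponential rescaling $\boldsymbol{u}_{\mu}(t)=e^{-t/\mu^{2}}\boldsymbol{u}(t)$: it solves the Cauchy problem for the invertible operator $\frac{1}{\mu^{2}}I+A_{p}$ with right-hand side $e^{-t/\mu^{2}}P\boldsymbol{f}$, whose $L^{q}(0,T;\boldsymbol{L}^{p}(\Omega))$ norm is dominated by that of $\boldsymbol{f}$; the imaginary powers of $\frac{1}{\mu^{2}}I+A_{p}$ are bounded uniformly in $\mu$ by the scaling argument of Proposition \ref{Lapimpowerproposition} (cf. \eqref{estnu*}), so Theorem \ref{existabscp} yields an estimate for $\partial_{t}\boldsymbol{u}_{\mu}$ and $\big(\frac{1}{\mu^{2}}I+A_{p}\big)\boldsymbol{u}_{\mu}$ with a constant independent of $\mu$ and $T$, and letting $\mu\rightarrow\infty$ recovers \eqref{estlplqlap} for $A_{p}\boldsymbol{u}=-\Delta\boldsymbol{u}$ itself. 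Replacing your perturbation step by this rescaling-and-limit argument closes the proof; the rest of your write-up (elliptic regularity $\mathbf{D}(A_{p})\hookrightarrow\boldsymbol{W}^{2,p}(\Omega)$ and reassembly of the pressure) is correct.
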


\subsubsection{The homogeneous problem.}

In the homogeneous case, the Stokes Problem \eqref{lens},  \eqref{nbc}  is equivalent to the problem  \eqref{henp}. With an initial data $\boldsymbol{L}^{p} _{ \sigma , \tau  }(\Omega)$, the analyticity of the semi-group  generated by $A_p$ gives a unique solution $u$ of \eqref{henp} satisfying $\boldsymbol{u}\in C^{k}(\left] 0\,,\,\infty\right[,\,\mathbf{D}(A^{\ell}_{p}) )$, for all $k\in\mathbb{N},$ for all $\ell\in\mathbb{N}^{*}$ (see Theorem \ref{exishenp} below).  
This function  is a weak solution of 
  the Stokes problem  \eqref{lens},  \eqref{nbc} and satisfies (cf. Corollary \ref{corolrlpw1p} ) 
  $$\forall\,1\leq q<2,\,\,\forall\,\,T<\infty,\quad\boldsymbol{u}\in L^{q}(0,T;\,\boldsymbol{W}^{1,p}(\Omega))\,\,\,\textrm{and}\quad\frac{\partial\boldsymbol{u}}{\partial t}\in L^{q}(0,T;\,[\boldsymbol{H}^{p'}_{0}(\mathrm{div},\Omega)]').$$

\noindent When the domain $\Omega$ is of class $C^{2,1}$ and the initial data $\boldsymbol{u}_{0}\in\boldsymbol{X}^{p}_{\sigma,\tau}(\Omega)$ given by \eqref{vpt},  the homogeneous Stokes problem  \eqref{lens},  \eqref{nbc} has a unique solution $\boldsymbol{u}(t)$ satisfying
$$\forall\,1\leq q<2,\,\,\,\forall\,\, T<\infty,\quad\boldsymbol{u}\in L^{q}(0,T;\,\boldsymbol{W}^{2,p}(\Omega))\,\,\,\,\textrm{and}\quad\frac{\partial\boldsymbol{u}}{\partial t}\in L^{q}(0,T;\,\boldsymbol{L}^{p}_{\sigma,\tau}(\Omega)).$$ 

\noindent  (see Proposition \ref{StrongSolution}). We also  prove the existence of very weak solution for the homogeneous Stokes Problem \eqref{lens},  \eqref{nbc} when the initial data is less regular and belongs to the dual space $[\boldsymbol{H}^{p'}_{0}(\mathrm{div},\Omega)]'_{\sigma,\tau}$. In this case the solution $\boldsymbol{u}$ satisfy (see Theorem \ref{veryweak})
$$\forall\,1\leq q<2,\,\,\,\forall\,\,T<\infty,\quad\boldsymbol{u}\in L^{q}(0,T;\,\boldsymbol{L}^{p}(\Omega))\,\,\,\,\textrm{and}\quad\frac{\partial\boldsymbol{u}}{\partial t}\in L^{q}(0,T;\,[\boldsymbol{T}^{p'}(\Omega)]'_{\sigma,\tau}).$$

In order to obtain the $L^p-L^q$ estimates for the solution to the homogeneous Stokes problem  \eqref{lens}, \eqref{nbc} with an initial data $\boldsymbol{u}_{0}\in\boldsymbol{L}^{p}_{\sigma,\tau}(\Omega)$ we study the fractional powers of $A'_p$.  We characterize the domain $\mathbf{D}((A'_{p})^{\frac{1}{2}})$ and we prove that $\mathbf{D}((A'_{p})^{\frac{1}{2}})\,=\,\boldsymbol{V}^{p}_{\sigma,\tau}(\Omega)$, where $\boldsymbol{V}^{p}_{\sigma,\tau}(\Omega)$ is given by \eqref{vptflux}. This yields an equivalence of the two norms $\Vert (A'_{p})^{\frac{1}{2}}\boldsymbol{u}\Vert_{\boldsymbol{L}^{p}(\Omega)}$ 
and $\Vert\boldsymbol{\mathrm{curl}}\,\boldsymbol{u}\Vert_{\boldsymbol{L}^{p}(\Omega)}$. We also prove an embedding of Sobolev type for the domain of the fractional powers of the Stokes operator $\mathbf{D}((A'_{p})^{\alpha})$, $\alpha\in\mathbb{R}^{\ast}_{+}$ such that $0<\alpha<3/2p$.

This  is similar to previous results by Borchers and Miyakawa in \cite{Bor, Bor2}, Giga and Sohr \cite{GiGa2, GiGa3, GiGa4} where the fractional powers of the Stokes operator  with Dirichlet boundary conditions $A$  is studied. They have proved that $\mathbf{D}(A^{1/2})\,=\,\boldsymbol{W}^{1,p}_{0}(\Omega)\cap\boldsymbol{L}^{p}_{\sigma}(\Omega)$ and the equivalence of the two norms $\Vert A^{1/2}\boldsymbol{u}\Vert_{\boldsymbol{L}^{p}(\Omega)}$ and $\Vert \nabla\boldsymbol{u}\Vert_{\boldsymbol{L}^{p}(\Omega)}$ for every $\boldsymbol{u}\in\mathbf{D}(A^{1/2})$.They also proved the Sobolev embedding  of the domain $D(A^\alpha )$ into $L^q(\Omega )$  for the Stokes operator with Dirichlet boundary conditions.

Using the fractional powers of $A'_p$ we prove our  third main result:
\begin{theo}\label{theo3}
 Let $1<p\leq q<\infty,$ $\boldsymbol{u}_{0}\in\boldsymbol{L}^{p}_{\sigma,\tau}(\Omega)$ and
\begin{equation}\label{widetildeu0}
\widetilde{\boldsymbol{u}}_{0}=\boldsymbol{u}_{0}\,-\,\boldsymbol{w}_{0},
\end{equation}
\begin{equation}\label{W0introduction}
\boldsymbol{w}_{0}=\sum_{j=1}^{J}\langle\boldsymbol{u}_{0}\cdot\boldsymbol{n}\,,\,1\rangle_{\Sigma_{j}}\widetilde{\boldsymbol{\mathrm{grad}}}\,q_{j}^{\tau}.
\end{equation}
Then the homogeneous Problem
 \eqref{lens},  \eqref{nbc} has a unique solution $\boldsymbol{u}$ satisfying
\begin{equation}\label{Reghenp1A'BIS}
\boldsymbol{u}\in
C([0,\,+\infty[,\,\boldsymbol{L}^{p} _{ \sigma , \tau  }(\Omega ))\cap
C(]0,\,+\infty[,\,\mathbf{D}(A_{p}))\cap
C^{1}(]0,\,+\infty[,\,\boldsymbol{L}^{p} _{ \sigma , \tau  }(\Omega )),
\end{equation}
\begin{equation}\label{Reghenp2A'BIS}
\boldsymbol{u}\in C^{k}(]0,\,+\infty[,\,\mathbf{D}(A^{\ell}_{p})),\qquad
\forall\,k,\,\ell\in\mathbb{N}.
\end{equation}
Moreover, for all $q\in [p, \infty)$, and for all  integers $m,n\in\mathbb{N}$, such that  $m+n> 0$,  there exists constants $M>0$ and $\mu>0$, such that the solution $\boldsymbol{u}$ satisfies the estimates:
\begin{equation}\label{estlplqutxpBIS}
 \Vert\boldsymbol{u}(t)-\boldsymbol{w}_{0}\Vert_{\boldsymbol{L}^{q}(\Omega)}\,\leq\,C\,e^{-\mu \,t}\,t^{-3/2(1/p-1/q)}\Vert \widetilde{\boldsymbol{u}}_{0}\Vert_{\boldsymbol{L}^{p}(\Omega)},
 \end{equation}
 \begin{equation}\label{estlplqcurlutxpBIS}
  \Vert\boldsymbol{\mathrm{curl}}\,\boldsymbol{u}(t)\Vert_{\boldsymbol{L}^{q}(\Omega)}\,\leq\,M\,e^{-\mu t}\,t^{-3/2(1/p-1/q)-1/2}\Vert \widetilde{\boldsymbol{u}}_{0}\Vert_{\boldsymbol{L}^{p}(\Omega)}
 \end{equation}
 and
 \begin{equation}\label{estlplqlaputxpBIS}
 \Big\Vert\frac{\partial^{m}}{\partial t^{m}}\Delta^{n}\boldsymbol{u}(t)\Big\Vert_{\boldsymbol{L}^{q}(\Omega)}\,\leq\,M\,e^{-\mu t}\,t^{-(m+n)-3/2(1/p-1/q)}\Vert \widetilde{\boldsymbol{u}}_{0}\Vert_{\boldsymbol{L}^{p}(\Omega)}.
 \end{equation}  
\end{theo}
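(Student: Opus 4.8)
The plan is to reduce everything to the semigroup generated by the flux operator $A'_p$ on $\boldsymbol{X}_p$, where the invertibility provides genuine exponential decay. First I would exploit the decomposition $\boldsymbol{u}_0=\widetilde{\boldsymbol{u}}_0+\boldsymbol{w}_0$ built into the statement. Since $\boldsymbol{w}_0$ is a linear combination of the generators $\widetilde{\boldsymbol{\mathrm{grad}}}\,q_j^\tau$ of the kernel $\boldsymbol{K}_\tau(\Omega)$, it satisfies $A_p\boldsymbol{w}_0=\boldsymbol{0}$, hence $\boldsymbol{\mathrm{curl}}\,\boldsymbol{w}_0=\boldsymbol{0}$, $\Delta\boldsymbol{w}_0=\boldsymbol{0}$ and $e^{-tA_p}\boldsymbol{w}_0=\boldsymbol{w}_0$ for all $t$. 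The remainder $\widetilde{\boldsymbol{u}}_0$ is divergence free, tangent to $\Gamma$, and by construction of $\boldsymbol{w}_0$ satisfies the flux condition $\langle\widetilde{\boldsymbol{u}}_0\cdot\boldsymbol{n},1\rangle_{\Sigma_j}=0$, so $\widetilde{\boldsymbol{u}}_0\in\boldsymbol{X}_p$. Writing $\boldsymbol{u}(t)=e^{-tA_p}\boldsymbol{u}_0$ and using that $e^{-tA_p}$ restricted to the invariant subspace $\boldsymbol{X}_p$ coincides with $e^{-tA'_p}$, I obtain the key identity $\boldsymbol{u}(t)-\boldsymbol{w}_0=e^{-tA'_p}\widetilde{\boldsymbol{u}}_0$. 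The regularity (\ref{Reghenp1A'BIS})--(\ref{Reghenp2A'BIS}) and the uniqueness then follow from the analyticity of the semigroup generated by $A_p$ (Theorem \ref{theo1}) together with Theorem \ref{exishenp}.

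For the decay estimates I would use that $A'_p$ is invertible with compact inverse on $\boldsymbol{X}_p$, so its spectrum is discrete and contained in a half-plane $\{\mathrm{Re}\,\lambda\geq\mu\}$ for some $\mu>0$. Combined with analyticity this yields the smoothing bound $\|(A'_p)^{\alpha}e^{-tA'_p}\|_{\mathcal{L}(\boldsymbol{X}_p)}\leq M\,e^{-\mu t}\,t^{-\alpha}$ for every $\alpha\geq0$. Choosing $\alpha=\tfrac{3}{2}(1/p-1/q)$, which lies in the admissible range $(0,3/2p)$ because $1/p-1/q<1/p$, the Sobolev-type embedding $\mathbf{D}((A'_p)^{\alpha})\hookrightarrow\boldsymbol{L}^q(\Omega)$ established earlier gives
\[
\|\boldsymbol{u}(t)-\boldsymbol{w}_0\|_{\boldsymbol{L}^q(\Omega)}=\|e^{-tA'_p}\widetilde{\boldsymbol{u}}_0\|_{\boldsymbol{L}^q(\Omega)}\leq C\,\|(A'_p)^{\alpha}e^{-tA'_p}\widetilde{\boldsymbol{u}}_0\|_{\boldsymbol{L}^p(\Omega)}\leq C M\,e^{-\mu t}\,t^{-\alpha}\,\|\widetilde{\boldsymbol{u}}_0\|_{\boldsymbol{L}^p(\Omega)},
\]
which is exactly (\ref{estlplqutxpBIS}); the case $p=q$ gives $\alpha=0$ and is trivial.

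For (\ref{estlplqcurlutxpBIS}) and (\ref{estlplqlaputxpBIS}) I would use the semigroup law with consistency of $e^{-tA'_p}$ and $e^{-tA'_q}$, splitting $e^{-tA'_p}=e^{-(t/2)A'_q}e^{-(t/2)A'_p}$ so that $\boldsymbol{g}:=e^{-(t/2)A'_p}\widetilde{\boldsymbol{u}}_0\in\boldsymbol{L}^q(\Omega)$ obeys the bound just proved. Since $\boldsymbol{\mathrm{curl}}\,\boldsymbol{w}_0=\boldsymbol{0}$, the norm equivalence $\|\boldsymbol{\mathrm{curl}}\,\boldsymbol{v}\|_{\boldsymbol{L}^q(\Omega)}\sim\|(A'_q)^{1/2}\boldsymbol{v}\|_{\boldsymbol{L}^q(\Omega)}$ on $\mathbf{D}((A'_q)^{1/2})=\boldsymbol{V}^q_{\sigma,\tau}(\Omega)$ reduces (\ref{estlplqcurlutxpBIS}) to $\|(A'_q)^{1/2}e^{-(t/2)A'_q}\boldsymbol{g}\|_{\boldsymbol{L}^q(\Omega)}\leq M\,e^{-\mu t/2}(t/2)^{-1/2}\|\boldsymbol{g}\|_{\boldsymbol{L}^q(\Omega)}$, and multiplying the two factors produces the exponent $-\tfrac{3}{2}(1/p-1/q)-\tfrac12$. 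For (\ref{estlplqlaputxpBIS}) I would use that the Stokes operator equals the Laplacian with these boundary conditions, so on $\boldsymbol{X}_q$ one has $\Delta=-A'_q$ and $\partial_t e^{-tA'_q}=-A'_q e^{-tA'_q}$; combined with $\Delta\boldsymbol{w}_0=\boldsymbol{0}$ and $\partial_t\boldsymbol{w}_0=\boldsymbol{0}$ this gives $\frac{\partial^m}{\partial t^m}\Delta^n\boldsymbol{u}(t)=(-1)^{m+n}(A'_q)^{m+n}e^{-(t/2)A'_q}\boldsymbol{g}$ for $m+n>0$, and the smoothing bound with exponent $m+n$ again combines with the $L^p$--$L^q$ factor to give the claim.

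The main obstacle I anticipate is justifying the genuine exponential rate $e^{-\mu t}$: this is precisely where the invertibility and compactness of $(A'_p)^{-1}$ on $\boldsymbol{X}_p$ are essential, since on the full space $\boldsymbol{L}^p_{\sigma,\tau}(\Omega)$ the kernel $\boldsymbol{K}_\tau(\Omega)$ produces a zero eigenvalue and no decay, which is why subtracting $\boldsymbol{w}_0$ is unavoidable. A secondary technical point is confirming that the Sobolev exponent $\alpha=\tfrac{3}{2}(1/p-1/q)$ lies in the range where $\mathbf{D}((A'_p)^\alpha)\hookrightarrow\boldsymbol{L}^q(\Omega)$ holds, and verifying the consistency of the family $\{e^{-tA'_r}\}_r$ underlying the half-time splitting.
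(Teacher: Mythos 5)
Your proposal is correct and follows essentially the same route as the paper: decompose $\boldsymbol{u}_0=\boldsymbol{w}_0+\widetilde{\boldsymbol{u}}_0$ with $\widetilde{\boldsymbol{u}}_0\in\boldsymbol{X}_p$, observe that $T(t)\boldsymbol{w}_0=\boldsymbol{w}_0$, and obtain the decay from the negative spectral bound of $-A'_p$ on $\boldsymbol{X}_p$ combined with the Sobolev embeddings of $\mathbf{D}((A'_p)^\alpha)$, the smoothing estimate for $(A'_p)^\alpha T(t)$, and the half-time splitting (plus density of $\boldsymbol{X}_p\cap\boldsymbol{X}_q$) for \eqref{estlplqcurlutxpBIS} and \eqref{estlplqlaputxpBIS}. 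The only cosmetic difference is that the paper proves \eqref{estlplqutxpBIS} by interpolating $\boldsymbol{L}^q$ between $\boldsymbol{L}^p$ and $\boldsymbol{L}^{p_0}$ for an exponent $s$ strictly between $\tfrac{3}{2}(1/p-1/q)$ and $\tfrac{3}{2p}$, whereas you apply the embedding of Corollary \ref{SoboembAS} directly at $\alpha=\tfrac{3}{2}(1/p-1/q)$; both choices are admissible.
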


The result  for the Stokes problem with flux condition \eqref{lens}, \eqref{nbc},  \eqref{condition2} with an initial data in $\boldsymbol{X}_{p}$ is given in Theorem \ref{exishenpA'}. In order to keep a reasonable   size for this paper, we have not included  the study of  the fractional powers of the operators $B_p, C_p, B'_p$ and $C'_p$. Therefore,  there are  no  regularity results for the weak and very weak solutions for the homogeneous problem. Nevertheless, the general theory of analytic semigroups applied to the semigroups generated by $B_p, C_p, B'_p$ and $C'_p$ provide existence and uniqueness results of solutions to problems  \eqref{lens}, \eqref{nbc} and  \eqref{lens}, \eqref{nbc},  \eqref{condition2} for initial data $\boldsymbol{u}_0$ with less regularity (cf. Theorem \ref{semigroupBC} for the Stokes problem and Theorem \ref{brpimecprime} for the Stokes problem with flux conditions).

 When $\Omega=\mathbb{R}^{3}$, Kato \cite{Ka3} shows that estimate \eqref{estlplqutxpBIS} follows directly from the corresponding estimates for the heat semi-group. In the half space, Borchers and Miyakawa \cite{Bor, Bor2} deduced estimate \eqref{estlplqutxpBIS} for the Stokes semi-group with Dirichlet boundary condition from Ukai's formula \cite{Ukai}. In a bounded domain  Giga \cite{GiGa5} derives this estimate for the Stokes semi-group with Dirichlet boundary conditions from the inequality 
 \begin{equation}
 \label{ineq2}
 \Vert\boldsymbol{u}\Vert_{\boldsymbol{L}^{q}(\Omega)}\leq C\Vert A^{\alpha/2}\boldsymbol{u}\Vert_{\boldsymbol{L}^{p}(\Omega)},\,\,\hbox{with}\,\,\alpha=3(1/p-1/q)
 \end{equation}
 which can be obtained directly from the usual Sobolev inequality for the Laplacian and from the fact that in the case of bounded domains 
 \begin{equation}
 \label{ineq}
 \Vert \Delta^{\alpha/2}\boldsymbol{u}\Vert_{\boldsymbol{L}^{p}(\Omega)}\leq C\Vert A^{\alpha/2}\boldsymbol{u}\Vert_{\boldsymbol{L}^{p}(\Omega)}
 \end{equation}
  for every regular function $\boldsymbol{u}$, for every $\alpha>0$ and for every $1<p<\infty$ (see \cite{GiGa2}). In the case of exterior domains Giga and Sohr follow in \cite{GiGa3} the same procedure as in the case of bounded domains but with limitations with respect to the values of $p$ and $q$, because in this case the inequality 
 \eqref{ineq} still hold true but for limited values $p$ and $q$. We note also that in exterior domain Borchers and Miyakawa \cite{Bor2} prove the same result as \cite{GiGa3} but using \eqref{ineq2}. More recently Coulhon and Lamberton \cite{Coulhon} proved the estimate \eqref{estlplqutxpBIS} by showing that some properties of the Stokes semi-group with Dirichlet boundary condition can be obtained by a simple transfer of the properties of the heat semi-group. 

\subsection{Plan of the paper.}
This paper is organized as follows. In Section \ref{Notations} we give the functional framework and some preliminary results at the basis of our proofs. In Section \ref{operators} we  
define the three different Stokes operators with Navier-type boundary conditions, and prove some of their properties. In Section \ref{semigroups}  we prove that the operators introduced in Section  \ref{operators} generate bounded analytic semi-groups. Section \ref{Stokes operator with flux boundary  conditions} is devoted to Stokes operators with Navier-type boundary conditions and flux conditions.  We introduce three   operators of that kind   and prove that they generate analytic semigroups. We prove in Section \ref{powers} several results on the pure imaginary and fractional powers of several  operators. Then, in Section \ref{time}, we solve the Stokes problem  and the Stokes problem with flux under different assumptions on the initial data $\boldsymbol{u}_0$ and the function $\boldsymbol{f}$.
\section{Notations and preliminary results}  
\label{Notations}
\subsection{Functional framework}
\label{Functional framework}
In this subsection we review some basic notations, definitions and functional framework which are essential in our work.

We do not assume
that the boundary $\Gamma$ is connected and we denote by
$\Gamma_{i},$ $0\leq i\leq I,$ the connected component of
$\Gamma,$ $\Gamma_{0}$ being the boundary of the only unbounded
connected component of
$\mathbb{R}^{3}\backslash\overline{\Omega}$. We also fix a smooth
open set $\vartheta$ with a connected boundary (a ball, for
instance), such that $\overline{\Omega}$ 
 is contained in $\vartheta$, and we denote by $\Omega_{i}$, $0\leq i \leq I$, the connected component of
 $\vartheta\backslash\overline{\Omega}$
 with boundary $\Gamma_{i}$ ($\Gamma_{0}\cup\partial\vartheta$ for $i=0$).
 
We do not assume that $\Omega$ is simply-connected but we suppose
that there exist $J$ connected open surfaces $\Sigma_{j}$, $1 \leq
j \leq J$, called ``cuts'', contained in $\Omega$, such that each
surface $\Sigma_{j}$ is an open subset of a smooth manifold, the
boundary of $\Sigma_{j}$ is contained in $\Gamma$. The
intersection $\overline{\Sigma}_{i}\cap\overline{\Sigma}_{j}$ is
empty for $i\neq j$ and finally the open set
$\Omega^{\circ}=\Omega\backslash\cup_{j=1}^{J}\Sigma_{j}$ is
simply connected and pseudo-$C^{1,1}$ (see \cite{Am2} for instance).

We denote by $\left[\cdot\right]_{j} $ the jump of a function over $\Sigma_{j}$, \textit{i.e.} the difference of the traces for $1\leq j \leq J$. In addition, for any function $q$ in $W^{1,p}(\Omega^{\circ})$, $\boldsymbol{\mathrm{grad}}\,q$ is the gradient of $q$ in the sense of distribution in $\boldsymbol{\mathcal{D}}'(\Omega^{\circ})$, it belongs to $\boldsymbol{L}^{p}(\Omega^{\circ})$ and therefore can be extended to $\boldsymbol{L}^{p}(\Omega)$. In order to distinguish this extension from the gradient of $q$ in $\boldsymbol{\mathcal{D}}'(\Omega^{\circ})$ we denote it by $\widetilde{\boldsymbol{\mathrm{grad}}}\,q$.
 
Finally, vector fields,
matrix fields and their corresponding spaces defined on $\Omega$
will be denoted by bold character. The functions treated here are
complex valued functions. We will use also the symbol $\sigma$ to
represent a set of divergence free functions. In other words If
$\boldsymbol{E}$ is Banach space, then
$$\boldsymbol{E}_{\sigma}\,=\,\big\{\boldsymbol{v}\in\boldsymbol{E};\,\,\mathrm{div}\,\boldsymbol{v}\,=\,0\,\,\,\textrm{in}\,\,\Omega\big\}.$$

Now, we introduce some functional spaces.
Let $\boldsymbol{L}^{p}(\Omega)$ denotes the usual vector valued
$\boldsymbol{L}^{p}$-space over $\Omega$. Let us define the
spaces:
\begin{equation*}
\boldsymbol{H}^{p}(\textrm{\textbf{curl}},\Omega)\,=\,\big\{\boldsymbol{v}\in\boldsymbol{L}^{p}(\Omega);\,\,\,\textrm{\textbf{curl}}\,\boldsymbol{v}\in\boldsymbol{L}^{p}(\Omega)\big\},
\end{equation*}
\begin{equation*}
\boldsymbol{H}^{p}(\textrm{div},\Omega)\,=\,\big\{\boldsymbol{v}\in\boldsymbol{L}^{p}(\Omega);\,\,\,\textrm{div}\,\boldsymbol{v}\in\boldsymbol{L}^{p}(\Omega)\big\},
\end{equation*}
\begin{equation*}
\boldsymbol{X}^{p}(\Omega)\,=\,\boldsymbol{H}^{p}(\textrm{\textbf{curl}},\Omega)\cap\boldsymbol{H}^{p}(\textrm{div},\Omega),
\end{equation*}
equipped with the graph norm.
Thanks to \cite{Am4} we know
that $\boldsymbol{\mathcal{D}}(\overline{\Omega})$ is dense in
$\boldsymbol{H}^{p}(\textrm{\textbf{curl}},\Omega),$
$\boldsymbol{H}^{p}(\textrm{div},\Omega)$ and
$\boldsymbol{X}^{p}(\Omega)$.\\
We also define the subspaces:
\begin{equation*}
\boldsymbol{H}^{p}_{0}(\textrm{\textbf{curl}},\Omega)\,=\,\big\{\boldsymbol{v}\in\boldsymbol{H}^{p}(\textrm{\textbf{curl}},\Omega);\,\,\,\boldsymbol{v}\times\boldsymbol{n}\,=\,\boldsymbol{0}\,\,\textrm{on}\,\,\Gamma\big\},
\end{equation*}
\begin{equation*}
\boldsymbol{H}^{p}_{0}(\textrm{div},\Omega)\,=\,\big\{\boldsymbol{v}\in\boldsymbol{H}^{p}(\textrm{div},\Omega);\,\,\,\boldsymbol{v}\cdot\boldsymbol{n}\,=\,0\,\,\textrm{on}\,\,\Gamma\big\},
\end{equation*}
\begin{equation*}
\boldsymbol{X}^{p}_{N}(\Omega)\,=\,\big\{\boldsymbol{v}\in\boldsymbol{X}^{p}(\Omega);\,\,\,\boldsymbol{v}\times\boldsymbol{n}\,=\,\boldsymbol{0}\,\,\textrm{on}\,\,\Gamma\big\},
\end{equation*}
\begin{equation*}
\boldsymbol{X}^{p}_{\tau}(\Omega)\,=\,\big\{\boldsymbol{v}\in\boldsymbol{X}^{p}(\Omega);\,\,\,\boldsymbol{v}\cdot\boldsymbol{n}\,=\,0\,\,\textrm{on}\,\,\Gamma\big\}
\end{equation*}
and
\begin{equation*}
\boldsymbol{X}^{p}_{0}(\Omega)\,=\,\boldsymbol{X}^{p}_{N}(\Omega)\cap\boldsymbol{X}^{p}_{\tau}(\Omega).
\end{equation*}
We have denoted by $\boldsymbol{v}\times\boldsymbol{n}$
(respectively by $\boldsymbol{v}\cdot\boldsymbol{n}$) the
tangential (respectively normal) boundary value of
$\boldsymbol{v}$ defined in $\boldsymbol{W}^{-1/p,\,p}(\Gamma)$
(respectively in $W^{-1/p,\,p}(\Gamma)$) as soon as
$\boldsymbol{v}$ belongs to
$\boldsymbol{H}^{p}(\textrm{\textbf{curl}},\Omega)$ (respectively
to $\boldsymbol{H}^{p}(\textrm{div},\Omega)$). More precisely, any
function $\boldsymbol{v}$ in
$\boldsymbol{H}^{p}(\textrm{\textbf{curl}},\Omega)$ (respectively
in $\boldsymbol{H}^{p}(\textrm{div},\Omega)$) has a tangential
(respectively normal) trace $\boldsymbol{v}\times\boldsymbol{n}$
 (respectively $\boldsymbol{v}\cdot\boldsymbol{n}$) in
 $\boldsymbol{W}^{-1/p,\,p}(\Gamma)$ (respectively in $W^{-1/p,\,p}(\Gamma)$) defined by:
 \begin{equation}\label{tt}
 \forall\,\boldsymbol{\varphi}\in\boldsymbol{W}^{1,\,p'}(\Omega),\,\,\,\langle\boldsymbol{v}\times\boldsymbol{n},\,\boldsymbol{\varphi}\rangle_{\Gamma}\,=\,\int_{\Omega}\textrm{\textbf{curl}}\,\boldsymbol{v}\cdot\overline{\boldsymbol{\varphi}}\,\textrm{d}\,x\,-\,\int_{\Omega}\boldsymbol{v}\cdot\textrm{\textbf{curl}}\,\overline{\boldsymbol{\varphi}}\,\textrm{d}\,x
 \end{equation}
 and
 \begin{equation}\label{nt}
 \forall\,\varphi\in
 W^{1,\,p'}(\Omega),\,\,\,\langle\boldsymbol{v}\cdot\boldsymbol{n},\,\varphi\rangle_{\Gamma}\,=\,\int_{\Omega}\boldsymbol{v}\cdot\textrm{\textbf{grad}}\,\overline{\varphi}\,\textrm{d}\,x\,+\,\int_{\Omega}\textrm{div}\,\boldsymbol{v}\,\overline{\varphi}\,\textrm{d}\,x,
 \end{equation}
 where $\langle.,.\rangle_{\Gamma}$ is the anti-duality between
 $\boldsymbol{W}^{-1/p,\,p}(\Gamma)$ and
 $\boldsymbol{W}^{1/p,\,p'}(\Gamma)$ in (\ref{tt}) and between
 $W^{-1/p,\,p}(\Gamma)$ and $W^{1/p,\,p'}(\Gamma)$ in
 (\ref{nt}).
 Thanks to \cite{Am4} we know that
 $\boldsymbol{\mathcal{D}}(\Omega)$ is dense in
 $\boldsymbol{H}^{p}_{0}(\textrm{\textbf{curl}},\Omega)$ and in
 $\boldsymbol{H}^{p}_{0}(\textrm{div},\Omega)$.

  We denote by
 $[\boldsymbol{H}^{p}_{0}(\textrm{\textbf{curl}},\Omega)]'$ and
 $[\boldsymbol{H}^{p}_{0}(\textrm{div},\Omega)]'$ the dual spaces
 of $\boldsymbol{H}^{p}_{0}(\textrm{\textbf{curl}},\Omega)$ and
 $\boldsymbol{H}^{p}_{0}(\textrm{div},\Omega)$ respectively.
 
  Notice that we can characterize these dual spaces as follows:
  A distribution $\boldsymbol{f}$ belongs to
 $[\boldsymbol{H}^{p}_{0}(\textrm{\textbf{curl}},\Omega)]'$ if and
 only if there exist functions functions
 $\boldsymbol{\psi}\in\boldsymbol{L}^{p'}(\Omega)$ and
 $\boldsymbol{\xi}\in\boldsymbol{L}^{p'}(\Omega)$, such that
 $\boldsymbol{f}\,=\,\boldsymbol{\psi}\,+\,\boldsymbol{\mathrm{curl}}\,\boldsymbol{\xi}$. Moreover one has
 \begin{equation*}
\|\boldsymbol{f}\|_{[\boldsymbol{H}^{p}_{0}(\textrm{\textbf{curl}},\Omega)]'}\,=\,\inf_{\boldsymbol{f}\,=\,\boldsymbol{\psi}\,+\,\boldsymbol{\mathrm{curl}}\,\boldsymbol{\xi}}\max\,(\|\boldsymbol{\psi}\|_{\boldsymbol{L}^{p'}(\Omega)},\,\|\boldsymbol{\xi}\|_{\boldsymbol{L}^{p'}(\Omega)}).
 \end{equation*}
 Similarly, a distribution $\boldsymbol{f}$ belongs to
$[\boldsymbol{H}^{p}_{0}(\textrm{div},\Omega)]'$
 if and only if there exist
 $\boldsymbol{\psi}\in\boldsymbol{L}^{p'}(\Omega)$ and $\chi\in
 L^{p'}(\Omega)$ such that
 $\boldsymbol{f}\,=\,\boldsymbol{\psi}\,+\,\boldsymbol{\mathrm{grad}}\,\chi$
 and
 \begin{equation*}
\|\boldsymbol{f}\|_{[\boldsymbol{H}^{p}_{0}(\textrm{div},\Omega)]'}\,=\,\inf_{\boldsymbol{f}\,=\,\boldsymbol{\psi}\,+\,\boldsymbol{\mathrm{grad}}\,\chi}\max\,(\|\boldsymbol{\psi}\|_{\boldsymbol{L}^{p'}(\Omega)}\,,\,\|\chi\|_{ L^{p'}(\Omega)}).
 \end{equation*}
 
 Finally we consider the space
\begin{equation}\label{tp}
\boldsymbol{T}^{p}(\Omega)=\big\{\boldsymbol{v}\in\boldsymbol{H}^{p}_{0}(\mathrm{div},\Omega);\,\,\,\mathrm{div}\,\boldsymbol{v}\in
{W}^{1,p}_{0}(\Omega)\big\},
\end{equation}
equipped with the graph norm.
 Thanks to \cite[Lemma 4.11, Lemma 4.12]{Am3} we
know that $\boldsymbol{\mathcal{D}}(\Omega)$ is dense in
$\boldsymbol{T}^{p}(\Omega)$ and a distribution $\boldsymbol{f}\in(\boldsymbol{T}^{p}(\Omega))'$ if and only if there exists a function $\boldsymbol{\psi}\in\boldsymbol{L}^{p'}(\Omega)$ and a function $\chi\in W^{-1,p'}(\Omega)$ such that $\boldsymbol{f}=\boldsymbol{\psi}\,+\,\nabla\chi$. 
 
\subsection{Preliminary results} 
In this subsection, we review some known results which are essential in our work. First, we recall that the vector-valued Laplace operator of a vector field $\textbf{\textit{v}}=(v_1,v_2,v_3)$ is equivalently defined by  
  \begin{equation*}\label{laplacebis}
\Delta\,\textbf{\textit{v}}= \mathbf{grad}\,(\mathrm{div} \, \textbf{\textit{v}})-\mathbf{curl}\, \mathbf{curl}\,\textbf{\textit{v}}.
\end{equation*}

Next, we review some Sobolev embeddings (see \cite{Am4}):
\begin{lemma}\label{con1} The spaces
$\boldsymbol{X}^{p}_{N}(\Omega)$ and
$\boldsymbol{X}^{p}_{\tau}(\Omega)$ defined above are continuously embedded in
$\boldsymbol{W}^{1,p}(\Omega)$.
\end{lemma}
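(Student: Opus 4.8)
The plan is to reduce the embedding to the a priori estimate
\[
\|\boldsymbol{v}\|_{\boldsymbol{W}^{1,p}(\Omega)}\,\leq\,C\big(\|\boldsymbol{v}\|_{\boldsymbol{L}^{p}(\Omega)}+\|\mathbf{curl}\,\boldsymbol{v}\|_{\boldsymbol{L}^{p}(\Omega)}+\|\mathrm{div}\,\boldsymbol{v}\|_{\boldsymbol{L}^{p}(\Omega)}\big)
\]
valid for every $\boldsymbol{v}\in\boldsymbol{X}^{p}_{N}(\Omega)$ (resp. $\boldsymbol{X}^{p}_{\tau}(\Omega)$). Since $\boldsymbol{\mathcal{D}}(\overline{\Omega})$ is dense in $\boldsymbol{X}^{p}(\Omega)$, it is enough to prove this for smooth $\boldsymbol{v}$ and then pass to the limit. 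The whole argument hinges on the identity $\Delta\boldsymbol{v}=\mathbf{grad}\,(\mathrm{div}\,\boldsymbol{v})-\mathbf{curl}\,\mathbf{curl}\,\boldsymbol{v}$ recalled above, which says that the full gradient of $\boldsymbol{v}$ is recovered, through a Calder\'on--Zygmund argument, from $\mathrm{div}\,\boldsymbol{v}$ and $\mathbf{curl}\,\boldsymbol{v}$.

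First I would handle the interior by a partition of unity. For $\varphi\in\boldsymbol{\mathcal{D}}(\Omega)$ the field $\varphi\boldsymbol{v}$ has compact support, and in $\mathbb{R}^{3}$ one has $-\Delta(\varphi\boldsymbol{v})=\mathbf{curl}\,\mathbf{curl}(\varphi\boldsymbol{v})-\mathbf{grad}\,\mathrm{div}(\varphi\boldsymbol{v})$, so on the Fourier side each $\widehat{\partial_j(\varphi v_i)}$ is expressed through the symbols of $\mathrm{div}(\varphi\boldsymbol{v})$ and $\mathbf{curl}(\varphi\boldsymbol{v})$ by multipliers homogeneous of degree zero. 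The Mikhlin--H\"ormander theorem then yields $\|\nabla(\varphi\boldsymbol{v})\|_{\boldsymbol{L}^{p}}\leq C(\|\mathrm{div}(\varphi\boldsymbol{v})\|_{\boldsymbol{L}^{p}}+\|\mathbf{curl}(\varphi\boldsymbol{v})\|_{\boldsymbol{L}^{p}})$, the commutators of $\varphi$ with $\mathrm{div}$ and $\mathbf{curl}$ being of lower order and absorbed into $\|\boldsymbol{v}\|_{\boldsymbol{L}^{p}}$.

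The boundary is the crux. Near a boundary chart I would flatten $\Gamma$ by a $C^{1,1}$ diffeomorphism onto a piece of $\{x_{3}=0\}$, so that $\boldsymbol{v}\cdot\boldsymbol{n}=0$ becomes $v_{3}=0$ (case $\boldsymbol{X}^{p}_{\tau}$) and $\boldsymbol{v}\times\boldsymbol{n}=\boldsymbol{0}$ becomes $v_{1}=v_{2}=0$ (case $\boldsymbol{X}^{p}_{N}$) on the flat piece. I would then extend $\boldsymbol{v}$ across $\{x_{3}=0\}$ by reflection with the parity dictated by the boundary condition: in the $\tau$-case, even reflection of $v_{1},v_{2}$ and odd reflection of $v_{3}$ (and the reverse choice in the $N$-case). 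The point to check is that the vanishing component makes the extension continuous across the interface, so that the distributional $\mathrm{div}$ and $\mathbf{curl}$ of the extended field carry no singular interface (Dirac) term and are exactly the reflections, with a definite parity, of $\mathrm{div}\,\boldsymbol{v}$ and $\mathbf{curl}\,\boldsymbol{v}$; hence they stay in $\boldsymbol{L}^{p}$. The whole-space estimate of the previous step applies to the extension, and restriction back to $\{x_{3}>0\}$ gives the local boundary estimate.

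The main obstacle, and the step demanding real care, is precisely this boundary treatment: flattening a merely $C^{1,1}$ surface turns the constant-coefficient operators $\mathrm{div}$ and $\mathbf{curl}$ into operators with Lipschitz, hence only continuous (VMO), leading coefficients, so after reflection one cannot invoke the plain whole-space multiplier result directly but must use Calder\'on--Zygmund theory with continuous coefficients through a freezing-of-coefficients and small-perturbation argument, controlling the perturbation by shrinking the chart. Summing the finitely many localized estimates against the partition of unity and combining them with the interior estimate yields the desired a priori bound, and therefore the continuous embedding $\boldsymbol{X}^{p}_{N}(\Omega),\,\boldsymbol{X}^{p}_{\tau}(\Omega)\hookrightarrow\boldsymbol{W}^{1,p}(\Omega)$.
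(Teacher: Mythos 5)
The paper does not prove this lemma at all: it is quoted from Amrouche--Seloula \cite{Am4} (see also \cite{Am2} for $p=2$), so there is no in-paper argument to compare against. Your localization--flattening--reflection--Calder\'on--Zygmund scheme is the standard route to this div-curl estimate and is essentially sound: the interior multiplier argument is correct, the parities you choose for the half-space reflection are the right ones, and you correctly isolate the real difficulty, namely that flattening a $C^{1,1}$ boundary destroys the constant-coefficient structure and forces a freezing-of-coefficients/absorption step on small charts. Two points deserve tightening. First, the claim that the reflected field has no interface Dirac term is not just ``continuity of the vanishing component'': when you compute $\mathrm{div}$ and $\mathbf{curl}$ of the extension distributionally via the Green formulas \eqref{tt} and \eqref{nt}, one of the two boundary pairings is killed by the hypothesis ($\boldsymbol{v}\cdot\boldsymbol{n}=0$ in the $\tau$-case, $\boldsymbol{v}\times\boldsymbol{n}=\boldsymbol{0}$ in the $N$-case) and the other vanishes for free because the relevant test field is purely normal (resp.\ purely tangential) on the interface; writing this out is what makes the reflection rigorous for non-smooth $\boldsymbol{v}$.

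Second, and more seriously, your reduction ``$\boldsymbol{\mathcal{D}}(\overline{\Omega})$ is dense in $\boldsymbol{X}^{p}(\Omega)$, so it suffices to prove the estimate for smooth $\boldsymbol{v}$'' does not work as stated: approximants of $\boldsymbol{v}\in\boldsymbol{X}^{p}_{\tau}(\Omega)$ in the $\boldsymbol{X}^{p}$-norm need not satisfy $\boldsymbol{v}\cdot\boldsymbol{n}=0$, their traces converge to zero only in $W^{-1/p,p}(\Gamma)$, and an estimate for general smooth fields would have to carry a trace term in a norm not controlled by $\boldsymbol{X}^{p}(\Omega)$. This matters because the absorption of the small first-order perturbation after flattening requires knowing $\nabla\boldsymbol{v}\in\boldsymbol{L}^{p}$ a priori. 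The fix is standard but must be said: either invoke the finer density of $\boldsymbol{\mathcal{D}}(\overline{\Omega})\cap\boldsymbol{X}^{p}_{\tau}(\Omega)$ in $\boldsymbol{X}^{p}_{\tau}(\Omega)$ (and likewise for $\boldsymbol{X}^{p}_{N}(\Omega)$), which is proved in \cite{Am4} by a translation-plus-mollification argument that preserves the boundary condition and does not presuppose the embedding, or regularize directly in the flattened chart by tangential translations and mollification, which preserve the condition $w_{3}=0$ on $\{y_{3}=0\}$. With that repair your argument goes through.
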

Consider now the spaces
\begin{equation}
\boldsymbol{X}^{2,p}(\Omega)\,=\,\big\{\boldsymbol{v}\in\boldsymbol{L}^{p}(\Omega);\,\,\mathrm{div}\,\boldsymbol{v}\in W^{1,p}(\Omega),\,\,\boldsymbol{\mathrm{curl}}\,\boldsymbol{u}\in\boldsymbol{W}^{1,p}(\Omega)
\,\,\textrm{and}\,\,\boldsymbol{v}\cdot\boldsymbol{n}\in
W^{1-1/p,p}(\Gamma)\big\}
\end{equation}
and
\begin{equation*}
\boldsymbol{Y}^{2,p}(\Omega)\,=\,\big\{\boldsymbol{v}\in\boldsymbol{L}^{p}(\Omega);\,\textrm{div}\,\boldsymbol{v}\in
W^{1,p}(\Omega),\,\boldsymbol{\mathrm{curl}}\,\boldsymbol{v}\in\boldsymbol{W}^{1,p}(\Omega)\,\,\textrm{and}\,\,\boldsymbol{v}\times\boldsymbol{n}\in\boldsymbol{W}^{1-1/p,p}(\Gamma)\big\}.
\end{equation*}
\begin{lemma}\label{con2}
The spaces
$\boldsymbol{X}^{2,p}(\Omega)$ and $\boldsymbol{Y}^{2,p}(\Omega)$
are continuously embedded in $\boldsymbol{W}^{2,p}(\Omega)$.
\end{lemma}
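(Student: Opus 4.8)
The plan is to prove both inclusions by the same pair of dual reductions: strip off the divergence of the field with a scalar potential and its curl with a vector potential, and then read off the $\boldsymbol{W}^{2,p}$-regularity from scalar and vector elliptic regularity for the Laplacian, using Lemma~\ref{con1} and the vector-potential theory of \cite{Am4} (with Condition~H controlling the finite-dimensional harmonic fields). I will write the argument in detail for $\boldsymbol{X}^{2,p}(\Omega)$ and then indicate the dual argument for $\boldsymbol{Y}^{2,p}(\Omega)$.

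\textbf{Reduction for $\boldsymbol{X}^{2,p}$.} Let $\boldsymbol{v}\in\boldsymbol{X}^{2,p}(\Omega)$, and set $\chi:=\mathrm{div}\,\boldsymbol{v}\in W^{1,p}(\Omega)$ and $\boldsymbol{\omega}:=\curl\boldsymbol{v}\in\boldsymbol{W}^{1,p}(\Omega)$ (so $\mathrm{div}\,\boldsymbol{\omega}=0$), with $\boldsymbol{v}\cdot\boldsymbol{n}$ a prescribed trace on $\Gamma$. First I would solve the Neumann problem $\Delta\phi=\chi$ in $\Omega$, $\partial\phi/\partial\boldsymbol{n}=\boldsymbol{v}\cdot\boldsymbol{n}$ on $\Gamma$; the compatibility condition $\int_\Omega\chi=\langle\boldsymbol{v}\cdot\boldsymbol{n},1\rangle_\Gamma$ holds by the divergence theorem, and since $\Gamma$ is of class $C^{2,1}$ scalar elliptic regularity (with the datum $\boldsymbol{v}\cdot\boldsymbol{n}$ having the regularity of a $\boldsymbol{W}^{2,p}$-trace) yields $\phi\in W^{3,p}(\Omega)$, hence $\nabla\phi\in\boldsymbol{W}^{2,p}(\Omega)$. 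Setting $\boldsymbol{w}=\boldsymbol{v}-\nabla\phi$ gives $\mathrm{div}\,\boldsymbol{w}=0$, $\boldsymbol{w}\cdot\boldsymbol{n}=0$ on $\Gamma$ and $\curl\boldsymbol{w}=\boldsymbol{\omega}\in\boldsymbol{W}^{1,p}(\Omega)$, so it remains to prove $\boldsymbol{w}\in\boldsymbol{W}^{2,p}(\Omega)$.

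\textbf{Vector potential and elliptic regularity.} By the theory of \cite{Am4}, using Condition~H to fix the finite-dimensional space of harmonic fields, a divergence-free field tangent to the boundary decomposes as $\boldsymbol{w}=\curl\boldsymbol{\psi}+\boldsymbol{w}_0$, where $\boldsymbol{w}_0$ belongs to a finite-dimensional space of smooth fields and $\boldsymbol{\psi}\in\boldsymbol{X}^{p}_{N}(\Omega)$ with $\mathrm{div}\,\boldsymbol{\psi}=0$ (the condition $\boldsymbol{\psi}\times\boldsymbol{n}=0$ being exactly what forces $\boldsymbol{w}\cdot\boldsymbol{n}=0$). Then $-\Delta\boldsymbol{\psi}=\curl\curl\boldsymbol{\psi}=\curl\boldsymbol{w}=\boldsymbol{\omega}\in\boldsymbol{W}^{1,p}(\Omega)$, and together with the homogeneous boundary conditions carried by $\boldsymbol{X}^{p}_{N}(\Omega)$ this is a regular elliptic boundary value problem for $\boldsymbol{\psi}$; bootstrapping the first-order embedding $\boldsymbol{X}^{p}_{N}(\Omega)\hookrightarrow\boldsymbol{W}^{1,p}(\Omega)$ of Lemma~\ref{con1} with interior and up-to-the-boundary elliptic estimates gives $\boldsymbol{\psi}\in\boldsymbol{W}^{3,p}(\Omega)$. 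Consequently $\boldsymbol{w}=\curl\boldsymbol{\psi}+\boldsymbol{w}_0\in\boldsymbol{W}^{2,p}(\Omega)$ and therefore $\boldsymbol{v}\in\boldsymbol{W}^{2,p}(\Omega)$.

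\textbf{The space $\boldsymbol{Y}^{2,p}$ and continuity.} For $\boldsymbol{Y}^{2,p}(\Omega)$ I would run the dual argument: lift the tangential trace $\boldsymbol{v}\times\boldsymbol{n}$ by a field in $\boldsymbol{W}^{2,p}(\Omega)$, reduce to a remainder with vanishing tangential trace, and interchange the roles of $\mathrm{div}$ and $\curl$ (the scalar potential now absorbing the curl through a Dirichlet problem, the remaining divergence-free piece being handled by a potential in $\boldsymbol{X}^{p}_{\tau}(\Omega)$ via Lemma~\ref{con1}). Once the two set inclusions are established, continuity of each embedding follows from the closed graph theorem, since the graph norms make $\boldsymbol{X}^{2,p}(\Omega)$, $\boldsymbol{Y}^{2,p}(\Omega)$ and $\boldsymbol{W}^{2,p}(\Omega)$ Banach spaces and the inclusion maps are closed. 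The main obstacle throughout is purely at the boundary: I must verify that the Laplacian equipped with the normal (resp. tangential) Navier-type conditions is regular elliptic in the Agmon--Douglis--Nirenberg sense, so that $L^p$-regularity holds up to $\Gamma$ and one gains two \emph{full} derivatives rather than only the tangential ones, the recovery of the normal second derivative from the equation in the non-flat $C^{2,1}$ geometry being where the real work lies; by contrast the interior estimate is immediate from $\Delta\boldsymbol{v}=\curl\boldsymbol{\omega}-\nabla\chi\in\boldsymbol{L}^{p}(\Omega)$. A secondary bookkeeping point is the correct treatment of the finite-dimensional harmonic fields attached to the cuts $\Sigma_j$ of Condition~H.
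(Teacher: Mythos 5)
The paper does not actually prove this lemma: it is quoted as a known result from \cite{Am4}, so there is no internal proof to compare against. Your strategy --- strip the divergence and the normal (resp.\ tangential) trace with a scalar Neumann (resp.\ Dirichlet) lift, represent the remaining divergence-free tangent field by a vector potential in $\boldsymbol{X}^{p}_{N}(\Omega)$ modulo the finite-dimensional harmonic fields, apply $L^p$ elliptic regularity for the resulting regular elliptic systems, and get continuity from the closed graph theorem --- is precisely the route taken in the cited reference, and the overall architecture is sound.

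There is, however, one concrete gap, and it sits exactly at the step you gloss over with the parenthesis ``the datum $\boldsymbol{v}\cdot\boldsymbol{n}$ having the regularity of a $\boldsymbol{W}^{2,p}$-trace.'' The definition of $\boldsymbol{X}^{2,p}(\Omega)$ as printed only requires $\boldsymbol{v}\cdot\boldsymbol{n}\in W^{1-1/p,p}(\Gamma)$, which is the trace regularity of a $W^{1,p}$ function, not of a $W^{2,p}$ function. With only that hypothesis, the Neumann problem $\Delta\phi=\mathrm{div}\,\boldsymbol{v}$, $\partial\phi/\partial\boldsymbol{n}=\boldsymbol{v}\cdot\boldsymbol{n}$ yields $\phi\in W^{2,p}(\Omega)$, not $W^{3,p}(\Omega)$, and your reduction stalls at $\boldsymbol{v}\in\boldsymbol{W}^{1,p}(\Omega)$. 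Indeed the lemma is false as literally printed: take $\boldsymbol{v}=\nabla\phi$ with $\phi$ harmonic and $\partial\phi/\partial\boldsymbol{n}$ a mean-zero element of $W^{1-1/p,p}(\Gamma)$ that is not in $W^{2-1/p,p}(\Gamma)$; then $\mathrm{div}\,\boldsymbol{v}=0$, $\curl\,\boldsymbol{v}=\boldsymbol{0}$, yet $\boldsymbol{v}\notin\boldsymbol{W}^{2,p}(\Omega)$. The correct hypothesis, which is the one in \cite{Am4}, is $\boldsymbol{v}\cdot\boldsymbol{n}\in W^{1+1/p',p}(\Gamma)=W^{2-1/p,p}(\Gamma)$ (and $\boldsymbol{v}\times\boldsymbol{n}\in\boldsymbol{W}^{2-1/p,p}(\Gamma)$ for $\boldsymbol{Y}^{2,p}(\Omega)$); the definitions in the present paper contain a typo, harmless in its applications because the lemma is only invoked for fields whose relevant trace vanishes. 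You have tacitly assumed the corrected hypothesis; you should state it explicitly, since under the printed one the key elliptic-regularity step (and the statement itself) fails. The remaining points --- the flux conditions on the cuts $\Sigma_j$ governing the existence of the vector potential, and the $W^{2,p}$ regularity of the finite-dimensional kernel fields $\widetilde{\boldsymbol{\mathrm{grad}}}\,q_j^{\tau}$ --- are, as you say, bookkeeping, but they do deserve a citation rather than a wave.
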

Consider now the space
\begin{equation*}
\textbf{\textit{E}}^{\,p}(\Omega)=\{\textbf{\textit{v}}\in \textbf{\textit{W}}^{\,1,p}(\Omega); \, \Delta \textbf{\textit{v}}\in [\textbf{\textit{H}}^{\,p'}_0(\mathrm{div},\Omega)]'\},
\end{equation*}
which is a Banach space for the norm:
$$\parallel\textbf{\textit{v}}\parallel_{\textbf{\textit{E}}^{\,p}(\Omega)}=\parallel\textbf{\textit{v}}\parallel_{\textbf{\textit{W}}^{\,1,p}(\Omega)}+\parallel\Delta\textbf{\textit{v}}\parallel_{[\textbf{\textit{H}}^{\,p'}_0(\mathrm{div},\Omega)]'}.$$

Thanks to \cite[Lemma 4.1]{Am3} we know that $\boldsymbol{\mathcal{D}}(\overline{\Omega})$ is dense in $\textbf{\textit{E}}^{\,p}(\Omega)$.
  Moreover we have the following Lemma (see \cite[Corollary 4.2]{Am3}):
\begin{lemma}\label{fg1}  
The linear mapping 
$\gamma : \textbf{\textit{v}}\xrightarrow{\hspace*{0.5cm}}\mathbf{curl} \,\textbf{\textit{v}}\times \textbf{\textit{n}}$ defined on $\boldsymbol{\mathcal{D}}(\overline{\Omega})$ can be extended  to a linear and continuous mapping 
 \begin{equation*} \gamma : \textbf{\textit{E}}^{\,p}(\Omega)\xrightarrow{\hspace*{0.8cm}} \textbf{\textit{W}}^{\,-\frac{1}{p},p}(\Gamma).  \end{equation*}
  Moreover, we have the Green formula: for any $\textbf{\textit{v}}\in \textbf{\textit{E}}^{\,p}(\Omega)$ and $\boldsymbol{\varphi} \in \textbf{\textit{X}}^{\,p'}_{\tau}(\Omega)$ such that $\mathrm{div}\,\boldsymbol{\varphi}=0$ in $\Omega$.
  \begin{equation*}\label{formule de Green bis}
  -\left\langle \Delta \textbf{\textit{v}},\boldsymbol{\varphi}\right\rangle_\Omega=\int_{\Omega}\curl \,\textbf{\textit{v}}\cdot\curl\,\boldsymbol{\varphi}\,\mathrm{d}\textbf{\textit{x}}-\langle \curl \textbf{\textit{v}}\times \textbf{\textit{n}}, \boldsymbol{\varphi}\rangle_{\Gamma}.
  \end{equation*}
where $\langle.,.\rangle_{\Gamma}$ denotes the anti-duality between $\textbf{\textit{W}}^{\,-\frac{1}{p},p}(\Gamma)$ and $\textbf{\textit{W}}^{\,\frac{1}{p},p'}(\Gamma)$ and $\langle., .\rangle_\Omega$ denotes the anti-duality between $[\textbf{\textit{H}}^{\,p'}_0(\mathrm{div},\Omega)]'$ and $ \textbf{\textit{H}}^{\,p'}_0(\mathrm{div},\Omega).$
\end{lemma}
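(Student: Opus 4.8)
The plan is to treat $\gamma$ exactly as one treats the normal trace on $\boldsymbol{H}^{p}(\mathrm{div},\Omega)$: the quantity $\mathbf{curl}\,\boldsymbol{v}\times\boldsymbol{n}$ has no pointwise meaning, since $\boldsymbol{v}\in\boldsymbol{W}^{1,p}(\Omega)$ only gives $\mathbf{curl}\,\boldsymbol{v}\in\boldsymbol{L}^{p}(\Omega)$, a field with no trace; it is precisely the extra hypothesis $\Delta\boldsymbol{v}\in[\boldsymbol{H}^{p'}_{0}(\mathrm{div},\Omega)]'$ that lets one define $\gamma(\boldsymbol{v})$ weakly through a Green formula and a duality argument. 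First I would prove the identity for smooth fields, then use it to define the trace by duality against a lifted test function, and finally extend everything by the density of $\boldsymbol{\mathcal{D}}(\overline{\Omega})$ in $\boldsymbol{E}^{p}(\Omega)$ recalled before the statement.

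First, for $\boldsymbol{v}\in\boldsymbol{\mathcal{D}}(\overline{\Omega})$ and $\boldsymbol{\varphi}\in\boldsymbol{X}^{p'}_{\tau}(\Omega)$ with $\mathrm{div}\,\boldsymbol{\varphi}=0$, I would start from the tangential Green formula \eqref{tt} applied to the field $\mathbf{curl}\,\boldsymbol{v}$, namely
\begin{equation*}
\langle\mathbf{curl}\,\boldsymbol{v}\times\boldsymbol{n},\,\boldsymbol{\varphi}\rangle_{\Gamma}=\int_{\Omega}\mathbf{curl}\,\mathbf{curl}\,\boldsymbol{v}\cdot\overline{\boldsymbol{\varphi}}\,\mathrm{d}x-\int_{\Omega}\mathbf{curl}\,\boldsymbol{v}\cdot\mathbf{curl}\,\overline{\boldsymbol{\varphi}}\,\mathrm{d}x,
\end{equation*}
and substitute the identity $\mathbf{curl}\,\mathbf{curl}\,\boldsymbol{v}=\mathbf{grad}\,(\mathrm{div}\,\boldsymbol{v})-\Delta\boldsymbol{v}$. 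Integrating the gradient term by parts produces a volume term proportional to $\int_{\Omega}\mathrm{div}\,\boldsymbol{v}\,\mathrm{div}\,\overline{\boldsymbol{\varphi}}\,\mathrm{d}x$ and a boundary term proportional to $\langle\mathrm{div}\,\boldsymbol{v},\,\overline{\boldsymbol{\varphi}}\cdot\boldsymbol{n}\rangle_{\Gamma}$; both vanish because $\mathrm{div}\,\boldsymbol{\varphi}=0$ and $\boldsymbol{\varphi}\cdot\boldsymbol{n}=0$. This leaves exactly the stated formula. Note that by Lemma \ref{con1} one has $\boldsymbol{\varphi}\in\boldsymbol{W}^{1,p'}(\Omega)$ and hence $\boldsymbol{\varphi}\in\boldsymbol{H}^{p'}_{0}(\mathrm{div},\Omega)$, so the anti-duality $\langle\Delta\boldsymbol{v},\boldsymbol{\varphi}\rangle_{\Omega}$ and every other term are well defined.

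Next I would define and estimate the trace. Keeping the same computation but dropping the requirement $\mathrm{div}\,\boldsymbol{\varphi}=0$, for smooth $\boldsymbol{v}$ and any $\boldsymbol{\varphi}\in\boldsymbol{W}^{1,p'}(\Omega)$ with $\boldsymbol{\varphi}\cdot\boldsymbol{n}=0$ the left-hand side $\langle\mathbf{curl}\,\boldsymbol{v}\times\boldsymbol{n},\boldsymbol{\varphi}\rangle_{\Gamma}$ depends only on the tangential trace $\boldsymbol{\mu}:=\boldsymbol{\varphi}_{\tau}\in\boldsymbol{W}^{1/p,p'}(\Gamma)$, while the right-hand side is a sum of the anti-duality $\langle\Delta\boldsymbol{v},\boldsymbol{\varphi}\rangle_{\Omega}$ and the two integrals $\int_{\Omega}\mathbf{curl}\,\boldsymbol{v}\cdot\mathbf{curl}\,\overline{\boldsymbol{\varphi}}\,\mathrm{d}x$ and $\int_{\Omega}\mathrm{div}\,\boldsymbol{v}\,\mathrm{div}\,\overline{\boldsymbol{\varphi}}\,\mathrm{d}x$. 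Given $\boldsymbol{\mu}$, I would take $\boldsymbol{\varphi}$ to be a continuous lifting furnished by the surjectivity of the trace operator $\boldsymbol{W}^{1,p'}(\Omega)\to\boldsymbol{W}^{1/p,p'}(\Gamma)$, with $\boldsymbol{\varphi}\cdot\boldsymbol{n}=0$ and $\|\boldsymbol{\varphi}\|_{\boldsymbol{W}^{1,p'}(\Omega)}\le C\|\boldsymbol{\mu}\|_{\boldsymbol{W}^{1/p,p'}(\Gamma)}$. Then H\"older's inequality, the definition of the norm on $[\boldsymbol{H}^{p'}_{0}(\mathrm{div},\Omega)]'$ and the bound $\|\boldsymbol{\varphi}\|_{\boldsymbol{H}^{p'}_{0}(\mathrm{div},\Omega)}\le C\|\boldsymbol{\varphi}\|_{\boldsymbol{W}^{1,p'}(\Omega)}$ give
\begin{equation*}
|\langle\mathbf{curl}\,\boldsymbol{v}\times\boldsymbol{n},\boldsymbol{\mu}\rangle_{\Gamma}|\le C\,\|\boldsymbol{v}\|_{\boldsymbol{E}^{p}(\Omega)}\,\|\boldsymbol{\mu}\|_{\boldsymbol{W}^{1/p,p'}(\Gamma)},
\end{equation*}
hence $\|\gamma(\boldsymbol{v})\|_{\boldsymbol{W}^{-1/p,p}(\Gamma)}\le C\|\boldsymbol{v}\|_{\boldsymbol{E}^{p}(\Omega)}$. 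Since $\boldsymbol{\mathcal{D}}(\overline{\Omega})$ is dense in $\boldsymbol{E}^{p}(\Omega)$, this extends $\gamma$ to a bounded linear map $\boldsymbol{E}^{p}(\Omega)\to\boldsymbol{W}^{-1/p,p}(\Gamma)$, and the Green formula passes to the limit because each of its three terms is continuous with respect to the $\boldsymbol{E}^{p}(\Omega)$-norm of $\boldsymbol{v}$.

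The step requiring the most care—and the one I expect to be the main obstacle—is this last duality/lifting argument rather than any single estimate: one must ensure the lifting lands in $\boldsymbol{H}^{p'}_{0}(\mathrm{div},\Omega)$ with controlled norm so that pairing with $\Delta\boldsymbol{v}\in[\boldsymbol{H}^{p'}_{0}(\mathrm{div},\Omega)]'$ is legitimate, and that the value of $\langle\gamma(\boldsymbol{v}),\boldsymbol{\mu}\rangle_{\Gamma}$ is independent of the chosen lifting (automatic here, since for smooth $\boldsymbol{v}$ the left-hand side is intrinsic). If instead one insists on establishing the formula directly with a divergence-free test field, the difficulty becomes the construction of a divergence-free lifting in $\boldsymbol{X}^{p'}_{\tau}(\Omega)$ with prescribed tangential trace—correcting the divergence by a Neumann potential and invoking the div--curl lifting results of \cite{Am4}—but the route above sidesteps this by allowing a general lifting and absorbing the harmless term $\int_{\Omega}\mathrm{div}\,\boldsymbol{v}\,\mathrm{div}\,\overline{\boldsymbol{\varphi}}\,\mathrm{d}x$ into the estimate.
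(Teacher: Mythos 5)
The paper itself gives no proof of this lemma --- it is quoted from \cite[Corollary 4.2]{Am3} --- and your reconstruction is exactly the standard argument used there: prove the Green identity for smooth fields, read off a bound for the boundary pairing in terms of $\Vert\boldsymbol{v}\Vert_{\boldsymbol{E}^{p}(\Omega)}$ by testing against a controlled tangential lifting, and extend by the density of $\boldsymbol{\mathcal{D}}(\overline{\Omega})$ in $\boldsymbol{E}^{p}(\Omega)$. Your proof is correct in structure and in all essential estimates; in particular you correctly identify that the lifting must satisfy $\boldsymbol{\varphi}\cdot\boldsymbol{n}=0$ so that the pairing with $\Delta\boldsymbol{v}\in[\boldsymbol{H}^{p'}_{0}(\mathrm{div},\Omega)]'$ is legitimate and so that the trace term $\langle\mathrm{div}\,\boldsymbol{v},\overline{\boldsymbol{\varphi}}\cdot\boldsymbol{n}\rangle_{\Gamma}$ (meaningless in the limit, since $\mathrm{div}\,\boldsymbol{v}$ is only in $L^{p}(\Omega)$) never appears.

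One caution on signs. If you apply \eqref{tt} literally as printed, with $\mathbf{curl}\,\boldsymbol{v}$ in place of $\boldsymbol{v}$, you arrive at $-\langle\Delta\boldsymbol{v},\boldsymbol{\varphi}\rangle_{\Omega}=\int_{\Omega}\mathbf{curl}\,\boldsymbol{v}\cdot\mathbf{curl}\,\overline{\boldsymbol{\varphi}}\,\mathrm{d}x+\langle\mathbf{curl}\,\boldsymbol{v}\times\boldsymbol{n},\boldsymbol{\varphi}\rangle_{\Gamma}$, i.e.\ the opposite sign on the boundary term from the one in the lemma; the identity $\mathrm{div}(\boldsymbol{a}\times\boldsymbol{b})=\boldsymbol{b}\cdot\mathbf{curl}\,\boldsymbol{a}-\boldsymbol{a}\cdot\mathbf{curl}\,\boldsymbol{b}$ with $\boldsymbol{a}=\mathbf{curl}\,\boldsymbol{v}$, combined with $(\boldsymbol{a}\times\boldsymbol{b})\cdot\boldsymbol{n}=-(\boldsymbol{a}\times\boldsymbol{n})\cdot\boldsymbol{b}$, produces the sign stated in the lemma. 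This is a mismatch between \eqref{tt} as written and the genuine pointwise cross product for smooth fields, not a defect of your scheme, but you should derive the smooth-field identity directly from the divergence theorem rather than from \eqref{tt} in order to land on the stated formula. A second, minor point: since $\gamma(\boldsymbol{v})$ is claimed to live in $\boldsymbol{W}^{-1/p,p}(\Gamma)$, the dual of all of $\boldsymbol{W}^{1/p,p'}(\Gamma)$, you should say that the functional defined on tangential traces is extended by composing with the projection $\boldsymbol{\mu}\mapsto\boldsymbol{\mu}-(\boldsymbol{\mu}\cdot\boldsymbol{n})\boldsymbol{n}$, which is bounded on $\boldsymbol{W}^{1/p,p'}(\Gamma)$ because $\Gamma$ is $C^{1,1}$.
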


Next we consider the space
\begin{equation*}
\boldsymbol{H}^{p}(\Delta,\Omega)=\big\{\boldsymbol{v}\in\boldsymbol{L}^{p}(\Omega);\,\,\,\Delta\boldsymbol{v}\in(\boldsymbol{T}^{p'}(\Omega))'\big\},
\end{equation*}
which is a Banach space for the graph norm.
Thanks to \cite[Lemma 4.13, Lemma 4.14]{Am3} we know that
\begin{prop} The space
$\boldsymbol{\mathcal{D}}(\overline{\Omega})$ is dense in
$\boldsymbol{H}^{p}(\Delta,\Omega)$. Moreover for every
$\boldsymbol{v}$ in $\boldsymbol{H}^{p}(\Delta,\Omega)$ the trace
$\boldsymbol{\mathrm{curl}}\,\boldsymbol{v}\times\boldsymbol{n}$
exists and belongs to $\boldsymbol{W}^{-1-1/p,p}(\Gamma)$.
In addition we have the Green formula: for all
$\boldsymbol{v}\in\boldsymbol{H}^{p}(\Delta,\Omega)$ and for all $\boldsymbol{\varphi}\in\boldsymbol{W}^{2,p}(\Omega)$ such that $\mathrm{div}\,\boldsymbol{\varphi}=\boldsymbol{\varphi}\cdot\boldsymbol{n}=0$ on $\Gamma$ and $\boldsymbol{\mathrm{curl}}\,\boldsymbol{\varphi}\times\boldsymbol{n}=\boldsymbol{0}$ on $\Gamma$:
\begin{equation}\label{greenfrtp2}
\langle\Delta\boldsymbol{v}\,,\,\boldsymbol{\varphi}\rangle_{(\boldsymbol{T}^{p'}(\Omega))'\times\boldsymbol{T}^{p'}(\Omega)}=\int_{\Omega}\boldsymbol{v}\cdot\Delta\overline{\boldsymbol{\varphi}}\,\textrm{d}\,x\,+\,\langle\boldsymbol{\mathrm{curl}}\,\boldsymbol{v}\times\boldsymbol{n}\,,\,\boldsymbol{\varphi}\rangle_{\Gamma},
\end{equation}
where $\langle.\,,\,.\rangle_{\Gamma}=\langle.\,,\,.\rangle_{\boldsymbol{W}^{-1-1/p,p}(\Gamma)\times\boldsymbol{W}^{1+1/p,p'}(\Gamma)}$.
\end{prop}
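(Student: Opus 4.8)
The plan is to establish the three assertions in the order: Green formula on smooth fields, density, and then the trace estimate, extending the Green formula to the whole space by density at the very end. Throughout I write $p'$ for the conjugate exponent and use $\Delta=\boldsymbol{\mathrm{grad}}\,\mathrm{div}-\boldsymbol{\mathrm{curl}}\,\boldsymbol{\mathrm{curl}}$. First I would record the Green formula on smooth fields. For $\boldsymbol{v}\in\boldsymbol{\mathcal{D}}(\overline{\Omega})$ and $\boldsymbol{\varphi}\in\boldsymbol{W}^{2,p'}(\Omega)$ with $\mathrm{div}\,\boldsymbol{\varphi}=0$, $\boldsymbol{\varphi}\cdot\boldsymbol{n}=0$ and $\boldsymbol{\mathrm{curl}}\,\boldsymbol{\varphi}\times\boldsymbol{n}=\boldsymbol{0}$ on $\Gamma$, I would insert the above splitting of $\Delta$ and integrate by parts twice, using the first-order Green formulas \eqref{tt} and \eqref{nt}. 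The gradient--divergence part drops out because $\mathrm{div}\,\boldsymbol{\varphi}=0$ and $\boldsymbol{\varphi}\cdot\boldsymbol{n}=0$, while the double-curl part leaves, after two integrations by parts, the symmetric volume term $\int_\Omega\boldsymbol{v}\cdot\Delta\overline{\boldsymbol{\varphi}}\,\mathrm{d}x$ together with the boundary pairings $\langle\boldsymbol{\mathrm{curl}}\,\boldsymbol{v}\times\boldsymbol{n},\boldsymbol{\varphi}\rangle_\Gamma$ and $\langle\boldsymbol{\mathrm{curl}}\,\boldsymbol{\varphi}\times\boldsymbol{n},\boldsymbol{v}\rangle_\Gamma$, the latter vanishing by hypothesis. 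Since for smooth $\boldsymbol{v}$ one has $\int_\Omega\Delta\boldsymbol{v}\cdot\overline{\boldsymbol{\varphi}}\,\mathrm{d}x=\langle\Delta\boldsymbol{v},\boldsymbol{\varphi}\rangle_{(\boldsymbol{T}^{p'})'\times\boldsymbol{T}^{p'}}$ (note $\boldsymbol{\varphi}\in\boldsymbol{T}^{p'}(\Omega)$), this is exactly \eqref{greenfrtp2} for smooth $\boldsymbol{v}$.

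Next, density. I would argue by duality: the map $\boldsymbol{v}\mapsto(\boldsymbol{v},\Delta\boldsymbol{v})$ embeds $\boldsymbol{H}^p(\Delta,\Omega)$ isometrically and with closed range into $\boldsymbol{L}^p(\Omega)\times(\boldsymbol{T}^{p'}(\Omega))'$, so every continuous linear form $\ell$ on $\boldsymbol{H}^p(\Delta,\Omega)$ is represented, by Hahn--Banach and the reflexivity of $\boldsymbol{T}^{p'}(\Omega)$ (a closed subspace of a reflexive space for $1<p<\infty$), as $\ell(\boldsymbol{v})=\int_\Omega\boldsymbol{v}\cdot\overline{\boldsymbol{f}}\,\mathrm{d}x+\langle\Delta\boldsymbol{v},\boldsymbol{\varphi}\rangle$ with $\boldsymbol{f}\in\boldsymbol{L}^{p'}(\Omega)$ and $\boldsymbol{\varphi}\in\boldsymbol{T}^{p'}(\Omega)$. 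Assume $\ell$ vanishes on $\boldsymbol{\mathcal{D}}(\overline{\Omega})$. Testing against $\boldsymbol{\mathcal{D}}(\Omega)$ gives $\boldsymbol{f}=-\Delta\boldsymbol{\varphi}$ in $\Omega$, hence $\Delta\boldsymbol{\varphi}\in\boldsymbol{L}^{p'}(\Omega)$; testing against all of $\boldsymbol{\mathcal{D}}(\overline{\Omega})$ then gives $\int_\Omega\Delta\boldsymbol{v}\cdot\overline{\boldsymbol{\varphi}}\,\mathrm{d}x=\int_\Omega\boldsymbol{v}\cdot\overline{\Delta\boldsymbol{\varphi}}\,\mathrm{d}x$. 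The key step is to read this identity as saying that the extension $\widetilde{\boldsymbol{\varphi}}$ of $\boldsymbol{\varphi}$ by zero to the larger ball $\vartheta$ satisfies $\Delta\widetilde{\boldsymbol{\varphi}}=\widetilde{\Delta\boldsymbol{\varphi}}\in\boldsymbol{L}^{p'}(\vartheta)$ with no singular boundary contribution; interior elliptic regularity for the componentwise Laplacian then yields $\widetilde{\boldsymbol{\varphi}}\in\boldsymbol{W}^{2,p'}(\mathbb{R}^{3})$, so that $\boldsymbol{\varphi}\in\boldsymbol{W}^{2,p'}(\Omega)$ with vanishing Cauchy data, i.e. $\boldsymbol{\varphi}\in\boldsymbol{W}^{2,p'}_0(\Omega)$. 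Approximating $\boldsymbol{\varphi}$ by $\boldsymbol{\varphi}_k\in\boldsymbol{\mathcal{D}}(\Omega)$ in $\boldsymbol{W}^{2,p'}(\Omega)$ (hence in $\boldsymbol{T}^{p'}(\Omega)$, with $\Delta\boldsymbol{\varphi}_k\to-\boldsymbol{f}$ in $\boldsymbol{L}^{p'}$) and passing to the limit in $\langle\Delta\boldsymbol{v},\boldsymbol{\varphi}_k\rangle=\int_\Omega\boldsymbol{v}\cdot\overline{\Delta\boldsymbol{\varphi}_k}\,\mathrm{d}x$, which is valid for every $\boldsymbol{v}\in\boldsymbol{H}^p(\Delta,\Omega)$, I obtain $\langle\Delta\boldsymbol{v},\boldsymbol{\varphi}\rangle=-\int_\Omega\boldsymbol{v}\cdot\overline{\boldsymbol{f}}\,\mathrm{d}x$, whence $\ell\equiv 0$. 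This proves the density.

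Finally, the trace and the general Green formula. For $\boldsymbol{v}\in\boldsymbol{\mathcal{D}}(\overline{\Omega})$ and a tangential datum $\boldsymbol{\mu}\in\boldsymbol{W}^{1+1/p,p'}(\Gamma)$, I would choose a lifting $\boldsymbol{\varphi}\in\boldsymbol{W}^{2,p'}(\Omega)$ with $\boldsymbol{\varphi}=\boldsymbol{\mu}$ on $\Gamma$, $\mathrm{div}\,\boldsymbol{\varphi}=0$, $\boldsymbol{\mathrm{curl}}\,\boldsymbol{\varphi}\times\boldsymbol{n}=\boldsymbol{0}$ and $\|\boldsymbol{\varphi}\|_{\boldsymbol{W}^{2,p'}(\Omega)}\le C\,\|\boldsymbol{\mu}\|_{\boldsymbol{W}^{1+1/p,p'}(\Gamma)}$; inserting it into the smooth Green formula and bounding the right-hand side by $\|\Delta\boldsymbol{v}\|_{(\boldsymbol{T}^{p'})'}\|\boldsymbol{\varphi}\|_{\boldsymbol{T}^{p'}}+\|\boldsymbol{v}\|_{\boldsymbol{L}^p}\|\Delta\boldsymbol{\varphi}\|_{\boldsymbol{L}^{p'}}$ gives $\|\boldsymbol{\mathrm{curl}}\,\boldsymbol{v}\times\boldsymbol{n}\|_{\boldsymbol{W}^{-1-1/p,p}(\Gamma)}\le C\,\|\boldsymbol{v}\|_{\boldsymbol{H}^p(\Delta,\Omega)}$. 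By the density just established, $\boldsymbol{v}\mapsto\boldsymbol{\mathrm{curl}}\,\boldsymbol{v}\times\boldsymbol{n}$ extends continuously to $\boldsymbol{H}^p(\Delta,\Omega)$ with values in $\boldsymbol{W}^{-1-1/p,p}(\Gamma)$, and passing to the limit in \eqref{greenfrtp2} along a sequence $\boldsymbol{v}_k\in\boldsymbol{\mathcal{D}}(\overline{\Omega})$ converging in the graph norm (each term being continuous, the boundary pairing by the trace estimate) yields the Green formula for all $\boldsymbol{v}\in\boldsymbol{H}^p(\Delta,\Omega)$. I expect the main obstacle to be the construction of this last lifting: producing a $\boldsymbol{W}^{2,p'}$ field that is simultaneously divergence-free, has prescribed tangential trace and satisfies $\boldsymbol{\mathrm{curl}}\,\boldsymbol{\varphi}\times\boldsymbol{n}=\boldsymbol{0}$, with the correct estimate, requires solving an auxiliary elliptic boundary value problem and invoking the $\boldsymbol{W}^{2,p'}$ regularity of the div--curl system with these boundary conditions, via the embeddings of Lemma \ref{con1} and Lemma \ref{con2}.
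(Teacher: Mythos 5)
The paper offers no proof of this Proposition: it is imported verbatim from Amrouche--Seloula \cite[Lemmas 4.13, 4.14]{Am3}, so there is no internal argument to compare against. Your plan is nonetheless the standard one, and it is structurally identical to the technique the paper itself uses for the analogous density statement in Lemma \ref{density1}: represent a functional vanishing on $\boldsymbol{\mathcal{D}}(\overline{\Omega})$ through the graph embedding and Hahn--Banach, identify the representing pair $(\boldsymbol{f},\boldsymbol{\varphi})$ by testing against $\boldsymbol{\mathcal{D}}(\Omega)$, upgrade $\boldsymbol{\varphi}$ to $\boldsymbol{W}^{2,p'}_{0}(\Omega)$ via the zero-extension and interior elliptic regularity, and conclude by approximating $\boldsymbol{\varphi}$ from $\boldsymbol{\mathcal{D}}(\Omega)$. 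The duality step is legitimate because $\boldsymbol{T}^{p'}(\Omega)$ is isometric to a closed subspace of $\boldsymbol{L}^{p'}(\Omega)\times W^{1,p'}(\Omega)$, hence reflexive, and the identity $\Delta\widetilde{\boldsymbol{\varphi}}=\widetilde{\Delta\boldsymbol{\varphi}}$ in $\boldsymbol{\mathcal{D}}'(\mathbb{R}^{3})$ is exactly the symmetry relation you derive on $\boldsymbol{\mathcal{D}}(\overline{\Omega})$. The trace and the general Green formula then follow by duality and density as you describe. I consider the proof essentially correct.

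Two points deserve adjustment. First, in the smooth Green formula you assume $\mathrm{div}\,\boldsymbol{\varphi}=0$, whereas the Proposition only requires $\mathrm{div}\,\boldsymbol{\varphi}=0$ \emph{on} $\Gamma$ (which, together with $\boldsymbol{\varphi}\cdot\boldsymbol{n}=0$, is what places $\boldsymbol{\varphi}$ in $\boldsymbol{T}^{p'}(\Omega)$). In that generality the $\boldsymbol{\mathrm{grad}}\,\mathrm{div}$ block does not ``drop out'': its two boundary terms vanish (one by $\boldsymbol{\varphi}\cdot\boldsymbol{n}=0$, the other by $\mathrm{div}\,\boldsymbol{\varphi}=0$ on $\Gamma$, via \eqref{nt}), but its volume term survives and is needed to reassemble $\int_{\Omega}\boldsymbol{v}\cdot\Delta\overline{\boldsymbol{\varphi}}\,\mathrm{d}x$ together with the double-curl contribution. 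Second, the lifting you single out as the main obstacle is easier than you fear: all three constraints ($\boldsymbol{\varphi}|_{\Gamma}=\boldsymbol{\mu}$ tangential, $\boldsymbol{\mathrm{curl}}\,\boldsymbol{\varphi}\times\boldsymbol{n}=\boldsymbol{0}$, $\mathrm{div}\,\boldsymbol{\varphi}=0$ on $\Gamma$) involve only the Cauchy data $(\boldsymbol{\varphi}|_{\Gamma},\partial_{n}\boldsymbol{\varphi}|_{\Gamma})$, and they determine $\partial_{n}\boldsymbol{\varphi}|_{\Gamma}$ as a first-order tangential differential expression in $\boldsymbol{\mu}\in\boldsymbol{W}^{2-1/p',p'}(\Gamma)$, hence an element of $\boldsymbol{W}^{1-1/p',p'}(\Gamma)$. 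The classical two-trace lifting into $\boldsymbol{W}^{2,p'}(\Omega)$ then produces $\boldsymbol{\varphi}$ with the required bound; no divergence-free extension and no auxiliary div--curl system need to be solved. With these two repairs your argument is complete.
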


Next we consider the problem:
\begin{equation}\label{wn.1}
\mathrm{div}\,(\boldsymbol{\mathrm{grad}}\,\pi\,-\,\boldsymbol{f})=0\qquad
\mathrm{in}\,\,\Omega,\qquad
(\boldsymbol{\mathrm{grad}}\,\pi\,-\,\boldsymbol{f})\cdot\boldsymbol{n}=0\qquad\mathrm{on}\,\,\Gamma.
\end{equation} 
We recall the following lemma  concerning the weak Neumann problem without giving the proof.
\begin{lemma}\label{wn1}
\textbf{(i)}  Let $\boldsymbol{f}\in\boldsymbol{L}^{p}(\Omega)$ (see \cite{Si} for instance), the Problem (\ref{wn.1})
has a unique solution $\pi\in W^{1,p}(\Omega)/\mathbb{R}$ satisfying the estimate
\begin{equation*}
\|\boldsymbol{\mathrm{grad}}\,\pi\|_{\boldsymbol{L}^{p}(\Omega)}\,\leq\,C_{1}(\Omega)\,\|\boldsymbol{f}\|_{\boldsymbol{L}^{p}(\Omega)},
\end{equation*}
for some constant $C_{1}(\Omega)>0$.

\textbf{(ii)}  Let $\boldsymbol{f}\in[\boldsymbol{H}^{p'}_{0}(\mathrm{div},\Omega)]'$, the Problem (\ref{wn.1})
 has a unique solution $\pi\in L^{p}(\Omega)/\mathbb{R}$ satisfying the estimate
 \begin{equation*}
 \Vert\pi\Vert_{L^{p}(\Omega)/\mathbb{R}}\,\leq\,C_{2}(\Omega,p)\Vert\boldsymbol{f}\Vert_{[\boldsymbol{H}^{p'}_{0}(\mathrm{div},\Omega)]'}.
 \end{equation*}
 
$\textbf{(iii)}$ Let $\boldsymbol{f}\in(\boldsymbol{T}^{p'}(\Omega))'$, where $\boldsymbol{T}^{p}(\Omega)$ is given by (\ref{tp}). The Problem (\ref{wn.1}) has a unique solution $\pi\in W^{-1,p}(\Omega)/\mathbb{R}$ satisfying the estimate
\begin{equation*}
\Vert\pi\Vert_{W^{-1,p}(\Omega)}\,\leq\,C(\Omega,p)\,\Vert\boldsymbol{f}\Vert_{(\boldsymbol{T}^{p'}(\Omega))'}.
\end{equation*}
\end{lemma}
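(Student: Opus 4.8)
The plan is to treat part \textbf{(i)} as the base case and to derive parts \textbf{(ii)} and \textbf{(iii)} from it by duality, using the characterizations of the dual spaces $[\boldsymbol{H}^{p'}_{0}(\mathrm{div},\Omega)]'$ and $(\boldsymbol{T}^{p'}(\Omega))'$ recalled above. In all three cases the solution is first produced by an energy/duality argument and then shown to satisfy \eqref{wn.1} in the appropriate (very) weak sense.

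For \textbf{(i)} I would start from the weak formulation: $\pi\in W^{1,p}(\Omega)/\mathbb{R}$ solves \eqref{wn.1} if and only if
\begin{equation*}
\int_{\Omega}(\boldsymbol{\mathrm{grad}}\,\pi-\boldsymbol{f})\cdot\boldsymbol{\mathrm{grad}}\,\overline{\varphi}\,\mathrm{d}x=0\qquad\text{for all }\varphi\in W^{1,p'}(\Omega),
\end{equation*}
the boundary term vanishing thanks to $(\boldsymbol{\mathrm{grad}}\,\pi-\boldsymbol{f})\cdot\boldsymbol{n}=0$ on $\Gamma$. For $p=2$ this is solved by Lax--Milgram: the form $(\pi,\varphi)\mapsto\int_{\Omega}\boldsymbol{\mathrm{grad}}\,\pi\cdot\boldsymbol{\mathrm{grad}}\,\overline{\varphi}\,\mathrm{d}x$ is bounded and, by Poincar\'e--Wirtinger, coercive on $W^{1,2}(\Omega)/\mathbb{R}$, giving existence, uniqueness and the estimate simultaneously. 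For general $1<p<\infty$ the gradient bound $\|\boldsymbol{\mathrm{grad}}\,\pi\|_{\boldsymbol{L}^{p}(\Omega)}\leq C_{1}\|\boldsymbol{f}\|_{\boldsymbol{L}^{p}(\Omega)}$ is the genuinely hard point: it is equivalent to the existence of the $\boldsymbol{L}^{p}$-Helmholtz decomposition on $\Omega$ and rests on Calder\'on--Zygmund estimates for the Neumann Laplacian on a bounded $C^{1,1}$ domain. This is the classical Simader--Sohr theory, and I would invoke \cite{Si}; uniqueness modulo constants then follows from the estimate together with the $p=2$ case by density.

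For \textbf{(ii)} I would argue by duality, using that $L^{p}(\Omega)/\mathbb{R}$ is the dual of the mean-zero subspace of $L^{p'}(\Omega)$. Given such a $g$, let $\varphi\in W^{2,p'}(\Omega)$ solve the strong Neumann problem $-\Delta\varphi=g$ in $\Omega$, $\partial\varphi/\partial\boldsymbol{n}=0$ on $\Gamma$, with $\|\varphi\|_{W^{2,p'}(\Omega)}\leq C\|g\|_{L^{p'}(\Omega)}$ by elliptic regularity on the $C^{1,1}$ domain. The key observation is that $\boldsymbol{\mathrm{grad}}\,\varphi\in\boldsymbol{H}^{p'}_{0}(\mathrm{div},\Omega)$, since $\mathrm{div}\,\boldsymbol{\mathrm{grad}}\,\varphi=\Delta\varphi=-g\in L^{p'}(\Omega)$ and $\boldsymbol{\mathrm{grad}}\,\varphi\cdot\boldsymbol{n}=0$ on $\Gamma$, with $\|\boldsymbol{\mathrm{grad}}\,\varphi\|_{\boldsymbol{H}^{p'}(\mathrm{div},\Omega)}\leq C\|g\|_{L^{p'}(\Omega)}$. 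I then define $\pi\in L^{p}(\Omega)/\mathbb{R}$ by
\begin{equation*}
\langle\pi,\,g\rangle=\langle\boldsymbol{f},\,\boldsymbol{\mathrm{grad}}\,\overline{\varphi}\rangle_{[\boldsymbol{H}^{p'}_{0}(\mathrm{div},\Omega)]'\times\boldsymbol{H}^{p'}_{0}(\mathrm{div},\Omega)},
\end{equation*}
a bounded antilinear functional of $g$ with norm controlled by $\|\boldsymbol{f}\|_{[\boldsymbol{H}^{p'}_{0}(\mathrm{div},\Omega)]'}$; this gives existence and the stated estimate, a short integration by parts shows $\pi$ solves \eqref{wn.1} in the very weak sense, and uniqueness is built into the duality definition.

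For \textbf{(iii)} I would repeat the scheme with the smaller test space $\boldsymbol{T}^{p'}(\Omega)$, since $W^{-1,p}(\Omega)/\mathbb{R}$ is the dual of the mean-zero subspace of $W^{1,p'}_{0}(\Omega)$. Taking $g\in W^{1,p'}_{0}(\Omega)$ of zero mean and solving the same Neumann problem $-\Delta\varphi=g$, the extra regularity of $g$ pays off exactly: $\mathrm{div}\,\boldsymbol{\mathrm{grad}}\,\varphi=-g\in W^{1,p'}_{0}(\Omega)$ together with $\boldsymbol{\mathrm{grad}}\,\varphi\cdot\boldsymbol{n}=0$ yields $\boldsymbol{\mathrm{grad}}\,\varphi\in\boldsymbol{T}^{p'}(\Omega)$, with $\|\boldsymbol{\mathrm{grad}}\,\varphi\|_{\boldsymbol{T}^{p'}(\Omega)}\leq C\|g\|_{W^{1,p'}(\Omega)}$. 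Defining $\pi$ by $\langle\pi,g\rangle=\langle\boldsymbol{f},\boldsymbol{\mathrm{grad}}\,\overline{\varphi}\rangle_{(\boldsymbol{T}^{p'}(\Omega))'\times\boldsymbol{T}^{p'}(\Omega)}$ then gives $\pi\in W^{-1,p}(\Omega)/\mathbb{R}$ with the required bound; the three solutions coincide whenever the data is regular enough to live in several of these spaces at once. The whole difficulty is thus concentrated in part \textbf{(i)} — the $\boldsymbol{L}^{p}$-boundedness of the Neumann solution operator for $p\neq2$ — and, granting that plus the $W^{2,p'}$ elliptic estimate used for the adjoint problem, parts \textbf{(ii)} and \textbf{(iii)} reduce to bookkeeping with the dual-space characterizations.
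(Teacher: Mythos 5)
The paper states this lemma \emph{without proof}, referring to \cite{Si} for part (i) and implicitly to the Amrouche--Seloula framework for (ii) and (iii); your reconstruction — Simader--Sohr / Calder\'on--Zygmund theory for the $\boldsymbol{L}^{p}$ gradient estimate in (i), then duality against the strong Neumann problem $-\Delta\varphi=g$, $\partial\varphi/\partial\boldsymbol{n}=0$ with $\boldsymbol{\mathrm{grad}}\,\varphi\in\boldsymbol{H}^{p'}_{0}(\mathrm{div},\Omega)$ resp. $\boldsymbol{T}^{p'}(\Omega)$ for (ii) and (iii) — is exactly the standard argument in those references and is correct. The only point worth spelling out slightly more is uniqueness in (ii)--(iii), which follows because $\Delta$ maps $\{\varphi\in W^{2,p'}(\Omega):\partial\varphi/\partial\boldsymbol{n}=0\}$ onto the mean-zero part of $L^{p'}(\Omega)$ (resp.\ of $W^{1,p'}_{0}(\Omega)$), so a solution of the homogeneous problem annihilates all admissible $g$ and must be constant.
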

\subsection{Some Properties of sectorial and non-negative operators}
This subsection is devoted to the definitions and some relevant properties of sectorial and non-negative operators very useful in our work. In all this subsection $X$ denotes a Banach space and $\mathcal{A}: D(\mathcal{A})\subset X\mapsto X$ is a closed linear operator. $D(\mathcal{A})$ is the domain of $\mathcal{A}$, it is equipped with the graph norm and form with this norm a Banach space.

\medskip

Let $0\leq\theta<\pi/2$ and let $\Sigma_{\theta}$ be the sector
\begin{equation*}
\Sigma_{\theta}\,=\,\Big\{\lambda\in\mathbb{C}^{\ast};\,\,\,|\arg\lambda|<\pi-\theta\Big\}.
\end{equation*}
Thanks to \cite[Chapter 2, page 96]{En}, we know that a linear densely defined
operator $\mathcal{A}$ is
sectorial if there exists a constant $M>0$ and $0\leq\theta<\pi/2$ such that
\begin{equation}\label{ç}
\forall\,\lambda\in\Sigma_{\theta},\qquad\|R(\lambda,\,\mathcal{A})\|_{\mathcal{L}(X)}\,\leq\,\frac{M}{|\lambda|},
\end{equation}
where $R(\lambda,\,\mathcal{A})\,=\,(\lambda\,I\,-\,\mathcal{A})^{-1}$. This means that the resolvent of a sectorial operator contains a sector $\Sigma_{\theta}$ for some $0\leq\theta<\pi/2$ and for every $\lambda\in\Sigma_{\theta}$ one has estimate (\ref{ç}). 
 
 \medskip
 
Moreover, the authors give in \cite{En} a necessary and sufficient condition for an operator $\mathcal{A}$ to generates a bounded analytic semi-group. In fact, according to \cite[Chapter 2, Theorem 4.6, page 101]{En}, an operator $\mathcal{A}$ generates a bounded analytic semi-group if and only
if it is sectorial in the sense of \eqref{ç}.  

\medskip

Nevertheless, it is not always easy to prove that an operator $\mathcal{A}$ is sectorial in the sense  \eqref{ç}. Although, Yosida proved in \cite{Yo} that it is suffices to prove (\ref{ç}) in the half plane  $\{\lambda\in\mathbb{C}^{\ast};\,\mathrm{Re}\,\lambda\geq 0\}.\,$ This result is stated in \cite[Chapter 1, Theorem 3.2, page 30]{Ba} and proved by K. Yosida.
\begin{prop}\label{pr2}
Let $\mathcal{A}\,:\,\mathrm{D}(\mathcal{A})\,\subseteq\,X\,\longmapsto\,X$ be a
linear densely defined operator and $M>0$ such that
\begin{equation*}
\forall\,\lambda\in\mathbb{C}^{\ast},\,\,\,\mathrm{Re}\,\lambda\geq 0,\qquad\|R(\lambda,\,\mathcal{A})\|_{\mathcal{L}(X)}\,\leq\,\frac{M}{|\lambda|}.
\end{equation*}
 Then $\mathcal{A}$ is sectorial in the sense of \eqref{ç}.
\end{prop}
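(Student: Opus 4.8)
The plan is to combine analyticity of the resolvent with a Neumann-series perturbation argument anchored on the imaginary axis, and then to cover a whole sector by a scaling/tangent argument. Assume without loss of generality that $M\ge 1$ (otherwise replace $M$ by $\max(M,1)$, which only weakens the hypothesis). The starting point is the classical fact that $\rho(\mathcal A)$ is open and the resolvent is analytic on it: whenever $\lambda_0\in\rho(\mathcal A)$ and $|\lambda-\lambda_0|\,\|R(\lambda_0,\mathcal A)\|_{\mathcal L(X)}<1$, one has $\lambda\in\rho(\mathcal A)$ and
\[
R(\lambda,\mathcal A)=\sum_{n=0}^{\infty}(\lambda_0-\lambda)^n\,R(\lambda_0,\mathcal A)^{n+1}.
\]

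First I would exploit the hypothesis only on the imaginary axis. Taking $\lambda_0=is$ with $s\in\mathbb R^{\ast}$, the assumption gives $is\in\rho(\mathcal A)$ and $\|R(is,\mathcal A)\|_{\mathcal L(X)}\le M/|s|$. Hence on the disk $D_s=\{\lambda:\ |\lambda-is|<|s|/(2M)\}$ one has $|\lambda-is|\,\|R(is,\mathcal A)\|_{\mathcal L(X)}\le 1/2$, so the series converges and
\[
\|R(\lambda,\mathcal A)\|_{\mathcal L(X)}\le 2\,\|R(is,\mathcal A)\|_{\mathcal L(X)}\le \frac{2M}{|s|}.
\]
Since $M\ge1$ forces $|\lambda-is|<|s|/2$, we also get $|s|>\tfrac23|\lambda|$ on $D_s$, whence $\|R(\lambda,\mathcal A)\|_{\mathcal L(X)}\le 3M/|\lambda|$ throughout $D_s$.

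Second comes the geometric covering. The disks $D_s$ are invariant under the dilation $\lambda\mapsto c\lambda$, $c>0$, so $\bigcup_{s>0}D_s$ is the open cone bounded by the two tangent lines from the origin to $D_1$, which make the angle $\arcsin(1/(2M))$ with the positive imaginary axis. Adding the disks for $s<0$ and the right half-plane $\{\mathrm{Re}\,\lambda\ge0\}\setminus\{0\}$ (where the hypothesis applies verbatim), these regions together cover $\{\lambda\in\mathbb C^{\ast}:\ |\arg\lambda|<\tfrac{\pi}{2}+\arcsin(1/(2M))\}$. Setting $\theta:=\tfrac{\pi}{2}-\arcsin(1/(2M))\in[0,\pi/2)$, this is precisely $\Sigma_\theta$, and on it the bound $\|R(\lambda,\mathcal A)\|_{\mathcal L(X)}\le 3M/|\lambda|$ holds: by the disk estimate off the imaginary axis and by the hypothesis (with $M\le 3M$) on the right half-plane. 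Thus $\mathcal A$ is sectorial in the sense of \eqref{ç} with constant $3M$ and angle $\theta$.

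The step requiring the most care is the uniform control of the constant: one must simultaneously keep the Neumann-series radius bounded away from its critical value and compare $|s|$ with $|\lambda|$, so that the resulting bound is independent of $s$. The margin provided by taking disks of radius $|s|/(2M)$ rather than $|s|/M$, together with $M\ge1$, is exactly what makes both controls work and what guarantees that the scaled disks genuinely fill the open cone rather than only a neighbourhood of the imaginary axis.
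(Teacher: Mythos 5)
Your proof is correct and follows essentially the same route as the paper's: both anchor at points $\pm is$ on the imaginary axis, use the resolvent perturbation (your Neumann series is exactly the paper's identity $R(\lambda,\mathcal A)=R(\pm ir,\mathcal A)[I+(\lambda\mp ir)R(\pm ir,\mathcal A)]^{-1}$ expanded), keep a factor-of-two safety margin in the radius $|s|/(2M)$, and then observe that these disks together with the right half-plane fill a sector $\Sigma_\theta$ with $\theta<\pi/2$. The only cosmetic differences are your tangent-line description of the covering (yielding angle $\arcsin(1/(2M))$ versus the paper's $\arctan(1/(2M))$) and the constant $3M$ versus $\sqrt{4M^2+1}$, neither of which matters.
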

\begin{proof} 
Thanks to Yosida \cite[Chapter VIII, Theorem 1, page 211]{Yo} we know that $\rho(\mathcal{A})$ is an open subset of
$\mathbb{C}$ and for all $\lambda_{0}\in\rho(\mathcal{A})$, the disc of center $\lambda_{0}$ and radius $\vert\lambda_{0}\vert/M$ is contained in $\rho(\mathcal{A})$. In particular, for every $r>0,\,$ the open disks with center $\pm\,
i\,r$ and radius $|r|/M$ is contained in $\rho(\mathcal{A})$. The union of
such  disks and of the half plane
$\{\lambda\in\mathbb{C};\,\mathrm{Re}\,\lambda\geq 0\}$ contains
the sector
\begin{equation*}
\Big\{\lambda\in\mathbb{C};\,\, \lambda\neq
0,\,|\arg\lambda|<\pi\,-\,\arctan(M)\Big\},
\end{equation*}
hence it contains the sector
\begin{equation*}
S\,=\,\Big\{\lambda\in\mathbb{C};\,\, \lambda\neq
0,\,|\arg\lambda|<\pi\,-\,\arctan(2\,M)\Big\}.
\end{equation*}
If $\lambda\in S$ and $\textrm{Re}\,\lambda <0,$ we write
$\lambda$ in the form
$\lambda\,=\,\pm\,i\,r\,-\,(\theta\,r)/(2\,M)$ for some
$\theta\in(0,1).$ Thanks to \cite[Chapter 4, formula 1.2, page
239]{En} we know that
$$R(\lambda,\,\mathcal{A})\,=\,R(\pm\,i\,r,\,\mathcal{A})\big[I\,+\,(\lambda\,\mp\,i\,r)R(\pm\,i\,r,\,\mathcal{A})\big]^{-1}.$$
We can easily verify that
$\big\|\big[I\,+\,(\lambda\,\mp\,i\,r)R(\pm\,i\,r,\,\mathcal{A})\big]^{-1}\big\|_{\mathcal{L}(X)}\,\leq\,2.$

Next, observe that
$|\lambda|\,=\,\sqrt{r^{2}\,+\,\frac{\theta^{2}\,r^{2}}{4\,M^{2}}}\,=\,r\,\frac{\sqrt{4\,M^{2}\,+\,\theta^{2}}}{2\,M}$.
Then
\begin{equation*}
\|R(\lambda,\,\mathcal{A})\|\,\leq\,\frac{2\,M}{r}\,\leq\,\frac{2\,M\,\frac{\sqrt{4\,M^{2}\,+\,\theta^{2}}}{2\,M}}{r\,\frac{\sqrt{4\,M^{2}\,+\,\theta^{2}}}{2\,M}}\,\leq\,\frac{\sqrt{4\,M^{2}\,+\,1}}{|\lambda|}.
\end{equation*}
Now if $\lambda\in S$ such that $\textrm{Re}\,\lambda\,\geq\,0$
then thanks to our assumption one has
\begin{equation}\label{esttéta}
\|R(\lambda,\,\mathcal{A})\|_{\mathcal{L}(X)}\,\leq\,\frac{M}{|\lambda|}
\end{equation}
which ends the proof.
\end{proof}
\begin{rmk}\label{rmktétareso}
\rm{Proposition \ref{pr2} means that there exists an angle $0<\theta_{0}<\pi/2$ such that 
the resolvent set of the operator $\mathcal{A}$ contains the sector
\begin{equation*}
\Sigma_{\theta_{0}}=\big\{\lambda\in\mathbb{C};\,\,\,\vert\arg\lambda\vert\leq\pi-\theta_{0}\big\}
\end{equation*}
where estimate (\ref{esttéta}) is satisfied.}
\end{rmk}

Next we recall some definitions and properties concerning the fractional powers of a non-negative operator. We start by the following definition. 
\begin{defi}
An operator $\mathcal{A}$ is said to be a non-negative operator if its resolvent set contains all negative real numbers and
\begin{equation*}
\sup _{t>0}\, t\,\Vert(t\,I+\mathcal{A})^{-1}\Vert_{\mathcal{L}(X)}< \infty.
\end{equation*}
\end{defi}
For a non-negative operator $\mathcal{A}$ it is possible to define its complex power $\mathcal{A}^{z}$ for every $z\in\mathbb{C}$ as a densely defined closed linear operator in the closed subspace $X_{\mathcal{A}}=\overline{D(\mathcal{A})}\cap\overline{R(\mathcal{A})}$ in $X$. Here $D(\mathcal{A})$ and $R(\mathcal{A})$ denote, respectively, the domain and the range of $\mathcal{A}$. Observe that, if both $D(\mathcal{A})$ and $R(\mathcal{A})$ are dense in $X$, then $X_{\mathcal{A}}=X$.  We refer to \cite{Ko, Tri} for the definition and some relevant properties of the complex power of a non-negative operator.

For a non-negative bounded operator whose  inverse $\mathcal{A}^{-1}$ exists and it is bounded (\textit{i.e.} $0\in\rho(\mathcal{A})$), the complex power $\mathcal{A}^{z}$ can be defined for all $z\in\mathbb{C}$ by the means of the Dunford integral (\cite{Yo}):
\begin{equation}\label{forintimpur}
 \mathcal{A}^{z}\,\textit{f}\,=\,\frac{1}{2\,\pi\,i}\,\int_{\Gamma_{\theta}}(-\lambda)^{z}\,(\lambda\,I+\mathcal{A})^{-1}\,\textit{f}\,\mathrm{d}\,\lambda,
 \end{equation}
where $\Gamma_{\theta}$ runs in the resolvent set of $-\mathcal{A}$ from $\infty\,e^{i(\theta-\pi)}$ to zero and from zero to $\infty\,e^{i(\pi-\theta)}$, $0<\theta<\pi/2$ in $\mathbb{C}$ avoiding the non negative real axis. The branch of $(-\lambda)^{z}$ is taken so that $\mathrm{Re}((-\lambda)^{z})>0$ for $\lambda<0$.  It is proved by Triebel \cite{Tri}  that when the operator $\mathcal{A}$ is of bounded inverse, the complex powers $\mathcal{A}^{z}$ for $\mathrm{Re}\,z>0$ are isomorphisms from $D(\mathcal{A}^{z})$ to $X_{\mathcal{A}}$.

The following property plays an important role in the study of the abstract inhomogeneous Cauchy-Problem and give us more regularity for the solutions (see \cite{GiGa4}).
\begin{defi}\label{EthetaK}
Let $\theta\geq 0$ and $K\geq 1$. A non-negative operator $\mathcal{A}$ belongs to $E^{\theta}_{K}(X)$ if $\mathcal{A}^{is}\in\mathcal{L}(X_{\mathcal{A}})$ for every $s\in\mathbb{R}$ and its norm in $\mathcal{L}(X_{\mathcal{A}})$ satisfies the estimate
\begin{equation}\label{estimpur}
\Vert\mathcal{A}^{is}\Vert_{\mathcal{L}(X_{\mathcal{A}})}\leq\,K\,e^{\theta\,\vert s\vert}.
\end{equation}
If in addition $D(\mathcal{A})$ and $R(\mathcal{A})$ are dense in $X$, we say that $\mathcal{A}\in\mathcal{E}^{\theta}_{K}(X)$.
\end{defi}
\noindent We note that, these spaces $E^{\theta}_{K}(X)$ and $\mathcal{E}^{\theta}_{K}(X)$ were introduced by Dore and Venni \cite{DV}, Giga and Sohr \cite{GiGa4} in the  abstract perturbation theory.

When $-\mathcal{A}$ is the infinitesimal generator of a bounded analytic semi-group $(T(t))_{t\geq0}$, the following proposition is proved by Komatsu (see \cite[Theorem 12.1]{Ko} for instance)
\begin{prop}\label{t-alpha}
Let $-\mathcal{A}$ be the infinitesimal generator of a bounded analytic semi-group $(T(t))_{t\geq0}$. For any complex number $\alpha$ such that $\mathrm{Re}\,\alpha>0$ one has
\begin{equation}\label{talpha}
\forall t>0,\qquad\Vert\mathcal{A}^{\alpha}T(t)\Vert_{\mathcal{L}(X)}\leq C\,t^{-\mathrm{Re}\,\alpha}.
\end{equation}
\end{prop}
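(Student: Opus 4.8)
The plan is to reduce the fractional estimate to the integer case, where analyticity gives a clean bound, and then recover the exponent $-\mathrm{Re}\,\alpha$ from a Beta integral via Komatsu's representation of negative complex powers. The key technical advantage of this route is that the complex exponent only ever appears as a scalar factor $s^{\,n-\alpha-1}$ and a constant $1/\Gamma(n-\alpha)$, so the argument treats complex $\alpha$ exactly as easily as real $\alpha$.

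First I would settle the integer case. Since $-\mathcal{A}$ generates a bounded analytic semi-group, the map $t\mapsto T(t)$ is analytic and smoothing, so $T(t)X\subseteq D(\mathcal{A}^{n})$ for every $n$, and the Cauchy estimate applied to this analytic map on a circle of radius comparable to $t$ yields the standard bound $\|\mathcal{A}\,T(t)\|_{\mathcal{L}(X)}\le C/t$. Because $\mathcal{A}$ commutes with $T(s)$ and $T(t)=T(t/n)^{n}$, one gets for every $n\in\mathbb{N}^{*}$
\[
\mathcal{A}^{n}T(t)=\big(\mathcal{A}\,T(t/n)\big)^{n},\qquad
\|\mathcal{A}^{n}T(t)\|_{\mathcal{L}(X)}\le\Big(\frac{Cn}{t}\Big)^{n}=\frac{C_{n}}{t^{n}}.
\]

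Next, fix $\alpha$ with $\mathrm{Re}\,\alpha>0$ and choose an integer $n>\mathrm{Re}\,\alpha$. Writing $\mathcal{A}^{\alpha}=\mathcal{A}^{-(n-\alpha)}\mathcal{A}^{n}$ on $X_{\mathcal{A}}$ and applying Komatsu's integral formula for the negative power $\mathcal{A}^{-(n-\alpha)}$ (legitimate since $\mathrm{Re}(n-\alpha)>0$) to the element $\mathcal{A}^{n}T(t)x$, I would obtain, for $x\in X_{\mathcal{A}}$ and $t>0$,
\[
\mathcal{A}^{\alpha}T(t)x=\frac{1}{\Gamma(n-\alpha)}\int_{0}^{\infty}s^{\,n-\alpha-1}\,\mathcal{A}^{n}T(s+t)x\,\mathrm{d}s .
\]
Although the semi-group is only bounded, this integral converges absolutely because the required decay is supplied by the integer estimate rather than by the semi-group itself: for large $s$ the integrand is controlled by $C_{n}\,s^{\mathrm{Re}(n-\alpha)-1}(s+t)^{-n}\sim s^{-\mathrm{Re}\,\alpha-1}$, integrable since $\mathrm{Re}\,\alpha>0$, while near $s=0$ it behaves like $t^{-n}s^{\mathrm{Re}(n-\alpha)-1}$, integrable since $\mathrm{Re}(n-\alpha)>0$. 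Taking norms, using $|s^{\,n-\alpha-1}|=s^{\mathrm{Re}(n-\alpha)-1}$, and substituting $s=t\sigma$ then gives
\[
\|\mathcal{A}^{\alpha}T(t)\|_{\mathcal{L}(X)}\le\frac{C_{n}}{|\Gamma(n-\alpha)|}\,t^{-\mathrm{Re}\,\alpha}\int_{0}^{\infty}\sigma^{\mathrm{Re}(n-\alpha)-1}(1+\sigma)^{-n}\,\mathrm{d}\sigma ,
\]
where the $t$-powers collapse exactly to $-\mathrm{Re}\,\alpha$ and the remaining integral is the convergent Beta integral $\mathrm{B}(\mathrm{Re}\,\alpha,\,n-\mathrm{Re}\,\alpha)$. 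This is the desired bound, with $C=C_{n}\,\mathrm{B}(\mathrm{Re}\,\alpha,\,n-\mathrm{Re}\,\alpha)/|\Gamma(n-\alpha)|$.

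I expect the main obstacle to be the rigorous justification of the middle representation formula: one must invoke Komatsu's functional calculus for the non-negative operator $\mathcal{A}$ to validate both the composition law $\mathcal{A}^{-(n-\alpha)}\mathcal{A}^{n}=\mathcal{A}^{\alpha}$ on $X_{\mathcal{A}}$ and the integral representation of the negative power in the absence of exponential decay of $(T(t))_{t\ge0}$. The resolution is precisely that the convergence at infinity is guaranteed by the integer estimate $\|\mathcal{A}^{n}T(s+t)\|\le C_{n}(s+t)^{-n}$, so that the a priori only formally meaningful integral is in fact absolutely convergent and the formula holds on the dense subspace where the calculus applies.
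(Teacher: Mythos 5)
Your proof is correct. The paper does not prove this proposition at all — it simply cites it as \cite[Theorem 12.1]{Ko} — and your argument is precisely the classical one behind that reference: the Cauchy-estimate bound $\Vert\mathcal{A}^{n}T(t)\Vert\leq C_{n}t^{-n}$ for integer powers of a bounded analytic semi-group, combined with the Komatsu/Balakrishnan integral representation of $\mathcal{A}^{-(n-\alpha)}$ applied to $\mathcal{A}^{n}T(t)x$, where the absolute convergence at infinity is indeed rescued by the integer decay rather than by any decay of $T$ itself, and the substitution $s=t\sigma$ produces exactly the exponent $-\mathrm{Re}\,\alpha$ and a convergent Beta integral. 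The only caveats are the ones you already flag (working on $X_{\mathcal{A}}$ and invoking the composition law of the non-negative functional calculus), so there is no gap to report.
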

The following lemma is proved by Komatsu (see \cite{Ko}) and plays an important role in the study of the domains of fractional powers of the Stokes operator with Navier-type boundary conditions.
\begin{lemma}
Let $\mathcal{A}$ be a non-negative closed linear operator. If $\mathrm{Re}\alpha>0$ the domain $D((\nu\,I\,+\,\mathcal{A})^{\alpha})$ doesn't depend on $\nu\geq0$ and coincides with $D((\mu\,I\,+\,\mathcal{A})^{\alpha})$ for $\mu\geq0$. In other words 
\begin{equation*}
\forall\,\mu,\,\nu>0,\qquad D(\mathcal{A}^{\alpha})\,=\,D((\mu\,I\,+\,\mathcal{A})^{\alpha})\,=\,D((\nu\,I\,+\,\mathcal{A})^{\alpha}).
\end{equation*}
\end{lemma}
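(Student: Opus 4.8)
The plan is to reduce the asserted equality of domains to the boundedness, with bounded inverse, of a single intertwining operator, and then to obtain that boundedness from the multiplicativity of the Dunford functional calculus \eqref{forintimpur} applied to an explicit scalar ratio function.

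First I would treat the case $\mu,\nu>0$. Since the resolvent set of $\mathcal{A}$ contains the negative real axis, we have $-\mu,-\nu\in\rho(\mathcal{A})$, so both $\mu I+\mathcal{A}$ and $\nu I+\mathcal{A}$ are non-negative operators with bounded inverse, and their complex powers are given by the integral \eqref{forintimpur}. I claim that it is enough to show that
\[
T_{\mu,\nu}\,=\,(\mu I+\mathcal{A})^{\alpha}\,(\nu I+\mathcal{A})^{-\alpha}
\]
extends to a bounded operator on $X_{\mathcal{A}}=\overline{D(\mathcal{A})}\cap\overline{R(\mathcal{A})}$ whose inverse $T_{\nu,\mu}$ is also bounded. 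Indeed, for $\boldsymbol{u}\in D((\nu I+\mathcal{A})^{\alpha})$, setting $\boldsymbol{w}=(\nu I+\mathcal{A})^{\alpha}\boldsymbol{u}$ gives $(\mu I+\mathcal{A})^{\alpha}\boldsymbol{u}=T_{\mu,\nu}\boldsymbol{w}$, which is well defined, so $\boldsymbol{u}\in D((\mu I+\mathcal{A})^{\alpha})$; the reverse inclusion follows from the boundedness of $T_{\nu,\mu}=T_{\mu,\nu}^{-1}$.

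To establish the boundedness of $T_{\mu,\nu}$ I would use the functional calculus for the non-negative operator $\mathcal{A}$. Consider the scalar function $f(z)=\bigl((\mu+z)/(\nu+z)\bigr)^{\alpha}$, with the branch of the power fixed as for $(-\lambda)^{\alpha}$ in \eqref{forintimpur}. Because $\mathcal{A}$ is non-negative, its spectrum lies in a sector around the nonnegative real axis of half-angle strictly less than $\pi$, on which the resolvent decays like $1/|\lambda|$. The function $f$ is holomorphic on a neighborhood of that sector (its only zero and pole, at $z=-\mu$ and $z=-\nu$, sit on the negative axis), it tends to $1$ as $z\to\infty$, it is bounded on the sector, and it is bounded away from $0$ there. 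By the multiplicativity of the functional calculus, $T_{\mu,\nu}=f(\mathcal{A})=(\mu I+\mathcal{A})^{\alpha}(\nu I+\mathcal{A})^{-\alpha}$ is bounded on $X_{\mathcal{A}}$, and $T_{\mu,\nu}^{-1}=(1/f)(\mathcal{A})$ is bounded because $1/f$ is of exactly the same type. This yields $D((\mu I+\mathcal{A})^{\alpha})=D((\nu I+\mathcal{A})^{\alpha})$ for all $\mu,\nu>0$.

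The main obstacle is the remaining case $\nu=0$, that is, the comparison of $D(\mathcal{A}^{\alpha})$ with $D((\mu I+\mathcal{A})^{\alpha})$. Here $\mathcal{A}$ need not be invertible, so the ratio function $f(z)=\bigl(z/(\nu+z)\bigr)^{\alpha}$ degenerates: it vanishes at $z=0$ and is no longer bounded below, and moreover $\mathcal{A}^{\alpha}$ is only defined on the subspace $X_{\mathcal{A}}$. I would resolve this by a limiting argument, letting $\nu\to 0^{+}$ while controlling $(\mu I+\mathcal{A})^{\alpha}\boldsymbol{u}-\mathcal{A}^{\alpha}\boldsymbol{u}$ uniformly on $X_{\mathcal{A}}$, using Komatsu's integral representation of $\mathcal{A}^{\alpha}$ together with the moment (interpolation) inequalities for fractional powers; equivalently, one shows that $(\mu I+\mathcal{A})^{\alpha}\mathcal{A}^{-\alpha}$ and its inverse stay bounded on $X_{\mathcal{A}}$. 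This last step is where the genuine work lies, the strictly positive case being essentially formal once the holomorphic functional calculus is available.
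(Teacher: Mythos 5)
The paper does not actually prove this lemma: it is quoted from Komatsu \cite{Ko}, so there is no internal proof to compare yours with, and I can only judge the proposal on its own merits. The part you carry out, $\mu,\nu>0$, is essentially correct: both $\mu I+\mathcal{A}$ and $\nu I+\mathcal{A}$ are then invertible sectorial operators, $f(z)=\bigl((\mu+z)/(\nu+z)\bigr)^{\alpha}$ is holomorphic, bounded and bounded below on a sector containing the spectrum, and $T_{\mu,\nu}=f(\mathcal{A})$, $T_{\mu,\nu}^{-1}=(1/f)(\mathcal{A})$ give the equality of domains (though identifying $T_{\mu,\nu}$ with $f(\mathcal{A})$ uses the composition rule for the sectorial calculus, which is more than the single-operator Dunford integral \eqref{forintimpur} the paper records). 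But this is the soft half of the statement. The identity that is actually used later, $D(\mathcal{A}^{\alpha})=D((\mu I+\mathcal{A})^{\alpha})$, is the case $\nu=0$, and there your proof stops at a sketch which, as written, does not close.

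Concretely, neither of the two routes you indicate for $\nu=0$ works. A limit $\nu\to0^{+}$ based on your first step cannot succeed, because the operator norm of $f(\mathcal{A})$ is controlled by $\sup|f|$ on the sector, which grows like $(\mu/\nu)^{\mathrm{Re}\,\alpha}$; nothing in the first part of your argument is uniform in $\nu$. The reformulation ``$(\mu I+\mathcal{A})^{\alpha}\mathcal{A}^{-\alpha}$ and its inverse stay bounded on $X_{\mathcal{A}}$'' is not equivalent to the claim and is in fact false for a general non-negative $\mathcal{A}$: if both that operator and its inverse were bounded, then $\mathcal{A}^{-\alpha}=(\mu I+\mathcal{A})^{-\alpha}\,\bigl[(\mu I+\mathcal{A})^{\alpha}\mathcal{A}^{-\alpha}\bigr]$ would be bounded on $X_{\mathcal{A}}$, i.e. $\mathcal{A}^{\alpha}$ would be boundedly invertible, which fails whenever $\mathcal{A}$ is not (take $\mathcal{A}$ to be multiplication by $t$ on $L^{2}(0,1)$). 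What is true, and what the lemma rests on, is that the \emph{difference} rather than the quotient is bounded: for $0<\mathrm{Re}\,\alpha<1$ and $x\in D(\mathcal{A})$, Balakrishnan's representation gives
\begin{equation*}
(\mu I+\mathcal{A})^{\alpha}x-\mathcal{A}^{\alpha}x=\frac{\sin\pi\alpha}{\pi}\int_{0}^{\infty}\lambda^{\alpha-1}\,\lambda\,\mu\,(\lambda I+\mathcal{A})^{-1}\bigl((\lambda+\mu) I+\mathcal{A}\bigr)^{-1}x\,\mathrm{d}\lambda,
\end{equation*}
whose integrand has norm at most $M^{2}\mu\,\lambda^{\mathrm{Re}\,\alpha-1}(\lambda+\mu)^{-1}\Vert x\Vert$, integrable on $(0,\infty)$ precisely because $0<\mathrm{Re}\,\alpha<1$. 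Hence the difference extends to a bounded operator, the graph norms of $\mathcal{A}^{\alpha}$ and $(\mu I+\mathcal{A})^{\alpha}$ are equivalent, and the two domains coincide; the case of general $\mathrm{Re}\,\alpha>0$ follows by writing $\alpha=n+\beta$ with $n\in\mathbb{N}$ and $0<\mathrm{Re}\,\beta<1$ and using additivity of the powers. This computation, which is Komatsu's, is the missing step, and it is the only part of the lemma that is not formal.
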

Finally, let $\mathcal{A}$ be a non-negative operator such that $0\in\rho(\mathcal{A})$. The boundedness of $\mathcal{A}^{is}$, $s\in\mathbb{R}$ allows us to determine the domain of definition of $D(\mathcal{A}^{\alpha})$, for complex number $\alpha$ satisfying $\mathrm{Re}\,\alpha>0$ using complex interpolation. The following result is due to \cite{Tri}
\begin{theo}\label{domfracpower}
Let $\mathcal{A}$ be a non-negative operator with bounded inverse. We suppose that there exist two positive numbers $\varepsilon$ and $C$ such that $\mathcal{A}^{is}$ is bounded for $-\varepsilon\leq s\leq\varepsilon$ and $\Vert\mathcal{A}^{is}\Vert_{\mathcal{L}(X_{A})}\,\leq\,C$. If $\alpha$ is a complex number such that $0<\mathrm{Re}\,\alpha\,<\infty$ and $0<\theta<1$ then
\begin{equation*}
\left[X\,,\,D(\mathcal{A}^{\alpha}) \right] _{\theta}\,=\,D(\mathcal{\mathcal{A}^{\alpha\theta}}).
\end{equation*} 
\end{theo}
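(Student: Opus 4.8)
The plan is first to reduce to a real exponent and to globalise the bound on the imaginary powers. Since $0\in\rho(\mathcal{A})$, every purely imaginary power $\mathcal{A}^{is}$ is bounded, and the group law $\mathcal{A}^{i(s+s')}=\mathcal{A}^{is}\,\mathcal{A}^{is'}$ turns the local bound $\|\mathcal{A}^{is}\|_{\mathcal{L}(X_{\mathcal{A}})}\le C$ on $[-\varepsilon,\varepsilon]$ into a global one, $\|\mathcal{A}^{is}\|_{\mathcal{L}(X_{\mathcal{A}})}\le C_{0}\,e^{\omega|s|}$ with $\omega=\varepsilon^{-1}\log C$. In particular $\mathcal{A}^{i\,\mathrm{Im}\,\alpha}$ is bounded with bounded inverse $\mathcal{A}^{-i\,\mathrm{Im}\,\alpha}$, so that $D(\mathcal{A}^{\alpha})=D(\mathcal{A}^{\mathrm{Re}\,\alpha})$ and $D(\mathcal{A}^{\alpha\theta})=D(\mathcal{A}^{\theta\,\mathrm{Re}\,\alpha})$; writing $a=\mathrm{Re}\,\alpha>0$ it therefore suffices to prove $[X,D(\mathcal{A}^{a})]_{\theta}=D(\mathcal{A}^{\theta a})$.

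I would then use Calder\'on's description of $[X,D(\mathcal{A}^{a})]_{\theta}$ as the set of values $f(\theta)$ of functions $f$, analytic on the open strip $S=\{0<\mathrm{Re}\,z<1\}$ and bounded and continuous on $\overline{S}$ with values in $X+D(\mathcal{A}^{a})$, such that $t\mapsto f(it)\in X$ and $t\mapsto f(1+it)\in D(\mathcal{A}^{a})$ are bounded. For the inclusion $D(\mathcal{A}^{\theta a})\hookrightarrow[X,D(\mathcal{A}^{a})]_{\theta}$, I take $x$ in a dense subspace on which $z\mapsto\mathcal{A}^{z}x$ is analytic and set, for small $\delta>0$, $f_{\delta}(z)=e^{\delta(z^{2}-\theta^{2})}\,\mathcal{A}^{a(\theta-z)}x$. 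Then $f_{\delta}(\theta)=x$; on $\mathrm{Re}\,z=0$ one has $\|f_{\delta}(it)\|\le e^{-\delta(t^{2}+\theta^{2})}C_{0}e^{\omega a|t|}\|\mathcal{A}^{\theta a}x\|$, and on $\mathrm{Re}\,z=1$ the quantity $\mathcal{A}^{a}f_{\delta}(1+it)$ obeys the same type of bound, the Gaussian factor absorbing the exponential growth of the imaginary powers. Hence $f_{\delta}$ is admissible, so $x\in[X,D(\mathcal{A}^{a})]_{\theta}$, and letting $\delta\to0$ together with a density argument gives the inclusion with control of the norm.

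For the reverse inclusion I start from an admissible $f$ with $f(\theta)=x$ and form $g_{\delta}(z)=e^{\delta(z^{2}-\theta^{2})}\,\mathcal{A}^{az}f(z)$, analytic on $S$ and continuous on $\overline{S}$. On $\partial S$ the Gaussian again dominates the factors $\mathcal{A}^{a\,it}$ and $\mathcal{A}^{a\,it}\mathcal{A}^{a}$, so that $\sup_{\partial S}\|g_{\delta}\|$ is bounded by $C_{0}e^{\delta}\max\big(\sup_{t}\|f(it)\|_{X},\,\sup_{t}\|\mathcal{A}^{a}f(1+it)\|_{X}\big)$. Applying the three-lines theorem to the scalar functions $z\mapsto\langle g_{\delta}(z),x'\rangle$, $x'\in X'$, I bound $\|g_{\delta}(\theta)\|=\|\mathcal{A}^{\theta a}x\|$ by this boundary maximum, uniformly in $\delta$; letting $\delta\to0$ yields $\mathcal{A}^{\theta a}x\in X$, i.e. $x\in D(\mathcal{A}^{\theta a})$, with the matching estimate. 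The two inclusions then give the asserted equality of spaces.

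I expect the main obstacle to be the analytic bookkeeping rather than any single estimate: one must verify that $z\mapsto\mathcal{A}^{az}x$ and $z\mapsto\mathcal{A}^{az}f(z)$ are genuinely analytic into $X+D(\mathcal{A}^{a})$ and continuous up to $\partial S$, which relies on the Dunford representation \eqref{forintimpur} and on approximating a general element by vectors in $D(\mathcal{A}^{N})$ for large $N$, where the complex powers act boundedly. The fact that the imaginary powers only satisfy an exponential bound in $s$ is exactly why a direct three-lines argument fails and why the Gaussian regulariser is indispensable; the delicate point is to keep all bounds uniform as $\delta\to0$ so as to recover the exact equality of the two spaces.
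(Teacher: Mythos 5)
The paper offers no proof of this statement: it is quoted from Triebel \cite{Tri} (Theorem 1.15.3 there) and used as a black box, so there is nothing internal to compare against. What you have written is, in substance, the classical proof from that reference: reduce to a real exponent $a=\mathrm{Re}\,\alpha$ using the bounded invertibility of $\mathcal{A}^{i\,\mathrm{Im}\,\alpha}$, globalise the local bound on the imaginary powers into $\Vert\mathcal{A}^{is}\Vert\leq C_{0}e^{\omega|s|}$ via the group law, and run Calder\'on's complex method with the regulariser $e^{\delta(z^{2}-\theta^{2})}$ to tame the exponential growth on the boundary lines. Both inclusions are set up correctly and the choice of test functions $f_{\delta}$ and $g_{\delta}$ is the right one.

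Two points in your write-up should be tightened. First, the assertion that $\sup_{\partial S}\Vert g_{\delta}\Vert$ is controlled by $C_{0}e^{\delta}\max\bigl(\sup_{t}\Vert f(it)\Vert_{X},\,\sup_{t}\Vert\mathcal{A}^{a}f(1+it)\Vert_{X}\bigr)$ \emph{uniformly in} $\delta$ is not correct: the product $e^{-\delta t^{2}}e^{\omega a|t|}$ is bounded only by $e^{(\omega a)^{2}/(4\delta)}$, which blows up as $\delta\to0$. This is harmless, because $g_{\delta}(\theta)=\mathcal{A}^{\theta a}x$ and $f_{\delta}(\theta)=x$ do not depend on $\delta$; you should therefore fix one value of $\delta$ (say $\delta=1$) and never send $\delta\to0$ — the resulting constant is large but finite, which is all the norm equivalence requires. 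The same correction applies to the direct inclusion. Second, for the reverse inclusion $\mathcal{A}^{az}f(z)$ is not defined for a general admissible $f$, since $f(z)$ only lies in $X+D(\mathcal{A}^{a})$ in the interior of the strip; the three-lines argument must be carried out on the dense subclass of Calder\'on functions of the form $\sum_{j}e^{\lambda_{j}z^{2}+\mu_{j}z}x_{j}$ with $x_{j}\in D(\mathcal{A}^{N})$, followed by a density passage. You flag this as ``analytic bookkeeping'', but it is the step that actually carries that half of the proof and deserves to be written out.
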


\subsection{Some auxiliary results on  $\zeta $-convexity.}

In order to prove maximal $L^p-L^q$ regularity properties for the solutions of the inhomogeneous Stokes problem, we use the property of $\zeta$-convexity of  Banach spaces. This property has already proved to be useful in the same context (cf. \cite{GiGa4}). For further readings on $\zeta$-convex Banach spaces we refer to \cite{Bur3,Rub}.

The $\zeta $ convex property  may be defined as follows:
\begin{defi}\label{zetaconv}
A Banach space $X$ is $\zeta$-convex if there is a symmetric biconvex function $\zeta$ on $X\times X$ such that $\zeta(0,0)>0$ and
\begin{equation}
\forall\,x,y\in X,\,\,\,\Vert x\Vert_{X}\geq 1, \qquad\zeta(x,y)\leq\Vert x+y\Vert_{X}.
\end{equation}
\end{defi}
For this and equivalent definitions see   Theorem 1 and Theorem 2 in  \cite{Rub}.

The  $\zeta$-convexity  property is stronger than  uniform convexity or reflexivity. 
It has been  proved in  Proposition 3 of  \cite{Rub} that for any $\Omega$  open domain of $\mathbb{R}^{3}$ the space $L^{p}(\Omega)$ is $\zeta$-convex if and only if $1<p<\infty$.

The following property of $\zeta$-convex spaces is needed in the following. Since its proof is elementary we shall skip it.
\begin{prop}\label{zetaconvexsubsp}
Every  closed subspace of a  $\zeta$-convex space is $\zeta$-convex.
\end{prop}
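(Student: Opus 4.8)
The plan is to take the witnessing function $\zeta$ for the $\zeta$-convexity of the ambient space $X$ and simply restrict it to $Y\times Y$, where $Y$ denotes the given closed subspace. I would first recall that any linear subspace $Y$ of a Banach space $X$, when closed, is itself a Banach space under the restricted norm, and that this norm coincides with the ambient one, namely $\Vert z\Vert_{Y}=\Vert z\Vert_{X}$ for every $z\in Y$. The candidate function for $Y$ is then $\eta:=\zeta|_{Y\times Y}$, and the entire argument reduces to checking that the four requirements of Definition \ref{zetaconv} are inherited by $\eta$.

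Each verification is immediate. Symmetry of $\eta$ follows at once from symmetry of $\zeta$, restricted to the subset $Y\times Y\subseteq X\times X$. Biconvexity transfers because $Y$ is a linear subspace and hence convex: for fixed $y\in Y$ the map $x\mapsto\zeta(x,y)$ is convex on $X$, so its restriction to the convex set $Y$ is convex, and symmetrically in the second variable. The normalization $\eta(0,0)=\zeta(0,0)>0$ holds since $0\in Y$. Finally, for the defining inequality I would fix $x,y\in Y$ with $\Vert x\Vert_{Y}\geq 1$; since $\Vert x\Vert_{Y}=\Vert x\Vert_{X}$ the ambient hypothesis $\Vert x\Vert_{X}\geq 1$ is met, and $x,y$ also lie in $X$, so $\zeta(x,y)\leq\Vert x+y\Vert_{X}=\Vert x+y\Vert_{Y}$, using once more that $x+y\in Y$.

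There is no genuine obstacle in this statement; the only point worth making explicit is that both the convexity structure of $X\times X$ and the norm of $X$ are faithfully inherited by a closed subspace, which is precisely why the restriction of $\zeta$ continues to witness $\zeta$-convexity for $Y$. This is the reason the authors record the proof as elementary and omit it.
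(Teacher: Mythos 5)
Your proof is correct, and since the paper explicitly omits the argument as elementary, the restriction of the witnessing biconvex function $\zeta$ to $Y\times Y$ is exactly the intended reasoning: symmetry, biconvexity, the normalization at $(0,0)$, and the defining inequality all pass to the subspace because the norm and the convex structure are inherited, while closedness is used only to guarantee that $Y$ is itself a Banach space. Nothing is missing.
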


On the other hand, the following characterization of $\zeta $-convex spaces in terms of the Hilbert transform is proved in \cite{Bur3} (cf. Theorem 3.3 in Section 3 and Section 2). See also  \cite{Rub} (Theorem 1 and Theorem 2):
\begin{theo}\label{Hilberttransf}
 A Banach space $X$ is $\zeta$-convex if and only if, for some $s\in(1, \infty)$,  the truncated Hilbert transform 
\begin{equation*}
(H_{\varepsilon}\textit{f})(t)\,=\,\frac{1}{\pi}\,\int_{\vert\tau\vert>\varepsilon}\frac{\textit{f}(t-\tau)}{\tau}\,\mathrm{d}\tau\end{equation*}
converges as $\varepsilon\rightarrow 0$, for almost all $t\in\mathbb{R}$, for all $\textit{f}\in L^{s}(\mathbb{R};\,X) $,  and there is a constant $C=C(s,X)$ independent of $\textit{f}$ such that 
\begin{equation*}
\Vert H\textit{f}\,\Vert_{L^{s}(\mathbb{R},\,X)}\,\leq\,C\,\Vert\textit{f}\,\Vert_{L^{s}(\mathbb{R};\,X)},
\end{equation*}
where $(H\textit{f})(t)=\lim _{\varepsilon\rightarrow 0}(H_{\varepsilon}\textit{f})(t)$.
\end{theo}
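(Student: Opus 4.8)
The plan is to factor the equivalence through the \emph{unconditionality of martingale differences} (UMD) property, since each of the two implications is most naturally proved by passing through UMD. Recall that $X$ is UMD if, for some (equivalently every) $s\in(1,\infty)$, there is a constant $C$ such that every $X$-valued $L^s$-martingale difference sequence $(d_k)$ and every choice of signs $\varepsilon_k\in\{-1,+1\}$ satisfy $\|\sum_k \varepsilon_k d_k\|_{L^s}\le C\|\sum_k d_k\|_{L^s}$. I would establish the two chains $\zeta\text{-convex}\Leftrightarrow\text{UMD}$ and $\text{UMD}\Leftrightarrow$ (Hilbert transform bounded on $L^s(\mathbb{R};X)$) separately, and then combine them. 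The first equivalence is Burkholder's geometric characterization; the second is the theorem of Burkholder, McConnell and Bourgain.

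For the equivalence of $\zeta$-convexity and UMD I would argue as follows. If $X$ is UMD, one constructs the required symmetric biconvex function by letting $\zeta(x,y)$ be, essentially, a supremum over dyadic martingale transforms with prescribed first increment and sign pattern of a quantity measuring how much the transform can shrink the norm; biconvexity and the normalization $\zeta(0,0)>0$ then encode exactly the martingale transform inequality. Conversely, given a biconvex $\zeta$ with $\zeta(0,0)>0$ and $\zeta(x,y)\le\|x+y\|$ for $\|x\|\ge 1$, one evaluates $\zeta$ along the pair (partial sum, transformed partial sum) of a martingale; biconvexity makes this a supermartingale, and the normalization propagates the bound to yield the UMD inequality. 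This is the content I would take from \cite{Rub}.

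For $\text{UMD}\Rightarrow$ boundedness of the Hilbert transform I would use the probabilistic representation of the conjugate function. Realizing the Hilbert transform through harmonic extension and Brownian motion, the conjugate function appears as a stochastic integral, that is, a \emph{martingale transform} of the martingale generated by $f$ along Brownian paths, with transforming integrand of modulus one. The UMD inequality, applied in its continuous-time stochastic-integral form, then gives $\|Hf\|_{L^s(\mathbb{R};X)}\le C\|f\|_{L^s(\mathbb{R};X)}$. Almost everywhere convergence of the truncations $H_\varepsilon f$ is obtained separately: a Cotlar-type inequality bounds the maximal truncated transform by $H$ together with the Hardy--Littlewood maximal operator, and a density argument, using smooth compactly supported $X$-valued $f$ for which convergence is classical, upgrades the maximal bound to a.e.\ convergence.

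The converse, boundedness of $H$ $\Rightarrow$ UMD, is the main obstacle, and is Bourgain's contribution. Here one must recover a \emph{discrete} martingale transform inequality from boundedness of a single singular integral. The idea is a transference argument: conditional expectations along a dyadic filtration, and their associated $\pm1$ transforms, can be approximated by suitable averages of translated and dilated copies of the Hilbert transform, so that the martingale transform norm is dominated by the Hilbert transform norm uniformly in the number of steps. Controlling the martingale structure by the convolution kernel $1/\tau$ in this way is the delicate part of the whole argument; once it is in place, UMD follows, and then $\zeta$-convexity follows from the first equivalence. I would draw this direction from \cite{Bur3}.
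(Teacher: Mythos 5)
The paper does not prove this statement at all: it is quoted as a known theorem, with the proof attributed to Burkholder \cite{Bur3} (Theorem 3.3 there) and to Rubio de Francia \cite{Rub} (Theorems 1 and 2), so there is no internal argument to compare yours against. Your outline is precisely the architecture of the proof in those references: $\zeta$-convexity $\Leftrightarrow$ UMD via Burkholder's biconvex-function characterization, UMD $\Rightarrow$ boundedness of $H$ via the Brownian/martingale-transform representation of the conjugate function together with a Cotlar-type maximal inequality and a density argument for the a.e.\ convergence of $H_\varepsilon f$, and the converse via Bourgain's transference of the dyadic martingale transform to averages of translates and dilates of the Hilbert kernel. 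That is the correct and, as far as I know, the only known route. Be aware, though, that what you have written is a roadmap rather than a proof: the two genuinely hard steps (the supermartingale argument showing that a biconvex $\zeta$ with $\zeta(0,0)>0$ forces the UMD inequality, and Bourgain's approximation of conditional expectations by the singular integral) are only named, not carried out, so as a self-contained verification it would not stand on its own; as a reconstruction of why the cited theorem is true, it is accurate.
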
 

\medskip

Using Theorem \ref{Hilberttransf} we prove the  following proposition and show the $\zeta$-convexity of the dual spaces  $[\boldsymbol{H}^{p'}_{0}(\mathrm{div},\Omega)]'$ and $[\boldsymbol{T}^{p'}(\Omega)]'$. 
\begin{prop}\label{Hpdivtp'zetaconx}
Let $1<p<\infty$, the dual spaces $[\boldsymbol{H}^{p'}_{0}(\mathrm{div},\Omega)]'$ and $[\boldsymbol{T}^{p'}(\Omega)]'$ are $\zeta$-convex Banach spaces.
\end{prop}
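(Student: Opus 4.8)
The plan is to deduce the $\zeta$-convexity of the two dual spaces from that of their preduals $\boldsymbol{H}^{p'}_{0}(\mathrm{div},\Omega)$ and $\boldsymbol{T}^{p'}(\Omega)$, combined with the stability of $\zeta$-convexity under duality. Since $1<p<\infty$ is equivalent to $1<p'<\infty$, and since a finite power $L^{p'}(\Omega)^{n}$ may be identified with $L^{p'}$ over the disjoint union of $n$ translated copies of $\Omega$ --- again an open subset of $\mathbb{R}^{3}$ --- the $L^{p}$ result of \cite{Rub} recalled above shows that $\boldsymbol{L}^{p'}(\Omega)=L^{p'}(\Omega)^{3}$ and all such finite powers are $\zeta$-convex.

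First I would realize each predual as a closed subspace of such a finite power. The graph map $\boldsymbol{v}\mapsto(\boldsymbol{v},\mathrm{div}\,\boldsymbol{v})$ sends $\boldsymbol{H}^{p'}(\mathrm{div},\Omega)$ isometrically, for its graph norm, onto a subspace of $\boldsymbol{L}^{p'}(\Omega)\times L^{p'}(\Omega)$, and this image is closed because the distributional divergence is a closed operator; since the normal trace is continuous, the condition $\boldsymbol{v}\cdot\boldsymbol{n}=0$ defines a closed subspace, so $\boldsymbol{H}^{p'}_{0}(\mathrm{div},\Omega)$ is a closed subspace of $\boldsymbol{L}^{p'}(\Omega)\times L^{p'}(\Omega)$. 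Likewise $\boldsymbol{v}\mapsto(\boldsymbol{v},\mathrm{div}\,\boldsymbol{v},\nabla\,\mathrm{div}\,\boldsymbol{v})$ embeds $\boldsymbol{T}^{p'}(\Omega)$ as a closed subspace of $\boldsymbol{L}^{p'}(\Omega)\times L^{p'}(\Omega)\times\boldsymbol{L}^{p'}(\Omega)$. By Proposition \ref{zetaconvexsubsp}, both $\boldsymbol{H}^{p'}_{0}(\mathrm{div},\Omega)$ and $\boldsymbol{T}^{p'}(\Omega)$ are therefore $\zeta$-convex.

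It then remains to pass to the dual, i.e.\ to prove that the dual $X'$ of a $\zeta$-convex space $X$ is again $\zeta$-convex; applied to $X=\boldsymbol{H}^{p'}_{0}(\mathrm{div},\Omega)$ and $X=\boldsymbol{T}^{p'}(\Omega)$ this gives the statement. This is where Theorem \ref{Hilberttransf} is used in full. Because $\zeta$-convex spaces are reflexive, $X'$ has the Radon--Nikod\'ym property and one has the identification $(L^{s}(\mathbb{R};X))'=L^{s'}(\mathbb{R};X')$. The scalar Hilbert transform is skew-adjoint, $\int_{\mathbb{R}}(Hf)\,g\,\mathrm{d}t=-\int_{\mathbb{R}}f\,(Hg)\,\mathrm{d}t$, and since $H$ acts only on the real variable it commutes with the fixed duality pairing $X\times X'\to\mathbb{C}$; hence the Banach-space adjoint of the vector-valued transform $H$ on $L^{s}(\mathbb{R};X)$ is $-H$ on $L^{s'}(\mathbb{R};X')$. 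The boundedness of $H$ on $L^{s}(\mathbb{R};X)$ furnished by Theorem \ref{Hilberttransf} thus transfers to boundedness on $L^{s'}(\mathbb{R};X')$ with the same norm, and the converse implication of Theorem \ref{Hilberttransf} shows that $X'$ is $\zeta$-convex.

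The main obstacle is precisely this last duality step: the embeddings into powers of $L^{p'}$ are routine, but the two spaces in the statement are duals of subspaces --- equivalently, quotients of powers of $L^{p}$ --- and $\zeta$-convexity is not among the hereditary properties recalled before the proposition. I expect the cleanest justification to be the adjoint computation above, whose only delicate points are the identification of the Bochner dual (covered by reflexivity) and the verification that the vector-valued $H$ has adjoint $-H$ (covered by scalar skew-adjointness together with the fact that $H$ commutes with the value-pairing). Once this is in place, the conclusion follows at once for both $[\boldsymbol{H}^{p'}_{0}(\mathrm{div},\Omega)]'$ and $[\boldsymbol{T}^{p'}(\Omega)]'$.
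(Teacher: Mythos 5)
Your proof is correct, but it follows a genuinely different route from the paper's. The paper works directly on the dual space: it uses the concrete representation recalled in Section \ref{Functional framework}, namely that every $\boldsymbol{f}\in[\boldsymbol{H}^{p'}_{0}(\mathrm{div},\Omega)]'$ decomposes as $\boldsymbol{f}=\boldsymbol{\psi}+\nabla\chi$ with $\boldsymbol{\psi}\in\boldsymbol{L}^{p}(\Omega)$, $\chi\in L^{p}(\Omega)$ and comparable norms, then decomposes an arbitrary $\boldsymbol{f}\in L^{s}(\mathbb{R};[\boldsymbol{H}^{p'}_{0}(\mathrm{div},\Omega)]')$ pointwise in $t$, observes that $H_{\varepsilon}$ commutes with the decomposition, and applies the scalar/$L^{p}$ case of Theorem \ref{Hilberttransf} to $\boldsymbol{\psi}$ and $\chi$ separately (the case of $[\boldsymbol{T}^{p'}(\Omega)]'$ being identical with $\chi\in W^{-1,p}$). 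You instead embed the \emph{preduals} $\boldsymbol{H}^{p'}_{0}(\mathrm{div},\Omega)$ and $\boldsymbol{T}^{p'}(\Omega)$ as closed subspaces of finite powers of $L^{p'}(\Omega)$ via the graph map, invoke Proposition \ref{zetaconvexsubsp}, and then prove the general principle that the dual of a $\zeta$-convex space is $\zeta$-convex via the adjoint of the vector-valued Hilbert transform. Each approach has its delicate point: the paper's argument tacitly requires a decomposition $\boldsymbol{f}(t)=\boldsymbol{\psi}(t)+\nabla\chi(t)$ that can be chosen \emph{measurably} in $t$ with controlled norms (glossed over in the paper), which your route avoids entirely; conversely, your duality step needs the identification $(L^{s}(\mathbb{R};X))'=L^{s'}(\mathbb{R};X')$ (covered by reflexivity, as you note) and, strictly speaking, the adjoint argument delivers the uniform $L^{s'}$ bound on the truncations $H_{\varepsilon}$ rather than the almost-everywhere convergence demanded by the literal statement of Theorem \ref{Hilberttransf} --- one still has to upgrade via density and a maximal-type argument, or use the standard fact that the norm bound alone characterizes UMD. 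Since the paper's own proof treats the a.e.-convergence issue no more carefully, this is not a substantive gap; your argument also has the merit of isolating a reusable general fact (self-duality of the UMD property) that the paper does not record.
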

\begin{proof}
We will only write the proof of the $\zeta$-convexity of $[\boldsymbol{H}^{p'}_{0}(\mathrm{div},\Omega)]'$ because the  proof of the $\zeta$-convexity of $[\boldsymbol{T}^{p'}(\Omega)]'$ is similar. Let $\boldsymbol{f}\in L^{s}(\mathbb{R};\,[\boldsymbol{H}^{p'}_{0}(\mathrm{div},\Omega)]')$, then for almost all $t\in\mathbb{R}$, there exists $\boldsymbol{\psi}(t)\in\boldsymbol{L}^{p}(\Omega)$ and $\chi(t)\in L^{p}(\Omega)$ such that 
\begin{equation*}
\boldsymbol{f}(t)\,=\,\boldsymbol{\psi}(t)\,+\,\nabla\chi(t),\qquad\Vert\boldsymbol{f}(t)\Vert_{[\boldsymbol{H}^{p'}_{0}(\mathrm{div},\Omega)]'}\,=\,\max (\Vert\boldsymbol{\psi}(t)\Vert_{\boldsymbol{L}^{p}(\Omega)},\,\Vert\chi(t)\Vert_{L^{p}(\Omega)}).
\end{equation*}
Since $\boldsymbol{f}\in L^{s}(\mathbb{R};\,[\boldsymbol{H}^{p'}_{0}(\mathrm{div},\Omega)]')$, it is clear that $\boldsymbol{\psi}\in L^{s}(\mathbb{R};\,\boldsymbol{L}^{p}(\Omega))$ and $\chi\in L^{s}(\mathbb{R};L^{p}(\Omega))$. On the other hand we can easily verify that 
\begin{equation*}
(H_{\varepsilon}\boldsymbol{f})(t)\,=\,(H_{\varepsilon}\boldsymbol{\psi})(t)\,+\,\nabla(H_{\varepsilon}\chi)(t).
\end{equation*}
Next, since $\boldsymbol{L}^{p}(\Omega)$ (respectively $L^{p}(\Omega)$) is $\zeta$-convex then $(H_{\varepsilon}\boldsymbol{\psi})(t)$ (respectively $(H_{\varepsilon}\chi)(t)$) converges as $\varepsilon\rightarrow 0$ to $H\boldsymbol{\psi}(t)$ (respectively to $H\chi(t)$). Moreover we have the estimate
\begin{equation*}
\Vert H\boldsymbol{\psi}(t)\Vert_{L^{s}(\mathbb{R};\,\boldsymbol{L}^{p}(\Omega))}\,\leq\,C(s,\Omega,p)\,\Vert\boldsymbol{\psi}\Vert_{L^{s}(\mathbb{R};\,\boldsymbol{L}^{p}(\Omega))}
\end{equation*}
and
\begin{equation*}
\Vert H\chi(t)\Vert_{L^{s}(\mathbb{R};\,L^{p}(\Omega))}\,\leq\,C(s,\Omega,p)\, \Vert\psi\Vert_{L^{s}(\mathbb{R};\,L^{p}(\Omega))}
\end{equation*}
 This means that $(H_{\varepsilon}\boldsymbol{f})(t)$ converges as $\varepsilon\rightarrow0$ to $H\boldsymbol{f}(t)\,=\, H\boldsymbol{\psi}(t)\,+\,\nabla\,H\chi(t)$. Moreover we have the estimate
 \begin{equation*}
\Vert H\boldsymbol{f}(t)\Vert_{L^{s}(\mathbb{R};\,[\boldsymbol{H}^{p'}_{0}(\mathrm{div},\Omega)]')}\,\leq\,C(s,\Omega,p)\,\Vert\boldsymbol{f}\Vert_{L^{s}(\mathbb{R};\,[\boldsymbol{H}^{p'}_{0}(\mathrm{div},\Omega)]')},
\end{equation*}
which ends the proof.
\end{proof}

\section{The Stokes operator}\label{Stokesoperatorsection}
\label{operators}
The main object of this section is to introduce the different Stokes operators with Navier-type boundary conditions that we need in order to solve the Stokes problem for the different types of initial data $\boldsymbol{u}_0$ and external forces $\boldsymbol{f}$ that we want to consider. For the sake of comparison we also recall the definition of
the Stokes operator with Dirichlet boundary conditions.

\subsection{The Stokes operator with
Dirichlet boundary conditions} 
We consider the space
\begin{equation*}
\boldsymbol{L}^{p}_{\sigma,\tau}(\Omega)\,=\,\Big\{\boldsymbol{f}\in\boldsymbol{L}^{p}(\Omega);\,\,\mathrm{div}\,\boldsymbol{f}=0\,\,\,\textrm{in}\,\,\,\Omega,\,\,\,\boldsymbol{f}\cdot\boldsymbol{n}=0\,\,\,\textrm{on}\,\,\,\Gamma\Big\}.
\end{equation*}
Endowed the $L^p(\Omega )$ norm, it is a Banach space. We also define
\begin{equation*}
\boldsymbol{V}^{p}_{0}(\Omega)\,=\,\big\{\boldsymbol{v}\in\boldsymbol{W}^{1,p}_{0}(\Omega);\,\mathrm{div}\,\boldsymbol{v}=0\,\,\,\textrm{in}\,\,\Omega\big\}
\end{equation*}
which is a Banach space for the norm of $\boldsymbol{W}^{1,p}(\Omega)$. For every $\boldsymbol{u}\in\boldsymbol{V}^{p}_{0}(\Omega)$ we define the Stokes operator with Dirichlet boundary condition by
\begin{equation*}
\forall\,\boldsymbol{v}\in\boldsymbol{V}^{p'}_{0}(\Omega),\qquad\langle A\boldsymbol{u}\,,\,\boldsymbol{v}\rangle_{(\boldsymbol{V}^{p'}_{0}(\Omega))'\times\boldsymbol{V}^{p'}_{0}(\Omega)}\,=\,\int_{\Omega}\nabla\boldsymbol{u}:\nabla\overline{\boldsymbol{v}}\,\textrm{d}\,x.
\end{equation*}
Notice that, we can also define the Stokes operator with Dirichlet boundary
condition by $$A\,:\,\mathbf{D}(A)\subset\boldsymbol{L}^{p}_{\sigma,\tau}(\Omega)\longmapsto\boldsymbol{L}^{p}_{\sigma,\tau}(\Omega),$$
where $\mathbf{D}(A)\,=\,
\boldsymbol{W}^{2,p}(\Omega)\cap\boldsymbol{W}^{1,p}_{0}(\Omega)\cap\boldsymbol{L}^{p}_{\sigma}(\Omega)$ and
$A\,=\,-\,P\Delta$. We recall that
\begin{equation}\label{helmholtzproj1}
P:\boldsymbol{L}^{p}(\Omega)\longmapsto\boldsymbol{L}^{p}_{\sigma,\tau}(\Omega)
\end{equation} is the Helmholtz projection
defined by,
\begin{equation}\label{helmholtzproj2}
\forall\quad \boldsymbol{f}\in\boldsymbol{L}^{p}(\Omega),\qquad
P\boldsymbol{f}\,=\,\boldsymbol{f}\,-\,\boldsymbol{\mathrm{grad}}\,\pi,
\end{equation}
where $\pi$ is the unique solution of Problem (\ref{wn.1}). This means that, the Stokes operator is defined by : $$\boldsymbol{u}\in\mathbf{D}(A),\qquad
A\boldsymbol{u}\,=\,-P\Delta\boldsymbol{u}\,=\,-\Delta\boldsymbol{u}\,+\boldsymbol{\mathrm{grad}}\,\pi,$$
where $\pi$ is the unique solution up to an additive constant of
the problem
\begin{equation}\label{stokesdirichlet}
\mathrm{div}(\boldsymbol{\mathrm{grad}}\,\pi\,-\,\Delta\boldsymbol{u})=0\qquad\mathrm{in}\,\,\Omega,\qquad
(\boldsymbol{\mathrm{grad}}\,\pi\,-\,\Delta\boldsymbol{u})\cdot\boldsymbol{n}=0\qquad\textrm{on}\,\,\,\Gamma.
\end{equation}
\subsection{The Stokes operator with Navier-type boundary conditions}\label{stokesoperatorwithnbcsubsection}
In this Section we consider three different Stokes operators with Navier type boundary conditions.

When $\Omega $ is not simply-connected, the Stokes  operator with boundary condition \eqref{nbc} has a non trivial kernel included in all the $L^p$ spaces for $p\in (1, \infty)$. 
It may be caracterised as follows:
\begin{equation}\label{noinailb}
 \boldsymbol{K}_{\tau}(\Omega)\,=\,\big\{\boldsymbol{v}\in\boldsymbol{X}^{p}_{\tau}(\Omega);\,\,\mathrm{div}\,\boldsymbol{v}=0,\,\,\boldsymbol{\mathrm{curl}}\,\boldsymbol{v}=\boldsymbol{0}\,\,\textrm{in}\,\,\Omega\big\}.
 \end{equation}
It has been proved that his kernel  is actually independent of $p$ (cf.  \cite{Am2}, for $p=2$ and \cite{Am4} for $p\in (1, \infty)$), is of finite dimension $J\ge 1$ and spanned by the  functions
 $\widetilde{\boldsymbol{\mathrm{grad}}}\,q^{\tau}_{j}$, $1\leq j\leq
 J$, (see \cite[proposition 3.14]{Am2}). For all $1\leq j\leq J$, the function $\widetilde{\boldsymbol{\mathrm{grad}}}\,q^{\tau}_{j}$ is the extension by continuity of $\boldsymbol{\mathrm{grad}}\,q^{\tau}_{j}$ to $\Omega$,  with $q_{j}^{\tau}$ is the unique solution up to an additive
 constant of the problem:
 \begin{equation}\label{gradqj}
\left\{
\begin{array}{r@{~}c@{~}l}
- \Delta q_{j}^{\tau} &=& 0 \qquad \textrm{in} \,\,\, \Omega^{\circ}, \\
\partial_{n}q_{j}^{\tau}&=&0\qquad\textrm{on}\,\,\,\Gamma,\\
\left[ q_{j}^{\tau}\right] _{k}&=& \mathrm{constant},\,\,\,1\leq k \leq J,\\
\left[ \partial_{n}q_{j}^{\tau}\right] _{k}&=&0;\,\,\,1\leq k\leq J,\\
\langle\partial_{n}q_{j}^{\tau}\,,\,1\rangle_{\Sigma_{k}}&=&\delta_{jk},\,\,\,1\leq
k\leq J.
\end{array}
\right.
\end{equation}
We recall that, for all $1\leq j\leq J,\,$ the product $\langle\cdot\,.\,\cdot\rangle_{\Sigma_{j}}$ is the duality product between $\boldsymbol{W}^{-\frac{1}{p},p}(\Sigma_{j})\,$ and $\,\boldsymbol{W}^{1-\frac{1}{p'},p'}(\Sigma_{j})$.

\subsubsection{The Stokes operator with Navier-type  conditions on $\boldsymbol{L}^p _{ \sigma , \tau  }(\Omega )$}
Consider the space
\begin{equation}\label{vpt}
\boldsymbol{X}^{p}_{\sigma,\tau}(\Omega)\,=\,\Big\{\boldsymbol{v}\in\boldsymbol{X}^{p}_{\tau}(\Omega);\,\,\textrm{div}\,\boldsymbol{v}=0\,\,\,\textrm{in}\,\,\Omega\Big\},
\end{equation}
which is a Banach space for the norm $\boldsymbol{X}^{p}(\Omega)$. We recall that $\boldsymbol{X}^{p}_{\sigma,\tau}(\Omega)$ is a closed subspace
of $\boldsymbol{X}^{p}_{\tau}(\Omega)$ and on $\boldsymbol{X}^{p}_{\sigma,\tau}(\Omega)$ the norm of $\boldsymbol{X}^{p}_{\tau}(\Omega)$ is equivalent to the norm of $\boldsymbol{W}^{1,p}(\Omega)$.

Let $\boldsymbol{u}\in\boldsymbol{L}^{p} _{ \sigma , \tau  }(\Omega)$ be fixed and consider the mapping
\begin{eqnarray*}
{A}_{p}\boldsymbol{u}&:&\quad \boldsymbol{W}\longrightarrow \mathbb{C}\\
& &\quad \boldsymbol{v} \longrightarrow -\int_{\Omega}\boldsymbol{u}\cdot\Delta\overline{\boldsymbol{v}}\,\mathrm{d}\,x,
\end{eqnarray*}
where
\begin{equation*}
\boldsymbol{W}=\boldsymbol{X}^{p'}_{\sigma,\tau}(\Omega)\cap\boldsymbol{W}^{2,p'}(\Omega).
\end{equation*}
It is clear that ${A}_{p}\in\mathcal{L}(\boldsymbol{L}^{p} _{ \sigma , \tau  }(\Omega),\,\boldsymbol{W}')$ and thanks to de Rham's Lemma there exists $\pi\in W^{-1,p}(\Omega)$ such that $${A}_{p}\boldsymbol{u}+\Delta\boldsymbol{u}=\nabla\pi\qquad\mathrm{in}\quad\Omega.$$
Now suppose that $\boldsymbol{u}\in\boldsymbol{L}^{p} _{ \sigma , \tau  }(\Omega)$ and ${A}_{p}\boldsymbol{u}\in\boldsymbol{L}^{p} _{ \sigma , \tau  }(\Omega)$. Since $\Delta\boldsymbol{u}=-{A}_{p}\boldsymbol{u}+\nabla\pi,\,$ then thanks to \cite[Lemma 4.4]{Am3} $\,\mathbf{curl}\,\boldsymbol{u}\times\boldsymbol{n}\in\boldsymbol{W}^{-1-1/p,p}(\Gamma).$ Moreover if we suppose that $\mathbf{curl}\,\boldsymbol{u}\times\boldsymbol{n}=\boldsymbol{0}\,$ on $\,\Gamma$ then $(\boldsymbol{u},\,\pi)\in\boldsymbol{L}^{p}_{\sigma,\tau}(\Omega)\times W^{-1,p}(\Omega)$ is a solution of the problem
 \begin{equation*}
\left\{
\begin{array}{r@{~}c@{~}l}
 - \Delta \boldsymbol{u}\,+\,\nabla\pi ={A}_{p}\boldsymbol{u}, &&\quad\mathrm{div}\,\boldsymbol{u} = 0 \,\,\, \qquad\qquad \mathrm{in} \,\,\, \Omega, \\
\boldsymbol{u}\cdot \boldsymbol{n} = 0, &&\quad \mathbf{curl}\,\boldsymbol{u}\times\boldsymbol{n}=\boldsymbol{0}\qquad
\,\,\mathrm{on}\,\,\, \Gamma.
\end{array}
\right.
\end{equation*}
As a result using the regularity of the Stokes Problem \cite[Theorem 4.8]{Am3} one has $(\boldsymbol{u},\,\pi)\in\boldsymbol{W}^{2,p}(\Omega)\times W^{1,p}(\Omega)$ .
 
The operator ${A}_{p}:\mathbf{D}({A}_{p})\subset\boldsymbol{L}^{p} _{ \sigma , \tau  }(\Omega)\longmapsto\boldsymbol{L}^{p} _{ \sigma , \tau  }(\Omega)$ is a linear operator with
\begin{equation}
\label{c2dpa}
\mathbf{D}({A}_{p})=\Big\{ \boldsymbol{u} \in
\boldsymbol{W}^{2,p}(\Omega);\,\,\mathrm{div}\,\boldsymbol{u}=
0\,\, \mathrm{in}\,\,\Omega,
\,\,\boldsymbol{u}\cdot\boldsymbol{n}=0,\,\,\,\boldsymbol{\mathrm{curl}}\,\boldsymbol{u}\times\boldsymbol{n}=\boldsymbol{0}
\,\,\mathrm{on}\,\,\Gamma \Big\},
\end{equation} 
provided that $\Omega$ is of class $C^{2.1}$ (cf. \cite{Albaba}). Moreover 
\begin{equation}\label{Aptilde1}
\forall\,\,\boldsymbol{u}\in\mathbf{D}({A}_{p}),\qquad
{A}_{p}\boldsymbol{u}\,=\,-\Delta\boldsymbol{u}\,+\boldsymbol{\mathrm{grad}}\,\pi,
\end{equation}
where $\pi$ is the unique solution up to an additive constant of
the problem
\begin{equation*}
\mathrm{div}(\boldsymbol{\mathrm{grad}}\,\pi\,-\,\Delta\boldsymbol{u})=0\qquad\mathrm{in}\,\,\Omega,\qquad
(\boldsymbol{\mathrm{grad}}\,\pi\,-\,\Delta\boldsymbol{u})\cdot\boldsymbol{n}=0\qquad\textrm{on}\,\,\,\Gamma.
\end{equation*}
Observe that for all $\boldsymbol{u}\in\mathbf{D}({A}_{p})$ and for all $\boldsymbol{v}\in\boldsymbol{X}^{p'}_{\sigma,\tau}(\Omega)$ one has \begin{equation*}
\int_{\Omega}{A}_{p}\,\boldsymbol{u}\cdot\overline{\boldsymbol{v}}\,\mathrm{d}\,x\,=\,\int_{\Omega}\boldsymbol{\mathrm{curl}}\,\boldsymbol{u}\cdot\boldsymbol{\mathrm{curl}}\,\overline{\boldsymbol{v}}\,\textrm{d}\,x.
\end{equation*}
It easily follows that $(A_{p})^{\ast}=A_{p'}$. 

Notice also that for all for all $1<p,q<\infty$ and $\boldsymbol{u}\in\mathbf{D}(A_{p})\cap\mathbf{D}(A_{q})$, $A_{p}\boldsymbol{u}=A_{q}\boldsymbol{u}$.
We also recall the following propositions, see \cite[Proposition 3.1]{Albaba} for the proof.
\begin{prop}\label{sl}
For all $\boldsymbol{u}\in\mathbf{D}(A_{p})$, $A_{p}\boldsymbol{u}=-\Delta\boldsymbol{u}$. 
\end{prop}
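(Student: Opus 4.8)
The plan is to show that the pressure $\pi$ entering the definition \eqref{Aptilde1} of $A_p\boldsymbol{u}$ has vanishing gradient, so that the term $\boldsymbol{\mathrm{grad}}\,\pi$ simply disappears. Recall that for $\boldsymbol{u}\in\mathbf{D}(A_p)$ we have
\[
A_p\boldsymbol{u} = -\Delta\boldsymbol{u} + \boldsymbol{\mathrm{grad}}\,\pi,
\]
where $\pi\in W^{1,p}(\Omega)/\mathbb{R}$ is the \emph{unique} solution of the weak Neumann problem \eqref{wn.1} with right-hand side $\boldsymbol{f}=\Delta\boldsymbol{u}$ (which is legitimate since $\Delta\boldsymbol{u}\in\boldsymbol{L}^p(\Omega)$, by Lemma \ref{wn1}(i)). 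By that uniqueness, it suffices to prove that the field $\Delta\boldsymbol{u}$ is already admissible as its own solution, i.e. that $-\Delta\boldsymbol{u}\in\boldsymbol{L}^p_{\sigma,\tau}(\Omega)$: if $\mathrm{div}(\Delta\boldsymbol{u})=0$ in $\Omega$ and $(\Delta\boldsymbol{u})\cdot\boldsymbol{n}=0$ on $\Gamma$, then $\pi=\mathrm{const}$ solves \eqref{wn.1} (both the interior and boundary conditions reduce to $\mathrm{div}(\Delta\boldsymbol{u})=0$ and $(\Delta\boldsymbol{u})\cdot\boldsymbol{n}=0$), whence $\boldsymbol{\mathrm{grad}}\,\pi=\boldsymbol{0}$ and $A_p\boldsymbol{u}=-\Delta\boldsymbol{u}$.

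The divergence condition is immediate: since $\boldsymbol{u}\in\boldsymbol{W}^{2,p}(\Omega)$ with $\mathrm{div}\,\boldsymbol{u}=0$, the Laplacian and divergence commute in the distributional sense, so $\mathrm{div}(\Delta\boldsymbol{u})=\Delta(\mathrm{div}\,\boldsymbol{u})=0$. In particular $\Delta\boldsymbol{u}\in\boldsymbol{H}^p(\mathrm{div},\Omega)$, so its normal trace $(\Delta\boldsymbol{u})\cdot\boldsymbol{n}$ is well defined in $W^{-1/p,p}(\Gamma)$ and can be tested against boundary values of $W^{1,p'}(\Omega)$ functions.

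The main step is to prove that this normal trace vanishes, and this is where the boundary condition $\boldsymbol{\mathrm{curl}}\,\boldsymbol{u}\times\boldsymbol{n}=\boldsymbol{0}$ enters. Using $\mathrm{div}\,\boldsymbol{u}=0$ together with the identity $\Delta\boldsymbol{u}=\boldsymbol{\mathrm{grad}}(\mathrm{div}\,\boldsymbol{u})-\boldsymbol{\mathrm{curl}}\,\boldsymbol{\mathrm{curl}}\,\boldsymbol{u}$, we have $\Delta\boldsymbol{u}=-\boldsymbol{\mathrm{curl}}\,\boldsymbol{\mathrm{curl}}\,\boldsymbol{u}$. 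I would then test the normal trace against $\boldsymbol{\mathrm{grad}}\,\varphi$ for smooth $\varphi$: the Green formula \eqref{nt} (with $\mathrm{div}(\Delta\boldsymbol{u})=0$) gives
\[
\langle(\Delta\boldsymbol{u})\cdot\boldsymbol{n},\,\varphi\rangle_{\Gamma}
=\int_{\Omega}\Delta\boldsymbol{u}\cdot\boldsymbol{\mathrm{grad}}\,\overline{\varphi}\,\mathrm{d}x
=-\int_{\Omega}\boldsymbol{\mathrm{curl}}\,\boldsymbol{\mathrm{curl}}\,\boldsymbol{u}\cdot\boldsymbol{\mathrm{grad}}\,\overline{\varphi}\,\mathrm{d}x.
\]
Applying the curl Green formula \eqref{tt} with $\boldsymbol{v}=\boldsymbol{\mathrm{curl}}\,\boldsymbol{u}$ and the test field $\boldsymbol{\mathrm{grad}}\,\varphi$, and using $\boldsymbol{\mathrm{curl}}(\boldsymbol{\mathrm{grad}}\,\varphi)=\boldsymbol{0}$, the volume integral collapses to the boundary term $\langle\boldsymbol{\mathrm{curl}}\,\boldsymbol{u}\times\boldsymbol{n},\,\boldsymbol{\mathrm{grad}}\,\varphi\rangle_{\Gamma}$. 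Because $\boldsymbol{u}\in\mathbf{D}(A_p)$ satisfies $\boldsymbol{\mathrm{curl}}\,\boldsymbol{u}\times\boldsymbol{n}=\boldsymbol{0}$ on $\Gamma$ (and $\boldsymbol{\mathrm{curl}}\,\boldsymbol{u}\in\boldsymbol{W}^{1,p}(\Omega)$, so this tangential trace genuinely vanishes in $\boldsymbol{W}^{1-1/p,p}(\Gamma)$), that term is zero. Hence $\langle(\Delta\boldsymbol{u})\cdot\boldsymbol{n},\,\varphi\rangle_{\Gamma}=0$ for every such $\varphi$, and by density $(\Delta\boldsymbol{u})\cdot\boldsymbol{n}=0$ on $\Gamma$.

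The only delicate point—the expected obstacle—is the regularity needed for $\boldsymbol{\mathrm{grad}}\,\varphi$ to be an admissible test field in \eqref{tt}: one needs $\boldsymbol{\mathrm{grad}}\,\varphi\in\boldsymbol{W}^{1,p'}(\Omega)$, i.e. $\varphi\in W^{2,p'}(\Omega)$, so that the tangential trace pairing is classical. I would therefore run the computation for $\varphi$ in the dense class $W^{2,p'}(\Omega)$ (whose boundary values are dense in $W^{1/p,p'}(\Gamma)$) and pass to the limit; this is harmless since the surviving boundary term vanishes identically, leaving no contribution to control. With $-\Delta\boldsymbol{u}\in\boldsymbol{L}^p_{\sigma,\tau}(\Omega)$ thus established, the uniqueness in Lemma \ref{wn1}(i) forces $\boldsymbol{\mathrm{grad}}\,\pi=\boldsymbol{0}$, and therefore $A_p\boldsymbol{u}=-\Delta\boldsymbol{u}$, as claimed.
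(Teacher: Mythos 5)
Your proof is correct and follows essentially the same route the paper relies on (it defers to \cite[Proposition 3.1]{Albaba}, but the key chain of facts --- $\mathrm{div}\,(\Delta\boldsymbol{u})=\Delta(\mathrm{div}\,\boldsymbol{u})=0$, the identity $\Delta\boldsymbol{u}=-\boldsymbol{\mathrm{curl}}\,\boldsymbol{\mathrm{curl}}\,\boldsymbol{u}$, and the implication $\boldsymbol{\mathrm{curl}}\,\boldsymbol{u}\times\boldsymbol{n}=\boldsymbol{0}\Rightarrow\boldsymbol{\mathrm{curl}}\,\boldsymbol{\mathrm{curl}}\,\boldsymbol{u}\cdot\boldsymbol{n}=0$ --- is exactly the one the authors invoke in the remark following Theorem 4.3, after which uniqueness in Lemma 2.7(i) forces $\boldsymbol{\mathrm{grad}}\,\pi=\boldsymbol{0}$). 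Your careful justification of the normal-trace step via the duality formulas \eqref{tt} and \eqref{nt}, with $W^{2,p'}$ test functions and a density argument, is a valid and complete way to make that implication rigorous.
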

\begin{rmk}
\rm{Unlike the Stokes operator with Dirichlet boundary condition,
we observe that here the pressure is constant, while with
Dirichlet boundary condition the pressure cannot be a constant
since it is the solution of the Problem (\ref{stokesdirichlet}).}
\end{rmk}
In the rest of this paper we will consider the Stokes operator with Navier-type boundary conditions \eqref{nbc}. We end this section by the following propositions (see \cite[Proposition 3.2, Proposition 3.3]{Albaba} for the proof):
\begin{prop}\label{dd1}
The space $\mathbf{D}(A_{p})$ is dense in
$\boldsymbol{L}^{p}_{\sigma,\tau}(\Omega)$.
\end{prop}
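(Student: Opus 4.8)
The plan is to prove that the annihilator of $\mathbf{D}(A_{p})$ in the dual space is trivial, and then to conclude by the Hahn--Banach theorem. First I would identify the dual of $\boldsymbol{L}^{p}_{\sigma,\tau}(\Omega)$. Since the Helmholtz projection $P$ of \eqref{helmholtzproj1} yields the topological splitting $\boldsymbol{L}^{p}(\Omega)=\boldsymbol{L}^{p}_{\sigma,\tau}(\Omega)\oplus\boldsymbol{\mathrm{grad}}\,W^{1,p}(\Omega)$, every continuous linear form on $\boldsymbol{L}^{p}_{\sigma,\tau}(\Omega)$ is represented by some $\boldsymbol{g}\in\boldsymbol{L}^{p'}_{\sigma,\tau}(\Omega)$ through the pairing $\boldsymbol{u}\mapsto\int_{\Omega}\boldsymbol{u}\cdot\overline{\boldsymbol{g}}\,\mathrm{d}x$. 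It therefore suffices to show that if $\boldsymbol{g}\in\boldsymbol{L}^{p'}_{\sigma,\tau}(\Omega)$ satisfies $\int_{\Omega}\boldsymbol{u}\cdot\overline{\boldsymbol{g}}\,\mathrm{d}x=0$ for every $\boldsymbol{u}\in\mathbf{D}(A_{p})$, then $\boldsymbol{g}=\boldsymbol{0}$.

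The argument rests on the fact that $\mathbf{D}(A_{p})$ contains two useful families of test fields. On the one hand, every smooth compactly supported divergence free field $\boldsymbol{\varphi}$ lies in $\mathbf{D}(A_{p})$, since it is of class $\boldsymbol{W}^{2,p}(\Omega)$, divergence free, and vanishes near $\Gamma$, so the conditions $\boldsymbol{\varphi}\cdot\boldsymbol{n}=0$ and $\boldsymbol{\mathrm{curl}}\,\boldsymbol{\varphi}\times\boldsymbol{n}=\boldsymbol{0}$ hold trivially. Testing against all such $\boldsymbol{\varphi}$ gives $\int_{\Omega}\boldsymbol{\varphi}\cdot\overline{\boldsymbol{g}}\,\mathrm{d}x=0$, whence by de Rham's lemma $\overline{\boldsymbol{g}}=\boldsymbol{\mathrm{grad}}\,\pi$ for some $\pi$. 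Combined with $\mathrm{div}\,\boldsymbol{g}=0$ in $\Omega$ and $\boldsymbol{g}\cdot\boldsymbol{n}=0$ on $\Gamma$, this forces $\pi$ to be harmonic with $\partial_{n}\pi=0$, that is, $\overline{\boldsymbol{g}}$, and hence $\boldsymbol{g}$, belongs to the kernel $\boldsymbol{K}_{\tau}(\Omega)$ of \eqref{noinailb}. On the other hand, $\boldsymbol{K}_{\tau}(\Omega)\subset\mathbf{D}(A_{p})$: its elements are divergence free, tangent to $\Gamma$, and curl free, so $\boldsymbol{\mathrm{curl}}\,\cdot\times\boldsymbol{n}=\boldsymbol{0}$ on $\Gamma$, and they belong to $\boldsymbol{W}^{2,p}(\Omega)$ by elliptic regularity on the $C^{2,1}$ domain.

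Since $\boldsymbol{g}\in\boldsymbol{K}_{\tau}(\Omega)\subset\mathbf{D}(A_{p})$ and $\boldsymbol{K}_{\tau}(\Omega)$ is contained in every $\boldsymbol{L}^{r}(\Omega)$, I can use $\boldsymbol{g}$ itself as a test field in the annihilation identity, obtaining $\int_{\Omega}|\boldsymbol{g}|^{2}\,\mathrm{d}x=0$ and hence $\boldsymbol{g}=\boldsymbol{0}$. This shows the annihilator is trivial, so $\mathbf{D}(A_{p})$ is dense in $\boldsymbol{L}^{p}_{\sigma,\tau}(\Omega)$. Equivalently, one may phrase this through the splitting $\boldsymbol{L}^{p}_{\sigma,\tau}(\Omega)=\overline{\boldsymbol{\mathcal{D}}_{\sigma}(\Omega)}\oplus\boldsymbol{K}_{\tau}(\Omega)$ and observe that both summands are contained in $\overline{\mathbf{D}(A_{p})}$. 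The one genuinely delicate point is precisely the non--simply--connected geometry: testing only against compactly supported divergence free fields does \emph{not} yield $\boldsymbol{g}=\boldsymbol{0}$ but merely $\boldsymbol{g}\in\boldsymbol{K}_{\tau}(\Omega)$, because this smooth dense family fails to cover the finite--dimensional kernel $\boldsymbol{K}_{\tau}(\Omega)$; the argument closes exactly because $\boldsymbol{K}_{\tau}(\Omega)$ is itself contained in $\mathbf{D}(A_{p})$.
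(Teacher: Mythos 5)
Your argument is correct, but it is longer than necessary, and the ``genuinely delicate point'' you single out at the end is not actually there. The paper defers the proof to \cite{Albaba}; the standard route, and the one the paper itself takes for granted elsewhere (the proof of Proposition \ref{dd2} begins by approximating an arbitrary element of $\boldsymbol{L}^{p}_{\sigma,\tau}(\Omega)$ by a sequence in $\boldsymbol{\mathcal{D}}_{\sigma}(\Omega)$), is simply the chain $\boldsymbol{\mathcal{D}}_{\sigma}(\Omega)\subset\mathbf{D}(A_{p})\subset\boldsymbol{L}^{p}_{\sigma,\tau}(\Omega)$ combined with the density of $\boldsymbol{\mathcal{D}}_{\sigma}(\Omega)$ in $\boldsymbol{L}^{p}_{\sigma,\tau}(\Omega)$, which holds on any bounded Lipschitz domain irrespective of its topology. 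Your own computation in fact proves exactly this: the de Rham lemma applied to a functional vanishing on $\boldsymbol{\mathcal{D}}_{\sigma}(\Omega)$ produces a \emph{single-valued} potential $\pi\in W^{1,p'}(\Omega)$ with $\overline{\boldsymbol{g}}=\boldsymbol{\mathrm{grad}}\,\pi$; then $\Delta\pi=0$ in $\Omega$ and $\partial_{n}\pi=0$ on $\Gamma$ force $\pi$ to be constant by the uniqueness part of Lemma \ref{wn1}, hence $\boldsymbol{g}=\boldsymbol{0}$ already at that stage. The nontrivial elements of $\boldsymbol{K}_{\tau}(\Omega)$ are gradients only on the cut domain $\Omega^{\circ}$ (the functions $q_{j}^{\tau}$ of \eqref{gradqj} have nonzero jumps across the cuts), so they are \emph{not} annihilated by $\boldsymbol{\mathcal{D}}_{\sigma}(\Omega)$: a compactly supported divergence-free field circulating around a handle has nonzero flux $\langle\boldsymbol{\varphi}\cdot\boldsymbol{n},1\rangle_{\Sigma_{j}}$ and pairs nontrivially with $\widetilde{\boldsymbol{\mathrm{grad}}}\,q_{j}^{\tau}$. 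Consequently your intermediate conclusion ``$\boldsymbol{g}\in\boldsymbol{K}_{\tau}(\Omega)$'' is true but strictly weaker than what the de Rham step already yields, and the closing step (testing against $\boldsymbol{g}$ itself, which does rest on the correct observation $\boldsymbol{K}_{\tau}(\Omega)\subset\mathbf{D}(A_{p})$) is a harmless but superfluous detour. The place where the multiple connectivity genuinely bites is Proposition \ref{densitédurang}: it is the \emph{range} of $A_{p}$, not its domain, whose closure misses $\boldsymbol{K}_{\tau}(\Omega)$.
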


\begin{rmk}\label{rmkequivnorm}
\rm{(i) Notice that, thanks to Lemmas \ref{con1} and \ref{con2}, since $\Omega$ is of class $C^{2,1}$ we have
\begin{equation*} 
\forall\,\boldsymbol{u}\in\mathbf{D}(A_{p}),\qquad
\Vert\boldsymbol{u}\Vert_{\boldsymbol{W}^{2,p}(\Omega)}\simeq \Vert\boldsymbol{u}\Vert_{\boldsymbol{L}^{p}(\Omega)}+\Vert\Delta\boldsymbol{u}\Vert_{\boldsymbol{L}^{p}(\Omega)}.
\end{equation*}

\noindent(ii) We recall that, thanks to \cite[Proposition 4.7]{Am3}, since $\Omega$ is of class $C^{2,1}$, for all $\boldsymbol{u}\in\mathbf{D}(A_{p})$ such that $\langle\boldsymbol{u}\cdot\boldsymbol{n}\,,\,1\rangle_{\Sigma_{j}}=0$, $1\leq j \leq J$ we have
\begin{equation*}
\Vert\boldsymbol{u}\Vert_{\boldsymbol{W}^{2,p}(\Omega)}\simeq\,\Vert\Delta\boldsymbol{u}\Vert_{\boldsymbol{L}^{p}(\Omega)}.
\end{equation*}
}
\end{rmk}
\begin{prop}\label{densitédurang}
Suppose that $\Omega$ is not simply connected. The range $R(A_{p})$ of the Stokes operator is not dense in $\boldsymbol{L}^{p}_{\sigma,\tau}(\Omega)$.
\end{prop}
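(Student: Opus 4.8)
The plan is to exhibit a single nonzero continuous linear functional on $\boldsymbol{L}^{p}_{\sigma,\tau}(\Omega)$ that annihilates the entire range $R(A_{p})$; the existence of such a functional immediately prevents $R(A_{p})$ from being dense. The natural candidate is furnished by the kernel $\boldsymbol{K}_{\tau}(\Omega)$, which is nontrivial precisely because $\Omega$ is not simply connected.

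First I would fix any $\boldsymbol{v}\in\boldsymbol{K}_{\tau}(\Omega)$ with $\boldsymbol{v}\neq\boldsymbol{0}$, which is possible since $\dim\boldsymbol{K}_{\tau}(\Omega)=J\geq 1$, and note that $\boldsymbol{v}\in\boldsymbol{X}^{p'}_{\sigma,\tau}(\Omega)$. Indeed, $\boldsymbol{K}_{\tau}(\Omega)$ is independent of the integrability exponent and contained in every $\boldsymbol{L}^{r}(\Omega)$; its elements are divergence free, satisfy $\boldsymbol{v}\cdot\boldsymbol{n}=0$ on $\Gamma$, and have $\boldsymbol{\mathrm{curl}}\,\boldsymbol{v}=\boldsymbol{0}$, so in particular $\boldsymbol{v}\in\boldsymbol{L}^{p'}(\Omega)$ with $\boldsymbol{\mathrm{curl}}\,\boldsymbol{v}\in\boldsymbol{L}^{p'}(\Omega)$. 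I would then define the functional $\Phi\colon\boldsymbol{L}^{p}_{\sigma,\tau}(\Omega)\to\mathbb{C}$ by $\Phi(\boldsymbol{w})=\int_{\Omega}\boldsymbol{w}\cdot\overline{\boldsymbol{v}}\,\mathrm{d}x$, which is continuous by H\"older's inequality because $\boldsymbol{v}\in\boldsymbol{L}^{p'}(\Omega)$.

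The key step is to evaluate $\Phi$ on the range. For every $\boldsymbol{u}\in\mathbf{D}(A_{p})$, the integration-by-parts identity recalled just above Proposition~\ref{dd1} gives
\[
\Phi(A_{p}\boldsymbol{u})=\int_{\Omega}A_{p}\boldsymbol{u}\cdot\overline{\boldsymbol{v}}\,\mathrm{d}x=\int_{\Omega}\boldsymbol{\mathrm{curl}}\,\boldsymbol{u}\cdot\boldsymbol{\mathrm{curl}}\,\overline{\boldsymbol{v}}\,\mathrm{d}x=0,
\]
the last equality holding because $\boldsymbol{\mathrm{curl}}\,\boldsymbol{v}=\boldsymbol{0}$. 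Hence $\Phi$ vanishes on all of $R(A_{p})$. On the other hand, since $\boldsymbol{v}$ also lies in $\boldsymbol{L}^{p}_{\sigma,\tau}(\Omega)$, one has $\Phi(\boldsymbol{v})=\int_{\Omega}|\boldsymbol{v}|^{2}\,\mathrm{d}x>0$, so $\Phi\neq 0$. A nonzero continuous functional vanishing on $R(A_{p})$ forces $\overline{R(A_{p})}\neq\boldsymbol{L}^{p}_{\sigma,\tau}(\Omega)$, which is exactly the assertion.

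Conceptually, this argument is nothing but the identity $\overline{R(A_{p})}={}^{\perp}\!\ker(A_{p})^{\ast}$ combined with $(A_{p})^{\ast}=A_{p'}$ and the fact that $\ker A_{p'}=\boldsymbol{K}_{\tau}(\Omega)$, the functional $\Phi$ being the pairing against a kernel element of the adjoint. I do not expect a genuine obstacle: the only two points requiring a brief check are the inclusion $\boldsymbol{K}_{\tau}(\Omega)\subset\boldsymbol{X}^{p'}_{\sigma,\tau}(\Omega)$, needed so that the Green-type identity applies to $\boldsymbol{v}$, and the non-vanishing of $\Phi$, both of which are immediate from the properties of $\boldsymbol{K}_{\tau}(\Omega)$ listed in Section~\ref{stokesoperatorwithnbcsubsection}.
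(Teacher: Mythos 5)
Your proof is correct and is essentially the paper's argument: the paper invokes the annihilator identity $\overline{R(A_{p})}=[\ker (A_{p})^{\ast}]^{\perp}$ together with $(A_{p})^{\ast}=A_{p'}$ and $\ker A_{p'}=\boldsymbol{K}_{\tau}(\Omega)\neq\{0\}$, which is exactly the duality you unpack by exhibiting the explicit functional $\Phi(\cdot)=\int_{\Omega}\cdot\;\overline{\boldsymbol{v}}\,\mathrm{d}x$ for a nonzero kernel element $\boldsymbol{v}$. The only difference is presentational, and your verification that $\Phi$ kills $R(A_{p})$ via the identity $\int_{\Omega}A_{p}\boldsymbol{u}\cdot\overline{\boldsymbol{v}}\,\mathrm{d}x=\int_{\Omega}\boldsymbol{\mathrm{curl}}\,\boldsymbol{u}\cdot\boldsymbol{\mathrm{curl}}\,\overline{\boldsymbol{v}}\,\mathrm{d}x$ is sound.
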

\begin{proof}
Since the domain $\Omega$ is not simply connected, the dimension of the kernel $\boldsymbol{K}_{\tau}(\Omega)$ of the Stokes operator $A_{p'}$ on $\boldsymbol{L}^{p'}_{\sigma,\tau}(\Omega)$ is finite and greater than or equal to 1. Suppose then that the range $R(A_{p})$ is dense in $\boldsymbol{L}^{p}_{\sigma,\tau}(\Omega)$.  Using the fact  that $(A_{p})^{\ast}=A_{p'}$ and that
\begin{equation*}
\boldsymbol{L}^{p}_{\sigma,\tau}(\Omega)=\overline{R(A_{p})}= [Ker(A_{p'})]^{\perp}
\end{equation*}
we obtain that 
\begin{equation*}
Ker(A_{p'})=\boldsymbol{K}_{\tau}(\Omega)=\{0\}
\end{equation*}
which is a contradiction.
\end{proof}
\subsubsection{The Stokes operator with Navier-type  conditions on $[\boldsymbol{H}^{p'}_{0}(\mathrm{div},\Omega)]'_{\sigma,\tau}$}
Consider now the space:
\begin{equation*}
\boldsymbol{E}=\{\boldsymbol{f}\in[\boldsymbol{H}^{p'}_{0}(\mathrm{div},\Omega)]';\,\,\mathrm{div}\,\boldsymbol{f}\in L^{p}(\Omega)\},
\end{equation*}
which is a Banach space with the norm 
\begin{equation}
\Vert\boldsymbol{f}\Vert_{\boldsymbol{E}}=\Vert\boldsymbol{f}\Vert_{[\boldsymbol{H}^{p'}_{0}(\mathrm{div},\Omega)]'}+\Vert\mathrm{div}\,\boldsymbol{f}\Vert_{L^{p}(\Omega)}.
\end{equation}
\begin{lemma}\label{density1}
The space $\boldsymbol{\mathcal{D}}(\overline{\Omega})$ is dense in $\boldsymbol{E}$.
\end{lemma}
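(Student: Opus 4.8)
The plan is to argue by duality through the Hahn--Banach theorem: it suffices to show that every continuous linear form $L$ on $\boldsymbol{E}$ vanishing on $\boldsymbol{\mathcal{D}}(\overline{\Omega})$ is identically zero. First I would identify the relevant dual. The map $\boldsymbol{f}\mapsto(\boldsymbol{f},\mathrm{div}\,\boldsymbol{f})$ is an isometry of $\boldsymbol{E}$ onto a closed subspace of the product $[\boldsymbol{H}^{p'}_{0}(\mathrm{div},\Omega)]'\times L^{p}(\Omega)$, so any $L\in\boldsymbol{E}'$ extends by Hahn--Banach to this product, whose dual is $\boldsymbol{H}^{p'}_{0}(\mathrm{div},\Omega)\times L^{p'}(\Omega)$ (using that $\boldsymbol{H}^{p'}_{0}(\mathrm{div},\Omega)$ is reflexive, being a closed subspace of $\boldsymbol{L}^{p'}(\Omega)\times L^{p'}(\Omega)$). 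Hence there exist $\boldsymbol{g}\in\boldsymbol{H}^{p'}_{0}(\mathrm{div},\Omega)$ and $h\in L^{p'}(\Omega)$ such that
$$L(\boldsymbol{f})=\langle\boldsymbol{f},\boldsymbol{g}\rangle+\int_{\Omega}\mathrm{div}\,\boldsymbol{f}\;\overline{h}\,\mathrm{d}x,\qquad\boldsymbol{f}\in\boldsymbol{E},$$
where $\langle\cdot,\cdot\rangle$ denotes the duality between $[\boldsymbol{H}^{p'}_{0}(\mathrm{div},\Omega)]'$ and $\boldsymbol{H}^{p'}_{0}(\mathrm{div},\Omega)$.

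Next I would read off the structure of the pair $(\boldsymbol{g},h)$ from the vanishing of $L$ on test fields. Testing with $\boldsymbol{\varphi}\in\boldsymbol{\mathcal{D}}(\Omega)$, for which $\langle\boldsymbol{\varphi},\boldsymbol{g}\rangle=\int_{\Omega}\boldsymbol{\varphi}\cdot\overline{\boldsymbol{g}}\,\mathrm{d}x$ and the second term equals $-\langle\nabla h,\boldsymbol{\varphi}\rangle_{\boldsymbol{\mathcal{D}}',\boldsymbol{\mathcal{D}}}$, the identity $L(\boldsymbol{\varphi})=0$ gives $\boldsymbol{g}=\nabla h$ in $\boldsymbol{\mathcal{D}}'(\Omega)$; since $\boldsymbol{g}\in\boldsymbol{L}^{p'}(\Omega)$ this forces $h\in W^{1,p'}(\Omega)$ with $\nabla h=\boldsymbol{g}$. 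Testing now with general $\boldsymbol{\varphi}\in\boldsymbol{\mathcal{D}}(\overline{\Omega})$ and integrating by parts (legitimate since $h\in W^{1,p'}(\Omega)$), all interior contributions cancel and one is left with $\langle\boldsymbol{\varphi}\cdot\boldsymbol{n},\overline{h}\rangle_{\Gamma}=0$; as the normal traces $\boldsymbol{\varphi}\cdot\boldsymbol{n}$ range over a dense set of boundary data, this yields $h|_{\Gamma}=0$, i.e. $h\in W^{1,p'}_{0}(\Omega)$.

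The final and most delicate step is to conclude that $L\equiv0$ on all of $\boldsymbol{E}$, and here I expect the main obstacle. The naive idea of approximating $h$ in $W^{1,p'}$ fails because it gives no control on $\Delta h_{k}$, so the approximants $\nabla h_{k}$ need not converge in the graph norm of $\boldsymbol{H}^{p'}_{0}(\mathrm{div},\Omega)$. The remedy is an elliptic-regularity bootstrap: since $\boldsymbol{g}=\nabla h\in\boldsymbol{H}^{p'}_{0}(\mathrm{div},\Omega)$ we have $\Delta h=\mathrm{div}\,\boldsymbol{g}\in L^{p'}(\Omega)$ together with $h\in W^{1,p'}_{0}(\Omega)$, so $W^{2,p'}$ regularity for the Dirichlet Laplacian (available because $\Gamma$ is of class $C^{1,1}$) gives $h\in W^{2,p'}(\Omega)$; moreover $\partial_{n}h=\boldsymbol{g}\cdot\boldsymbol{n}=0$ on $\Gamma$, so $h$ has vanishing Cauchy data and therefore $h\in W^{2,p'}_{0}(\Omega)=\overline{\mathcal{D}(\Omega)}^{\,W^{2,p'}}$. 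Picking $h_{k}\in\mathcal{D}(\Omega)$ with $h_{k}\to h$ in $W^{2,p'}(\Omega)$, one gets $\nabla h_{k}\to\nabla h=\boldsymbol{g}$ in $\boldsymbol{H}^{p'}_{0}(\mathrm{div},\Omega)$ and $h_{k}\to h$ in $L^{p'}(\Omega)$. For each $k$ the identity $\langle\boldsymbol{f},\nabla h_{k}\rangle=-\int_{\Omega}\mathrm{div}\,\boldsymbol{f}\,\overline{h_{k}}\,\mathrm{d}x$ is just the distributional definition of $\mathrm{div}\,\boldsymbol{f}\in L^{p}(\Omega)$, and passing to the limit yields $\langle\boldsymbol{f},\boldsymbol{g}\rangle=-\int_{\Omega}\mathrm{div}\,\boldsymbol{f}\,\overline{h}\,\mathrm{d}x$, that is $L(\boldsymbol{f})=0$ for every $\boldsymbol{f}\in\boldsymbol{E}$. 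By Hahn--Banach this proves the asserted density.
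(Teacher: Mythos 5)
Your proof is correct and follows the same overall duality scheme as the paper's: represent a functional $L$ vanishing on $\boldsymbol{\mathcal{D}}(\overline{\Omega})$ as $\boldsymbol{f}\mapsto\langle\boldsymbol{f},\boldsymbol{g}\rangle+\int_{\Omega}\mathrm{div}\,\boldsymbol{f}\,\overline{h}\,\mathrm{d}x$ with $\boldsymbol{g}\in\boldsymbol{H}^{p'}_{0}(\mathrm{div},\Omega)$ and $h\in L^{p'}(\Omega)$, identify $\boldsymbol{g}=\nabla h$, upgrade $h$ to $W^{2,p'}_{0}(\Omega)$, and conclude by approximating $h$ in $W^{2,p'}$ by functions of $\mathcal{D}(\Omega)$. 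The one step where you genuinely diverge is the upgrade. You first extract $h\in W^{1,p'}_{0}(\Omega)$ from boundary testing, then invoke $W^{2,p'}$ elliptic regularity for the Dirichlet Laplacian (using the regularity of $\Gamma$) together with $\partial_{n}h=\boldsymbol{g}\cdot\boldsymbol{n}=0$ to obtain vanishing Cauchy data and hence $h\in W^{2,p'}_{0}(\Omega)$. The paper instead extends $\boldsymbol{g}$ and $h$ by zero to $\mathbb{R}^{3}$ --- legitimate precisely because the condition $\boldsymbol{g}\cdot\boldsymbol{n}=0$ ensures the zero extension of $\boldsymbol{g}=\nabla h$ has $L^{p'}$ divergence with no surface contribution --- deduces $\widetilde{h}\in W^{2,p'}(\mathbb{R}^{3})$ from whole-space Calder\'on--Zygmund theory, and reads off $h\in W^{2,p'}_{0}(\Omega)$ from the support condition. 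The paper's route trades up-to-the-boundary elliptic regularity in $\Omega$ for whole-space regularity; yours is equally valid under the standing $C^{1,1}$ (indeed $C^{2,1}$) hypothesis on $\Gamma$, and has the small advantage of making explicit where each of the two trace conditions $h|_{\Gamma}=0$ and $\partial_{n}h|_{\Gamma}=0$ comes from. The final limiting argument is identical in both proofs.
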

\begin{proof}
Let $\boldsymbol{\ell}\in\boldsymbol{E}'$ such that $\langle\boldsymbol{\ell}\,,\,\boldsymbol{v}\rangle_{\boldsymbol{E}'\times\boldsymbol{E}}=0$ for all $\boldsymbol{v}\in\boldsymbol{\mathcal{D}}(\overline{\Omega})$ and let us show that $\boldsymbol{\ell}$ is null in $\boldsymbol{E}$. We know that there exists a function $\boldsymbol{u}$ in $\boldsymbol{H}^{p'}_{0}(\mathrm{div},\Omega)$ and a function $\chi$ in $L^{p'}(\Omega)$ such that for all $\boldsymbol{f}$ in $\boldsymbol{E}$ one has:
\begin{equation}
\langle\boldsymbol{\ell}\,,\,\boldsymbol{f}\rangle_{\boldsymbol{E}'\times\boldsymbol{E}}\,=\,\langle\boldsymbol{f}\,,\,\boldsymbol{u}\rangle_{[\boldsymbol{H}^{p'}_{0}(\mathrm{div},\Omega)]'\times\boldsymbol{H}^{p'}_{0}(\mathrm{div},\Omega)}\,+\,\int_{\Omega}\mathrm{div}\,\boldsymbol{f}\,\overline{\chi}\,\textrm{d}\,x.
\end{equation}
We denote by $\widetilde{\boldsymbol{u}}$ and $\widetilde{\chi}$ the extension of $\boldsymbol{u}$ and $\chi$ by zero to $\mathbb{R}^{3}$. As a result for every $\boldsymbol{f}\in\boldsymbol{\mathcal{D}}(\mathbb{R}^{3})$ one has
\begin{equation*}
\langle\boldsymbol{f}\,,\,\widetilde{\boldsymbol{u}}\rangle_{[\boldsymbol{H}^{p'}_{0}(\mathrm{div},\mathbb{R}^{3})]'\times\boldsymbol{H}^{p'}_{0}(\mathrm{div},\mathbb{R}^{3})}\,+\,\int_{\mathbb{R}^{3}}\mathrm{div}\,\boldsymbol{f}\,\overline{\widetilde{\chi}}\textrm{d}\,x\,=\,0.
\end{equation*}
Then $\widetilde{\boldsymbol{u}}=\nabla\,\widetilde{\chi}$ and $\boldsymbol{u}=\nabla\,\chi$. This means that $\widetilde{\chi}\in L^{p'}(\mathbb{R}^{3})$ and $\nabla\,\widetilde{\chi}\in\boldsymbol{H}^{p'}_{0}(\mathrm{div},\,\mathbb{R}^{3})$. Then $\widetilde{\chi}\in W^{2,p'}(\mathbb{R}^{3})$ and $\chi\in W^{2,p'}_{0}(\Omega)$. Now since $\mathcal{D}(\Omega)$ dense in $W^{2,p'}_{0}(\Omega)$ there exists a sequence $(\chi_{k})_{k}$ in $\mathcal{D}(\Omega)$ that converges to $\chi$ in $W^{2,p'}(\Omega)$. Finally  for all $\boldsymbol{f}\in\boldsymbol{E}$ one has:
\begin{eqnarray*}
\langle\boldsymbol{\ell}\,,\,\boldsymbol{f}\rangle_{\boldsymbol{E}'\times\boldsymbol{E}}&=&\langle\boldsymbol{f}\,,\,\boldsymbol{u}\rangle_{[\boldsymbol{H}^{p'}_{0}(\mathrm{div},\Omega)]'\times\boldsymbol{H}^{p'}_{0}(\mathrm{div},\Omega)}\,+\,\int_{\Omega}\mathrm{div}\,\boldsymbol{f}\,\overline{\chi}\,\textrm{d}\,x.\\
&=&\lim _{k\rightarrow +\infty}\Big[\langle\boldsymbol{f}\,,\,\nabla\,\chi_{k}\rangle_{[\boldsymbol{H}^{p'}_{0}(\mathrm{div},\Omega)]'\times\boldsymbol{H}^{p'}_{0}(\mathrm{div},\Omega)}\,+\,\int_{\Omega}\mathrm{div}\,\boldsymbol{f}\,\overline{\chi_{k}}\,\textrm{d}\,x.\Big] \\
&=&0.
\end{eqnarray*}
\end{proof}
The following Corollary gives us the normal trace of a function $\boldsymbol{f}$ in $\boldsymbol{E}$.
\begin{coro}\label{tracehpdiv}
The linear mapping $\gamma\,:\,\boldsymbol{f}\longmapsto\boldsymbol{f}\cdot\boldsymbol{n}$ defined on $\boldsymbol{\mathcal{D}}(\overline{\Omega})$ can be extended to a linear continuous mapping still denoted by $\gamma\,:\,\boldsymbol{E}\longmapsto W^{-1-1/p,p}(\Gamma)$. Moreover we have the following Green formula: for all $\boldsymbol{f}\in\boldsymbol{E}$ and for all $\chi\in W^{2,p'}(\Omega)$ such that $\frac{\partial\,\chi}{\partial\,\boldsymbol{n}}=0$ on $\Gamma$,
\begin{equation}\label{greenfrhdiv}
\int_{\Omega}(\mathrm{div}\,\boldsymbol{f})\,\overline{\chi}\,\textrm{d}\,x\,=\,-\,\langle\boldsymbol{f}\,,\,\nabla\,\chi\rangle_{\Omega}\,+\,\langle\boldsymbol{f}\cdot\boldsymbol{n}\,,\,\chi\rangle_{\Gamma},
\end{equation}
where $\langle\cdot\,,\,\cdot\rangle_{\Omega}=\langle\cdot\,,\,\cdot\rangle_{[\boldsymbol{H}^{p'}_{0}(\mathrm{div},\Omega)]'\times\boldsymbol{H}^{p'}_{0}(\mathrm{div},\Omega)}$ and  $\langle \cdot\,,\,\cdot \rangle_{\Gamma}\,=\,\langle \cdot\,,\, \cdot
\rangle_{W^{-1-1/p,p}(\Gamma)\times W^{1+1/p,p'}(\Gamma)}$.
 \end{coro}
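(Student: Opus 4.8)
The plan is to mimic the construction of the normal and tangential traces in \eqref{nt} and in Lemma \ref{fg1}: establish the Green formula first for smooth fields, read it as the definition of $\boldsymbol{f}\cdot\boldsymbol{n}$ as a bounded functional on a boundary space, and then pass to the whole space $\boldsymbol{E}$ using the density Lemma \ref{density1}. First I would record, for $\boldsymbol{f}\in\boldsymbol{\mathcal{D}}(\overline{\Omega})$ and $\chi\in W^{2,p'}(\Omega)$ with $\partial\chi/\partial\boldsymbol{n}=0$ on $\Gamma$, the classical integration by parts identity
$$\int_{\Omega}(\mathrm{div}\,\boldsymbol{f})\,\overline{\chi}\,\mathrm{d}\,x\,=\,-\,\int_{\Omega}\boldsymbol{f}\cdot\nabla\overline{\chi}\,\mathrm{d}\,x\,+\,\int_{\Gamma}(\boldsymbol{f}\cdot\boldsymbol{n})\,\overline{\chi}\,\mathrm{d}\sigma.$$
The decisive observation is that every term on the right is controlled by $\|\boldsymbol{f}\|_{\boldsymbol{E}}$. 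Since $\chi\in W^{2,p'}(\Omega)$ we have $\Delta\chi\in L^{p'}(\Omega)$, hence $\nabla\chi\in\boldsymbol{H}^{p'}(\mathrm{div},\Omega)$; the hypothesis $\partial\chi/\partial\boldsymbol{n}=\nabla\chi\cdot\boldsymbol{n}=0$ on $\Gamma$ is precisely what forces $\nabla\chi\in\boldsymbol{H}^{p'}_{0}(\mathrm{div},\Omega)$, with $\|\nabla\chi\|_{\boldsymbol{H}^{p'}(\mathrm{div},\Omega)}\le C\,\|\chi\|_{\boldsymbol{W}^{2,p'}(\Omega)}$. Thus $\int_{\Omega}\boldsymbol{f}\cdot\nabla\overline{\chi}\,\mathrm{d}\,x$ may be rewritten as the duality $\langle\boldsymbol{f},\nabla\chi\rangle_{\Omega}$, which is meaningful for any $\boldsymbol{f}\in[\boldsymbol{H}^{p'}_{0}(\mathrm{div},\Omega)]'$, while $\int_{\Omega}(\mathrm{div}\,\boldsymbol{f})\overline{\chi}\,\mathrm{d}\,x$ is controlled by $\|\mathrm{div}\,\boldsymbol{f}\|_{L^{p}(\Omega)}$.

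To produce $\boldsymbol{f}\cdot\boldsymbol{n}$ as an element of $W^{-1-1/p,p}(\Gamma)$ I would invoke the trace theory in $W^{2,p'}(\Omega)$: the map $\chi\mapsto(\chi_{|\Gamma},\,\partial\chi/\partial\boldsymbol{n})$ sends $W^{2,p'}(\Omega)$ onto $W^{1+1/p,p'}(\Gamma)\times W^{1/p,p'}(\Gamma)$ (note $2-1/p'=1+1/p$) and admits a bounded right inverse. Hence, given $\mu\in W^{1+1/p,p'}(\Gamma)$, I can choose a lifting $\chi_{\mu}\in W^{2,p'}(\Omega)$ with $\chi_{\mu}=\mu$ and $\partial\chi_{\mu}/\partial\boldsymbol{n}=0$ on $\Gamma$ and $\|\chi_{\mu}\|_{\boldsymbol{W}^{2,p'}(\Omega)}\le C\,\|\mu\|_{W^{1+1/p,p'}(\Gamma)}$. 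For $\boldsymbol{f}\in\boldsymbol{\mathcal{D}}(\overline{\Omega})$ the boundary integral $\int_{\Gamma}(\boldsymbol{f}\cdot\boldsymbol{n})\overline{\mu}\,\mathrm{d}\sigma$ depends on $\chi_{\mu}$ only through its trace $\mu$, and the identity above yields
$$\Big|\int_{\Gamma}(\boldsymbol{f}\cdot\boldsymbol{n})\,\overline{\mu}\,\mathrm{d}\sigma\Big|\,\le\,\|\mathrm{div}\,\boldsymbol{f}\|_{L^{p}(\Omega)}\|\chi_{\mu}\|_{L^{p'}(\Omega)}+\|\boldsymbol{f}\|_{[\boldsymbol{H}^{p'}_{0}(\mathrm{div},\Omega)]'}\|\nabla\chi_{\mu}\|_{\boldsymbol{H}^{p'}(\mathrm{div},\Omega)}\,\le\,C\,\|\boldsymbol{f}\|_{\boldsymbol{E}}\,\|\mu\|_{W^{1+1/p,p'}(\Gamma)}.$$
This shows $\|\boldsymbol{f}\cdot\boldsymbol{n}\|_{W^{-1-1/p,p}(\Gamma)}\le C\,\|\boldsymbol{f}\|_{\boldsymbol{E}}$, i.e. $\gamma$ is continuous from $(\boldsymbol{\mathcal{D}}(\overline{\Omega}),\|\cdot\|_{\boldsymbol{E}})$ into $W^{-1-1/p,p}(\Gamma)$.

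Finally, by Lemma \ref{density1} the space $\boldsymbol{\mathcal{D}}(\overline{\Omega})$ is dense in $\boldsymbol{E}$, so $\gamma$ extends uniquely to a continuous map $\gamma:\boldsymbol{E}\to W^{-1-1/p,p}(\Gamma)$. The Green formula \eqref{greenfrhdiv} then follows by passing to the limit: for a fixed admissible $\chi$, each of the three terms $\int_{\Omega}(\mathrm{div}\,\boldsymbol{f})\overline{\chi}\,\mathrm{d}\,x$, $\langle\boldsymbol{f},\nabla\chi\rangle_{\Omega}$ and $\langle\boldsymbol{f}\cdot\boldsymbol{n},\chi\rangle_{\Gamma}$ is continuous in $\boldsymbol{f}$ for the norm of $\boldsymbol{E}$, and the identity already holds on the dense subspace $\boldsymbol{\mathcal{D}}(\overline{\Omega})$, hence on all of $\boldsymbol{E}$. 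I expect the only genuinely delicate point to be the construction of the lifting $\chi_{\mu}$ carrying the two prescribed boundary data with the correct norm control, together with the bookkeeping of the trace exponents that produces the target space $W^{-1-1/p,p}(\Gamma)$; the rest is standard integration by parts and a density argument.
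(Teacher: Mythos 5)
Your proof is correct and follows exactly the route the paper intends: the Corollary is stated right after Lemma \ref{density1} precisely so that the trace $\gamma$ can be defined on $\boldsymbol{\mathcal{D}}(\overline{\Omega})$ via the classical divergence theorem, bounded in the $\boldsymbol{E}$-norm through a $W^{2,p'}$ lifting with vanishing normal derivative (which places $\nabla\chi_{\mu}$ in $\boldsymbol{H}^{p'}_{0}(\mathrm{div},\Omega)$), and then extended by density, with the Green formula passing to the limit term by term. The exponent bookkeeping ($2-1/p'=1+1/p$, hence the target space $W^{-1-1/p,p}(\Gamma)$) and the use of the two-component trace map with bounded right inverse are exactly the standard ingredients used for the analogous results of \cite{Am3} invoked elsewhere in the paper.
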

 
 Now we consider the space
 \begin{equation}\label{hpdivsigmatau}
 [\boldsymbol{H}^{p'}_{0}(\mathrm{div},\Omega)]'_{\sigma,\tau}\,=\,\big\{\boldsymbol{f}\in[\boldsymbol{H}^{p'}_{0}(\mathrm{div},\Omega)]';\,\,\mathrm{div}\,\boldsymbol{f}=0\,\,\textrm{in}\,\,\Omega,\,\,\boldsymbol{f}\cdot\boldsymbol{n}=0\,\,\textrm{on}\,\,\Gamma\big\}.
 \end{equation}
  We define the operator  
 \begin{equation*}\label{b1}
 B_{p}\,:\,\mathbf{D}(B_{p})\subset[\boldsymbol{H}^{p'}_{0}(\mathrm{div},\Omega)]'_{\sigma,\tau}\longmapsto[\boldsymbol{H}^{p'}_{0}(\mathrm{div},\Omega)]'_{\sigma,\tau},
 \end{equation*}
 by
 \begin{equation}\label{b2}
 \forall\,\boldsymbol{u}\in\mathbf{D}(B_{p}),\qquad B_{p}\,\boldsymbol{u}=-\Delta\boldsymbol{u}\qquad\textrm{in}\,\,\Omega.
 \end{equation}
 The domain of $B_{p}$ is given by
 \begin{multline}\label{Dpb}
 \mathbf{D}(B_{p})\,=\big\{\boldsymbol{u}\in\boldsymbol{W}^{1,p}(\Omega);\,\,\Delta\boldsymbol{u}\in[\boldsymbol{H}^{p'}_{0}(\mathrm{div},\Omega)]'\,\,\mathrm{div}\,\boldsymbol{u}=0\,\,\textrm{in}\,\,\Omega,\\
 \boldsymbol{u}\cdot\boldsymbol{n}\,=\,0,\,\,\,\boldsymbol{\mathrm{curl}}\,\boldsymbol{u}\times\boldsymbol{n}\,=\,\boldsymbol{0}\,\,\textrm{on}\,\,\Gamma\big\}.
 \end{multline}
 \begin{rmk}
 \rm{The operator $B_{p}$ is the extension of the Stokes operator to $[\boldsymbol{H}^{p'}_{0}(\mathrm{div},\Omega)]'_{\sigma,\tau}$.
} 
 \end{rmk}
 \begin{prop}\label{denshpdiv}
 The space $\boldsymbol{\mathcal{D}}_{\sigma}(\Omega)$ is dense in $[\boldsymbol{H}^{p'}_{0}(\mathrm{div},\Omega)]'_{\sigma,\tau}$.
 \end{prop}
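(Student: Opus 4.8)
The plan is to argue by duality, following the same scheme already used in the proof of Lemma \ref{density1}. First I would check that $\boldsymbol{\mathcal{D}}_{\sigma}(\Omega)$ is genuinely contained in $[\boldsymbol{H}^{p'}_{0}(\mathrm{div},\Omega)]'_{\sigma,\tau}$: any $\boldsymbol{\varphi}\in\boldsymbol{\mathcal{D}}_{\sigma}(\Omega)$ is smooth and compactly supported, hence lies in $\boldsymbol{L}^{p}(\Omega)\subset[\boldsymbol{H}^{p'}_{0}(\mathrm{div},\Omega)]'$, is divergence free, and satisfies $\boldsymbol{\varphi}\cdot\boldsymbol{n}=0$ on $\Gamma$ since it vanishes near the boundary. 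Because $[\boldsymbol{H}^{p'}_{0}(\mathrm{div},\Omega)]'_{\sigma,\tau}$ is a closed subspace of the reflexive Banach space $[\boldsymbol{H}^{p'}_{0}(\mathrm{div},\Omega)]'$, whose dual is $\boldsymbol{H}^{p'}_{0}(\mathrm{div},\Omega)$, the Hahn--Banach theorem reduces the claim to the following: every $\boldsymbol{v}\in\boldsymbol{H}^{p'}_{0}(\mathrm{div},\Omega)$ with $\langle\boldsymbol{\varphi},\boldsymbol{v}\rangle_{\Omega}=0$ for all $\boldsymbol{\varphi}\in\boldsymbol{\mathcal{D}}_{\sigma}(\Omega)$ must annihilate the whole subspace $[\boldsymbol{H}^{p'}_{0}(\mathrm{div},\Omega)]'_{\sigma,\tau}$.

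Next I would exploit the structure of such a $\boldsymbol{v}$. The orthogonality relation reads $\int_{\Omega}\boldsymbol{\varphi}\cdot\overline{\boldsymbol{v}}\,\mathrm{d}x=0$ for every divergence free $\boldsymbol{\varphi}\in\boldsymbol{\mathcal{D}}_{\sigma}(\Omega)$, so by de Rham's theorem there exists $q\in W^{1,p'}(\Omega)$ with $\boldsymbol{v}=\nabla q$. Since moreover $\boldsymbol{v}\in\boldsymbol{H}^{p'}_{0}(\mathrm{div},\Omega)$, one has $\Delta q=\mathrm{div}\,\boldsymbol{v}\in L^{p'}(\Omega)$ together with the homogeneous Neumann condition $\partial_{n}q=\boldsymbol{v}\cdot\boldsymbol{n}=0$ on $\Gamma$. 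Invoking the regularity theory for the Neumann problem (and using that $\Omega$ is of class $C^{2,1}$), this upgrades to $q\in W^{2,p'}(\Omega)$, which is exactly the regularity needed to use $q$ as a test function in the Green formula of Corollary \ref{tracehpdiv}.

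Finally, given any $\boldsymbol{f}\in[\boldsymbol{H}^{p'}_{0}(\mathrm{div},\Omega)]'_{\sigma,\tau}$, I would note that $\mathrm{div}\,\boldsymbol{f}=0\in L^{p}(\Omega)$, so $\boldsymbol{f}\in\boldsymbol{E}$, while $\boldsymbol{f}\cdot\boldsymbol{n}=0$ on $\Gamma$. Applying the Green formula \eqref{greenfrhdiv} with $\chi=q$ gives $0=\int_{\Omega}(\mathrm{div}\,\boldsymbol{f})\,\overline{q}\,\mathrm{d}x=-\langle\boldsymbol{f},\nabla q\rangle_{\Omega}+\langle\boldsymbol{f}\cdot\boldsymbol{n},q\rangle_{\Gamma}=-\langle\boldsymbol{f},\boldsymbol{v}\rangle_{\Omega}$, so $\boldsymbol{v}$ annihilates $[\boldsymbol{H}^{p'}_{0}(\mathrm{div},\Omega)]'_{\sigma,\tau}$, as required; density then follows. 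I expect the two analytic ingredients to be the delicate points: the identification $\boldsymbol{v}=\nabla q$ via de Rham in the $L^{p'}$ setting, and the elliptic (Neumann) regularity guaranteeing $q\in W^{2,p'}(\Omega)$ so that Corollary \ref{tracehpdiv} is applicable; the rest is the standard Hahn--Banach bookkeeping, which is essentially identical to the argument of Lemma \ref{density1}.
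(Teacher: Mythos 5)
Your proposal is correct and follows essentially the same route as the paper: a Hahn--Banach duality reduction, identification of the annihilating functional as a gradient $\nabla q$ via de Rham, and the Green formula of Corollary \ref{tracehpdiv} to conclude. The only difference is cosmetic — you spell out the intermediate step that $q\in W^{1,p'}(\Omega)$ is upgraded to $W^{2,p'}(\Omega)$ by Neumann elliptic regularity, a point the paper's proof states without justification — so your write-up is, if anything, slightly more complete.
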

 \begin{proof}
 Let $\boldsymbol{\ell}$  be a linear form on $[\boldsymbol{H}^{p'}_{0}(\mathrm{div},\Omega)]'_{\sigma,\tau}$ such that $\boldsymbol{\ell}$ vanishes on $\boldsymbol{\mathcal{D}}_{\sigma}(\Omega)$ and let us show that $\boldsymbol{\ell}$ is null on $[\boldsymbol{H}^{p'}_{0}(\mathrm{div},\Omega)]'_{\sigma,\tau}$. Thanks to the Hahn-Banach theorem, $\boldsymbol{\ell}$ can be extended to a linear continuous form on $[\boldsymbol{H}^{p'}_{0}(\mathrm{div},\Omega)]'$ denoted by $\widetilde{\boldsymbol{\ell}}$. Moreover
 \begin{equation*}
 \forall\,\boldsymbol{f}\in[\boldsymbol{H}^{p'}_{0}(\mathrm{div},\Omega)]'_{\sigma,\tau},\qquad\boldsymbol{\ell}(\boldsymbol{f})\,=\,\langle\widetilde{\boldsymbol{\ell}}\,,\,\boldsymbol{f}\rangle_{\boldsymbol{H}^{p'}_{0}(\mathrm{div},\Omega)\times[\boldsymbol{H}^{p'}_{0}(\mathrm{div},\Omega)]'}.
\end{equation*}   
Since $\boldsymbol{\ell}$ vanishes on $\boldsymbol{\mathcal{D}}_{\sigma}(\Omega)$ then thanks to De-Rham lemma there exists a function $\pi\in W^{2,p'}(\Omega)$ such that $\frac{\partial\,\pi}{\partial\,\boldsymbol{n}}=0$ on $\Gamma$ and $\widetilde{\boldsymbol{\ell}}=\nabla\pi$ in $\Omega$. Now let $\boldsymbol{f}\in[\boldsymbol{H}^{p'}_{0}(\mathrm{div},\Omega)]'_{\sigma,\tau}$ then by Corollary \ref{tracehpdiv} we have
 \begin{eqnarray*}
 \boldsymbol{\ell}(\boldsymbol{f})&=&\langle\boldsymbol{f}\,,\,\nabla\pi\rangle_{[\boldsymbol{H}^{p'}_{0}(\mathrm{div},\Omega)]'\times\boldsymbol{H}^{p'}_{0}(\mathrm{div},\Omega)}\\
 &=&-\int_{\Omega}(\mathrm{div}\,\boldsymbol{f})\,\overline{\pi}\,\textrm{d}\,x\,+\,\langle\boldsymbol{f}\cdot\boldsymbol{n}\,,\,\pi\rangle_{\Gamma}\\
 &=&0.
 \end{eqnarray*}
 \end{proof}
 As a result of Proposition \ref{denshpdiv} we deduce the density of the domain of the operator $B_{p}$.
 \begin{coro}
 The operator $B_{p}$ is a densely defined operator.
 \end{coro}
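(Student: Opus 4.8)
The plan is to prove density by exhibiting a dense subspace that is already contained in the domain. Since Proposition~\ref{denshpdiv} establishes that $\boldsymbol{\mathcal{D}}_{\sigma}(\Omega)$ is dense in $[\boldsymbol{H}^{p'}_{0}(\mathrm{div},\Omega)]'_{\sigma,\tau}$, it suffices to verify the inclusion
\begin{equation*}
\boldsymbol{\mathcal{D}}_{\sigma}(\Omega)\,\subset\,\mathbf{D}(B_{p}).
\end{equation*}
Once this is shown, density of $\mathbf{D}(B_{p})$ follows immediately by transitivity, because a space containing a dense subset is itself dense.

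First I would take an arbitrary $\boldsymbol{u}\in\boldsymbol{\mathcal{D}}_{\sigma}(\Omega)$, that is a smooth compactly supported divergence-free vector field, and check the defining conditions of $\mathbf{D}(B_{p})$ listed in \eqref{Dpb}. The regularity $\boldsymbol{u}\in\boldsymbol{W}^{1,p}(\Omega)$ and the condition $\mathrm{div}\,\boldsymbol{u}=0$ in $\Omega$ are immediate from the definition of $\boldsymbol{\mathcal{D}}_{\sigma}(\Omega)$. Because $\boldsymbol{u}$ has compact support in the interior of $\Omega$, it vanishes identically in a neighbourhood of $\Gamma$; the same is true of $\boldsymbol{\mathrm{curl}}\,\boldsymbol{u}$. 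Consequently both boundary traces $\boldsymbol{u}\cdot\boldsymbol{n}$ and $\boldsymbol{\mathrm{curl}}\,\boldsymbol{u}\times\boldsymbol{n}$ are zero on $\Gamma$, so the Navier-type boundary conditions are trivially satisfied.

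The only remaining point to check is that $\Delta\boldsymbol{u}\in[\boldsymbol{H}^{p'}_{0}(\mathrm{div},\Omega)]'$. Here $\Delta\boldsymbol{u}$ is again smooth with compact support, hence lies in $\boldsymbol{L}^{p}(\Omega)$. I would then use the continuous embedding $\boldsymbol{L}^{p}(\Omega)\hookrightarrow[\boldsymbol{H}^{p'}_{0}(\mathrm{div},\Omega)]'$, which is the dual of the continuous inclusion $\boldsymbol{H}^{p'}_{0}(\mathrm{div},\Omega)\hookrightarrow\boldsymbol{L}^{p'}(\Omega)$ (injectivity of the induced map being guaranteed by the density of $\boldsymbol{\mathcal{D}}(\Omega)$ in $\boldsymbol{L}^{p'}(\Omega)$). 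This places $\Delta\boldsymbol{u}$ in the required dual space and completes the verification that $\boldsymbol{u}\in\mathbf{D}(B_{p})$.

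There is no substantial obstacle in this argument: the entire content is the observation that compactly supported smooth divergence-free fields automatically meet every structural and boundary requirement in \eqref{Dpb}, together with the elementary duality embedding of $\boldsymbol{L}^{p}(\Omega)$ into $[\boldsymbol{H}^{p'}_{0}(\mathrm{div},\Omega)]'$. The mild care needed is only to confirm that the two boundary traces appearing in \eqref{Dpb} are genuinely well defined and vanish, which is transparent here since the relevant fields are identically zero near $\Gamma$. The corollary is then a direct consequence of Proposition~\ref{denshpdiv}.
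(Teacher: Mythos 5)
Your proof is correct and follows exactly the route the paper intends: the Corollary is stated as a direct consequence of Proposition \ref{denshpdiv}, the implicit step being precisely the inclusion $\boldsymbol{\mathcal{D}}_{\sigma}(\Omega)\subset\mathbf{D}(B_{p})$ that you verify in detail (including the embedding $\boldsymbol{L}^{p}(\Omega)\hookrightarrow[\boldsymbol{H}^{p'}_{0}(\mathrm{div},\Omega)]'$ for $\Delta\boldsymbol{u}$). Nothing is missing.
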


\subsubsection{The Stokes operator with Navier-type  conditions on $[\boldsymbol{T}^{p'}(\Omega)]'_{\sigma,\tau}$}
Consider the space
 \begin{equation*}
 \boldsymbol{G}\,=\,\big\{\boldsymbol{f}\in(\boldsymbol{T}^{p'}(\Omega))';\,\,\mathrm{div}\,\boldsymbol{f}\in L^{p}(\Omega)\big\},
 \end{equation*}
 equipped with the graph norm.
We skip the proof of the following lemma because it is similar to the proof of Lemma \ref{density1}:
 \begin{lemma}
The space $\boldsymbol{\mathcal{D}}(\overline{\Omega})$ is dense in $\boldsymbol{G}$.
\end{lemma}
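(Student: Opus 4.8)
The plan is to follow the Hahn--Banach scheme used in the proof of Lemma~\ref{density1}, adapting it to the duality structure of $(\boldsymbol{T}^{p'}(\Omega))'$. Since $\boldsymbol{G}$ embeds isometrically into $(\boldsymbol{T}^{p'}(\Omega))'\times L^{p}(\Omega)$ through $\boldsymbol{f}\mapsto(\boldsymbol{f},\mathrm{div}\,\boldsymbol{f})$, I would take a continuous linear form $\boldsymbol{\ell}$ on $\boldsymbol{G}$ vanishing on $\boldsymbol{\mathcal{D}}(\overline{\Omega})$ and aim to prove $\boldsymbol{\ell}=0$. Extending $\boldsymbol{\ell}$ by Hahn--Banach to the product space and using the reflexivity of $\boldsymbol{T}^{p'}(\Omega)$ together with the characterisation of $(\boldsymbol{T}^{p'}(\Omega))'$ recalled after \eqref{tp}, I would represent $\boldsymbol{\ell}$ by a pair $(\boldsymbol{u},\chi)\in\boldsymbol{T}^{p'}(\Omega)\times L^{p'}(\Omega)$ with
\begin{equation*}
\forall\,\boldsymbol{f}\in\boldsymbol{G},\qquad\langle\boldsymbol{\ell}\,,\,\boldsymbol{f}\rangle_{\boldsymbol{G}'\times\boldsymbol{G}}\,=\,\langle\boldsymbol{f}\,,\,\boldsymbol{u}\rangle_{(\boldsymbol{T}^{p'}(\Omega))'\times\boldsymbol{T}^{p'}(\Omega)}\,+\,\int_{\Omega}\mathrm{div}\,\boldsymbol{f}\,\overline{\chi}\,\mathrm{d}\,x.
\end{equation*}

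Next I would extend $\boldsymbol{u}$ and $\chi$ by zero to $\mathbb{R}^{3}$, writing $\widetilde{\boldsymbol{u}}$ and $\widetilde{\chi}$. Since $\widetilde{\boldsymbol{u}}\in\boldsymbol{L}^{p'}(\mathbb{R}^{3})$ and $\widetilde{\chi}\in L^{p'}(\mathbb{R}^{3})$, testing the representation against the restrictions to $\overline{\Omega}$ of functions $\boldsymbol{f}\in\boldsymbol{\mathcal{D}}(\mathbb{R}^{3})$ (on which $\boldsymbol{\ell}$ vanishes) gives
\begin{equation*}
\forall\,\boldsymbol{f}\in\boldsymbol{\mathcal{D}}(\mathbb{R}^{3}),\qquad\int_{\mathbb{R}^{3}}\boldsymbol{f}\cdot\overline{\widetilde{\boldsymbol{u}}}\,\mathrm{d}\,x\,+\,\int_{\mathbb{R}^{3}}\mathrm{div}\,\boldsymbol{f}\,\overline{\widetilde{\chi}}\,\mathrm{d}\,x\,=\,0,
\end{equation*}
whence $\widetilde{\boldsymbol{u}}=\nabla\widetilde{\chi}$ in $\boldsymbol{\mathcal{D}}'(\mathbb{R}^{3})$, and in particular $\boldsymbol{u}=\nabla\chi$ in $\Omega$.

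The step that genuinely differs from Lemma~\ref{density1}, and the one I expect to be the main obstacle, is the regularity bootstrap for $\chi$. Here $\boldsymbol{u}$ lies in the smaller space $\boldsymbol{T}^{p'}(\Omega)$, so $\mathrm{div}\,\boldsymbol{u}\in W^{1,p'}_{0}(\Omega)$; and because $\boldsymbol{u}\cdot\boldsymbol{n}=0$ on $\Gamma$, its zero extension satisfies $\mathrm{div}\,\widetilde{\boldsymbol{u}}=\widetilde{\mathrm{div}\,\boldsymbol{u}}\in W^{1,p'}(\mathbb{R}^{3})$. Hence $\Delta\widetilde{\chi}=\mathrm{div}\,\widetilde{\boldsymbol{u}}\in W^{1,p'}(\mathbb{R}^{3})$ with $\widetilde{\chi}\in L^{p'}(\mathbb{R}^{3})$, and the elliptic regularity of $-\Delta$ on $\mathbb{R}^{3}$ yields $\widetilde{\chi}\in W^{3,p'}(\mathbb{R}^{3})$, one derivative more than in the setting of $\boldsymbol{E}$. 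Consequently $\chi\in W^{3,p'}_{0}(\Omega)$, the membership in $W^{3,p'}_{0}$ being a consequence of the fact that the zero extension of $\chi$ stays in $W^{3,p'}(\mathbb{R}^{3})$.

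Finally I would invoke the density of $\mathcal{D}(\Omega)$ in $W^{3,p'}_{0}(\Omega)$: choosing $\chi_{k}\in\mathcal{D}(\Omega)$ with $\chi_{k}\to\chi$ in $W^{3,p'}(\Omega)$ forces $\nabla\chi_{k}\to\boldsymbol{u}$ in $\boldsymbol{T}^{p'}(\Omega)$, since this convergence requires precisely that $\nabla\chi_{k}\to\nabla\chi$ in $\boldsymbol{L}^{p'}(\Omega)$ and $\Delta\chi_{k}\to\mathrm{div}\,\boldsymbol{u}$ in $W^{1,p'}(\Omega)$ (this stronger topology, forced by the graph norm of $\boldsymbol{T}^{p'}$, is exactly what the extra derivative above provides). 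For each $k$, as $\chi_{k}$ has compact support in $\Omega$ and $\mathrm{div}\,\boldsymbol{f}\in L^{p}(\Omega)$, the definition of the distributional divergence gives $\langle\boldsymbol{f}\,,\,\nabla\chi_{k}\rangle_{(\boldsymbol{T}^{p'}(\Omega))'\times\boldsymbol{T}^{p'}(\Omega)}=-\int_{\Omega}\mathrm{div}\,\boldsymbol{f}\,\overline{\chi_{k}}\,\mathrm{d}\,x$, so each bracketed term in the representation cancels; passing to the limit yields $\langle\boldsymbol{\ell}\,,\,\boldsymbol{f}\rangle=0$ for every $\boldsymbol{f}\in\boldsymbol{G}$. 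This shows $\boldsymbol{\ell}=0$ and hence the asserted density of $\boldsymbol{\mathcal{D}}(\overline{\Omega})$ in $\boldsymbol{G}$.
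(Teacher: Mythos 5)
Your proof is correct and is exactly the adaptation the paper has in mind: the authors skip this proof precisely because it mirrors Lemma~\ref{density1}, and your argument reproduces that Hahn--Banach/reflexivity scheme while correctly tracking the one point that changes, namely that $\boldsymbol{u}\in\boldsymbol{T}^{p'}(\Omega)$ forces $\Delta\widetilde{\chi}=\mathrm{div}\,\widetilde{\boldsymbol{u}}\in W^{1,p'}(\mathbb{R}^{3})$ and hence $\chi\in W^{3,p'}_{0}(\Omega)$, which is what makes the approximation $\nabla\chi_{k}\to\boldsymbol{u}$ converge in the graph norm of $\boldsymbol{T}^{p'}(\Omega)$. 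No gaps.
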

As in the previous Subsection The following Corollary gives the normal trace of functions in $\boldsymbol{G}$.
\begin{coro}
The linear mapping $\gamma\,:\,\boldsymbol{f}\longmapsto\boldsymbol{f}\cdot\boldsymbol{n}$ defined on $\boldsymbol{\mathcal{D}}(\overline{\Omega})$ can be extended to a linear continuous mapping still denoted by $\gamma\,:\,\boldsymbol{G}\longmapsto W^{-2-1/p,p}(\Gamma)$. Moreover we have the following Green formula: for all $\boldsymbol{f}\in\boldsymbol{G}$ and for all $\chi\in W^{3,p'}(\Omega)$ such that $\frac{\partial\,\chi}{\partial\,\boldsymbol{n}}=0$ on $\Gamma$  and $\Delta\chi=0$ on $\Gamma$,
\begin{equation}\label{greenfrtp}
\int_{\Omega}(\mathrm{div}\,\boldsymbol{f})\,\overline{\chi}\,\textrm{d}\,x\,=\,-\,\langle\boldsymbol{f}\,,\,\nabla\,\chi\rangle_{(\boldsymbol{T}^{p'}(\Omega))'\times\boldsymbol{T}^{p'}(\Omega)}\,+\,\langle\boldsymbol{f}\cdot\boldsymbol{n}\,,\,\chi\rangle_{\Gamma}.
\end{equation}
We recall that $\langle .\,,\,. \rangle_{\Gamma}\,=\,\langle .\,,\, .
\rangle_{W^{-2-1/p,p}(\Gamma)\times W^{2+1/p,p'}(\Gamma)}$.
 \end{coro}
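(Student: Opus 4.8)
The plan is to follow the pattern of Corollary~\ref{tracehpdiv}, combining the density of $\boldsymbol{\mathcal{D}}(\overline{\Omega})$ in $\boldsymbol{G}$ (the preceding lemma) with the observation that the three conditions imposed on $\chi$ are exactly those guaranteeing $\nabla\chi\in\boldsymbol{T}^{p'}(\Omega)$, so that the antiduality on the right-hand side of \eqref{greenfrtp} makes sense.

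First I would verify that if $\chi\in W^{3,p'}(\Omega)$ satisfies $\frac{\partial\chi}{\partial\boldsymbol{n}}=0$ and $\Delta\chi=0$ on $\Gamma$, then $\nabla\chi\in\boldsymbol{T}^{p'}(\Omega)$ with $\Vert\nabla\chi\Vert_{\boldsymbol{T}^{p'}(\Omega)}\le C\Vert\chi\Vert_{W^{3,p'}(\Omega)}$. Indeed $\nabla\chi\in\boldsymbol{L}^{p'}(\Omega)$ and $\mathrm{div}(\nabla\chi)=\Delta\chi\in W^{1,p'}(\Omega)$; the condition $\nabla\chi\cdot\boldsymbol{n}=\frac{\partial\chi}{\partial\boldsymbol{n}}=0$ places $\nabla\chi$ in $\boldsymbol{H}^{p'}_{0}(\mathrm{div},\Omega)$, and $\Delta\chi=0$ on $\Gamma$ means $\Delta\chi\in W^{1,p'}_{0}(\Omega)$; by \eqref{tp} this is precisely $\nabla\chi\in\boldsymbol{T}^{p'}(\Omega)$. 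Hence $\langle\boldsymbol{f},\nabla\chi\rangle_{(\boldsymbol{T}^{p'}(\Omega))'\times\boldsymbol{T}^{p'}(\Omega)}$ is well defined for every $\boldsymbol{f}\in\boldsymbol{G}$.

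Next, for $\boldsymbol{f}\in\boldsymbol{\mathcal{D}}(\overline{\Omega})$ I would apply the classical divergence theorem,
\[
\int_{\Omega}(\mathrm{div}\,\boldsymbol{f})\,\overline{\chi}\,\mathrm{d}x
= -\int_{\Omega}\boldsymbol{f}\cdot\nabla\overline{\chi}\,\mathrm{d}x
+ \int_{\Gamma}(\boldsymbol{f}\cdot\boldsymbol{n})\,\overline{\chi}\,\mathrm{d}\sigma,
\]
and note that for smooth $\boldsymbol{f}$ one has $\int_{\Omega}\boldsymbol{f}\cdot\nabla\overline{\chi}\,\mathrm{d}x=\langle\boldsymbol{f},\nabla\chi\rangle_{(\boldsymbol{T}^{p'}(\Omega))'\times\boldsymbol{T}^{p'}(\Omega)}$, since $\boldsymbol{L}^{p}(\Omega)\hookrightarrow(\boldsymbol{T}^{p'}(\Omega))'$ acts through the $L^{p}$--$L^{p'}$ pairing. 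This already yields \eqref{greenfrtp} on the dense subspace, together with the bound
\[
\Big|\int_{\Gamma}(\boldsymbol{f}\cdot\boldsymbol{n})\,\overline{\chi}\,\mathrm{d}\sigma\Big|
\le \Vert\mathrm{div}\,\boldsymbol{f}\Vert_{L^{p}(\Omega)}\Vert\chi\Vert_{L^{p'}(\Omega)}
+\Vert\boldsymbol{f}\Vert_{(\boldsymbol{T}^{p'}(\Omega))'}\Vert\nabla\chi\Vert_{\boldsymbol{T}^{p'}(\Omega)}
\le C\,\Vert\boldsymbol{f}\Vert_{\boldsymbol{G}}\,\Vert\chi\Vert_{W^{3,p'}(\Omega)}.
\]

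The crucial step, and the one I expect to be the main obstacle, is to replace $\Vert\chi\Vert_{W^{3,p'}(\Omega)}$ by the boundary norm $\Vert\chi|_{\Gamma}\Vert_{W^{2+1/p,p'}(\Gamma)}$. Given $\mu\in W^{2+1/p,p'}(\Gamma)$ I would construct, via the trace theorem for $W^{3,p'}(\Omega)$, a lifting $\chi\in W^{3,p'}(\Omega)$ prescribing the normal traces of orders $0,1,2$ by $\chi|_{\Gamma}=\mu$, $\frac{\partial\chi}{\partial\boldsymbol{n}}=0$ and $\frac{\partial^{2}\chi}{\partial\boldsymbol{n}^{2}}=-\Delta_{\Gamma}\mu$, where $\Delta_{\Gamma}$ is the Laplace--Beltrami operator on $\Gamma$, with $\Vert\chi\Vert_{W^{3,p'}(\Omega)}\le C\Vert\mu\Vert_{W^{2+1/p,p'}(\Gamma)}$; this is consistent since $\mu\in W^{2+1/p,p'}(\Gamma)$ forces $\Delta_{\Gamma}\mu\in W^{1/p,p'}(\Gamma)$, the correct trace space for $\frac{\partial^{2}\chi}{\partial\boldsymbol{n}^{2}}$. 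Because $\Delta\chi|_{\Gamma}=\frac{\partial^{2}\chi}{\partial\boldsymbol{n}^{2}}+H\,\frac{\partial\chi}{\partial\boldsymbol{n}}+\Delta_{\Gamma}(\chi|_{\Gamma})$ for a smooth curvature coefficient $H$, this choice gives exactly $\frac{\partial\chi}{\partial\boldsymbol{n}}=0$ and $\Delta\chi=0$ on $\Gamma$. Inserting this lifting into the previous estimate produces, for $\boldsymbol{f}\in\boldsymbol{\mathcal{D}}(\overline{\Omega})$,
\[
\Big|\int_{\Gamma}(\boldsymbol{f}\cdot\boldsymbol{n})\,\overline{\mu}\,\mathrm{d}\sigma\Big|
\le C\,\Vert\boldsymbol{f}\Vert_{\boldsymbol{G}}\,\Vert\mu\Vert_{W^{2+1/p,p'}(\Gamma)},
\]
so $\boldsymbol{f}\cdot\boldsymbol{n}$ extends to a continuous functional on $W^{2+1/p,p'}(\Gamma)$, i.e. $\boldsymbol{f}\cdot\boldsymbol{n}\in W^{-2-1/p,p}(\Gamma)$ with $\Vert\boldsymbol{f}\cdot\boldsymbol{n}\Vert_{W^{-2-1/p,p}(\Gamma)}\le C\Vert\boldsymbol{f}\Vert_{\boldsymbol{G}}$. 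Finally, for general $\boldsymbol{f}\in\boldsymbol{G}$ I would pick $\boldsymbol{f}_{k}\in\boldsymbol{\mathcal{D}}(\overline{\Omega})$ with $\boldsymbol{f}_{k}\to\boldsymbol{f}$ in $\boldsymbol{G}$; the uniform bound makes $(\boldsymbol{f}_{k}\cdot\boldsymbol{n})_{k}$ Cauchy in $W^{-2-1/p,p}(\Gamma)$, which defines $\gamma(\boldsymbol{f})=\boldsymbol{f}\cdot\boldsymbol{n}$, and passing to the limit in \eqref{greenfrtp} for the $\boldsymbol{f}_{k}$ establishes it for $\boldsymbol{f}$. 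All steps other than the lifting are routine given the density lemma and the characterisation of $(\boldsymbol{T}^{p'}(\Omega))'$.
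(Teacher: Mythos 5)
Your argument is correct and is exactly the standard proof the paper has in mind (it states this Corollary without proof, by analogy with Corollary \ref{tracehpdiv} and the results of \cite{Am3}): density of $\boldsymbol{\mathcal{D}}(\overline{\Omega})$ in $\boldsymbol{G}$, the classical Green formula on smooth fields, and a continuity estimate obtained by lifting an arbitrary $\mu\in W^{2+1/p,p'}(\Gamma)$ to a $\chi\in W^{3,p'}(\Omega)$ with the prescribed normal traces so that $\nabla\chi\in\boldsymbol{T}^{p'}(\Omega)$. The key observations — that the hypotheses on $\chi$ are precisely equivalent to $\nabla\chi\in\boldsymbol{T}^{p'}(\Omega)$, and that the choice $\partial_{\boldsymbol{n}}\chi=0$, $\partial^{2}_{\boldsymbol{n}}\chi=-\Delta_{\Gamma}\mu$ forces $\Delta\chi=0$ on $\Gamma$ — are exactly right and are compatible with the $C^{2,1}$ regularity of $\Gamma$.
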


Now we consider the space
\begin{equation}\label{tp'sigmatau}
[\boldsymbol{T}^{p'}(\Omega)]'_{\sigma,\tau}\,=\,\big\{\boldsymbol{f}\in(\boldsymbol{T}^{p'}(\Omega))';\,\,\mathrm{div}\,\boldsymbol{f}=0\,\,\textrm{in}\,\,\Omega,\,\,\boldsymbol{f}\cdot\boldsymbol{n}=0\,\,\textrm{on}\,\,\Gamma\big\} .
\end{equation}
Next, we consider the operator:
\begin{equation*}
C_{p}\,:\,\mathbf{D}(C_{p})\subset[\boldsymbol{T}^{p'}(\Omega)]'_{\sigma,\tau}\longmapsto[\boldsymbol{T}^{p'}(\Omega)]'_{\sigma,\tau},
\end{equation*}
defined by
\begin{equation}\label{C2}
\forall\,\boldsymbol{u}\in\mathbf{D}(C_{p}),\qquad C_{p}\,\boldsymbol{u}=-\Delta\boldsymbol{u}\qquad \textrm{in} \Omega.
\end{equation}
The domain of $C_{p}$ is given by
 \begin{equation}\label{Dpc}
 \mathbf{D}(C_{p})\,=\big\{\boldsymbol{u}\in\boldsymbol{L}^{p}(\Omega);\,\,\Delta\boldsymbol{u}\in(\boldsymbol{T}^{p'}(\Omega))',\,\,\mathrm{div}\,\boldsymbol{u}=0\,\,\textrm{in}\,\,\Omega,\,\,\boldsymbol{u}\cdot\boldsymbol{n}\,=\,0,\,\,\,\boldsymbol{\mathrm{curl}}\,\boldsymbol{u}\times\boldsymbol{n}\,=\,\boldsymbol{0}\,\,\textrm{on}\,\,\Gamma\big\}.
 \end{equation}
 \begin{rmk}
 \rm{ The operator $C_{p}$ is the extension of the stokes operator to $[\boldsymbol{T}^{p'}(\Omega)]'_{\sigma,\tau}$.
}
 \end{rmk}
 We skip the proof of the following proposition because it is similar to the proof of Proposition \ref{denshpdiv}:
 \begin{prop}\label{densttp}
 The space $\boldsymbol{\mathcal{D}}_{\sigma}(\Omega)$ is dense in $[\boldsymbol{T}^{p'}(\Omega)]'_{\sigma,\tau}$.
 \end{prop}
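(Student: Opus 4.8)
The plan is to reproduce, \emph{mutatis mutandis}, the duality argument of Proposition \ref{denshpdiv}, with the space $\boldsymbol{H}^{p'}_{0}(\mathrm{div},\Omega)$ replaced by $\boldsymbol{T}^{p'}(\Omega)$ and the Green formula \eqref{greenfrhdiv} replaced by \eqref{greenfrtp}. First I would take a continuous linear form $\boldsymbol{\ell}$ on $[\boldsymbol{T}^{p'}(\Omega)]'_{\sigma,\tau}$ vanishing on $\boldsymbol{\mathcal{D}}_{\sigma}(\Omega)$ and show that it is identically zero; the density then follows from the Hahn--Banach characterization of the closure of a subspace. Extending $\boldsymbol{\ell}$ by Hahn--Banach to a continuous form $\widetilde{\boldsymbol{\ell}}$ on $(\boldsymbol{T}^{p'}(\Omega))'$ and using the reflexivity of $\boldsymbol{T}^{p'}(\Omega)$, I would represent $\widetilde{\boldsymbol{\ell}}$ by an element $\boldsymbol{g}\in\boldsymbol{T}^{p'}(\Omega)$, so that $\boldsymbol{\ell}(\boldsymbol{f})=\langle\boldsymbol{f},\boldsymbol{g}\rangle_{(\boldsymbol{T}^{p'}(\Omega))'\times\boldsymbol{T}^{p'}(\Omega)}$ for all $\boldsymbol{f}\in[\boldsymbol{T}^{p'}(\Omega)]'_{\sigma,\tau}$.

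Next, testing $\boldsymbol{\ell}$ against $\boldsymbol{\mathcal{D}}_{\sigma}(\Omega)$ gives $\int_{\Omega}\boldsymbol{\varphi}\cdot\overline{\boldsymbol{g}}\,\mathrm{d}\,x=0$ for every $\boldsymbol{\varphi}\in\boldsymbol{\mathcal{D}}_{\sigma}(\Omega)$, and since $\boldsymbol{g}\in\boldsymbol{L}^{p'}(\Omega)$ De Rham's lemma yields a potential $\pi\in W^{1,p'}(\Omega)$ with $\boldsymbol{g}=\nabla\pi$ in $\Omega$. The membership $\boldsymbol{g}\in\boldsymbol{T}^{p'}(\Omega)$, read through the definition \eqref{tp}, then encodes precisely the two facts $\partial_{n}\pi=\boldsymbol{g}\cdot\boldsymbol{n}=0$ on $\Gamma$ and $\Delta\pi=\mathrm{div}\,\boldsymbol{g}\in W^{1,p'}_{0}(\Omega)$.

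The main obstacle --- indeed the only genuine difference from Proposition \ref{denshpdiv} --- is to promote $\pi$ to an admissible test function for the Green formula \eqref{greenfrtp}, i.e.\ to obtain $\pi\in W^{3,p'}(\Omega)$ satisfying both boundary conditions $\partial_{n}\pi=0$ and $\Delta\pi=0$ on $\Gamma$. The condition $\Delta\pi=0$ on $\Gamma$ is immediate because $\Delta\pi\in W^{1,p'}_{0}(\Omega)$ has vanishing trace, and $\partial_{n}\pi=0$ was already recorded; the extra derivative comes from elliptic regularity for the Neumann problem on the $C^{2,1}$ domain $\Omega$, which upgrades $\Delta\pi\in W^{1,p'}(\Omega)$, $\partial_{n}\pi=0$ to $\pi\in W^{3,p'}(\Omega)$. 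This is exactly where the richer structure of $\boldsymbol{T}^{p'}(\Omega)$ (the extra requirement $\mathrm{div}\,\boldsymbol{g}\in W^{1,p'}_{0}$) is used: in the $\boldsymbol{H}^{p'}_{0}(\mathrm{div})$ case only $\Delta\pi\in L^{p'}$ was available, giving $\pi\in W^{2,p'}$ and a single boundary condition.

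Once $\pi$ is admissible, I would conclude by applying \eqref{greenfrtp} with $\chi=\pi$ to an arbitrary $\boldsymbol{f}\in[\boldsymbol{T}^{p'}(\Omega)]'_{\sigma,\tau}$:
\begin{equation*}
\boldsymbol{\ell}(\boldsymbol{f})=\langle\boldsymbol{f},\nabla\pi\rangle_{(\boldsymbol{T}^{p'}(\Omega))'\times\boldsymbol{T}^{p'}(\Omega)}=-\int_{\Omega}(\mathrm{div}\,\boldsymbol{f})\,\overline{\pi}\,\mathrm{d}\,x+\langle\boldsymbol{f}\cdot\boldsymbol{n},\pi\rangle_{\Gamma}=0,
\end{equation*}
since $\mathrm{div}\,\boldsymbol{f}=0$ in $\Omega$ and $\boldsymbol{f}\cdot\boldsymbol{n}=0$ on $\Gamma$ by the definition \eqref{tp'sigmatau}. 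Hence $\boldsymbol{\ell}\equiv 0$, which proves the asserted density.
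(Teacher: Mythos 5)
Your proposal is correct and follows essentially the same route as the paper, which omits the proof precisely because it is the argument of Proposition \ref{denshpdiv} transposed to the $\boldsymbol{T}^{p'}(\Omega)$ setting: Hahn--Banach plus reflexivity to represent the annihilating functional by some $\boldsymbol{g}=\nabla\pi\in\boldsymbol{T}^{p'}(\Omega)$ via De Rham, elliptic regularity for the Neumann problem to reach $\pi\in W^{3,p'}(\Omega)$ with $\partial_{n}\pi=0$ and $\Delta\pi=0$ on $\Gamma$, and the Green formula \eqref{greenfrtp} to conclude. You have correctly isolated the only new ingredient, namely that $\mathrm{div}\,\boldsymbol{g}\in W^{1,p'}_{0}(\Omega)$ supplies both the extra derivative and the second boundary condition needed for the test function to be admissible in \eqref{greenfrtp}.
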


\section{Analyticity results}
\label{semigroups}
In this section we will state our main result and  its proof. We will prove that the Stokes operator with Navier-type boundary conditions \eqref{nbc} generates a bounded analytic semi-group on $\boldsymbol{L}^{p}_{\sigma,\tau}(\Omega)$, $[\boldsymbol{H}^{p'}_{0}(\mathrm{div},\Omega)]'_{\sigma,\tau}$ and $[\boldsymbol{T}^{p'}(\Omega)]'_{\sigma,\tau}$ respectively for all $1<p<\infty$. 
\subsection{Analyticity on $\boldsymbol{L}^{p}_{\sigma,\tau}(\Omega)$}
In this subsection, we review the main results of \cite{Albaba} concerning the analyticity of the semi-group generated by the Stokes operator with Navier-type boundary conditions $A_p$ on $\boldsymbol{L}^{p}_{\sigma,\tau}(\Omega)$, (see \cite{Albaba} for the proof).
\subsubsection{The Hilbertian case}
 The results  of \cite{Albaba} on the resolvent of the Stokes operator are obtained considering the problem \eqref{*}, that we recall here:
 \begin{equation}\label{resolventexistence}
\left\{
\begin{array}{r@{~}c@{~}l}
\lambda \boldsymbol{u} - \Delta \boldsymbol{u} = \boldsymbol{f}, && \mathrm{div}\,\boldsymbol{u} = 0 \qquad\qquad\,\,\, \mathrm{in} \,\,\, \Omega, \\
\boldsymbol{u}\cdot \boldsymbol{n} = 0, &&\boldsymbol{\mathrm{curl}}\,\boldsymbol{u}\times\boldsymbol{n}=\boldsymbol{0} \qquad
\mathrm{on}\,\,\, \Gamma,
\end{array}
\right.
\end{equation}
where $\boldsymbol{f}\in\boldsymbol{L}^{2}_{\sigma,\tau}(\Omega)$ and $\lambda\in\Sigma_{\varepsilon}$.
\begin{rmk}
\rm{
Observe that, Problem (\ref{resolventexistence}) is equivalent to the problem
\begin{equation}\label{sansdiv0}
\left\{
\begin{array}{cccc}
\lambda \boldsymbol{u} - \Delta \boldsymbol{u} = \boldsymbol{f},&  &\qquad \mathrm{in}& \Omega, \\
\boldsymbol{u}\cdot \boldsymbol{n} = 0, &\boldsymbol{\mathrm{curl}}\,\boldsymbol{u}\times\boldsymbol{n}=\boldsymbol{0}& \qquad
\mathrm{on}& \Gamma.
\end{array}
\right.
\end{equation}
Let $\boldsymbol{u}\in\boldsymbol{H}^{1}(\Omega)$ be the unique solution of Problem (\ref{sansdiv0}) and set $\mathrm{div}\,\boldsymbol{u}=\chi$. It is clear that $\lambda\chi-\Delta\chi\,=\,0$ in $\Omega$. Moreover, since $\boldsymbol{f}\cdot\boldsymbol{n}=0$ and $\boldsymbol{u}\cdot\boldsymbol{n}=0$ on $\Gamma$ then $\Delta\boldsymbol{u}\cdot\boldsymbol{n}=0$ on $\Gamma$. Notice also that the condition $\boldsymbol{\mathrm{curl}}\boldsymbol{u}\times\boldsymbol{n}=\boldsymbol{0}$ on $\Gamma$
 implies that $\boldsymbol{\mathrm{curl}}\,\boldsymbol{\mathrm{curl}}\boldsymbol{u}\cdot\boldsymbol{n}=0$ on $\Gamma$. Finally since $\Delta\boldsymbol{u}=\boldsymbol{\mathrm{grad}}(\mathrm{div}\,\boldsymbol{u})-\boldsymbol{\mathrm{curl}}\,\boldsymbol{\mathrm{curl}}\boldsymbol{u}$ one gets $\frac{\partial\chi}{\partial\boldsymbol{n}}=0$ on $\Gamma$. Thus $\chi=0$ in $\Omega$ and the result is proved.}
\end{rmk}
 We have the following theorem, for the proof see \cite[Theorem 4.3]{Albaba}.
\begin{theo}\label{existencehilbert}
Let $\varepsilon\in\left] 0,\pi\right[ $ be fixed, $\boldsymbol{f}\in\boldsymbol{L}^{2}_{\sigma,\tau}(\Omega)$ and $\lambda\in\Sigma_{\varepsilon}$.\\
\textbf{(i)}  The
Problem (\ref{resolventexistence})
 has a unique solution
$\boldsymbol{u}\in\boldsymbol{H}^{1}(\Omega)$.

\noindent\textbf{(ii)} There exist a constant $C'_{\varepsilon}>0$ independent of $\boldsymbol{f}$ and $\lambda$ such that the solution $\boldsymbol{u}$ satisfies the
estimates
\begin{equation}\label{esthilb1}
\|\boldsymbol{u}\|_{\boldsymbol{L}^{2}(\Omega)}\leq\frac{C'_{\varepsilon}}{|\lambda|}\|\boldsymbol{f}\|_{\boldsymbol{L}^{2}(\Omega)}
\end{equation}
and
\begin{equation}\label{esthilb2}
\|\boldsymbol{\mathrm{curl}}\,\boldsymbol{u}\|_{\boldsymbol{L}^{2}(\Omega)}\leq\frac{C'_{\varepsilon}}{\sqrt{|\lambda|}}\|\boldsymbol{f}\|_{\boldsymbol{L}^{2}(\Omega)}.
\end{equation}

\noindent\textbf{(iii)} $\boldsymbol{u}\in\boldsymbol{H}^{2}(\Omega)$ and satisfies the estimate
\begin{equation}\label{esthilb3}
\|\boldsymbol{u}\|_{\boldsymbol{H}^{2}(\Omega)}\leq\,\frac{C(\Omega,\lambda,\varepsilon)}{\vert\lambda\vert}\,\|\boldsymbol{f}\|_{\boldsymbol{L}^{2}(\Omega)},
\end{equation}
where $C(\Omega,\lambda,\varepsilon)=C(\Omega)(C'_{\varepsilon}+1)(\vert\lambda\vert+1)$.
\end{theo}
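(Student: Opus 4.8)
The plan is to recast Problem \eqref{resolventexistence} variationally on the Hilbert space $\boldsymbol{V}:=\boldsymbol{X}^{2}_{\sigma,\tau}(\Omega)$ (defined by \eqref{vpt}, with $p=2$) and to run a Lax--Milgram argument. Using the Green formula of Lemma \ref{fg1} with $p=p'=2$, the natural weak formulation is: find $\boldsymbol{u}\in\boldsymbol{V}$ with
\begin{equation*}
a_{\lambda}(\boldsymbol{u},\boldsymbol{v}):=\lambda\int_{\Omega}\boldsymbol{u}\cdot\overline{\boldsymbol{v}}\,\mathrm{d}x+\int_{\Omega}\boldsymbol{\mathrm{curl}}\,\boldsymbol{u}\cdot\boldsymbol{\mathrm{curl}}\,\overline{\boldsymbol{v}}\,\mathrm{d}x=\int_{\Omega}\boldsymbol{f}\cdot\overline{\boldsymbol{v}}\,\mathrm{d}x,\qquad\forall\,\boldsymbol{v}\in\boldsymbol{V}.
\end{equation*}
The constraints $\mathrm{div}\,\boldsymbol{u}=0$ and $\boldsymbol{u}\cdot\boldsymbol{n}=0$ are built into $\boldsymbol{V}$, while $\boldsymbol{\mathrm{curl}}\,\boldsymbol{u}\times\boldsymbol{n}=\boldsymbol{0}$ is the \emph{natural} boundary condition hidden in this identity: de Rham's lemma shows $\lambda\boldsymbol{u}-\Delta\boldsymbol{u}-\boldsymbol{f}=\nabla\pi$, and taking the divergence (using $\mathrm{div}\,\boldsymbol{u}=\mathrm{div}\,\boldsymbol{f}=0$) forces $\pi$ harmonic with $\partial_{n}\pi=0$, hence constant, so $\lambda\boldsymbol{u}-\Delta\boldsymbol{u}=\boldsymbol{f}$; the boundary term in Lemma \ref{fg1} then yields the Navier-type condition.

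For existence and uniqueness (part (i)) I would establish coercivity of $a_{\lambda}$ on the \emph{whole} sector, not merely for $\mathrm{Re}\,\lambda\geq0$, by a rotation. Writing $\lambda=|\lambda|e^{i\theta}$ with $|\theta|\leq\pi-\varepsilon$ and testing the rotated form $e^{-i\theta/2}a_{\lambda}$ against $\boldsymbol{u}$ gives
\begin{equation*}
\mathrm{Re}\big(e^{-i\theta/2}a_{\lambda}(\boldsymbol{u},\boldsymbol{u})\big)=\cos(\theta/2)\Big(|\lambda|\,\|\boldsymbol{u}\|_{\boldsymbol{L}^{2}(\Omega)}^{2}+\|\boldsymbol{\mathrm{curl}}\,\boldsymbol{u}\|_{\boldsymbol{L}^{2}(\Omega)}^{2}\Big).
\end{equation*}
Since $\cos(\theta/2)\geq\cos(\pi/2-\varepsilon/2)=\sin(\varepsilon/2)>0$, and since by Lemma \ref{con1} together with $\mathrm{div}\,\boldsymbol{u}=0$ the quantity $\|\boldsymbol{u}\|_{\boldsymbol{L}^{2}}^{2}+\|\boldsymbol{\mathrm{curl}}\,\boldsymbol{u}\|_{\boldsymbol{L}^{2}}^{2}$ is equivalent to $\|\boldsymbol{u}\|_{\boldsymbol{V}}^{2}$, this gives coercivity. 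Lax--Milgram then produces a unique $\boldsymbol{u}\in\boldsymbol{V}\subset\boldsymbol{H}^{1}(\Omega)$.

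For the estimates (part (ii)), I would take $\boldsymbol{v}=\boldsymbol{u}$, obtaining $\lambda\|\boldsymbol{u}\|_{\boldsymbol{L}^{2}}^{2}+\|\boldsymbol{\mathrm{curl}}\,\boldsymbol{u}\|_{\boldsymbol{L}^{2}}^{2}=\int_{\Omega}\boldsymbol{f}\cdot\overline{\boldsymbol{u}}\,\mathrm{d}x$, and use the elementary geometric bound $|\lambda+s|\geq|\lambda|\sin\varepsilon$, valid for all $s\geq0$ and $\lambda\in\Sigma_{\varepsilon}$ (minimise $|\lambda|^{2}+2s|\lambda|\cos\theta+s^{2}$ over $s\geq0$, using $\cos\theta\geq-\cos\varepsilon$). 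Applied with $s=\|\boldsymbol{\mathrm{curl}}\,\boldsymbol{u}\|_{\boldsymbol{L}^{2}}^{2}/\|\boldsymbol{u}\|_{\boldsymbol{L}^{2}}^{2}$ (the case $\boldsymbol{u}=\boldsymbol{0}$ being trivial) this yields $|\lambda|\sin\varepsilon\,\|\boldsymbol{u}\|_{\boldsymbol{L}^{2}}^{2}\leq\|\boldsymbol{f}\|_{\boldsymbol{L}^{2}}\|\boldsymbol{u}\|_{\boldsymbol{L}^{2}}$, i.e. \eqref{esthilb1} with $C'_{\varepsilon}=1/\sin\varepsilon$; inserting this into $\|\boldsymbol{\mathrm{curl}}\,\boldsymbol{u}\|_{\boldsymbol{L}^{2}}^{2}\leq\|\boldsymbol{f}\|_{\boldsymbol{L}^{2}}\|\boldsymbol{u}\|_{\boldsymbol{L}^{2}}+|\lambda|\|\boldsymbol{u}\|_{\boldsymbol{L}^{2}}^{2}$ gives \eqref{esthilb2}.

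Finally, for the $\boldsymbol{H}^{2}$-regularity (part (iii)), I note $\Delta\boldsymbol{u}=\lambda\boldsymbol{u}-\boldsymbol{f}\in\boldsymbol{L}^{2}(\Omega)$, so $\boldsymbol{u}\in\mathbf{D}(A_{2})$ and the norm equivalence of Remark \ref{rmkequivnorm}(i) (resting on Lemmas \ref{con1}, \ref{con2}) gives $\|\boldsymbol{u}\|_{\boldsymbol{H}^{2}}\leq C(\Omega)(\|\boldsymbol{u}\|_{\boldsymbol{L}^{2}}+\|\Delta\boldsymbol{u}\|_{\boldsymbol{L}^{2}})$. Since $\|\Delta\boldsymbol{u}\|_{\boldsymbol{L}^{2}}\leq|\lambda|\|\boldsymbol{u}\|_{\boldsymbol{L}^{2}}+\|\boldsymbol{f}\|_{\boldsymbol{L}^{2}}\leq(C'_{\varepsilon}+1)\|\boldsymbol{f}\|_{\boldsymbol{L}^{2}}$ by \eqref{esthilb1}, combining with \eqref{esthilb1} once more gives $\|\boldsymbol{u}\|_{\boldsymbol{H}^{2}}\leq C(\Omega)(C'_{\varepsilon}+1)\tfrac{1+|\lambda|}{|\lambda|}\|\boldsymbol{f}\|_{\boldsymbol{L}^{2}}$, which is exactly \eqref{esthilb3}. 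The main obstacle I anticipate is not the estimates, which are algebraically short, but securing coercivity uniformly across the \emph{full} sector $\Sigma_{\varepsilon}$ (handled by the rotation $e^{-i\theta/2}$) and rigorously recovering both the vanishing of the pressure and the natural boundary condition $\boldsymbol{\mathrm{curl}}\,\boldsymbol{u}\times\boldsymbol{n}=\boldsymbol{0}$ from the divergence-free variational formulation.
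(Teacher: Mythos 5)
Your proposal is correct and follows essentially the route the authors take: the paper does not reprove this theorem but defers to \cite[Theorem 4.3]{Albaba}, where the Hilbertian case is handled by precisely this variational formulation on $\boldsymbol{X}^{2}_{\sigma,\tau}(\Omega)$, a Lax--Milgram argument made coercive on the full sector, the energy identity $\lambda\|\boldsymbol{u}\|^{2}_{\boldsymbol{L}^{2}(\Omega)}+\|\boldsymbol{\mathrm{curl}}\,\boldsymbol{u}\|^{2}_{\boldsymbol{L}^{2}(\Omega)}=\int_{\Omega}\boldsymbol{f}\cdot\overline{\boldsymbol{u}}\,\mathrm{d}x$, and the elementary geometric bound $|\lambda+s|\geq|\lambda|\sin\varepsilon$ for $s\geq 0$. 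The only step you should rewrite is in (iii): as stated you infer $\boldsymbol{u}\in\mathbf{D}(A_{2})$ from $\Delta\boldsymbol{u}=\lambda\boldsymbol{u}-\boldsymbol{f}\in\boldsymbol{L}^{2}(\Omega)$ and then invoke the norm equivalence of Remark \ref{rmkequivnorm}, but membership in $\mathbf{D}(A_{2})$ already presupposes $\boldsymbol{u}\in\boldsymbol{W}^{2,2}(\Omega)$, which is exactly what is to be proved, so the argument is circular as written. The non-circular order (which you gesture at by citing Lemmas \ref{con1} and \ref{con2}) is: from $\mathrm{div}\,\boldsymbol{u}=0$ one has $\boldsymbol{\mathrm{curl}}\,\boldsymbol{\mathrm{curl}}\,\boldsymbol{u}=-\Delta\boldsymbol{u}\in\boldsymbol{L}^{2}(\Omega)$, and together with $\mathrm{div}\,(\boldsymbol{\mathrm{curl}}\,\boldsymbol{u})=0$ and $\boldsymbol{\mathrm{curl}}\,\boldsymbol{u}\times\boldsymbol{n}=\boldsymbol{0}$ this places $\boldsymbol{\mathrm{curl}}\,\boldsymbol{u}$ in $\boldsymbol{X}^{2}_{N}(\Omega)\hookrightarrow\boldsymbol{H}^{1}(\Omega)$ by Lemma \ref{con1}; hence $\boldsymbol{u}\in\boldsymbol{X}^{2,2}(\Omega)\hookrightarrow\boldsymbol{H}^{2}(\Omega)$ by Lemma \ref{con2}, after which your quantitative bound $\|\boldsymbol{u}\|_{\boldsymbol{H}^{2}(\Omega)}\leq C(\Omega)(C'_{\varepsilon}+1)\frac{1+|\lambda|}{|\lambda|}\|\boldsymbol{f}\|_{\boldsymbol{L}^{2}(\Omega)}$ follows exactly as you wrote it.
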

\begin{rmk}\label{m-dissipative}
\rm{We note that for $\lambda>0$ the constant $C'_{\varepsilon}$ is equal to $1$ and we recover the m-acritiveness property of the stokes operator on $\boldsymbol{L}^{2}_{\sigma,\tau}(\Omega )$.}
\end{rmk}
\begin{rmk}
\rm{Consider the sesqui-linear form:
\begin{equation}\label{sesq1}
\forall\,\boldsymbol{u},\,\boldsymbol{v}\in\boldsymbol{X}^{2}_{\sigma,\tau}(\Omega),\qquad
a(\boldsymbol{u},\boldsymbol{v})\,=\,\int_{\Omega}\boldsymbol{\mathrm{curl}}\,\boldsymbol{u}\cdot\boldsymbol{\mathrm{curl}}\,\overline{\boldsymbol{v}}\,\textrm{d}\,x.
\end{equation}
If $\Omega$ is simply connected, we know that (see \cite[Corollary 3.16]{Am2}) for all
$\boldsymbol{v}\in\boldsymbol{X}^{2}_{\sigma,\tau}(\Omega)$ one has
\begin{equation}\label{equivnorm1}
\|\boldsymbol{v}\|_{\boldsymbol{X}^{2}(\Omega)}\leq
C\,\|\boldsymbol{\mathrm{curl}}\,\boldsymbol{v}\|_{\boldsymbol{L}^{2}(\Omega)}.
\end{equation}
As a result, the sesqui-linear form $a$ is coercive and we can apply
Lax-Milgram Lemma to find solution to the problem: find
$\boldsymbol{u}\in\boldsymbol{X}^{2}_{\sigma,\tau}(\Omega)$ such that for
all $\boldsymbol{v}\in\boldsymbol{X}^{2}_{\sigma,\tau}(\Omega)$
\begin{equation*}
a(\boldsymbol{u},\boldsymbol{v})\,=\,\int_{\Omega}\boldsymbol{f}\cdot\overline{\boldsymbol{v}}\,\textrm{d}\,x,
\end{equation*}
where $\boldsymbol{f}\in\boldsymbol{L}^{2}_{\sigma,\tau}(\Omega)$.
This means that the operator
$A_{2}\,:\,\mathbf{D}(A_{2})\subset\boldsymbol{L}^{2}_{\sigma,\tau}(\Omega)\longmapsto\boldsymbol{L}^{2}_{\sigma,\tau}(\Omega)$
 is bijective.

 Now, if $\Omega$ is multiply-connected, the inequality (\ref{equivnorm1})
 is false because the kernel $K _{ \tau  }(\Omega )$ of the Stokes operator with Navier-type boundary conditions is not trivial (cf.  \cite{Am2}).
It is also proved in \cite{Am2}, that for all
$\boldsymbol{v}\in\boldsymbol{X}^{2}_{\tau}(\Omega)$ we have instead the
following Poincar\'{e}-type inequality:
\begin{equation}\label{pc1}
\|\boldsymbol{v}\|_{\boldsymbol{X}^{2}_{\tau}(\Omega)}\leq
C_{2}(\Omega)(\|\boldsymbol{\mathrm{curl}}\,\boldsymbol{v}\|_{\boldsymbol{L}^{2}(\Omega)}\,+\,\|\mathrm{div}\,\boldsymbol{v}\|_{\boldsymbol{L}^{2}(\Omega)}\,+\,\sum_{j=1}^{J}|\langle\boldsymbol{v}\cdot\boldsymbol{n}\,,\,1\rangle_{\Sigma_{j}}|).
\end{equation}
 }
\end{rmk}
As a consequence of Theorem \ref{existencehilbert} we have the following theorem
\begin{theo}\label{analsemi1}
The operator $-A_{2}$ generates a bounded analytic semi-group on $\boldsymbol{L}^{2}_{\sigma,\tau}(\Omega)$.
\end{theo}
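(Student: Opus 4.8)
The plan is to show that $-A_{2}$ is a densely defined, closed, sectorial operator in the sense of \eqref{ç}, and then to invoke the generation theorem \cite[Chapter 2, Theorem 4.6, page 101]{En}, according to which an operator generates a bounded analytic semi-group precisely when it is sectorial. The bulk of the work is already contained in Theorem \ref{existencehilbert}; what remains is to rephrase that result as a resolvent estimate for $-A_{2}$ and to verify the angular and density hypotheses.

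First I would identify the resolvent problem. Since the resolvent of $-A_{2}$ is $R(\lambda,-A_{2})=(\lambda I-(-A_{2}))^{-1}=(\lambda I+A_{2})^{-1}$, solving $(\lambda I+A_{2})\boldsymbol{u}=\boldsymbol{f}$ amounts to $\lambda\boldsymbol{u}+A_{2}\boldsymbol{u}=\boldsymbol{f}$; and by Proposition \ref{sl} one has $A_{2}\boldsymbol{u}=-\Delta\boldsymbol{u}$, so this is exactly Problem \eqref{resolventexistence}. Fix any $\varepsilon\in\left]0,\pi/2\right[$. Theorem \ref{existencehilbert}(i) then asserts that for every $\lambda\in\Sigma_{\varepsilon}$ and every $\boldsymbol{f}\in\boldsymbol{L}^{2}_{\sigma,\tau}(\Omega)$ the problem has a unique solution, so that $\lambda I+A_{2}$ is a bijection from $\mathbf{D}(A_{2})$ onto $\boldsymbol{L}^{2}_{\sigma,\tau}(\Omega)$; in particular $\Sigma_{\varepsilon}\subset\rho(-A_{2})$, which also shows that $-A_{2}$ is closed. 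Estimate \eqref{esthilb1} of Theorem \ref{existencehilbert}(ii) then gives
\begin{equation*}
\|R(\lambda,-A_{2})\boldsymbol{f}\|_{\boldsymbol{L}^{2}(\Omega)}=\|\boldsymbol{u}\|_{\boldsymbol{L}^{2}(\Omega)}\leq\frac{C'_{\varepsilon}}{|\lambda|}\|\boldsymbol{f}\|_{\boldsymbol{L}^{2}(\Omega)},
\end{equation*}
so that $\|R(\lambda,-A_{2})\|_{\mathcal{L}(\boldsymbol{L}^{2}_{\sigma,\tau}(\Omega))}\leq C'_{\varepsilon}/|\lambda|$ for all $\lambda\in\Sigma_{\varepsilon}$. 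This is precisely the sectoriality estimate \eqref{ç} with $M=C'_{\varepsilon}$ and $\theta=\varepsilon<\pi/2$.

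It remains to note the density of the domain: by Proposition \ref{dd1}, $\mathbf{D}(A_{2})$ is dense in $\boldsymbol{L}^{2}_{\sigma,\tau}(\Omega)$, hence $-A_{2}$ is densely defined. Having checked that $-A_{2}$ is densely defined, closed, and sectorial in the sense of \eqref{ç}, the generation theorem \cite[Chapter 2, Theorem 4.6, page 101]{En} immediately yields that $-A_{2}$ generates a bounded analytic semi-group on $\boldsymbol{L}^{2}_{\sigma,\tau}(\Omega)$. I do not expect a genuine obstacle here, since the analytic core is already carried by Theorem \ref{existencehilbert}: the only delicate points are purely bookkeeping, namely matching the sign convention so that the resolvent problem for $-A_{2}$ coincides with \eqref{resolventexistence}, and choosing $\varepsilon<\pi/2$ so that the available sector $\Sigma_{\varepsilon}$ fits the definition of sectoriality. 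Alternatively, one could retain only the half-plane estimate on $\{\mathrm{Re}\,\lambda\geq0\}$ and appeal to Proposition \ref{pr2} to recover a full sector, which is the route that will be needed later in the general $\boldsymbol{L}^{p}$ setting where the sectorial estimate is not directly available on all of $\Sigma_{\varepsilon}$.
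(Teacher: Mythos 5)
Your proof is correct and follows the same route the paper intends: Theorem \ref{existencehilbert} supplies unique solvability and the resolvent estimate \eqref{esthilb1} on a full sector $\Sigma_{\varepsilon}$ with $\varepsilon<\pi/2$, which together with the density of $\mathbf{D}(A_{2})$ (Proposition \ref{dd1}) gives sectoriality in the sense of \eqref{ç} and hence generation of a bounded analytic semi-group by \cite[Chapter 2, Theorem 4.6]{En} — this is precisely why the paper states the theorem as an immediate consequence (cf.\ Remark \ref{Reposi}, which confirms that in the Hilbertian case one uses the full-sector estimate directly rather than Proposition \ref{pr2}). Your bookkeeping of the sign convention and of the sector angle is accurate, so nothing is missing.
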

\begin{rmk}
\rm{ We recall that the
restriction of an analytic semi-group to the non negative real axis is a $C_{0}$
semi-group. Thanks to Remark \ref{m-dissipative} the restriction of our analytic
semi-group to the real axis gives a $C_{0}$ semi-group of
contraction.\rm}
\end{rmk}
The following proposition gives the eigenvalues of the Stokes operator. We will see later that the following proposition allows us to obtain an explicit form for the unique solution of the homogeneous Stokes Problem \eqref{hensp} as a linear combination of the eigenfunctions of the Stokes operator.
\begin{prop}\label{eva}
There exists a sequence of functions
$(\boldsymbol{z}_{k})_{k}\subset\mathbf{D}(A_{2})$ and an increasing
sequence of real numbers $(\lambda_{k})_{k}$ such that
$\lambda_{k}\geq 0$, $\lambda_{k}\rightarrow +\infty$ as
$k\rightarrow +\infty$ and
\begin{equation*}
\forall\,\boldsymbol{v}\in\boldsymbol{X}^{2}_{\tau}(\Omega),\qquad
\int_{\Omega}\boldsymbol{\mathrm{curl}}\,\boldsymbol{z}_{k}\cdot\boldsymbol{\mathrm{curl}}\,\overline{\boldsymbol{v}}\,\mathrm{d}\,x\,=\,\lambda_{k}\,\int_{\Omega}\boldsymbol{z}_{k}\cdot\overline{\boldsymbol{v}}\,\mathrm{d}\,x.
\end{equation*}
In other words, $(\lambda_{k})_{k}$ are the eigenvalues of the
Stokes operator and $(\boldsymbol{z}_{k})_{k}$ are the associated
eigenfunctions.
\end{prop}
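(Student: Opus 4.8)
The plan is to exploit that, on the Hilbert space $\boldsymbol{L}^{2}_{\sigma,\tau}(\Omega)$, the operator $A_{2}$ is self-adjoint, non-negative, and has compact resolvent, and then to invoke the spectral theorem for such operators. Self-adjointness is immediate from the identity $(A_{p})^{\ast}=A_{p'}$ recorded above, taken at $p=2$. Non-negativity follows from the quadratic-form identity $\int_{\Omega}A_{2}\boldsymbol{u}\cdot\overline{\boldsymbol{u}}\,\dx=\int_{\Omega}|\boldsymbol{\mathrm{curl}}\,\boldsymbol{u}|^{2}\,\dx\ge 0$ valid for $\boldsymbol{u}\in\mathbf{D}(A_{2})$; in particular $-1\in\rho(A_{2})$, so $(I+A_{2})^{-1}$ is a bounded, self-adjoint, positive operator on $\boldsymbol{L}^{2}_{\sigma,\tau}(\Omega)$ with spectrum in $(0,1]$.

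Next I would establish compactness of this resolvent. By Theorem \ref{existencehilbert}, applied with $\lambda=1\in\Sigma_{\varepsilon}$, for every $\boldsymbol{f}\in\boldsymbol{L}^{2}_{\sigma,\tau}(\Omega)$ the solution $\boldsymbol{u}=(I+A_{2})^{-1}\boldsymbol{f}$ belongs to $\boldsymbol{H}^{1}(\Omega)$ (indeed to $\boldsymbol{H}^{2}(\Omega)$) with norm controlled by $\|\boldsymbol{f}\|_{\boldsymbol{L}^{2}(\Omega)}$. Hence $(I+A_{2})^{-1}$ maps $\boldsymbol{L}^{2}_{\sigma,\tau}(\Omega)$ boundedly into $\boldsymbol{H}^{1}(\Omega)\cap\boldsymbol{L}^{2}_{\sigma,\tau}(\Omega)$; since $\Omega$ is bounded, Rellich's theorem gives a compact embedding $\boldsymbol{H}^{1}(\Omega)\hookrightarrow\boldsymbol{L}^{2}(\Omega)$, so $(I+A_{2})^{-1}$ is compact. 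The spectral theorem for compact, self-adjoint, positive operators then yields an orthonormal basis $(\boldsymbol{z}_{k})_{k}$ of $\boldsymbol{L}^{2}_{\sigma,\tau}(\Omega)$ and eigenvalues $0<\mu_{k}\le 1$, $\mu_{k}\downarrow 0$, with $(I+A_{2})^{-1}\boldsymbol{z}_{k}=\mu_{k}\boldsymbol{z}_{k}$. Setting $\lambda_{k}=\mu_{k}^{-1}-1$, each $\boldsymbol{z}_{k}\in\mathbf{D}(A_{2})$ satisfies $A_{2}\boldsymbol{z}_{k}=\lambda_{k}\boldsymbol{z}_{k}$; the $\lambda_{k}$ may be arranged into an increasing sequence tending to $+\infty$, and they are $\ge 0$ by the non-negativity above (the value $0$ occurring, with eigenspace $\boldsymbol{K}_{\tau}(\Omega)$, precisely when $\Omega$ is multiply connected).

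It remains to recast the relation $A_{2}\boldsymbol{z}_{k}=\lambda_{k}\boldsymbol{z}_{k}$, that is $-\Delta\boldsymbol{z}_{k}=\lambda_{k}\boldsymbol{z}_{k}$ by Proposition \ref{sl}, in the stated variational form for test fields in $\boldsymbol{X}^{2}_{\tau}(\Omega)$. Since $\boldsymbol{z}_{k}$ is divergence free, $-\Delta\boldsymbol{z}_{k}=\boldsymbol{\mathrm{curl}}\,\boldsymbol{\mathrm{curl}}\,\boldsymbol{z}_{k}$. Applying the Green formula \eqref{tt} to the field $\boldsymbol{\mathrm{curl}}\,\boldsymbol{z}_{k}$, which lies in $\boldsymbol{H}^{2}(\boldsymbol{\mathrm{curl}},\Omega)$ with $\boldsymbol{\mathrm{curl}}\,\boldsymbol{\mathrm{curl}}\,\boldsymbol{z}_{k}=\lambda_{k}\boldsymbol{z}_{k}$, and to the test field $\boldsymbol{v}\in\boldsymbol{X}^{2}_{\tau}(\Omega)\subset\boldsymbol{W}^{1,2}(\Omega)$ (by Lemma \ref{con1}), gives
\begin{equation*}
\lambda_{k}\int_{\Omega}\boldsymbol{z}_{k}\cdot\overline{\boldsymbol{v}}\,\dx=\int_{\Omega}\boldsymbol{\mathrm{curl}}\,\boldsymbol{z}_{k}\cdot\boldsymbol{\mathrm{curl}}\,\overline{\boldsymbol{v}}\,\dx+\langle\boldsymbol{\mathrm{curl}}\,\boldsymbol{z}_{k}\times\boldsymbol{n}\,,\,\boldsymbol{v}\rangle_{\Gamma}.
\end{equation*}
Because $\boldsymbol{z}_{k}\in\mathbf{D}(A_{2})$ we have $\boldsymbol{\mathrm{curl}}\,\boldsymbol{z}_{k}\times\boldsymbol{n}=\boldsymbol{0}$ on $\Gamma$, so the boundary term drops and the asserted identity follows for all $\boldsymbol{v}\in\boldsymbol{X}^{2}_{\tau}(\Omega)$.

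I expect the main obstacle to be the careful verification that the resolvent is genuinely a compact, self-adjoint operator on the correct space: one must check that the $\boldsymbol{H}^{1}$ (or $\boldsymbol{H}^{2}$) regularity of Theorem \ref{existencehilbert} combines cleanly with the self-adjointness $(A_{2})^{\ast}=A_{2}$ so that the abstract spectral theorem applies. Once this structural step is secured, the identification of the eigenvalue equation and the passage to the weak form via the curl Green formula are routine.
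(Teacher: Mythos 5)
Your proposal is correct and follows essentially the same route as the paper: the paper also introduces the resolvent $\Lambda=(I+A_{2})^{-1}$, shows it is bounded, compact (via Lemma \ref{con1} and the Rellich embedding of $\boldsymbol{H}^{1}(\Omega)$ into $\boldsymbol{L}^{2}(\Omega)$) and self-adjoint on $\boldsymbol{L}^{2}_{\sigma,\tau}(\Omega)$, applies the spectral theorem to get eigenvalues $\mu_{k}\in(0,1]$, and sets $\lambda_{k}=\mu_{k}^{-1}-1$. Your closing step, deriving the stated variational identity from $A_{2}\boldsymbol{z}_{k}=\lambda_{k}\boldsymbol{z}_{k}$ via the Green formula \eqref{tt} and the boundary condition $\boldsymbol{\mathrm{curl}}\,\boldsymbol{z}_{k}\times\boldsymbol{n}=\boldsymbol{0}$, is a small but welcome addition that the paper leaves implicit.
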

\begin{proof}
Consider the operator
\begin{equation*}
\Lambda\,:\,\boldsymbol{L}^{2}_{\sigma,\tau}(\Omega)\longmapsto\mathbf{D}(A_{2})\longmapsto\boldsymbol{L}^{2}_{\sigma,\tau}(\Omega)
\end{equation*}
\begin{equation*}
\boldsymbol{f}\longmapsto\boldsymbol{u}\longmapsto\boldsymbol{u},
\end{equation*}
where $\boldsymbol{u}$ is the unique solution of the problem
\begin{equation*}
\left\{
\begin{array}{ccc}
\boldsymbol{u} + A_{2} \boldsymbol{u} = \boldsymbol{f}, & \mathrm{div}\,\boldsymbol{u} =0 & \qquad \textrm{in} \,\,\, \Omega, \\
\boldsymbol{u}\cdot \boldsymbol{n} = 0, & \boldsymbol{\mathrm{curl}}\,\boldsymbol{u}\times\boldsymbol{n}=\boldsymbol{0}&\qquad
\textrm{on}\,\,\, \Gamma.
\end{array}
\right.
\end{equation*}
Thanks to Theorem \ref{existencehilbert}, we know that $\Lambda$ is a bounded linear operator from
$\boldsymbol{L}^{2}_{\sigma,\tau}(\Omega)$ into itself. Moreover,
thanks to Lemma \ref{con1} and the compact embedding of $\boldsymbol{H}^{1}(\Omega)$ in $\boldsymbol{L}^{2}(\Omega)$, the canonical embedding
$\mathbf{D}(A_{2})\hookrightarrow\boldsymbol{L}^{2}_{\sigma,\tau}(\Omega)$ is compact. Equivalently, the operator $\Lambda$ is compact from
$\boldsymbol{L}^{2}_{\sigma,\tau}(\Omega)$ into itself. Moreover we can easily verify that this operator
is also a self adjoint operator. Thus
$\boldsymbol{L}^{2}_{\sigma,\tau}(\Omega)$ has a Hilbertian basis
formed from the eigenvectors of the operator $\Lambda$. Then, there exists a sequence of real numbers  $(\mu_{k})_{k\geqslant 0}$
and eigenfunctions $(\boldsymbol{z}_{k})_{k\geqslant 0}$ such that
$\Lambda\boldsymbol{z}_{k}\,=\,\mu_{k}\,\boldsymbol{z}_{k}$ and 
$\mu_{k}\longrightarrow 0$ as $k\rightarrow +\infty$. This means that
$-\mu_{k}\,\Delta\boldsymbol{z}_{k}\,+\,\mu_{k}\,\boldsymbol{z}_{k}\,=\,\boldsymbol{z}_{k}$. Note that $0<\mu_{k}\leq 1$.
As a result $A_{2}\,\boldsymbol{z}_{k}\,=\,\lambda_{k}\,\boldsymbol{z}_{k}$, where $\lambda_{k}\,=\,\frac{1}{\mu_{k}}-1$ and $\lambda_{k}\longrightarrow
+\infty$ as $k\rightarrow +\infty.$  In conclusion
$(\boldsymbol{z}_{k})_{k}$ is a sequence of eigenfunctions of the
Stokes operator associated to the eigenvalues $(\lambda_{k})_{k}$.
\end{proof}
\begin{rmk}\label{remarkeva}
\rm{
As a consequence of Proposition \ref{eva},
$\boldsymbol{L}^{2}_{\sigma,\tau}(\Omega)$ can be written in the form
\begin{equation*}
\boldsymbol{L}^{2}_{\sigma,\tau}(\Omega)\,=\,\boldsymbol{\mathrm{Ker}}A_{2}\,\bigoplus_{k=1}^{+\infty}\boldsymbol{\mathrm{Ker}}(\lambda_{k}\,I\,-\,A_{2}).
\end{equation*}
In other words, any vector
$\boldsymbol{v}\in\boldsymbol{L}^{2}_{\sigma,\tau}(\Omega)$ can be
written in the form
\begin{equation*}
\boldsymbol{v}\,=\,\sum_{k=1}^{J}\alpha_{k}\widetilde{\boldsymbol{\mathrm{grad}}}\,q^{\tau}_{k}\,+\,\sum_{k=1}^{+\infty}\beta_{k}\boldsymbol{z}_{k},
\end{equation*}
where $(\widetilde{\boldsymbol{\mathrm{grad}}}\,q^{\tau}_{k})_{1\leq
k\leq J}$ is a basis for
$\boldsymbol{\mathrm{ker}}\,A_{2}\,=\,\boldsymbol{K}^{2}_{\tau}(\Omega)$
and
$\forall\,k\in\mathbb{N},\,\,\,\boldsymbol{z}_{k}\in\boldsymbol{\mathrm{ker}}\,(\lambda_{k}\,I\,-\,A_{2}).$
We recall that $J$ is the dimension of
$\boldsymbol{\mathrm{ker}}\,A_{2}\,=\,\boldsymbol{K}^{2}_{\tau}(\Omega)$,
(see \cite{Am2}).

 As described above, when $\Omega$ is simply-connected,
 $\boldsymbol{K}^{2}_{\tau}(\Omega)=\{\boldsymbol{0}\}$, $\lambda_{0}=0$ is
 not an eigenvalue and the Stokes operator is bijective from $\mathbf{D}(A_{2})$ into $\boldsymbol{L}^{2}_{\sigma,\tau}(\Omega)$ with bounded and compact inverse.
 In this case,
 \begin{equation*}
\boldsymbol{L}^{2}_{\sigma,\tau}(\Omega)\,=\,\bigoplus_{k=1}^{+\infty}\boldsymbol{\mathrm{Ker}}(\lambda_{k}\,I\,-\,A_{2}),
\end{equation*}
where $(\lambda_{k})_{k\geq 1}$ are the eigenvalues of the Stokes
operator and $(\boldsymbol{z_{k}})_{k}$ are the eigenfunctions
associated to eigenvalues $(\lambda_{k})_{k\geq 1}$. Moreover, the
sequence $(\lambda_{k})_{k\geq 1}$ is an increasing sequence of
positive real numbers and the first eigenvalue $\lambda_{1}$ is
equal to $\frac{1}{C_{2}(\Omega)}$ where $C_{2}(\Omega)$ is the
constant that comes from the Poincar\'{e}-type inequality
(\ref{pc1}).
}
\end{rmk}
\subsubsection{$\boldsymbol{L}^{p}$-theory}
This subsection extends Theorem \ref{existencehilbert} to every $1<p<\infty$. Theorem \ref{existencelp} gives the well posedness of the resolvent Problem \eqref{resolventexistence} in $\boldsymbol{L}^{p}(\Omega)$, while Theorem \ref{estlpnavier} extends estimates (\ref{esthilb1}-\ref{esthilb3}) to all $1<p<\infty$ (see \cite[Theorem 4.8, Theorem 4.11]{Albaba} for the proof).
\begin{theo}\label{existencelp}
Let $\lambda\in\mathbb{C}\in\Sigma_{\varepsilon}$ and let
$\boldsymbol{f}\in\boldsymbol{L}^{p}_{\sigma,\tau}(\Omega)$. The
Problem (\ref{resolventexistence}) has a  solution
$\boldsymbol{u}\in\boldsymbol{W}^{2,p}(\Omega)$. Moreover, this solution is unique in 
$\boldsymbol{u}\in\boldsymbol{W}^{1,p}(\Omega)$.
\end{theo}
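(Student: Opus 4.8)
The plan is to reduce everything to the Hilbertian result of Theorem \ref{existencehilbert} together with the elliptic regularity already built into the operator $A_p$, namely that a divergence-free field satisfying the Navier-type conditions with $\Delta\boldsymbol{u}\in\boldsymbol{L}^p(\Omega)$ automatically belongs to $\boldsymbol{W}^{2,p}(\Omega)$ (this is the domain description \eqref{c2dpa} together with the equivalence of norms in Remark \ref{rmkequivnorm}). The cases $p\geq 2$ and $1<p<2$ require different treatments, so I would split accordingly. Throughout I use that, by Proposition \ref{sl}, on $\mathbf{D}(A_p)$ the operator $A_p$ coincides with $-\Delta$, so solving \eqref{resolventexistence} is the same as inverting $\lambda I+A_p$ on $\boldsymbol{L}^p_{\sigma,\tau}(\Omega)$.

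For $p\geq 2$, since $\Omega$ is bounded we have $\boldsymbol{L}^p_{\sigma,\tau}(\Omega)\hookrightarrow\boldsymbol{L}^2_{\sigma,\tau}(\Omega)$, so Theorem \ref{existencehilbert} yields a solution $\boldsymbol{u}\in\boldsymbol{H}^2(\Omega)$. In dimension three $\boldsymbol{H}^2(\Omega)\hookrightarrow\boldsymbol{L}^\infty(\Omega)\subset\boldsymbol{L}^p(\Omega)$, hence $\Delta\boldsymbol{u}=\lambda\boldsymbol{u}-\boldsymbol{f}\in\boldsymbol{L}^p(\Omega)$; as $\boldsymbol{u}$ is divergence-free and satisfies $\boldsymbol{u}\cdot\boldsymbol{n}=0$, $\boldsymbol{\mathrm{curl}}\,\boldsymbol{u}\times\boldsymbol{n}=\boldsymbol{0}$ on $\Gamma$, the regularity result gives $\boldsymbol{u}\in\boldsymbol{W}^{2,p}(\Omega)$. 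Uniqueness is immediate here: any $\boldsymbol{u}\in\boldsymbol{W}^{1,p}(\Omega)\hookrightarrow\boldsymbol{H}^1(\Omega)$ solving the homogeneous problem vanishes by Theorem \ref{existencehilbert}(i).

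For $1<p<2$ I would obtain existence by duality against the already-settled exponent $p'=p/(p-1)>2$. First note that $A_p$ is densely defined (Proposition \ref{dd1}) and closed (by the a priori bound of Remark \ref{rmkequivnorm}), and recall the relation $(A_p)^\ast=A_{p'}$ recorded in the excerpt, whence $(A_{p'})^\ast=A_p$. By the previous step $\bar\lambda I+A_{p'}$ is a bijection of $\mathbf{D}(A_{p'})$ onto $\boldsymbol{L}^{p'}_{\sigma,\tau}(\Omega)$ with bounded inverse. Given $\boldsymbol{f}\in\boldsymbol{L}^p_{\sigma,\tau}(\Omega)$, the functional $\boldsymbol{g}\mapsto\int_\Omega\boldsymbol{f}\cdot\overline{(\bar\lambda I+A_{p'})^{-1}\boldsymbol{g}}\,\mathrm{d}x$ is bounded on $\boldsymbol{L}^{p'}_{\sigma,\tau}(\Omega)$, so by reflexivity and the identification of its dual with $\boldsymbol{L}^p_{\sigma,\tau}(\Omega)$ through the Helmholtz projection \eqref{helmholtzproj1}, it is represented by some $\boldsymbol{u}\in\boldsymbol{L}^p_{\sigma,\tau}(\Omega)$. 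Writing $\boldsymbol{\varphi}=(\bar\lambda I+A_{p'})^{-1}\boldsymbol{g}$, the resulting identity $\int_\Omega\boldsymbol{u}\cdot\overline{(\bar\lambda I+A_{p'})\boldsymbol{\varphi}}\,\mathrm{d}x=\int_\Omega\boldsymbol{f}\cdot\overline{\boldsymbol{\varphi}}\,\mathrm{d}x$, valid for all $\boldsymbol{\varphi}\in\mathbf{D}(A_{p'})$, says exactly that $\boldsymbol{u}\in\mathbf{D}\big((\bar\lambda I+A_{p'})^\ast\big)=\mathbf{D}(\lambda I+A_p)$ with $(\lambda I+A_p)\boldsymbol{u}=\boldsymbol{f}$. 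Thus $\boldsymbol{u}\in\mathbf{D}(A_p)\subset\boldsymbol{W}^{2,p}(\Omega)$ solves \eqref{resolventexistence}.

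It remains to settle uniqueness for $1<p<2$, where $\boldsymbol{W}^{1,p}(\Omega)$ no longer embeds into $\boldsymbol{H}^1(\Omega)$; here I would run a finite Sobolev bootstrap. If $\boldsymbol{u}\in\boldsymbol{W}^{1,p}(\Omega)$ solves the homogeneous problem, then $\Delta\boldsymbol{u}=\lambda\boldsymbol{u}\in\boldsymbol{L}^p(\Omega)$ forces $\boldsymbol{u}\in\boldsymbol{W}^{2,p}(\Omega)$; the embedding $\boldsymbol{W}^{2,q}(\Omega)\hookrightarrow\boldsymbol{L}^{q_1}(\Omega)$ with $1/q_1=1/q-2/3$ raises the integrability by a fixed amount, and feeding $\Delta\boldsymbol{u}=\lambda\boldsymbol{u}$ back into the regularity result finitely many times yields $\boldsymbol{u}\in\boldsymbol{W}^{2,q}(\Omega)\hookrightarrow\boldsymbol{H}^1(\Omega)$ for some $q\geq 2$, whence $\boldsymbol{u}=\boldsymbol{0}$ by Theorem \ref{existencehilbert}(i). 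The main obstacle is the $1<p<2$ existence step: the delicate points are the correct identification of the dual of $\boldsymbol{L}^{p'}_{\sigma,\tau}(\Omega)$ and the verification that the weak identity obtained is genuinely the adjoint statement $(\bar\lambda I+A_{p'})^\ast\boldsymbol{u}=\boldsymbol{f}$; it is precisely the relations $(A_p)^\ast=A_{p'}$, together with the closedness and density of the operators, that make this rigorous.
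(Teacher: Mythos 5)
Your argument is correct and follows essentially the route of the proof that the paper defers to (\cite{Albaba}, Theorems 4.8 and 4.11): for $p\geq 2$ one bootstraps the Hilbertian solution of Theorem \ref{existencehilbert} through the stationary regularity theory of \cite{Am3} (using $\boldsymbol{H}^{2}(\Omega)\hookrightarrow\boldsymbol{L}^{\infty}(\Omega)$ in dimension three), and for $1<p<2$ one dualizes against the already-solved exponent $p'$ via $(A_{p'})^{\ast}=A_{p}$, with uniqueness recovered by a finite Sobolev bootstrap back to the $\boldsymbol{H}^{1}$ setting. The one point to keep explicit is that the identification $(A_{p'})^{\ast}=A_{p}$ -- in particular that the adjoint domain is no larger than $\mathbf{D}(A_{p})$ -- is itself a regularity statement for very weak solutions of the stationary problem and must be drawn from \cite[Theorems 4.8 and 4.15]{Am3} rather than from the resolvent problem being proved; with that reference in place your duality step is rigorous.
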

 \begin{theo}\label{estlpnavier}
Let $\lambda\in\mathbb{C}^{\ast} $ such that
$\mathrm{Re}\,\lambda\,\geq 0$, let $1<p<\infty$,
$\boldsymbol{f}\in\boldsymbol{L}^{p}_{\sigma,\tau}(\Omega)$ and let
$\boldsymbol{u}\in\boldsymbol{W}^{1,p}(\Omega)$ be the unique
solution of Problem (\ref{resolventexistence}). Then $\boldsymbol{u}$ satisfies the
estimates
\begin{eqnarray}\label{estimlpf}
\|\boldsymbol{u}\|_{\boldsymbol{L}^{p}(\Omega)}\,\leq\,\frac{\kappa_{1}(\Omega,p)}{|\lambda|}\|\boldsymbol{f}\|_{\boldsymbol{L}^{p}(\Omega)},\\
\label{curlestlp}
\Vert\boldsymbol{\mathrm{curl}}\,\boldsymbol{u}\Vert_{\boldsymbol{L}^{p}(\Omega)}\,\leq\,\frac{\kappa_{2}(\Omega,p)}{\sqrt{\vert\lambda\vert}}\,\Vert\boldsymbol{f}\Vert_{\boldsymbol{L}^{p}(\Omega)}\\
\label{estw2p}
\Vert\boldsymbol{u}\Vert_{\boldsymbol{W}^{2,p}(\Omega)}\,\leq\,\kappa_{3}(\Omega,p)\,\frac{1+\vert\lambda\vert}{\vert\lambda\vert}\Vert\boldsymbol{f}\Vert_{\boldsymbol{L}^{p}(\Omega)}.
\end{eqnarray}
where $\kappa_{i}(\Omega,p)$, $i=1, 2, 3$  are positive constants independent of $\lambda$ and $\boldsymbol{f}$.
\end{theo}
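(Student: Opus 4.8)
The plan is to reduce everything to the single $L^{p}$-resolvent bound \eqref{estimlpf}: once \eqref{estimlpf} is known for all $1<p<\infty$, the second-order bound \eqref{estw2p} and the curl bound \eqref{curlestlp} follow by elliptic regularity and interpolation. By Proposition~\ref{pr2} it suffices to establish \eqref{estimlpf} on the half-plane $\mathrm{Re}\,\lambda\geq0$, and by the duality $(A_{p})^{\ast}=A_{p'}$ it is enough to treat $p\geq2$: for $1<p<2$ the resolvent of $A_{p}$ at $\lambda$ is the adjoint of the resolvent of $A_{p'}$ at $\overline{\lambda}$, and passing to the adjoint preserves both the operator norm and the condition $\mathrm{Re}\,\lambda\geq0$, so \eqref{estimlpf} for $p'>2$ yields \eqref{estimlpf} for $p$.

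For $p\geq2$ the core computation is to test the resolvent equation $\lambda\boldsymbol{u}-\Delta\boldsymbol{u}=\boldsymbol{f}$ against $|\boldsymbol{u}|^{p-2}\overline{\boldsymbol{u}}$ (legitimate since $\boldsymbol{u}\in\boldsymbol{W}^{2,p}(\Omega)$ by Theorem~\ref{existencelp}) and to invoke the integration-by-parts identity recalled in the Introduction. Writing $I_{\lambda}=\int_{\Omega}\boldsymbol{f}\cdot|\boldsymbol{u}|^{p-2}\overline{\boldsymbol{u}}\,\mathrm{d}x$, this produces
\[
\lambda\,\Vert\boldsymbol{u}\Vert_{\boldsymbol{L}^{p}(\Omega)}^{p}+A+iB=I_{\lambda}+\Big\langle\tfrac{\partial\boldsymbol{u}}{\partial\boldsymbol{n}},\,|\boldsymbol{u}|^{p-2}\boldsymbol{u}\Big\rangle_{\Gamma},
\]
where $A=\int_{\Omega}|\boldsymbol{u}|^{p-2}|\nabla\boldsymbol{u}|^{2}\,\mathrm{d}x+4\tfrac{p-2}{p^{2}}\int_{\Omega}\bigl|\nabla|\boldsymbol{u}|^{p/2}\bigr|^{2}\,\mathrm{d}x\ge0$ is the nonnegative quadratic part and $B$ is the real cross-term, which Cauchy--Schwarz bounds by $|B|\leq\tfrac{p-2}{2}\int_{\Omega}|\boldsymbol{u}|^{p-2}|\nabla\boldsymbol{u}|^{2}\,\mathrm{d}x\leq\tfrac{p-2}{2}A$. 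Taking real and imaginary parts, using $\mathrm{Re}\,\lambda\geq0$ to discard the coercive contribution near the imaginary axis, and feeding $|B|\leq C_{p}A$ into the imaginary part, one combines the two identities into $|\lambda|\,\Vert\boldsymbol{u}\Vert_{\boldsymbol{L}^{p}(\Omega)}^{p}\leq C\,|I_{\lambda}|+(\text{boundary})$; since $|I_{\lambda}|\leq\Vert\boldsymbol{f}\Vert_{\boldsymbol{L}^{p}(\Omega)}\Vert\boldsymbol{u}\Vert_{\boldsymbol{L}^{p}(\Omega)}^{p-1}$, this yields \eqref{estimlpf} as soon as the boundary term is controlled.

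The boundary term is the main obstacle, and it is here that the Navier-type conditions $\boldsymbol{u}\cdot\boldsymbol{n}=0$, $\boldsymbol{\mathrm{curl}}\,\boldsymbol{u}\times\boldsymbol{n}=\boldsymbol{0}$ are decisive: they turn $\langle\partial\boldsymbol{u}/\partial\boldsymbol{n},\,|\boldsymbol{u}|^{p-2}\boldsymbol{u}\rangle_{\Gamma}$ into a curvature-weighted boundary integral of the type $\int_{\Gamma}\mathcal{B}(\boldsymbol{u},\overline{\boldsymbol{u}})\,|\boldsymbol{u}|^{p-2}\,\mathrm{d}\sigma$ carrying no tangential derivatives of $\boldsymbol{u}$, where $\mathcal{B}$ is built from the second fundamental form of $\Gamma$ (this is where the $C^{2,1}$ regularity enters). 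Such a term is dominated by $\int_{\Gamma}|\boldsymbol{u}|^{p}\,\mathrm{d}\sigma=\bigl\Vert\,|\boldsymbol{u}|^{p/2}\bigr\Vert_{L^{2}(\Gamma)}^{2}$, and by the trace theorem together with an Ehrling-type interpolation inequality it is bounded, for every $\varepsilon>0$, by $\varepsilon\bigl\Vert\nabla|\boldsymbol{u}|^{p/2}\bigr\Vert_{L^{2}(\Omega)}^{2}+C_{\varepsilon}\Vert\boldsymbol{u}\Vert_{\boldsymbol{L}^{p}(\Omega)}^{p}$; the first piece is absorbed into $A$ (which dominates $\bigl\Vert\nabla|\boldsymbol{u}|^{p/2}\bigr\Vert_{L^{2}}^{2}$) and the second is genuinely lower order. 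That lower-order term is absorbed for $|\lambda|$ large, proving \eqref{estimlpf} for $|\lambda|\geq\lambda_{0}$. For bounded $\lambda$ in the half-plane I would use that $\mathbf{D}(A_{p})\hookrightarrow\boldsymbol{L}^{p}_{\sigma,\tau}(\Omega)$ is compact, so the spectrum is discrete and $0$ is an isolated eigenvalue with finite-dimensional eigenspace $\boldsymbol{K}_{\tau}(\Omega)$; writing $(\lambda I+A_{p})^{-1}=\tfrac1\lambda P+(\lambda I+A_{p})^{-1}(I-P)$ with $P$ the Riesz projection onto $\boldsymbol{K}_{\tau}(\Omega)$ shows the resolvent blows up exactly like $1/|\lambda|$ near $0$ and stays $O(1/|\lambda|)$ on $\{0<|\lambda|\leq\lambda_{0}\}$.

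Finally, \eqref{estw2p} follows from the norm equivalence $\Vert\boldsymbol{u}\Vert_{\boldsymbol{W}^{2,p}(\Omega)}\simeq\Vert\boldsymbol{u}\Vert_{\boldsymbol{L}^{p}(\Omega)}+\Vert\Delta\boldsymbol{u}\Vert_{\boldsymbol{L}^{p}(\Omega)}$ of Remark~\ref{rmkequivnorm} together with $\Delta\boldsymbol{u}=\lambda\boldsymbol{u}-\boldsymbol{f}$, whence $\Vert\Delta\boldsymbol{u}\Vert_{\boldsymbol{L}^{p}}\leq(1+\kappa_{1})\Vert\boldsymbol{f}\Vert_{\boldsymbol{L}^{p}}$ and $\Vert\boldsymbol{u}\Vert_{\boldsymbol{W}^{2,p}}\lesssim\tfrac{1+|\lambda|}{|\lambda|}\Vert\boldsymbol{f}\Vert_{\boldsymbol{L}^{p}}$. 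For \eqref{curlestlp}, when $p=2$ the term $A$ equals $\Vert\nabla\boldsymbol{u}\Vert_{L^{2}}^{2}\geq\Vert\boldsymbol{\mathrm{curl}}\,\boldsymbol{u}\Vert_{L^{2}}^{2}$ and $A\lesssim|I_{\lambda}|\lesssim\tfrac{1}{|\lambda|}\Vert\boldsymbol{f}\Vert_{L^{2}}^{2}$, giving the $1/\sqrt{|\lambda|}$ scaling at once; for general $p$ and $|\lambda|\geq1$ I interpolate via Gagliardo--Nirenberg, $\Vert\boldsymbol{\mathrm{curl}}\,\boldsymbol{u}\Vert_{\boldsymbol{L}^{p}}\leq\Vert\nabla\boldsymbol{u}\Vert_{\boldsymbol{L}^{p}}\lesssim\Vert\boldsymbol{u}\Vert_{\boldsymbol{L}^{p}}^{1/2}\Vert\boldsymbol{u}\Vert_{\boldsymbol{W}^{2,p}}^{1/2}+\Vert\boldsymbol{u}\Vert_{\boldsymbol{L}^{p}}\lesssim\tfrac{\sqrt{1+|\lambda|}}{|\lambda|}\Vert\boldsymbol{f}\Vert_{\boldsymbol{L}^{p}}\lesssim\tfrac{1}{\sqrt{|\lambda|}}\Vert\boldsymbol{f}\Vert_{\boldsymbol{L}^{p}}$, while for bounded $\lambda$ the fact that $\boldsymbol{\mathrm{curl}}$ annihilates $\boldsymbol{K}_{\tau}(\Omega)$ keeps $\boldsymbol{\mathrm{curl}}\,(\lambda I+A_{p})^{-1}$ bounded as $\lambda\to0$, hence dominated by $1/\sqrt{|\lambda|}$ there.
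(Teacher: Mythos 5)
The paper does not reprove this theorem but cites \cite{Albaba} for it, and the method used there --- as the Introduction of the present paper summarizes: for $p\geq 2$, test the resolvent equation against $|\boldsymbol{u}|^{p-2}\overline{\boldsymbol{u}}$, apply the displayed integration-by-parts identity, and control the boundary term via a curvature formula coming from the Navier-type conditions, the remaining estimates following from the norm equivalence of Remark \ref{rmkequivnorm} --- is exactly the one you reproduce. Your argument is correct and essentially the same as the paper's (cited) proof, with the duality reduction to $p\geq 2$ and the Riesz-projection treatment of small $|\lambda|$ being the natural complements that the identity's restriction to $p\geq 2$ and the nontrivial kernel $\boldsymbol{K}_{\tau}(\Omega)$ require.
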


As a result we have the following theorem (see \cite[Theorem 4.12]{Albaba} for the proof)
 \begin{theo}\label{analsemi2}
The operator $-A_{p}$ generates a bounded analytic semi-group on
$\boldsymbol{L}^{p}_{\sigma,\tau}(\Omega)$  for all $1<p<\infty$.
\end{theo}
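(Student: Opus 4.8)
The plan is to deduce the generation of a bounded analytic semi-group purely from the resolvent analysis already carried out in Theorems \ref{existencelp} and \ref{estlpnavier}, combined with the abstract characterizations recalled in Proposition \ref{pr2} and in \cite{En}. Since the genuine analytic work — existence, uniqueness and the $\boldsymbol{L}^{p}$ resolvent bound — is contained in those two theorems, the task here reduces to translating their conclusions into operator-theoretic language and invoking the right abstract result.

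First I would record that $-A_{p}$ is densely defined: density of $\mathbf{D}(A_{p})$ in $\boldsymbol{L}^{p}_{\sigma,\tau}(\Omega)$ is exactly Proposition \ref{dd1}. Next, using Proposition \ref{sl}, which identifies $A_{p}\boldsymbol{u}=-\Delta\boldsymbol{u}$, the resolvent problem \eqref{resolventexistence}, namely $\lambda\boldsymbol{u}-\Delta\boldsymbol{u}=\boldsymbol{f}$, becomes $(\lambda I+A_{p})\boldsymbol{u}=\boldsymbol{f}$, so its solution operator is precisely $R(\lambda,-A_{p})=(\lambda I+A_{p})^{-1}$. By Theorem \ref{existencelp} this operator is well defined for every $\lambda$ with $\mathrm{Re}\,\lambda\geq 0$, $\lambda\neq 0$, which shows that the whole punctured closed right half-plane lies in the resolvent set $\rho(-A_{p})$; in particular $\rho(-A_{p})\neq\emptyset$, so $-A_{p}$ is closed.

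Then the estimate \eqref{estimlpf} of Theorem \ref{estlpnavier} reads, in this notation,
\[
\|R(\lambda,-A_{p})\|_{\mathcal{L}(\boldsymbol{L}^{p}_{\sigma,\tau}(\Omega))}\,\leq\,\frac{\kappa_{1}(\Omega,p)}{|\lambda|},\qquad \mathrm{Re}\,\lambda\geq 0,\ \lambda\neq 0,
\]
that is, the hypothesis of Proposition \ref{pr2} holds with $M=\kappa_{1}(\Omega,p)$. Applying Proposition \ref{pr2} upgrades this bound from the half-plane to a genuine sector $\Sigma_{\theta_{0}}$ and shows that $-A_{p}$ is sectorial in the sense of \eqref{ç}. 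Finally, by the generation theorem \cite[Chapter 2, Theorem 4.6, page 101]{En}, a densely defined sectorial operator is exactly the generator of a bounded analytic semi-group, which is the assertion for all $1<p<\infty$.

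The step that is ordinarily the crux — passing from a half-plane resolvent bound to a full sectorial estimate — is handled abstractly by Proposition \ref{pr2} (the Yosida argument), so here it costs nothing. The only genuine verification is the bookkeeping identifying the PDE resolvent problem with the operator $-A_{p}$ and checking that the constant $\kappa_{1}$ in \eqref{estimlpf} is independent of $\lambda$, a fact asserted in Theorem \ref{estlpnavier}. Consequently the proof is short once the resolvent estimates are in place.
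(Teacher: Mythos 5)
Your proposal is correct and follows essentially the same route the paper intends: the resolvent estimates of Theorems \ref{existencelp} and \ref{estlpnavier} on the punctured closed right half-plane, upgraded to a sectorial estimate via Proposition \ref{pr2} (exactly the point of Remark \ref{Reposi}), and then the generation theorem of \cite{En}. The bookkeeping steps you supply (density via Proposition \ref{dd1}, the identification $A_p\boldsymbol{u}=-\Delta\boldsymbol{u}$ via Proposition \ref{sl}, closedness from non-empty resolvent set) are all sound.
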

\begin{rmk}\label{Reposi}
\rm{
Notice that, unlike the Hilbertian case, we can not use the result of \cite[Chapter II, Theorem 4.6, page 101]{En} to prove the analyticity of the semi-group generated by the Stokes operator in the  $\boldsymbol{L}^{p}$-space where we have supposed that $\mathrm{Re}\,\lambda\geq 0$.
}
\end{rmk}
\begin{rmk}\label{normboundcond}
\rm{ Consider the two problems:
\begin{equation}\label{resprlp}
\left\{
\begin{array}{cccc}
\lambda \boldsymbol{u} - \Delta \boldsymbol{u} = \boldsymbol{f},&\mathrm{div}\,\boldsymbol{u}=0& \textrm{in}& \Omega, \\
\boldsymbol{u}\times \boldsymbol{n} = \boldsymbol{0}&& \textrm{on}& \Gamma
\end{array}
\right.
\end{equation}
and
\begin{equation}\label{na}
\left\{
\begin{array}{cccc}
\lambda \boldsymbol{u} - \Delta \boldsymbol{u}\,+\,\nabla\pi=\, \boldsymbol{f},& \mathrm{div}\,\boldsymbol{u} = 0& \mathrm{in}&\qquad \Omega, \\
\boldsymbol{u}\cdot \boldsymbol{n} = 0,& \left[ \mathbb{D}\boldsymbol{u}\cdot\boldsymbol{n}\right]_{\boldsymbol{\tau}}=0&
\mathrm{on}&\qquad \Gamma,
\end{array}
\right.
\end{equation}
where $\lambda\in\mathbb{C}^{\ast} $ is such that $\mathrm{Re}\,\lambda\geq 0$ and $\boldsymbol{f}\in\boldsymbol{L}^{p}_{\sigma}(\Omega)$ (respectively $\boldsymbol{f}\in\boldsymbol{L}^{p}_{\sigma,\tau}(\Omega)$  ). 

In two forthcoming  papers we  study the two Problems (\ref{resprlp}) and (\ref{na}). Proceeding in a similar way as in \cite{Albaba} we prove that these two Problems have a unique solution $\boldsymbol{u}\in\boldsymbol{W}^{1,p}(\Omega)$ (respectively $(\boldsymbol{u},\pi)\in\boldsymbol{W}^{1,p}(\Omega)\times W^{1,p}(\Omega)/\mathbb{R}$) that satisfy the estimate
\begin{equation*}
\Vert\boldsymbol{u}\Vert_{\boldsymbol{L}^{p}(\Omega)}\leq\,\frac{C(\Omega,p)}{\vert\lambda\vert}\,\Vert\boldsymbol{f}\Vert_{\boldsymbol{L}^{p}(\Omega)}.
\end{equation*}
Moreover when $\Omega$ is of class $C^{2,1}$, we have $\boldsymbol{u}\in\boldsymbol{W}^{2,p}(\Omega)$.
This means that the Laplacian operator with normal boundary conditions and the Stokes operator with Navier boundary condition generate a bounded analytic semi-group on $\boldsymbol{L}^{p}_{\sigma}(\Omega)$ and $\boldsymbol{L}^{p}_{\sigma,\tau}(\Omega)$ respectively .

This analyticity allows us to solve the time dependent Stokes Problem with normal boundary condition and pressure boundary condition:
\begin{equation}\label{stnobo}
\left\{
\begin{array}{cccc}
\frac{\partial\boldsymbol{u}}{\partial t} - \Delta \boldsymbol{u
}+\nabla\pi= \boldsymbol{f},& \mathrm{div}\,\boldsymbol{u}=0& \textrm{in}&
\Omega\times (0,T), \\
\boldsymbol{u}\times \boldsymbol{n} = \boldsymbol{0},&\pi=0& \textrm{on} & \Gamma\times (0,T), \\
&\boldsymbol{u}(0)= \boldsymbol{u}_{0} &\textrm{in} &\Omega,
\end{array}
\right.
\end{equation}
}
as well as the time dependent Stokes Problem (\ref{lens}) with Navier-boundary condition (\ref{Navierboundcond}) for a given $\boldsymbol{f}\in L^{q}(0,T;\,\boldsymbol{L}^{p}(\Omega))$ and $\boldsymbol{u}_{0}\in\boldsymbol{L}^{p}_{\sigma}(\Omega)$ (respectively $\boldsymbol{u}_{0}\in\boldsymbol{L}^{p}_{\sigma,\tau}(\Omega)$).
\end{rmk}
Remark \ref{normboundcond} allows us to conclude the following corollary
\begin{coro}
The operator $-A_{p}$ generates a bounded analytic semi-group on $\boldsymbol{X}^{p}_{\sigma,\tau}(\Omega)$, where $\boldsymbol{X}^{p}_{\sigma,\tau}(\Omega)$ is given by (\ref{vpt}).
\end{coro}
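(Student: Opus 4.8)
The plan is to verify the resolvent criterion of Proposition \ref{pr2} directly on the space $\boldsymbol{X}^{p}_{\sigma,\tau}(\Omega)$. Since $\mathrm{div}\,\boldsymbol{v}=0$ on this space, its norm is equivalent to $\boldsymbol{v}\mapsto\|\boldsymbol{v}\|_{\boldsymbol{L}^{p}(\Omega)}+\|\boldsymbol{\mathrm{curl}}\,\boldsymbol{v}\|_{\boldsymbol{L}^{p}(\Omega)}$. Because every element of $\mathbf{D}(A_{p})$ lies in $\boldsymbol{W}^{2,p}(\Omega)$ and satisfies $\mathrm{div}\,\boldsymbol{u}=0$, $\boldsymbol{u}\cdot\boldsymbol{n}=0$ and $\boldsymbol{\mathrm{curl}}\,\boldsymbol{u}\times\boldsymbol{n}=\boldsymbol{0}$, one has $\mathbf{D}(A_{p})\subset\boldsymbol{X}^{p}_{\sigma,\tau}(\Omega)$; and for $\boldsymbol{f}\in\boldsymbol{X}^{p}_{\sigma,\tau}(\Omega)$ and $\lambda\in\mathbb{C}^{\ast}$ with $\mathrm{Re}\,\lambda\geq0$, Theorem \ref{existencelp} gives $\boldsymbol{u}=(\lambda I+A_{p})^{-1}\boldsymbol{f}\in\mathbf{D}(A_{p})\subset\boldsymbol{X}^{p}_{\sigma,\tau}(\Omega)$. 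Thus $-A_{p}$ is a well-defined operator on $\boldsymbol{X}^{p}_{\sigma,\tau}(\Omega)$ whose resolvent set contains the closed right half-plane, and it remains to bound its resolvent in the $\boldsymbol{X}^{p}$-norm and to check that its domain is dense.

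The key step is a commutation argument. By Proposition \ref{sl}, $A_{p}\boldsymbol{u}=-\Delta\boldsymbol{u}$, so the resolvent equation reads $\lambda\boldsymbol{u}-\Delta\boldsymbol{u}=\boldsymbol{f}$. Applying $\boldsymbol{\mathrm{curl}}$ and using that it commutes with $\Delta$, the field $\boldsymbol{w}=\boldsymbol{\mathrm{curl}}\,\boldsymbol{u}$ satisfies $\lambda\boldsymbol{w}-\Delta\boldsymbol{w}=\boldsymbol{\mathrm{curl}}\,\boldsymbol{f}$, together with $\mathrm{div}\,\boldsymbol{w}=0$ and the boundary condition $\boldsymbol{w}\times\boldsymbol{n}=\boldsymbol{\mathrm{curl}}\,\boldsymbol{u}\times\boldsymbol{n}=\boldsymbol{0}$ on $\Gamma$. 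This is exactly the resolvent problem (\ref{resprlp}) with datum $\boldsymbol{\mathrm{curl}}\,\boldsymbol{f}\in\boldsymbol{L}^{p}_{\sigma}(\Omega)$. Invoking the estimate for (\ref{resprlp}) recorded in Remark \ref{normboundcond}, namely $\|\boldsymbol{w}\|_{\boldsymbol{L}^{p}(\Omega)}\leq\frac{C}{|\lambda|}\|\boldsymbol{\mathrm{curl}}\,\boldsymbol{f}\|_{\boldsymbol{L}^{p}(\Omega)}$, and combining it with $\|\boldsymbol{u}\|_{\boldsymbol{L}^{p}(\Omega)}\leq\frac{\kappa_{1}}{|\lambda|}\|\boldsymbol{f}\|_{\boldsymbol{L}^{p}(\Omega)}$ from Theorem \ref{estlpnavier}, yields
\begin{equation*}
\|\boldsymbol{u}\|_{\boldsymbol{X}^{p}(\Omega)}\,\leq\,\frac{M}{|\lambda|}\,\|\boldsymbol{f}\|_{\boldsymbol{X}^{p}(\Omega)}
\end{equation*}
for some $M$ independent of $\lambda$ and $\boldsymbol{f}$. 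The care needed here, and what I expect to be the principal obstacle, is the rigorous justification of the curl commutation at the level of regularity available ($\boldsymbol{u}\in\boldsymbol{W}^{2,p}(\Omega)$, so that $\Delta\boldsymbol{w}=\lambda\boldsymbol{w}-\boldsymbol{\mathrm{curl}}\,\boldsymbol{f}\in\boldsymbol{L}^{p}(\Omega)$ and the identity holds in $\boldsymbol{L}^{p}$), the identification of $\boldsymbol{w}$ with the unique solution of (\ref{resprlp}), and the reliance on the result of Remark \ref{normboundcond}.

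For density of $\mathbf{D}(A_{p})$ in $\boldsymbol{X}^{p}_{\sigma,\tau}(\Omega)$, I would show that $\boldsymbol{u}_{\lambda}:=\lambda(\lambda I+A_{p})^{-1}\boldsymbol{f}\to\boldsymbol{f}$ in $\boldsymbol{X}^{p}(\Omega)$ as $\lambda\to+\infty$ along the reals. Convergence in $\boldsymbol{L}^{p}(\Omega)$ is the usual consequence of $\mathbf{D}(A_{p})$ being dense in $\boldsymbol{L}^{p}_{\sigma,\tau}(\Omega)$ (Proposition \ref{dd1}). For the curl, the same commutation gives $\boldsymbol{\mathrm{curl}}\,\boldsymbol{u}_{\lambda}=\lambda(\lambda I+\widetilde{A})^{-1}\boldsymbol{\mathrm{curl}}\,\boldsymbol{f}$, where $\widetilde{A}$ denotes the operator of problem (\ref{resprlp}) on $\boldsymbol{L}^{p}_{\sigma}(\Omega)$; since $-\widetilde{A}$ generates a strongly continuous analytic semi-group by Remark \ref{normboundcond}, the term $\lambda(\lambda I+\widetilde{A})^{-1}\boldsymbol{\mathrm{curl}}\,\boldsymbol{f}$ tends to $\boldsymbol{\mathrm{curl}}\,\boldsymbol{f}$ in $\boldsymbol{L}^{p}(\Omega)$. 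Hence $\boldsymbol{u}_{\lambda}\to\boldsymbol{f}$ in $\boldsymbol{X}^{p}(\Omega)$ with $\boldsymbol{u}_{\lambda}\in\mathbf{D}(A_{p})$, which proves the density. With the resolvent bound valid on the whole half-plane $\mathrm{Re}\,\lambda\geq0$ and the domain dense, Proposition \ref{pr2} shows that $-A_{p}$ is sectorial on $\boldsymbol{X}^{p}_{\sigma,\tau}(\Omega)$ and therefore generates a bounded analytic semi-group there, as claimed.
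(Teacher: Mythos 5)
Your proof follows essentially the same route as the paper's: applying $\boldsymbol{\mathrm{curl}}$ to the resolvent equation, identifying $\boldsymbol{z}=\boldsymbol{\mathrm{curl}}\,\boldsymbol{u}$ as the solution of problem (\ref{resprlp}) with datum $\boldsymbol{\mathrm{curl}}\,\boldsymbol{f}$, invoking the estimate of Remark \ref{normboundcond}, and combining with (\ref{estimlpf}) to bound the resolvent in the $\boldsymbol{X}^{p}_{\tau}(\Omega)$-norm. Your additional verification that $\mathbf{D}(A_{p})$ is dense in $\boldsymbol{X}^{p}_{\sigma,\tau}(\Omega)$ is a point the paper leaves implicit, but it does not change the nature of the argument.
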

\begin{proof}
Consider the Problem (\ref{resolventexistence}) where $\lambda\in\mathbb{C}^{\ast}$ such that $\mathrm{Re}\lambda\geq0$ and $\boldsymbol{f}\in\boldsymbol{X}^{p}_{\sigma,\tau}(\Omega)$. Thanks to Theorem \ref{existencelp} and Theorem \ref{estlpnavier} we know that Problem (\ref{resolventexistence}) has a unique solution $\boldsymbol{u}\in\boldsymbol{W}^{1,p}(\Omega)$ satisfying estimate (\ref{estimlpf}). Since $\Omega$ is of class $C^{2,1}$ then $\boldsymbol{u}\in\boldsymbol{W}^{2,p}(\Omega)$. Next, set $\boldsymbol{z}=\boldsymbol{\mathrm{curl}}\,\boldsymbol{u}$, thanks to Remark \ref{normboundcond} it is clear that $\boldsymbol{z}$ verifies
\begin{equation*}
\left\{
\begin{array}{cccc}
\lambda \boldsymbol{z} - \Delta \boldsymbol{z} = \boldsymbol{\mathrm{curl}}\,\boldsymbol{f},&\mathrm{div}\,\boldsymbol{z}=0& \textrm{in}& \Omega, \\
\boldsymbol{z}\times \boldsymbol{n} = \boldsymbol{0}&& \textrm{on}& \Gamma
\end{array}
\right.
\end{equation*}
and satisfies the estimate
\begin{equation*}
\Vert\boldsymbol{z}\Vert_{\boldsymbol{L}^{p}(\Omega)}\leq\,\frac{C(\Omega,p)}{\vert\lambda\vert}\,\Vert\boldsymbol{\mathrm{curl}}\,\boldsymbol{f}\Vert_{\boldsymbol{L}^{p}(\Omega)}.
\end{equation*} 
Thus the solution $\boldsymbol{u}$ of Problem (\ref{resolventexistence}) satisfies the estimate
\begin{equation*}
\Vert\boldsymbol{u}\Vert_{\boldsymbol{X}^{p}_{\tau}(\Omega)}\leq\,\frac{C(\Omega,p)}{\vert\lambda\vert}\,\Vert\boldsymbol{f}\Vert_{\boldsymbol{X}^{p}_{\tau}(\Omega)}.
\end{equation*}
As a result, we recover the analyticity of the semi-group generated by the operator $-A_{p}$ on $\boldsymbol{X}^{p}_{\sigma,\tau}(\Omega)$. We recall that $\boldsymbol{X}^{p}_{\sigma,\tau}(\Omega)$ is a subspace of $\boldsymbol{X}^{p}_{\tau}(\Omega)$, where the norm of $\boldsymbol{X}^{p}_{\tau}(\Omega)$ is equivalent to the norm of $\boldsymbol{W}^{1,p}(\Omega)$.
\end{proof}
\subsection{Analyticity on $[\boldsymbol{H}^{p'}_{0}(\mathrm{div},\Omega)]'_{\sigma,\tau}$}
\label{semigroupBp}
This subsection is devoted to the analyticity of the semi-group generated by the Stokes operator on $[\boldsymbol{H}^{p'}_{0}(\mathrm{div},\Omega)]'_{\sigma,\tau}$. This analyticity allows us to obtain the weak solution to the Problem (\ref{lens}) with the boundary condition (\ref{nbc}). 

To this end we  consider the problem:
\begin{equation}\label{wsp1}
 \left\{
\begin{array}{cccc}
\lambda \boldsymbol{u} - \Delta \boldsymbol{u}\,+\,\nabla\pi = \boldsymbol{f},& \mathrm{div}\,\boldsymbol{u} = 0& \qquad \mathrm{in}&\Omega, \\
\boldsymbol{u}\cdot \boldsymbol{n} = 0,&\boldsymbol{\mathrm{curl}}\,\boldsymbol{u}\times\boldsymbol{n}=\boldsymbol{0} &\qquad\mathrm{on}&\Gamma,
\end{array}
\right.
\end{equation}
where $\lambda\in\mathbb{C}^{\ast}$ such that $\mathrm{Re}\,\lambda\geq 0$ and $\boldsymbol{f}\in[\boldsymbol{H}^{p'}_{0}(\mathrm{div},\Omega)]'$.
  The following theorem gives  the existence and uniqueness of solution to Problem \eqref{wsp1}:
 \begin{theo}\label{weakexistenceth}
 Let  $\lambda\in\mathbb{C}^{\ast}$ such that $\mathrm{Re}\,\lambda\geq 0$ and let $\boldsymbol{f}\in[\boldsymbol{H}^{p'}_{0}(\mathrm{div},\Omega)]'$. The Problem (\ref{wsp1})
 has a unique solution $(\boldsymbol{u},\pi)\in\boldsymbol{W}^{1,p}(\Omega)\times L^{p}(\Omega)/\mathbb{R}$ satisfying 
 \begin{equation}\label{estwsp1}
 \Vert\boldsymbol{u}\Vert_{[\boldsymbol{H}^{p'}_{0}(\mathrm{div},\Omega)]'}\,\leq\,\frac{C(\Omega,p)}{\vert\lambda\vert}\,\Vert\boldsymbol{f}\Vert_{[\boldsymbol{H}^{p'}_{0}(\mathrm{div},\Omega)]'}
 \end{equation}
 for some constant $C(\Omega,p)>0$ independent of $\lambda$ and $\boldsymbol{f}$.
 \end{theo}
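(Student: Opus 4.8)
The plan is to reduce \eqref{wsp1} to the already-solved $\boldsymbol{L}^{p}$ resolvent problem and then transfer everything to the dual space $[\boldsymbol{H}^{p'}_{0}(\mathrm{div},\Omega)]'$ by a duality argument against the adjoint ($\boldsymbol{L}^{p'}$) resolvent. First I would split off the pressure: given $\boldsymbol{f}\in[\boldsymbol{H}^{p'}_{0}(\mathrm{div},\Omega)]'$, Lemma \ref{wn1}(ii) produces a unique $\pi\in L^{p}(\Omega)/\mathbb{R}$ with $\Vert\pi\Vert_{L^{p}/\mathbb{R}}\le C\Vert\boldsymbol{f}\Vert_{[\boldsymbol{H}^{p'}_{0}(\mathrm{div},\Omega)]'}$, and setting $\boldsymbol{F}=\boldsymbol{f}-\nabla\pi$ yields a datum in $[\boldsymbol{H}^{p'}_{0}(\mathrm{div},\Omega)]'_{\sigma,\tau}$. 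By Proposition \ref{sl} the Stokes operator coincides with $-\Delta$ (constant pressure), so it suffices to solve $\lambda\boldsymbol{u}-\Delta\boldsymbol{u}=\boldsymbol{F}$ with $\boldsymbol{u}$ divergence-free and satisfying \eqref{nbc}, and then recover $\pi$; this is exactly the resolvent equation for $B_p$.

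The heart of the proof is the a priori estimate \eqref{estwsp1}, obtained by duality, and which simultaneously yields uniqueness. For any admissible solution $\boldsymbol{u}\in\boldsymbol{W}^{1,p}(\Omega)$ (so that $\Delta\boldsymbol{u}=\lambda\boldsymbol{u}+\nabla\pi-\boldsymbol{f}\in[\boldsymbol{H}^{p'}_{0}(\mathrm{div},\Omega)]'$, i.e. $\boldsymbol{u}\in\boldsymbol{E}^{p}(\Omega)$) I would test against an arbitrary $\boldsymbol{\varphi}\in\boldsymbol{H}^{p'}_{0}(\mathrm{div},\Omega)$. Writing its Helmholtz decomposition $\boldsymbol{\varphi}=\boldsymbol{\varphi}_{\sigma}+\nabla\phi$ with $\boldsymbol{\varphi}_{\sigma}\in\boldsymbol{L}^{p'}_{\sigma,\tau}(\Omega)$, the gradient part drops out because $\boldsymbol{u}$ is divergence-free with $\boldsymbol{u}\cdot\boldsymbol{n}=0$ (formula \eqref{nt}), so $\langle\boldsymbol{u},\boldsymbol{\varphi}\rangle=\int_{\Omega}\boldsymbol{u}\cdot\overline{\boldsymbol{\varphi}_{\sigma}}\,\dx$. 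I then introduce the adjoint resolvent solution $\boldsymbol{v}=(\overline{\lambda}\,I+A_{p'})^{-1}\boldsymbol{\varphi}_{\sigma}\in\mathbf{D}(A_{p'})\subset\boldsymbol{W}^{2,p'}(\Omega)$, which exists by Theorem \ref{existencelp} (applied with $p',\overline{\lambda}$, noting $\mathrm{Re}\,\overline{\lambda}\ge0$) and satisfies $\Vert\boldsymbol{v}\Vert_{\boldsymbol{L}^{p'}(\Omega)}\le\frac{\kappa_{1}}{|\lambda|}\Vert\boldsymbol{\varphi}_{\sigma}\Vert_{\boldsymbol{L}^{p'}(\Omega)}$ by \eqref{estimlpf}. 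Substituting $\overline{\boldsymbol{\varphi}_{\sigma}}=\lambda\overline{\boldsymbol{v}}-\Delta\overline{\boldsymbol{v}}$ and integrating by parts via the Green formula of Lemma \ref{fg1}, the boundary term $\langle\boldsymbol{\mathrm{curl}}\,\boldsymbol{u}\times\boldsymbol{n},\boldsymbol{v}\rangle_{\Gamma}$ vanishes thanks to $\boldsymbol{\mathrm{curl}}\,\boldsymbol{u}\times\boldsymbol{n}=\boldsymbol{0}$, and the identity collapses to $\langle\boldsymbol{u},\boldsymbol{\varphi}\rangle=\langle\lambda\boldsymbol{u}-\Delta\boldsymbol{u},\boldsymbol{v}\rangle=\langle\boldsymbol{f},\boldsymbol{v}\rangle$ (the pressure $\nabla\pi$ pairs to zero since $\boldsymbol{v}$ is solenoidal with zero normal trace). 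Bounding $|\langle\boldsymbol{f},\boldsymbol{v}\rangle|\le\Vert\boldsymbol{f}\Vert_{[\boldsymbol{H}^{p'}_{0}(\mathrm{div},\Omega)]'}\Vert\boldsymbol{v}\Vert_{\boldsymbol{L}^{p'}(\Omega)}$ (using $\mathrm{div}\,\boldsymbol{v}=0$), the resolvent bound on $\boldsymbol{v}$, and the $L^{p'}$-boundedness of the Helmholtz projection, yields $|\langle\boldsymbol{u},\boldsymbol{\varphi}\rangle|\le\frac{C}{|\lambda|}\Vert\boldsymbol{f}\Vert_{[\boldsymbol{H}^{p'}_{0}(\mathrm{div},\Omega)]'}\Vert\boldsymbol{\varphi}\Vert_{\boldsymbol{H}^{p'}_{0}(\mathrm{div},\Omega)}$; taking the supremum over $\boldsymbol{\varphi}$ in the unit ball gives \eqref{estwsp1}, and applying it to the difference of two solutions proves uniqueness.

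For existence I would argue by density. By Proposition \ref{denshpdiv} choose $\boldsymbol{F}_{k}\in\boldsymbol{\mathcal{D}}_{\sigma}(\Omega)\subset\boldsymbol{L}^{p}_{\sigma,\tau}(\Omega)$ with $\boldsymbol{F}_{k}\to\boldsymbol{F}$ in $[\boldsymbol{H}^{p'}_{0}(\mathrm{div},\Omega)]'$; Theorems \ref{existencelp}--\ref{estlpnavier} give solutions $\boldsymbol{u}_{k}\in\boldsymbol{W}^{2,p}(\Omega)$ of $\lambda\boldsymbol{u}_{k}-\Delta\boldsymbol{u}_{k}=\boldsymbol{F}_{k}$. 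The a priori estimate shows $(\boldsymbol{u}_{k})$ is Cauchy in $[\boldsymbol{H}^{p'}_{0}(\mathrm{div},\Omega)]'_{\sigma,\tau}$, while $\Delta\boldsymbol{u}_{k}=\lambda\boldsymbol{u}_{k}-\boldsymbol{F}_{k}$ stays bounded there, so $(\boldsymbol{u}_{k})$ is bounded in $\boldsymbol{E}^{p}(\Omega)$; invoking the stationary $\boldsymbol{W}^{1,p}$-regularity for the Laplace problem with Navier-type conditions and data in $[\boldsymbol{H}^{p'}_{0}(\mathrm{div},\Omega)]'$ (as in \cite{Am3}) upgrades this, for fixed $\lambda$, to a Cauchy sequence in $\boldsymbol{W}^{1,p}(\Omega)$. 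Passing to the limit gives $\boldsymbol{u}\in\boldsymbol{W}^{1,p}(\Omega)$ solving the equation, and together with $\pi$ from the first step we obtain the pair $(\boldsymbol{u},\pi)$.

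The main obstacle is the rigorous justification of the integration by parts in the low-regularity setting: $\boldsymbol{u}$ lies only in $\boldsymbol{W}^{1,p}(\Omega)$ with $\Delta\boldsymbol{u}$ in the dual space, so the Green formula of Lemma \ref{fg1} must be used exactly at its stated regularity, and the vanishing of the boundary term together with the symmetry identity $\int_{\Omega}\boldsymbol{u}\cdot\Delta\overline{\boldsymbol{v}}\,\dx=\langle\Delta\boldsymbol{u},\boldsymbol{v}\rangle_{\Omega}$ must be verified carefully, keeping track of complex conjugates. A secondary point is securing the $\boldsymbol{W}^{1,p}$-regularity that places $\boldsymbol{u}$ in the domain of $B_p$, which relies on the stationary elliptic theory rather than on the semigroup machinery itself.
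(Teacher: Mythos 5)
Your proof is correct and follows essentially the same route as the paper: the resolvent estimate is obtained by the identical duality argument against the adjoint resolvent problem solved in $\boldsymbol{L}^{p'}_{\sigma,\tau}(\Omega)$ via Theorems \ref{existencelp}--\ref{estlpnavier}, the only cosmetic difference being that you Helmholtz-project the test function whereas the paper keeps the pressure $\theta$ inside the adjoint system \eqref{adjwsp1} and tests with a general $\boldsymbol{F}\in\boldsymbol{H}^{p'}_{0}(\mathrm{div},\Omega)$. Your density argument for existence is a reasonable expansion of the paper's one-line reference to the stationary theory of \cite{Am3}, and your use of $\overline{\lambda}$ in the adjoint problem is in fact more careful about the anti-duality than the paper's own write-up.
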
 
 \begin{proof}
\textbf{(i)} For the existence of solutions for Problem (\ref{wsp1}) we proceed in the same way as in \cite[Theorem 4.4]{Am3}, Theorem \ref{existencehilbert} and Theorem \ref{existencelp}.

\noindent\textbf{(ii)} To prove estimate (\ref{estwsp1}) we proceed as follows:
Consider the problem:
 \begin{equation}\label{adjwsp1}
 \left\{
\begin{array}{cccc}
\lambda \boldsymbol{v} - \Delta \boldsymbol{v}\,+\,\nabla\theta = \boldsymbol{F},&\mathrm{div}\,\boldsymbol{v}=0 &\qquad \mathrm{in}&\Omega, \\
\boldsymbol{v}\cdot \boldsymbol{n} = 0,& \boldsymbol{\mathrm{curl}}\,\boldsymbol{v}\times\boldsymbol{n}=\boldsymbol{0}&\qquad
\mathrm{on}& \Gamma,
\end{array}
\right.
\end{equation}
where $\boldsymbol{F}\in\boldsymbol{H}^{p'}_{0}(\mathrm{div},\Omega)$ and $\lambda\in\mathbb{C}^{\ast}$ such that $\mathrm{Re}\,\lambda\geq 0$. 
Thanks to Lemma \ref{wn1} there exists a unique up to an additive function $\theta\in W^{1,p'}(\Omega)/\mathbb{R}$ solution of
\begin{equation*}
\mathrm{div}(\nabla\theta-\boldsymbol{F})=0\qquad\textrm{in}\,\,\Omega\qquad(\nabla\theta-\boldsymbol{F})\cdot\boldsymbol{n}=0\qquad\textrm{on}\,\,\Gamma.
\end{equation*}
Moreover the function $\theta$ satisfies the estimate
\begin{equation*}
\Vert\nabla\theta\Vert_{\boldsymbol{L}^{p'}(\Omega)}\,\leq\,C(\Omega,p')\,\Vert\boldsymbol{F}\Vert_{\boldsymbol{L}^{p'}(\Omega)}.
\end{equation*}
As a result, thanks to Theorem \ref{existencelp} and Theorem \ref{estlpnavier}, Problem (\ref{adjwsp1}) has a unique solution $(\boldsymbol{v},\theta)\in\boldsymbol{W}^{1,p'}(\Omega)\times W^{1,p'}(\Omega)/\mathbb{R}$ that satisfies the estimate
\begin{equation*}
\Vert\boldsymbol{v}\Vert_{\boldsymbol{L}^{p'}(\Omega)}\,\leq\,\frac{C(\Omega,p')}{\vert\lambda\vert}\,\Vert\boldsymbol{F}\Vert_{\boldsymbol{L}^{p'}(\Omega)}.
\end{equation*} 
Thus 
\begin{equation*}
\Vert\boldsymbol{v}\Vert_{\boldsymbol{H}^{p'}_{0}(\mathrm{div},\Omega)}\,\leq\,\frac{C(\Omega,p')}{\vert\lambda\vert}\,\Vert\boldsymbol{F}\Vert_{\boldsymbol{H}^{p'}_{0}(\mathrm{div},\Omega)}.
\end{equation*}
Now let $(\boldsymbol{u},\pi)\in\boldsymbol{W}^{1,p}(\Omega)\times\boldsymbol{L}^{p}(\Omega)/\mathbb{R}$ be the solution of Problem (\ref{wsp1}), then by using (\ref{greenfrhdiv}) we have:
\begin{eqnarray*}
\Vert\boldsymbol{u}\Vert_{[\boldsymbol{H}^{p'}_{0}(\mathrm{div},\Omega)]'}&=&\sup _{\boldsymbol{F}\in\boldsymbol{H}^{p'}_{0}(\mathrm{div},\Omega), \boldsymbol{F}\neq 0}\frac{\vert\langle\boldsymbol{u}\,,\,\boldsymbol{F}\rangle_{\Omega}\vert}{\Vert\boldsymbol{F}\Vert_{\boldsymbol{H}^{p'}_{0}(\mathrm{div},\Omega)}}\\
&=&\sup _{\boldsymbol{F}\in\boldsymbol{H}^{p'}_{0}(\mathrm{div},\Omega), \boldsymbol{F}\neq 0}\frac{\vert\langle\boldsymbol{u}\,,\,\lambda\,\boldsymbol{v}-\Delta\boldsymbol{v}-\nabla\theta\rangle_{\Omega}\vert}{\Vert\boldsymbol{F}\Vert_{\boldsymbol{H}^{p'}_{0}(\mathrm{div},\Omega)}}\\
&=&\sup _{\boldsymbol{F}\in\boldsymbol{H}^{p'}_{0}(\mathrm{div},\Omega), \boldsymbol{F}\neq 0}\frac{\vert\langle\lambda\,\boldsymbol{u}-\Delta\boldsymbol{u}-\nabla\pi\,,\,\boldsymbol{v}\rangle_{\Omega}\vert}{\Vert\boldsymbol{F}\Vert_{\boldsymbol{H}^{p'}_{0}(\mathrm{div},\Omega)}}\\
&=&\sup _{\boldsymbol{F}\in\boldsymbol{H}^{p'}_{0}(\mathrm{div},\Omega), \boldsymbol{F}\neq 0}\frac{\vert\langle\boldsymbol{f}\,,\,\boldsymbol{v}\rangle_{\Omega}\vert}{\Vert\boldsymbol{F}\Vert_{\boldsymbol{H}^{p'}_{0}(\mathrm{div},\Omega)}}\\
&\leq & \frac{C(\Omega ,p')}{\vert\lambda\vert}\,\Vert\boldsymbol{f}\Vert_{[\boldsymbol{H}^{p'}_{0}(\mathrm{div},\Omega)]'},
\end{eqnarray*} which is estimate (\ref{estwsp1}).
 \end{proof}

 As consequence of Theorem \ref{weakexistenceth} we have the following corollary
 \begin{coro}\label{existenceweaklaplacian}
 Let $\lambda\in\mathbb{C}^{\ast}$ such that $\mathrm{Re}\,\lambda\geq 0$ and let $\boldsymbol{f}\in[\boldsymbol{H}^{p'}_{0}(\mathrm{div},\Omega)]'$ such that $\mathrm{div}\,\boldsymbol{f}=0$ in $\Omega$ and $\boldsymbol{f}\cdot\boldsymbol{n}=0$ on $\Gamma$. The Problem (\ref{resolventexistence})
 has a unique solution $\boldsymbol{u}\in\boldsymbol{W}^{1,p}(\Omega)$ satisfying the estimate (\ref{estwsp1}).
 \end{coro}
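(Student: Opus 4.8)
The plan is to deduce the Corollary directly from Theorem \ref{weakexistenceth} by showing that, when $\boldsymbol{f}$ is in addition divergence-free with vanishing normal trace, the pressure produced by that theorem is constant, so that the velocity alone already solves Problem (\ref{resolventexistence}). First I would apply Theorem \ref{weakexistenceth} to $\boldsymbol{f}\in[\boldsymbol{H}^{p'}_{0}(\mathrm{div},\Omega)]'$ to obtain a pair $(\boldsymbol{u},\pi)\in\boldsymbol{W}^{1,p}(\Omega)\times L^{p}(\Omega)/\mathbb{R}$ solving (\ref{wsp1}) and satisfying estimate (\ref{estwsp1}). It then remains only to prove that $\nabla\pi=\boldsymbol{0}$, since in that case $\boldsymbol{u}$ solves (\ref{resolventexistence}) and automatically inherits the bound (\ref{estwsp1}).

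To show $\nabla\pi=\boldsymbol{0}$ I would use the momentum equation in the form $\nabla\pi=\boldsymbol{f}+\Delta\boldsymbol{u}-\lambda\boldsymbol{u}$ and check that $\pi$ solves the homogeneous weak Neumann problem (\ref{wn.1}) with right-hand side $\boldsymbol{0}$. Taking the divergence and using $\mathrm{div}\,\boldsymbol{u}=0$, $\mathrm{div}\,\boldsymbol{f}=0$ together with $\mathrm{div}\,\Delta\boldsymbol{u}=\Delta\,\mathrm{div}\,\boldsymbol{u}=0$ gives $\mathrm{div}(\nabla\pi)=0$ in $\Omega$. For the boundary condition I would compute the normal trace on $\Gamma$: by hypothesis $\boldsymbol{u}\cdot\boldsymbol{n}=0$ and $\boldsymbol{f}\cdot\boldsymbol{n}=0$, while $\Delta\boldsymbol{u}\cdot\boldsymbol{n}=0$ because $\mathrm{div}\,\boldsymbol{u}=0$ yields $\Delta\boldsymbol{u}=-\boldsymbol{\mathrm{curl}}\,\boldsymbol{\mathrm{curl}}\,\boldsymbol{u}$ and the boundary condition $\boldsymbol{\mathrm{curl}}\,\boldsymbol{u}\times\boldsymbol{n}=\boldsymbol{0}$ forces $\boldsymbol{\mathrm{curl}}\,\boldsymbol{\mathrm{curl}}\,\boldsymbol{u}\cdot\boldsymbol{n}=0$, exactly as observed in the Remark following Problem (\ref{sansdiv0}). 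Hence $(\nabla\pi)\cdot\boldsymbol{n}=0$ on $\Gamma$, so by the uniqueness part of Lemma \ref{wn1} the solution $\pi$ is constant, i.e. $\pi=0$ in $L^{p}(\Omega)/\mathbb{R}$.

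For uniqueness I would argue that any $\boldsymbol{u}\in\boldsymbol{W}^{1,p}(\Omega)$ solving the homogeneous version of (\ref{resolventexistence}) gives rise to the pair $(\boldsymbol{u},0)$ solving (\ref{wsp1}) with $\boldsymbol{f}=\boldsymbol{0}$; the uniqueness statement of Theorem \ref{weakexistenceth} then forces $\boldsymbol{u}=\boldsymbol{0}$. Thus existence, uniqueness, and the resolvent estimate all transfer from Theorem \ref{weakexistenceth} once the pressure is known to vanish.

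The main obstacle is the rigorous justification of the step $\nabla\pi=\boldsymbol{0}$ at the regularity actually available: since Theorem \ref{weakexistenceth} only provides $\pi\in L^{p}(\Omega)/\mathbb{R}$, the identities $\mathrm{div}(\nabla\pi)=0$ and $(\nabla\pi)\cdot\boldsymbol{n}=0$ must be interpreted weakly rather than pointwise. I would make this precise by reading $\nabla\pi=\boldsymbol{f}+\Delta\boldsymbol{u}-\lambda\boldsymbol{u}$ as an element of $[\boldsymbol{H}^{p'}_{0}(\mathrm{div},\Omega)]'$, using the normal-trace map together with the Green formula of Corollary \ref{tracehpdiv} to give meaning to $(\nabla\pi)\cdot\boldsymbol{n}$, and then invoking the uniqueness in Lemma \ref{wn1} for the corresponding data space. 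Everything outside this verification is a direct consequence of Theorem \ref{weakexistenceth}.
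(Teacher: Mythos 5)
Your proposal is correct and follows essentially the same route as the paper, which states the Corollary as a direct consequence of Theorem \ref{weakexistenceth}: the only missing ingredient is that the pressure is constant, which you establish exactly as the paper does elsewhere (cf.\ Proposition \ref{sl} and the Remark following Problem (\ref{sansdiv0})) by showing $\nabla\pi$ solves the homogeneous weak Neumann problem and invoking the uniqueness in Lemma \ref{wn1}(ii). Your attention to the weak interpretation of $(\nabla\pi)\cdot\boldsymbol{n}$ via Corollary \ref{tracehpdiv} at the available regularity is the right way to make this rigorous.
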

  Next, using Proposition \ref{pr2}, one gets the analyticity of the semi-group generated by the operator $B_{p}$:
 \begin{theo}
 The operator $-B_{p}$ generates a bounded analytic semi-group on the space $[\boldsymbol{H}^{p'}_{0}(\mathrm{div},\Omega)]'_{\sigma,\tau}$.
 \end{theo}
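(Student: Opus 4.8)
The plan is to apply the Yosida-type criterion of Proposition \ref{pr2} to the operator $\mathcal{A}=-B_p$ on the Banach space $X=[\boldsymbol{H}^{p'}_{0}(\mathrm{div},\Omega)]'_{\sigma,\tau}$, and then to invoke the characterization of generators of bounded analytic semi-groups recalled from \cite[Chapter 2, Theorem 4.6, page 101]{En}. Since we have already shown that $B_p$ is densely defined, the only thing left to establish is the resolvent estimate $\|R(\lambda,-B_p)\|_{\mathcal{L}(X)}\leq M/|\lambda|$ for every $\lambda\in\mathbb{C}^{\ast}$ with $\mathrm{Re}\,\lambda\geq 0$.

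First I would identify the resolvent of $-B_p$ with the solution operator of Problem \eqref{resolventexistence}. By the definition \eqref{b2}, solving $(\lambda I+B_p)\boldsymbol{u}=\boldsymbol{f}$ for $\boldsymbol{f}\in X$ amounts to finding $\boldsymbol{u}\in\mathbf{D}(B_p)$ with $\lambda\boldsymbol{u}-\Delta\boldsymbol{u}=\boldsymbol{f}$, which is exactly Problem \eqref{resolventexistence} for a divergence-free datum of vanishing normal trace. Corollary \ref{existenceweaklaplacian} furnishes, for each such $\lambda$, a unique $\boldsymbol{u}\in\boldsymbol{W}^{1,p}(\Omega)$ solving this problem and satisfying the estimate \eqref{estwsp1}.

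Next I would verify that this $\boldsymbol{u}$ genuinely belongs to $\mathbf{D}(B_p)$ and to $X$, so that $\boldsymbol{u}=R(\lambda,-B_p)\boldsymbol{f}$. Membership in $X=[\boldsymbol{H}^{p'}_{0}(\mathrm{div},\Omega)]'_{\sigma,\tau}$ is immediate, since $\boldsymbol{u}$ is divergence-free with $\boldsymbol{u}\cdot\boldsymbol{n}=0$ on $\Gamma$ and $\boldsymbol{W}^{1,p}(\Omega)\hookrightarrow\boldsymbol{L}^{p}(\Omega)\hookrightarrow X$. For the domain, the description \eqref{Dpb} requires $\Delta\boldsymbol{u}\in[\boldsymbol{H}^{p'}_{0}(\mathrm{div},\Omega)]'$ together with the two boundary conditions; the former follows by writing $\Delta\boldsymbol{u}=\lambda\boldsymbol{u}-\boldsymbol{f}$ with both terms in $X\subset[\boldsymbol{H}^{p'}_{0}(\mathrm{div},\Omega)]'$, while the conditions $\boldsymbol{u}\cdot\boldsymbol{n}=0$ and $\boldsymbol{\mathrm{curl}}\,\boldsymbol{u}\times\boldsymbol{n}=\boldsymbol{0}$ are part of the conclusion of Corollary \ref{existenceweaklaplacian}. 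Uniqueness in $\mathbf{D}(B_p)$ is inherited from the uniqueness statement of Theorem \ref{weakexistenceth}, so that $\lambda I+B_p$ is a bijection from $\mathbf{D}(B_p)$ onto $X$ and $\lambda\in\rho(-B_p)$.

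With the identification $R(\lambda,-B_p)\boldsymbol{f}=\boldsymbol{u}$ in hand, estimate \eqref{estwsp1} reads precisely $\|R(\lambda,-B_p)\|_{\mathcal{L}(X)}\leq C(\Omega,p)/|\lambda|$ for all $\lambda$ in the punctured closed right half-plane. Proposition \ref{pr2} then upgrades this to sectoriality in the sense of \eqref{ç}, and the cited result of \cite{En} yields that $-B_p$ generates a bounded analytic semi-group on $X$. I expect the main point requiring care — and the step I would treat most explicitly — to be the domain identification above, namely checking that the weak solution produced by Corollary \ref{existenceweaklaplacian} satisfies all the defining conditions \eqref{Dpb} of $\mathbf{D}(B_p)$; the remainder of the argument is a direct transcription of already established estimates and of the abstract criteria.
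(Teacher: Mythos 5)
Your proposal is correct and follows essentially the same route as the paper: the resolvent estimate \eqref{estwsp1} obtained by duality in Theorem \ref{weakexistenceth} and restricted to divergence-free data in Corollary \ref{existenceweaklaplacian}, combined with the Yosida-type criterion of Proposition \ref{pr2} and the characterization from \cite{En}. The domain identification you spell out is left implicit in the paper, but your treatment of it is accurate and fills in exactly the step the paper omits.
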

\subsection{Analyticity on $[\boldsymbol{T}^{p'}(\Omega)]'_{\sigma,\tau}$}
\label{semigroupCp}
In this subsection we  prove the analyticity of the semi-group generated by the Stokes operator on $[\boldsymbol{T}^{p'}(\Omega)]'_{\sigma,\tau}$. Using this property we will show the existence of very weak solutions to the Problem (\ref{lens}) with the Navier-type boundary condition ($\ref{nbc}$). The method and arguments in this Section are very similar to those  of the previous one.

 The following theorem gives the very weak solution to Problem (\ref{wsp1}).
 \begin{theo}\label{veryweakexistth}
 Let $\lambda\in\mathbb{C}^{\ast}$ such that $\mathrm{Re}\,\lambda\geq 0$ and let $\boldsymbol{f}\in(\boldsymbol{T}^{p'}(\Omega))'$ then the Problem (\ref{wsp1}) has a unique solution $(\boldsymbol{u},\pi)\in\boldsymbol{L}^{p}(\Omega)\times W^{-1,p}(\Omega)/\mathbb{R}$. Moreover we have the estimate
 \begin{equation}\label{estveryweak}
 \Vert\boldsymbol{u}\Vert_{\boldsymbol{L}^{p}(\Omega)}\,\leq\,\frac{C(\Omega,p)}{\vert\lambda\vert}\,\Vert\boldsymbol{f}\Vert_{(\boldsymbol{T}^{p'}(\Omega))'},
 \end{equation}
 for some constant $C(\Omega,p)>0$ independent of $\lambda$ and $\boldsymbol{f}$.
 \end{theo}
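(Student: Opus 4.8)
The plan is to reproduce, one regularity level lower, the duality scheme of Theorem \ref{weakexistenceth}, now estimating $\|\boldsymbol{u}\|_{\boldsymbol{L}^{p}(\Omega)}$ against the \emph{strong} solutions furnished by Theorems \ref{existencelp}--\ref{estlpnavier} with exponent $p'$. For existence I would first use the characterization of $(\boldsymbol{T}^{p'}(\Omega))'$ recalled after \eqref{tp} to write $\boldsymbol{f}=\boldsymbol{\psi}+\nabla\chi$ and absorb the gradient part into the pressure, so that it suffices to treat data in $[\boldsymbol{T}^{p'}(\Omega)]'_{\sigma,\tau}$. By Proposition \ref{densttp} the space $\boldsymbol{\mathcal{D}}_{\sigma}(\Omega)$ is dense there, so I approximate $\boldsymbol{f}$ by $\boldsymbol{f}_{n}\in\boldsymbol{\mathcal{D}}_{\sigma}(\Omega)\subset\boldsymbol{L}^{p}_{\sigma,\tau}(\Omega)$, solve the resolvent problem \eqref{resolventexistence} via Theorem \ref{existencelp} to obtain $\boldsymbol{u}_{n}\in\boldsymbol{W}^{2,p}(\Omega)$, and pass to the limit using the uniform bound \eqref{estveryweak} proved below. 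The pressure $\pi\in W^{-1,p}(\Omega)/\mathbb{R}$ is then recovered from $\nabla\pi=\boldsymbol{f}+\Delta\boldsymbol{u}-\lambda\boldsymbol{u}$ by de Rham's lemma, with the regularity dictated by Lemma \ref{wn1}(iii).

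The heart of the matter is the estimate \eqref{estveryweak}. Since $\boldsymbol{u}\in\boldsymbol{L}^{p}_{\sigma,\tau}(\Omega)$, the boundedness of the Helmholtz projection \eqref{helmholtzproj1} reduces the computation of its $\boldsymbol{L}^{p}(\Omega)$-norm, up to a fixed constant, to
$$\|\boldsymbol{u}\|_{\boldsymbol{L}^{p}(\Omega)}\,\le\,C\sup_{\boldsymbol{F}\in\boldsymbol{L}^{p'}_{\sigma,\tau}(\Omega),\,\boldsymbol{F}\neq\boldsymbol{0}}\frac{\big|\int_{\Omega}\boldsymbol{u}\cdot\overline{\boldsymbol{F}}\,\mathrm{d}x\big|}{\|\boldsymbol{F}\|_{\boldsymbol{L}^{p'}(\Omega)}}.$$
For fixed $\boldsymbol{F}$ I would solve the dual problem $\lambda\boldsymbol{v}-\Delta\boldsymbol{v}=\boldsymbol{F}$ with the boundary conditions of \eqref{resolventexistence}; Theorem \ref{existencelp} gives $\boldsymbol{v}\in\boldsymbol{W}^{2,p'}(\Omega)$, and, being divergence-free with $\boldsymbol{v}\cdot\boldsymbol{n}=0$, this $\boldsymbol{v}$ lies in $\boldsymbol{T}^{p'}(\Omega)$ with $\|\boldsymbol{v}\|_{\boldsymbol{T}^{p'}(\Omega)}=\|\boldsymbol{v}\|_{\boldsymbol{L}^{p'}(\Omega)}\le\frac{\kappa_{1}(\Omega,p')}{|\lambda|}\|\boldsymbol{F}\|_{\boldsymbol{L}^{p'}(\Omega)}$ by \eqref{estimlpf}. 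Transporting $\Delta$ across the pairing via the Green formula \eqref{greenfrtp2} (valid since $\boldsymbol{u}\in\boldsymbol{H}^{p}(\Delta,\Omega)$ and $\boldsymbol{v}$ is an admissible test field) yields $\int_{\Omega}\boldsymbol{u}\cdot\overline{\boldsymbol{F}}\,\mathrm{d}x=\langle\boldsymbol{f},\boldsymbol{v}\rangle_{(\boldsymbol{T}^{p'}(\Omega))'\times\boldsymbol{T}^{p'}(\Omega)}$, so that $|\int_{\Omega}\boldsymbol{u}\cdot\overline{\boldsymbol{F}}\,\mathrm{d}x|\le\|\boldsymbol{f}\|_{(\boldsymbol{T}^{p'}(\Omega))'}\|\boldsymbol{v}\|_{\boldsymbol{T}^{p'}(\Omega)}$ and \eqref{estveryweak} follows.

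The two cancellations inside that pairing are where I expect the real work to lie. When $\Delta$ is moved onto $\boldsymbol{v}$ through \eqref{greenfrtp2}, the boundary term $\langle\boldsymbol{\mathrm{curl}}\,\boldsymbol{u}\times\boldsymbol{n},\boldsymbol{v}\rangle_{\Gamma}$ must vanish, which is precisely the Navier-type condition $\boldsymbol{\mathrm{curl}}\,\boldsymbol{u}\times\boldsymbol{n}=\boldsymbol{0}$ on $\Gamma$ carried by \eqref{wsp1}; and the pressure contribution $\langle\nabla\pi,\boldsymbol{v}\rangle$ drops out because $\boldsymbol{v}\in\boldsymbol{T}^{p'}(\Omega)$ is divergence-free with $\boldsymbol{v}\cdot\boldsymbol{n}=0$, killing both the bulk and boundary terms of the integration by parts. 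No conjugation mismatch in $\lambda$ arises, since the dual problem carries the same $\lambda$ and the Laplacian is formally self-adjoint under these symmetric boundary conditions. Uniqueness is then immediate from \eqref{estveryweak} (if $\boldsymbol{f}=\boldsymbol{0}$ then $\boldsymbol{u}=\boldsymbol{0}$, and de Rham fixes $\pi$ up to a constant), completing the proof along the same lines as Theorem \ref{weakexistenceth}.
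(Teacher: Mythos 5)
Your proposal is correct and rests on the same engine as the paper's proof: solvability of the dual resolvent problem in $\boldsymbol{W}^{2,p'}(\Omega)$ with the bound \eqref{estimlpf}, combined with the Green formula \eqref{greenfrtp2} and the observation that a divergence-free $\boldsymbol{v}$ with $\boldsymbol{v}\cdot\boldsymbol{n}=0$ satisfies $\Vert\boldsymbol{v}\Vert_{\boldsymbol{T}^{p'}(\Omega)}=\Vert\boldsymbol{v}\Vert_{\boldsymbol{L}^{p'}(\Omega)}$. Where you diverge is in how existence is obtained. The paper uses pure transposition: for arbitrary $\boldsymbol{F}\in\boldsymbol{L}^{p'}(\Omega)$ it solves the dual problem $(\boldsymbol{\varphi},\theta)$, checks that $L(\boldsymbol{F})=\langle\boldsymbol{f},\boldsymbol{\varphi}\rangle$ is a bounded functional of norm $\le C|\lambda|^{-1}\Vert\boldsymbol{f}\Vert_{(\boldsymbol{T}^{p'}(\Omega))'}$, and defines $\boldsymbol{u}$ as its Riesz representative, so that existence, uniqueness and \eqref{estveryweak} come out in one stroke, the pressure being recovered afterwards by de Rham. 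You instead build the solution constructively (splitting $\boldsymbol{f}=\boldsymbol{\psi}+\nabla\chi$ and absorbing $\nabla\chi$ into $\pi$, or regularizing by $\boldsymbol{\mathcal{D}}_{\sigma}(\Omega)$) and use duality only for the a priori bound; this is redundant (either the decomposition or the approximation alone suffices) and the phrase ``it suffices to treat data in $[\boldsymbol{T}^{p'}(\Omega)]'_{\sigma,\tau}$'' is not quite what the splitting gives you — after absorbing $\nabla\chi$ the remaining datum $\boldsymbol{\psi}$ lies in $\boldsymbol{L}^{p}(\Omega)$, so your route in fact produces a velocity in $\boldsymbol{W}^{2,p}(\Omega)$, more regular than the statement claims; only the pressure is genuinely ``very weak''. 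Your restriction of the test data $\boldsymbol{F}$ to $\boldsymbol{L}^{p'}_{\sigma,\tau}(\Omega)$ via the Helmholtz projection is an equivalent variant of the paper's choice of arbitrary $\boldsymbol{F}\in\boldsymbol{L}^{p'}(\Omega)$ with a pressure $\theta$ in the dual problem. One small caveat: under the anti-duality conventions of the paper, $\langle\boldsymbol{u},\lambda\boldsymbol{v}\rangle=\overline{\lambda}\langle\boldsymbol{u},\boldsymbol{v}\rangle$, so the dual problem should be posed with $\overline{\lambda}$ rather than $\lambda$ for the two pairings to match exactly; since $\mathrm{Re}\,\overline{\lambda}=\mathrm{Re}\,\lambda$ and $|\overline{\lambda}|=|\lambda|$ this changes nothing in the estimate (the paper's own proof of Theorem \ref{weakexistenceth} elides the same point), but your assertion that ``no conjugation mismatch arises'' is not literally accurate.
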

 \begin{proof}
 \textbf{(i)} Thanks to the Green formula (\ref{greenfrtp2}) and to \cite[Theorem 4.15]{Am3} we can easily verify that Problem  (\ref{wsp1}) is equivalent to the problem: Find $\boldsymbol{u}\in\boldsymbol{L}^{p}(\Omega)$ such that for all $\boldsymbol{\varphi}\in\mathbf{D}(A_{p'})$ (given by (\ref{c2dpa})) and for all $\mathit{q}\in W^{1,p'}(\Omega)$ 
 \begin{equation}\label{vwvp}
\begin{array}{r@{~}c@{~}l}
 \lambda\,\int_{\Omega}\boldsymbol{u}\cdot\overline{\boldsymbol{\varphi}}\,\textrm{d}\,x\,-\,\int_{\Omega}\boldsymbol{u}\cdot\Delta\overline{\boldsymbol{\varphi}}\,\textrm{d}\,x &=& \langle\boldsymbol{f},\,\boldsymbol{\varphi}\rangle_{(\boldsymbol{T}^{p'}(\Omega))'\times\boldsymbol{T}^{p'}(\Omega)}\\
 \int_{\Omega} \boldsymbol{u}\cdot\nabla\overline{\mathit{q}}\,\textrm{d}\,x&=&0.
 \end{array}
\end{equation}
  Notice that we recuperate the pressure using the De-Rham argument: if $\boldsymbol{F}\in\boldsymbol{W}^{-2,p}(\Omega)$ verifying $\langle \boldsymbol{F}\,,\,\boldsymbol{v}\rangle_{\boldsymbol{\mathcal{D}'}(\Omega)\times\boldsymbol{\mathcal{D}(\Omega)}}=0$, for all $\boldsymbol{v}\in\boldsymbol{\mathcal{D}}_{\sigma}(\Omega)$ then there exists $\chi\in W^{-1,p}(\Omega)$ such that $\boldsymbol{F}=\nabla\,\chi$.
  
\textbf{(ii)} Let us now solve (\ref{vwvp}). 
As in the proof of Theorem \ref{weakexistenceth}, we know that for all
$\boldsymbol{F}\in\boldsymbol{L}^{p'}(\Omega)$ the
problem:
\begin{equation}\label{adpb}
\left\{
\begin{array}{cccc}
\lambda \boldsymbol{\varphi} - \Delta \boldsymbol{\varphi}-\nabla\theta = \boldsymbol{F},&\mathrm{div}\,\boldsymbol{\varphi} = 0& \qquad \textrm{in}& \Omega, \\
\boldsymbol{\varphi}\cdot \boldsymbol{n} = 0,& \boldsymbol{\mathrm{curl}}\,\boldsymbol{\varphi}\times\boldsymbol{n}=\boldsymbol{0}&\qquad
\textrm{on}&\Gamma,
\end{array}
\right.
\end{equation}
has a unique solution $(\boldsymbol{\varphi},\theta)\in\mathbf{D}(A_{p'})\times W^{1,p'}(\Omega)/\mathbb{R}$
that satisfies the estimate
\begin{equation*}
\|\boldsymbol{\varphi}\|_{\boldsymbol{L}^{p'}(\Omega)}\leq\,\frac{C(\Omega,p')}{|\lambda|}\,\|\boldsymbol{F}\|_{\boldsymbol{L}^{p'}(\Omega)}.
\end{equation*}
Now the following linear mapping:
\begin{eqnarray*}
L&:&\boldsymbol{L}^{p'}(\Omega)\,\longmapsto\,\mathbb{C}\\
&&\boldsymbol{F}\,\longmapsto\,\langle\boldsymbol{f}\,,\,\boldsymbol{\varphi}\rangle_{(\boldsymbol{T}^{p'}(\Omega))'\times\boldsymbol{T}^{p'}(\Omega)},
\end{eqnarray*}
where $\boldsymbol{\varphi}$ is the unique solution of Problem
(\ref{adpb}), satisfies
\begin{equation*}
|L(\boldsymbol{F})|\,\leq\,\|\boldsymbol{f}\|_{(\boldsymbol{T}^{p'}(\Omega))'}\|\boldsymbol{\varphi}\|_{\boldsymbol{L}^{p'}(\Omega)}\,\leq\,\frac{C(\Omega,p')}{|\lambda|}\,\|\boldsymbol{f}\|_{(\boldsymbol{T}^{p'}(\Omega))'}\|\boldsymbol{F}\|_{\boldsymbol{L}^{p'}(\Omega)}.
\end{equation*}
Then there exists a unique
$\boldsymbol{u}\in\boldsymbol{L}^{p}(\Omega)$ such that
\begin{equation*}
L(\boldsymbol{F})\,=\,\int_{\Omega}\boldsymbol{u}\cdot\overline{\boldsymbol{F}}\,\textrm{d}\,x\,=\,\langle\boldsymbol{f}\,,\,\boldsymbol{\varphi}\rangle_{(\boldsymbol{T}^{p'}(\Omega))'\times\boldsymbol{T}^{p'}(\Omega)}
\end{equation*}
and satisfying the estimate (\ref{estveryweak}).
On other worlds $\boldsymbol{u}$ is the unique solution of Problem
(\ref{vwvp}). 
 \end{proof}

As a consequence of Theorem \ref{veryweakexistth} we deduce the existence and uniqueness of very weak solutions to Problem \eqref{resolventexistence}.
\begin{coro}
Let $\lambda\in\mathbb{C}^{\ast} $ such that $\mathrm{Re}\,\lambda\geq 0$ and let $\boldsymbol{f}\in(\boldsymbol{T}^{p'}(\Omega))'$ such that $\mathrm{div}\,\boldsymbol{f}=0$ in $\Omega$ and $\boldsymbol{f}\cdot\boldsymbol{n}=0$ on $\Gamma$. The Problem (\ref{resolventexistence}) has a unique solution $\boldsymbol{u}\in\boldsymbol{L}^{p}(\Omega)$ that satisfies the estimate (\ref{estveryweak}).
\end{coro}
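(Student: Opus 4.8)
The plan is to deduce this directly from Theorem \ref{veryweakexistth}, following the pattern of Corollary \ref{existenceweaklaplacian}. First I would apply that theorem to obtain the unique pair $(\boldsymbol{u},\pi)\in\boldsymbol{L}^{p}(\Omega)\times W^{-1,p}(\Omega)/\mathbb{R}$ solving Problem (\ref{wsp1}) together with the bound (\ref{estveryweak}). The whole point is then to show that the extra hypotheses $\mathrm{div}\,\boldsymbol{f}=0$ in $\Omega$ and $\boldsymbol{f}\cdot\boldsymbol{n}=0$ on $\Gamma$ force the pressure to be constant. Once $\nabla\pi=\boldsymbol{0}$, the momentum equation of (\ref{wsp1}) reduces to $\lambda\boldsymbol{u}-\Delta\boldsymbol{u}=\boldsymbol{f}$, so that $\boldsymbol{u}$ solves Problem (\ref{resolventexistence}) and automatically inherits the estimate (\ref{estveryweak}).

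To show $\pi$ is constant, I would check that it solves a homogeneous weak Neumann problem. From the equation, $\Delta\boldsymbol{u}=\lambda\boldsymbol{u}+\nabla\pi-\boldsymbol{f}$, which lies in $(\boldsymbol{T}^{p'}(\Omega))'$ since $\lambda\boldsymbol{u}\in\boldsymbol{L}^{p}(\Omega)$, $\boldsymbol{f}\in(\boldsymbol{T}^{p'}(\Omega))'$, and $\nabla\pi\in(\boldsymbol{T}^{p'}(\Omega))'$ because $\pi\in W^{-1,p}(\Omega)$; hence $\boldsymbol{u}\in\boldsymbol{H}^{p}(\Delta,\Omega)$ and the Green formula (\ref{greenfrtp2}) and the normal trace are available. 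Setting $\boldsymbol{h}=\boldsymbol{f}-\lambda\boldsymbol{u}+\Delta\boldsymbol{u}=\nabla\pi$, I would verify that $\mathrm{div}\,\boldsymbol{h}=0$ in $\Omega$: taking the divergence of the momentum equation and using $\mathrm{div}\,\boldsymbol{u}=0$ together with $\mathrm{div}\,\boldsymbol{f}=0$ gives $\Delta\pi=0$ in the distributional sense. For the normal trace, since $\boldsymbol{u}\cdot\boldsymbol{n}=0$ and $\boldsymbol{f}\cdot\boldsymbol{n}=0$ on $\Gamma$, and since $\Delta\boldsymbol{u}=-\boldsymbol{\mathrm{curl}}\,\boldsymbol{\mathrm{curl}}\,\boldsymbol{u}$ (because $\mathrm{div}\,\boldsymbol{u}=0$) with $\boldsymbol{\mathrm{curl}}\,\boldsymbol{\mathrm{curl}}\,\boldsymbol{u}\cdot\boldsymbol{n}=0$ on $\Gamma$ — the consequence of $\boldsymbol{\mathrm{curl}}\,\boldsymbol{u}\times\boldsymbol{n}=\boldsymbol{0}$ already recorded in the Remark preceding Theorem \ref{existencehilbert} — one gets $\boldsymbol{h}\cdot\boldsymbol{n}=\partial\pi/\partial\boldsymbol{n}=0$ on $\Gamma$. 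Thus $\pi$ solves Problem (\ref{wn.1}) with $\boldsymbol{f}=\boldsymbol{0}$, and the uniqueness asserted in Lemma \ref{wn1} forces $\pi\equiv\mathrm{const}$, i.e. $\nabla\pi=\boldsymbol{0}$.

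Uniqueness of $\boldsymbol{u}$ for (\ref{resolventexistence}) then follows at once from uniqueness for (\ref{wsp1}): any very weak solution $\boldsymbol{u}$ of (\ref{resolventexistence}) yields a pair $(\boldsymbol{u},0)$ solving (\ref{wsp1}) with the same right-hand side, so two such solutions must coincide by Theorem \ref{veryweakexistth}.

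The main obstacle I anticipate is the rigorous justification of the ``take the divergence'' and ``take the normal trace'' steps in this very weak setting, where $\boldsymbol{u}\in\boldsymbol{L}^{p}(\Omega)$ and $\pi\in W^{-1,p}(\Omega)$ are too rough for pointwise manipulation. These identities must be read as equalities of distributions in $\Omega$ and, respectively, of traces in $W^{-2-1/p,p}(\Gamma)$; the cleanest passage is to test the variational identity (\ref{vwvp}) against suitable $\boldsymbol{\varphi}\in\mathbf{D}(A_{p'})$ and $q\in W^{1,p'}(\Omega)$ and to invoke the Green formula (\ref{greenfrtp2}), the curl boundary condition entering precisely through the vanishing of the corresponding boundary term, exactly as in the Remark before Theorem \ref{existencehilbert}.
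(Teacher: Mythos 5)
Your argument is correct and is essentially the route the paper intends: the corollary is stated there as an immediate consequence of Theorem \ref{veryweakexistth}, the implicit point being exactly the one you make explicit, namely that under the extra hypotheses $\mathrm{div}\,\boldsymbol{f}=0$ and $\boldsymbol{f}\cdot\boldsymbol{n}=0$ the pressure solves the homogeneous weak Neumann problem (via $\Delta\boldsymbol{u}=-\boldsymbol{\mathrm{curl}}\,\boldsymbol{\mathrm{curl}}\,\boldsymbol{u}$ and the curl boundary condition) and is therefore constant by Lemma \ref{wn1}. Your care about reading the divergence and normal-trace identities distributionally, using that $\nabla\pi\in(\boldsymbol{T}^{p'}(\Omega))'$ with $\mathrm{div}(\nabla\pi)=0\in L^{p}(\Omega)$ so that its normal trace is defined, is exactly the right justification.
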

 As described above, using Proposition \ref{pr2} with $w=0$, we have the analyticity of the semi-group generated by the Stokes operator on $[\boldsymbol{T}^{p'}(\Omega)]'_{\sigma,\tau}$:
 \begin{theo}
 The operator $-C_{p}$ is a densely defined operator and it generates a bounded analytic semi-group on $[\boldsymbol{T}^{p'}(\Omega)]'_{\sigma,\tau}$.
 \end{theo}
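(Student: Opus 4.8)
The plan is to verify the two hypotheses of Proposition \ref{pr2} for the operator $\mathcal{A}=-C_{p}$ acting on the Banach space $X=[\boldsymbol{T}^{p'}(\Omega)]'_{\sigma,\tau}$, and then to invoke the characterisation of \cite[Chapter 2, Theorem 4.6, page 101]{En} that a densely defined sectorial operator is exactly the generator of a bounded analytic semi-group. The whole argument runs parallel to the one just given for $-B_{p}$ on $[\boldsymbol{H}^{p'}_{0}(\mathrm{div},\Omega)]'_{\sigma,\tau}$, the only genuinely new ingredient being the very weak resolvent estimate of Theorem \ref{veryweakexistth}.

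First I would record that $C_{p}$ is densely defined. Indeed, every field $\boldsymbol{u}\in\boldsymbol{\mathcal{D}}_{\sigma}(\Omega)$ is smooth, divergence free and compactly supported in $\Omega$, so $\Delta\boldsymbol{u}\in(\boldsymbol{T}^{p'}(\Omega))'$ and all its boundary traces vanish; hence $\boldsymbol{\mathcal{D}}_{\sigma}(\Omega)\subset\mathbf{D}(C_{p})$ by the very definition \eqref{Dpc}. Since, by Proposition \ref{densttp}, $\boldsymbol{\mathcal{D}}_{\sigma}(\Omega)$ is dense in $[\boldsymbol{T}^{p'}(\Omega)]'_{\sigma,\tau}$, the domain $\mathbf{D}(C_{p})$ is dense in $X$.

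Next I would establish the resolvent bound. Fix $\lambda\in\mathbb{C}^{\ast}$ with $\mathrm{Re}\,\lambda\geq 0$ and $\boldsymbol{f}\in X$. Because $C_{p}\boldsymbol{u}=-\Delta\boldsymbol{u}$, the resolvent equation $(\lambda I+C_{p})\boldsymbol{u}=\boldsymbol{f}$ is exactly Problem \eqref{resolventexistence} with a right-hand side in $(\boldsymbol{T}^{p'}(\Omega))'$ that is divergence free and has vanishing normal trace. The Corollary following Theorem \ref{veryweakexistth} then provides a unique $\boldsymbol{u}\in\boldsymbol{L}^{p}(\Omega)$ solving it and satisfying \eqref{estveryweak}; this $\boldsymbol{u}$ belongs to $\mathbf{D}(C_{p})$ and, being divergence free with $\boldsymbol{u}\cdot\boldsymbol{n}=0$, lies in $X$. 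Using the continuous embedding $\boldsymbol{T}^{p'}(\Omega)\hookrightarrow\boldsymbol{L}^{p'}(\Omega)$, whose dual is the continuous embedding $\boldsymbol{L}^{p}(\Omega)\hookrightarrow(\boldsymbol{T}^{p'}(\Omega))'$, the $\boldsymbol{L}^{p}$-estimate \eqref{estveryweak} upgrades to an estimate in the ambient norm of $X$:
\begin{equation*}
\Vert R(\lambda,-C_{p})\boldsymbol{f}\Vert_{X}=\Vert\boldsymbol{u}\Vert_{(\boldsymbol{T}^{p'}(\Omega))'}\leq C\,\Vert\boldsymbol{u}\Vert_{\boldsymbol{L}^{p}(\Omega)}\leq\frac{M}{|\lambda|}\,\Vert\boldsymbol{f}\Vert_{X},
\end{equation*}
with $M$ independent of $\lambda$ and $\boldsymbol{f}$.

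With density of the domain and this half-plane resolvent estimate in hand, Proposition \ref{pr2} applied to $\mathcal{A}=-C_{p}$ shows that $-C_{p}$ is sectorial, and the cited result of \cite{En} then yields that $-C_{p}$ generates a bounded analytic semi-group on $X$. I expect the only delicate point to be the transition from the estimate \eqref{estveryweak}, which is phrased in the stronger $\boldsymbol{L}^{p}$-norm, to a bona fide resolvent bound in the weaker norm of $X$; this is precisely what the duality embedding $\boldsymbol{L}^{p}(\Omega)\hookrightarrow(\boldsymbol{T}^{p'}(\Omega))'$ takes care of, together with the verification that the solution returned by Theorem \ref{veryweakexistth} actually sits in the closed subspace $[\boldsymbol{T}^{p'}(\Omega)]'_{\sigma,\tau}$ and in $\mathbf{D}(C_{p})$, so that $R(\lambda,-C_{p})$ is a well-defined bounded operator on $X$.
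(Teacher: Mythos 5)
Your proposal is correct and follows essentially the same route as the paper: density of $\mathbf{D}(C_{p})$ via Proposition \ref{densttp}, the very weak resolvent estimate of Theorem \ref{veryweakexistth} and its Corollary for the half-plane $\mathrm{Re}\,\lambda\geq 0$, and then Proposition \ref{pr2} together with the characterisation in \cite{En}. The only point the paper leaves implicit and you make explicit is the passage from the $\boldsymbol{L}^{p}$-bound \eqref{estveryweak} to a bound in the norm of $[\boldsymbol{T}^{p'}(\Omega)]'_{\sigma,\tau}$ via the embedding $\boldsymbol{L}^{p}(\Omega)\hookrightarrow(\boldsymbol{T}^{p'}(\Omega))'$, which is exactly the right way to close that gap.
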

\section{ Stokes operator with flux boundary  conditions}
\label{Stokes operator with flux boundary  conditions}
As we have already mentioned, the Stokes operator with Navier-type boundary conditions in a non simply connected domain has a non trivial finite dimensional kernel $\boldsymbol{K} _{ \tau  }(\Omega )$. It is then natural to study the Stokes problem on the orthogonal of that kernel. To this end we first consider the Stokes operator on that space.  It turns out  that, under the assumption of Condition H for the domain $\Omega $, for a function $\boldsymbol{u}\in L^p _{ \sigma , \tau  }(\Omega )$, to be in the orthogonal of $\boldsymbol{K} _{ \tau  }(\Omega )$ is equivalent to the condition \eqref{condition2} (cf.  \cite{Am4}, see also \cite{Albaba1}). It is then equivalent for our purpose to consider the Stokes problem, with Navier-type boundary conditions, with the supplementary flux condition \eqref{condition2}.

We  then begin this section  considering $A'_p$, the Stokes operator, with Navier-type boundary conditions, and with flux condition \eqref{condition2}. Its resolvent set is given by the solutions of problem \eqref{*BIS} that we  may recall here:
\begin{equation}\label{resolA'}
\left\{
\begin{array}{r@{~}c@{~}l}
\lambda \boldsymbol{u} - \Delta \boldsymbol{u}\,=\, \boldsymbol{f}, &&\mathrm{div}\,\boldsymbol{u} = 0 \,\,\, \qquad\qquad \mathrm{in} \,\,\, \Omega, \\
\boldsymbol{u}\cdot \boldsymbol{n} = 0, && \boldsymbol{\mathrm{curl}}\,\boldsymbol{u}\times\boldsymbol{n}=\boldsymbol{0}\qquad
\mathrm{on}\,\,\, \Gamma,\\
\langle\boldsymbol{u}\cdot\boldsymbol{n}, 1\rangle_{\Sigma_{j}}\,=\,0,&& 1\leq
j\leq J.
\end{array}
\right.
\end{equation} 

The  addition of the  extra boundary condition on the  cuts $\Sigma_{j}$, $1\leq j \leq J$ makes  the Stokes operator invertible on $L^p _{ \sigma , \tau  }(\Omega )$  with bounded and compact inverse.

 Consider then the space
 \begin{equation}\label{Xp}
\boldsymbol{X}_{p}\,=\,\big\{\boldsymbol{f}\in\boldsymbol{L}^{p}_{\sigma,\tau}(\Omega);\,\,\,\int_{\Omega}\boldsymbol{f}\cdot\overline{\boldsymbol{v}}\,\textrm{d}\,x=0,\,\,\forall\,\,\boldsymbol{v}\in\boldsymbol{K}_{\tau}(\Omega)\big\},
 \end{equation}
 (not to confuse with the space $\boldsymbol{X}^{p}(\Omega)$ defined in the subsection 2.1).
It worth noting that, 
\begin{equation}\label{realtionorthogo}
\forall\,1<p<\infty,\quad\boldsymbol{L}^{p}_{\sigma,\tau}(\Omega)=\boldsymbol{K}_{\tau}(\Omega)\oplus\boldsymbol{X}_{p}\quad\mathrm{and}\quad(\boldsymbol{X}_{p})'=\boldsymbol{X}_{p'}.
\end{equation}

Next, we define the operator
$A'_{p}\,:\,\mathbf{D}(A'_{p})\subset\boldsymbol{X}_{p}\longmapsto\boldsymbol{X}_{p}$
by:
\begin{equation}\label{A'}
\mathbf{D}(A'_{p})=\big\{\boldsymbol{u}\in\mathbf{D}(A_{p});\,\,\langle\boldsymbol{u}\cdot\boldsymbol{n}\,,\,1\rangle_{\Sigma_{j}}=0,\,\,1\leq
j\leq J\big\}
\end{equation}
and $A'_{p}\boldsymbol{u}=A_{p}\boldsymbol{u}$, for all $\boldsymbol{u}\in\mathbf{D}(A'_{p})$.
In other words, the operator $A'_{p}$ is the restriction of the Stokes
operator to the space $\boldsymbol{X}_{p}$. It is clear that when $\Omega$ is simply connected
the Stokes operator $A_{p}$ coincides with the operator $A'_{p}$.
\begin{rmk}\label{cdflux}
\rm{Let $\boldsymbol{u}\in\boldsymbol{L}^{p}_{\sigma,\tau}(\Omega)$, we note that the condition 
\begin{equation}\label{compcond}
\forall\,\,\boldsymbol{v}\in\boldsymbol{K}_{\tau}(\Omega),\qquad \int_{\Omega}\boldsymbol{u}\cdot\overline{\boldsymbol{v}}\,\textrm{d}\,x=0,
\end{equation}
 is equivalent to the condition (see \cite[Lemma 3.2, Corollary 3.4]{Am4}):
\begin{equation}\label{concomp}
\langle\boldsymbol{u}\cdot\boldsymbol{n}\,,\,1\rangle_{\Sigma_{j}}=0,\,\,1\leq
j\leq J.
\end{equation}
}
\end{rmk}

We prove in the following proposition the density of the domain of the operator $A'_{p}$.
\begin{prop}\label{dd2}
The operator $A'_{p}$ is a well defined operator of dense domain.
\end{prop}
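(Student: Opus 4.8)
The plan is to prove two things: that $A'_p$ is \emph{well defined}, in the sense that it maps $\mathbf{D}(A'_p)$ into $\boldsymbol{X}_p$, and that $\mathbf{D}(A'_p)$ is \emph{dense} in $\boldsymbol{X}_p$.

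First, for the well-definedness, I would take $\boldsymbol{u}\in\mathbf{D}(A'_p)\subset\mathbf{D}(A_p)$, so that $A'_p\boldsymbol{u}=A_p\boldsymbol{u}\in\boldsymbol{L}^p_{\sigma,\tau}(\Omega)$, and it only remains to check that $A_p\boldsymbol{u}$ is orthogonal to $\boldsymbol{K}_\tau(\Omega)$. The kernel $\boldsymbol{K}_\tau(\Omega)$ of \eqref{noinailb} consists of divergence-free, curl-free fields and is contained in every $\boldsymbol{L}^r$ space, whence $\boldsymbol{K}_\tau(\Omega)\subset\boldsymbol{X}^{p'}_{\sigma,\tau}(\Omega)$. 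I would then invoke, for $\boldsymbol{v}\in\boldsymbol{K}_\tau(\Omega)$, the identity $\int_\Omega A_p\boldsymbol{u}\cdot\overline{\boldsymbol{v}}\,\mathrm{d}\,x=\int_\Omega\boldsymbol{\mathrm{curl}}\,\boldsymbol{u}\cdot\boldsymbol{\mathrm{curl}}\,\overline{\boldsymbol{v}}\,\mathrm{d}\,x$ recorded earlier for $\boldsymbol{u}\in\mathbf{D}(A_p)$, $\boldsymbol{v}\in\boldsymbol{X}^{p'}_{\sigma,\tau}(\Omega)$; its right-hand side vanishes since $\boldsymbol{\mathrm{curl}}\,\boldsymbol{v}=\boldsymbol{0}$. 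Hence $A'_p\boldsymbol{u}\in\boldsymbol{X}_p$.

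For the density, the key observation is that, by Remark \ref{cdflux}, the flux condition $\langle\boldsymbol{u}\cdot\boldsymbol{n},1\rangle_{\Sigma_j}=0$ ($1\le j\le J$) entering \eqref{A'} is equivalent to orthogonality to $\boldsymbol{K}_\tau(\Omega)$, so that $\mathbf{D}(A'_p)=\mathbf{D}(A_p)\cap\boldsymbol{X}_p$. I would then exploit the topological direct sum $\boldsymbol{L}^p_{\sigma,\tau}(\Omega)=\boldsymbol{K}_\tau(\Omega)\oplus\boldsymbol{X}_p$ of \eqref{realtionorthogo}: as $\boldsymbol{K}_\tau(\Omega)$ is finite-dimensional and $\boldsymbol{X}_p$ is closed, the projection $P:\boldsymbol{L}^p_{\sigma,\tau}(\Omega)\longmapsto\boldsymbol{X}_p$ along $\boldsymbol{K}_\tau(\Omega)$ is bounded, and in fact $P\boldsymbol{u}=\boldsymbol{u}-\sum_{j=1}^{J}\langle\boldsymbol{u}\cdot\boldsymbol{n},1\rangle_{\Sigma_j}\,\widetilde{\boldsymbol{\mathrm{grad}}}\,q_j^\tau$, the relation $\langle\widetilde{\boldsymbol{\mathrm{grad}}}\,q_j^\tau\cdot\boldsymbol{n},1\rangle_{\Sigma_k}=\delta_{jk}$ from \eqref{gradqj} guaranteeing that the range is indeed $\boldsymbol{X}_p$. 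Since $\boldsymbol{K}_\tau(\Omega)$, being the kernel of $A_p$, is contained in $\mathbf{D}(A_p)$, for $\boldsymbol{u}\in\mathbf{D}(A_p)$ one has $P\boldsymbol{u}=\boldsymbol{u}-(I-P)\boldsymbol{u}$ with $(I-P)\boldsymbol{u}\in\boldsymbol{K}_\tau(\Omega)\subset\mathbf{D}(A_p)$, so $P$ maps $\mathbf{D}(A_p)$ into $\mathbf{D}(A_p)\cap\boldsymbol{X}_p=\mathbf{D}(A'_p)$.

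The conclusion is then immediate: given $\boldsymbol{f}\in\boldsymbol{X}_p$, Proposition \ref{dd1} supplies $(\boldsymbol{u}_k)_k\subset\mathbf{D}(A_p)$ with $\boldsymbol{u}_k\to\boldsymbol{f}$ in $\boldsymbol{L}^p_{\sigma,\tau}(\Omega)$; applying $P$, its continuity and $P\boldsymbol{f}=\boldsymbol{f}$ yield $P\boldsymbol{u}_k\to\boldsymbol{f}$ in $\boldsymbol{X}_p$ with $P\boldsymbol{u}_k\in\mathbf{D}(A'_p)$. The only step requiring genuine care—and thus the main obstacle—is the fact that $P$ preserves $\mathbf{D}(A_p)$, which relies on the inclusion $\boldsymbol{K}_\tau(\Omega)\subset\mathbf{D}(A_p)$ and on the continuity of the flux functionals $\boldsymbol{u}\mapsto\langle\boldsymbol{u}\cdot\boldsymbol{n},1\rangle_{\Sigma_j}$ on $\boldsymbol{L}^p_{\sigma,\tau}(\Omega)$ (valid because such $\boldsymbol{u}$ lie in $\boldsymbol{H}^p_0(\mathrm{div},\Omega)$ with $\mathrm{div}\,\boldsymbol{u}=0$, so their normal traces on the cuts are controlled by the $\boldsymbol{L}^p$-norm); everything else follows directly from \eqref{realtionorthogo} and Remark \ref{cdflux}.
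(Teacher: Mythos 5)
Your proof is correct and follows essentially the same route as the paper: well-definedness via the curl--curl identity (equivalently the Green formula of Lemma \ref{fg1}) applied to $\boldsymbol{v}\in\boldsymbol{K}_{\tau}(\Omega)$, and density by approximating in $\boldsymbol{L}^{p}_{\sigma,\tau}(\Omega)$ and subtracting $\sum_{j}\langle\cdot\,\boldsymbol{n},1\rangle_{\Sigma_{j}}\widetilde{\boldsymbol{\mathrm{grad}}}\,q_{j}^{\tau}$, which is exactly the paper's corrected sequence $\widetilde{\boldsymbol{w}}_{k}$ repackaged as the bounded projection $P$ along $\boldsymbol{K}_{\tau}(\Omega)$. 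The only cosmetic difference is that the paper approximates by $\boldsymbol{\mathcal{D}}_{\sigma}(\Omega)$ rather than by $\mathbf{D}(A_{p})$ via Proposition \ref{dd1}; both work for the same reason.
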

\begin{proof} Thanks to
Remark \ref{cdflux} it is clear that
$\mathbf{D}_{p}(A')\subset\boldsymbol{X}_{p}$. Moreover, using Lemma \ref{fg1} we can easily
verify that for all $\boldsymbol{v}\in\boldsymbol{K}_{\tau}(\Omega)$, $\int_{\Omega}\Delta\boldsymbol{u}\cdot\overline{\boldsymbol{v}}\,\textrm{d}\,x\,=\,0$.
As a result $A'\boldsymbol{u}\in\boldsymbol{X}_{p}$
 and $A'$ is a well defined
operator.

Now, for the density, let $\boldsymbol{w}\in\boldsymbol{L}^{p}_{\sigma,\tau}(\Omega)$ such that $\langle\boldsymbol{w}\cdot\boldsymbol{n}\,,\,1\rangle_{\Sigma_{j}}=0$ for all $1\leqslant\,j\,\leqslant J$. We know that there exists a sequence $(\boldsymbol{w}_{k})_{k}$ in $\boldsymbol{\mathcal{D}}_{\sigma}(\Omega)$ such that $\boldsymbol{w}_{k}\longrightarrow\boldsymbol{w}$ in $\boldsymbol{L}^{p}(\Omega)$. As a consequence for all $1\leqslant j \leqslant J$, $\langle\boldsymbol{w}_{k}\cdot\boldsymbol{n}\,,\,1\rangle_{\Sigma_{j}}\longrightarrow\langle\boldsymbol{w}\cdot\boldsymbol{n}\,,\,1\rangle_{\Sigma_{j}}=0$, as $k\rightarrow+\infty.$\\
Now for all $k\in\mathbb{N}$, setting $\widetilde{\boldsymbol{w}}_{k}=\boldsymbol{w}_{k}-\sum_{j=1}^{J}\langle\boldsymbol{w}_{k}\cdot\boldsymbol{n}\,,\,1\rangle_{\Sigma_{j}}\,\widetilde{\boldsymbol{\mathrm{grad}}}\,q_{j}^{\tau}$.
We can easily verify that $(\widetilde{\boldsymbol{w}}_{k})_{k}$ is in $\mathbf{D}_{p}(A')$ and converges to $\boldsymbol{w}$ in $\boldsymbol{L}^{p}(\Omega)$. 
\end{proof}
Next we study the resolvent of the operator $A'_{p}$ and, to this end, consider the problem \eqref{resolA'}
for $\lambda \in \mathbb{C}$. The following results holds:
\begin{theo}\label{exislpA'}
Let $\lambda\in\mathbb{C}$ such that $\mathrm{Re}\,\lambda\geq 0$ and
$\boldsymbol{f}\in\boldsymbol{X}_{p}$. The problem (\ref{resolA'}) has a unique solution $\boldsymbol{u}\in\boldsymbol{W}^{1,p}(\Omega)$ that satisfies the estimates (\ref{estimlpf})-(\ref{curlestlp}).

\noindent In addition,  the solution $\boldsymbol{u}$ belongs to $\boldsymbol{W}^{2,p}(\Omega)$ and satisfies the estimate
\begin{equation}
\Vert\boldsymbol{u}\Vert_{\boldsymbol{W}^{2,p}(\Omega)}\leq\,C(\Omega,p)\,\Vert\boldsymbol{f}\Vert_{\boldsymbol{L}^{p}(\Omega)},
\end{equation}
where $C(\Omega,p)$ is independent of $\lambda$ and $\boldsymbol{f}$.
\end{theo}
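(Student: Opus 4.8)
The plan is to reduce the problem to the resolvent problem \eqref{resolventexistence}, already solved in Theorem~\ref{existencelp} and Theorem~\ref{estlpnavier}, and to show that, precisely because $\boldsymbol{f}\in\boldsymbol{X}_{p}$, the extra flux conditions on the cuts are satisfied \emph{automatically} by that solution. Concretely, fix $\lambda\in\mathbb{C}^{\ast}$ with $\mathrm{Re}\,\lambda\geq0$; then $\lambda\in\Sigma_{\varepsilon}$ for a suitable $\varepsilon$, so Theorem~\ref{existencelp} furnishes a unique $\boldsymbol{u}\in\boldsymbol{W}^{2,p}(\Omega)$ solving the first two lines of \eqref{resolA'}, and Theorem~\ref{estlpnavier} already gives the two estimates \eqref{estimlpf}--\eqref{curlestlp}. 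Since any solution of \eqref{resolA'} is in particular a solution of \eqref{resolventexistence}, the uniqueness in $\boldsymbol{W}^{1,p}(\Omega)$ asserted in Theorem~\ref{existencelp} immediately yields the uniqueness claimed here. So the whole point is the existence, i.e. verifying the third line of \eqref{resolA'}.

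The key step is to test the equation against the kernel. Let $\boldsymbol{w}\in\boldsymbol{K}_{\tau}(\Omega)$; then $\boldsymbol{w}\in\boldsymbol{X}^{p'}_{\tau}(\Omega)$, $\mathrm{div}\,\boldsymbol{w}=0$ and $\boldsymbol{\mathrm{curl}}\,\boldsymbol{w}=\boldsymbol{0}$ in $\Omega$. Applying the Green formula of Lemma~\ref{fg1} to $\boldsymbol{u}\in\boldsymbol{W}^{2,p}(\Omega)\subset\boldsymbol{E}^{\,p}(\Omega)$ and $\boldsymbol{w}$, and using that $\boldsymbol{\mathrm{curl}}\,\boldsymbol{w}=\boldsymbol{0}$ together with the boundary condition $\boldsymbol{\mathrm{curl}}\,\boldsymbol{u}\times\boldsymbol{n}=\boldsymbol{0}$ on $\Gamma$, both terms on the right-hand side vanish and one gets $\int_{\Omega}\Delta\boldsymbol{u}\cdot\overline{\boldsymbol{w}}\,\mathrm{d}x=0$. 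Pairing the equation $\lambda\boldsymbol{u}-\Delta\boldsymbol{u}=\boldsymbol{f}$ with $\overline{\boldsymbol{w}}$ and using $\boldsymbol{f}\in\boldsymbol{X}_{p}$ (so that $\int_{\Omega}\boldsymbol{f}\cdot\overline{\boldsymbol{w}}\,\mathrm{d}x=0$ by \eqref{Xp}) then gives $\lambda\int_{\Omega}\boldsymbol{u}\cdot\overline{\boldsymbol{w}}\,\mathrm{d}x=0$. As $\lambda\neq0$, it follows that $\int_{\Omega}\boldsymbol{u}\cdot\overline{\boldsymbol{w}}\,\mathrm{d}x=0$ for every $\boldsymbol{w}\in\boldsymbol{K}_{\tau}(\Omega)$, and Remark~\ref{cdflux} translates this orthogonality into exactly the flux conditions $\langle\boldsymbol{u}\cdot\boldsymbol{n},1\rangle_{\Sigma_{j}}=0$, $1\leq j\leq J$. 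Hence $\boldsymbol{u}\in\mathbf{D}(A'_{p})$ solves \eqref{resolA'}.

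It remains to upgrade \eqref{estw2p} to the $\lambda$-uniform $\boldsymbol{W}^{2,p}$ estimate, and this is where the flux conditions pay off. By Remark~\ref{rmkequivnorm}(ii), once $\langle\boldsymbol{u}\cdot\boldsymbol{n},1\rangle_{\Sigma_{j}}=0$ for all $j$ one has $\Vert\boldsymbol{u}\Vert_{\boldsymbol{W}^{2,p}(\Omega)}\simeq\Vert\Delta\boldsymbol{u}\Vert_{\boldsymbol{L}^{p}(\Omega)}$ with a constant depending only on $\Omega$ and $p$. Writing $\Delta\boldsymbol{u}=\lambda\boldsymbol{u}-\boldsymbol{f}$ and invoking \eqref{estimlpf} gives $\Vert\Delta\boldsymbol{u}\Vert_{\boldsymbol{L}^{p}(\Omega)}\leq|\lambda|\,\Vert\boldsymbol{u}\Vert_{\boldsymbol{L}^{p}(\Omega)}+\Vert\boldsymbol{f}\Vert_{\boldsymbol{L}^{p}(\Omega)}\leq(\kappa_{1}(\Omega,p)+1)\,\Vert\boldsymbol{f}\Vert_{\boldsymbol{L}^{p}(\Omega)}$, so that $\Vert\boldsymbol{u}\Vert_{\boldsymbol{W}^{2,p}(\Omega)}\leq C(\Omega,p)\,\Vert\boldsymbol{f}\Vert_{\boldsymbol{L}^{p}(\Omega)}$ with a constant independent of $\lambda$ — the crucial gain over \eqref{estw2p}, reflecting the invertibility of $A'_{p}$.

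The main obstacle is really the bookkeeping in the second step: one must check that the Green formula of Lemma~\ref{fg1} applies (which needs $\boldsymbol{u}\in\boldsymbol{E}^{\,p}(\Omega)$ and $\boldsymbol{w}$ in the appropriate trace space, both granted by $\boldsymbol{u}\in\boldsymbol{W}^{2,p}(\Omega)$ and $\boldsymbol{w}\in\boldsymbol{K}_{\tau}(\Omega)$) and that $\boldsymbol{K}_{\tau}(\Omega)$ is $p$-independent, so the same $\boldsymbol{w}$ can legitimately be used in the $\boldsymbol{L}^{p}/\boldsymbol{L}^{p'}$ duality against $\boldsymbol{u}$ and $\boldsymbol{f}$. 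The excluded value $\lambda=0$ (for which \eqref{estimlpf}--\eqref{curlestlp} are vacuous) is not reached by this argument since one cannot divide by $\lambda$; it corresponds instead to the statement that $A'_{p}$ is invertible with bounded and compact inverse, which follows from Remark~\ref{rmkequivnorm}(ii) and the elliptic theory already available for \eqref{resolventexistence}.
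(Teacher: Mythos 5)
Your proof is correct. The paper itself does not write out this argument: it simply cites \cite{Am3} (Proposition 4.3) for $\lambda=0$ and declares the case $\lambda\in\mathbb{C}^{\ast}$, $\mathrm{Re}\,\lambda\geq0$ ``very similar'' to the proofs of Theorems \ref{existencelp} and \ref{estlpnavier}, i.e.\ it implicitly asks the reader to rerun the existence argument with the flux constraint built in. You instead \emph{derive} the theorem from Theorems \ref{existencelp}--\ref{estlpnavier} as already stated: the unique solution of \eqref{resolventexistence} with datum in $\boldsymbol{X}_{p}$ automatically satisfies the flux conditions, because testing $\lambda\boldsymbol{u}-\Delta\boldsymbol{u}=\boldsymbol{f}$ against $\boldsymbol{w}\in\boldsymbol{K}_{\tau}(\Omega)$ and using the Green formula of Lemma \ref{fg1} (legitimate since $\boldsymbol{u}\in\boldsymbol{W}^{2,p}(\Omega)\subset\boldsymbol{E}^{\,p}(\Omega)$, $\boldsymbol{\mathrm{curl}}\,\boldsymbol{w}=\boldsymbol{0}$ and $\boldsymbol{\mathrm{curl}}\,\boldsymbol{u}\times\boldsymbol{n}=\boldsymbol{0}$) kills $\int_{\Omega}\Delta\boldsymbol{u}\cdot\overline{\boldsymbol{w}}$, whence $\lambda\int_{\Omega}\boldsymbol{u}\cdot\overline{\boldsymbol{w}}=0$ and Remark \ref{cdflux} converts the resulting orthogonality into $\langle\boldsymbol{u}\cdot\boldsymbol{n},1\rangle_{\Sigma_{j}}=0$. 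This is the same mechanism the paper uses in Proposition \ref{dd2} to show $A'_{p}$ is well defined, so your route is faithful to the paper's toolkit while being more economical: it avoids repeating any existence construction. Your upgrade of \eqref{estw2p} to a $\lambda$-uniform bound via Remark \ref{rmkequivnorm}(ii) together with $\Vert\Delta\boldsymbol{u}\Vert_{\boldsymbol{L}^{p}(\Omega)}\leq|\lambda|\Vert\boldsymbol{u}\Vert_{\boldsymbol{L}^{p}(\Omega)}+\Vert\boldsymbol{f}\Vert_{\boldsymbol{L}^{p}(\Omega)}$ and \eqref{estimlpf} is exactly the point of the flux condition and is correct. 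The only place where you, like the paper, fall back on an external citation is $\lambda=0$, where the orthogonality argument degenerates and one must invoke the elliptic result of \cite{Am3}; your one-sentence treatment of that case matches what the paper does and is acceptable, though a fully self-contained proof would have to reproduce that elliptic existence statement.
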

This result is proved  for $\lambda =0$ in \cite{Am3} (cf. Proposition 4.3).  On the other, for $\lambda \in \mathbb{C}^*$, $\mathrm{Re}\,\lambda\geq 0$ a similar Theorem has been proved for the problem  \eqref{resolventexistence} in \cite{Albaba} (cf. Theorem  4.8 and Theorem  4.11). Since the proof of Theorem \ref{exislpA'} is very similar  it will be skipped. \\

The following theorem follows:
\begin{theo}\label{analsemi3}
The operator $-A'_{p}$ generates a bounded analytic semi-group on $\boldsymbol{X}_{p}$ for all $1<p<\infty$.
\end{theo}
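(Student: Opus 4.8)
The plan is to derive Theorem \ref{analsemi3} from the abstract Yosida criterion of Proposition \ref{pr2}, combined with the characterization of generators of bounded analytic semigroups recalled just after it (\cite[Chapter 2, Theorem 4.6, page 101]{En}), exactly as was done for $A_p$ in Theorem \ref{analsemi2}. The point is that $\boldsymbol{X}_p$ carries the norm of $\boldsymbol{L}^p(\Omega)$ and $A'_p$ is the restriction of $A_p$ to $\boldsymbol{X}_p$, so the entire argument reduces to verifying, on $\boldsymbol{X}_p$, a half-plane resolvent estimate for $-A'_p$, and this estimate is already contained in Theorem \ref{exislpA'}.

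First I would recall from Proposition \ref{dd2} that $A'_p$ is densely defined in $\boldsymbol{X}_p$, hence so is $-A'_p$. Then, for $\lambda\in\mathbb{C}^*$ with $\mathrm{Re}\,\lambda\geq 0$ and $\boldsymbol{f}\in\boldsymbol{X}_p$, I would observe that the resolvent equation $(\lambda I + A'_p)\boldsymbol{u}=\boldsymbol{f}$ is, by Proposition \ref{sl} (which gives $A'_p\boldsymbol{u}=A_p\boldsymbol{u}=-\Delta\boldsymbol{u}$), precisely Problem \eqref{resolA'}. Theorem \ref{exislpA'} then yields a unique solution $\boldsymbol{u}\in\boldsymbol{W}^{2,p}(\Omega)$. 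I would check that this $\boldsymbol{u}$ belongs to $\mathbf{D}(A'_p)$: it satisfies $\mathrm{div}\,\boldsymbol{u}=0$, $\boldsymbol{u}\cdot\boldsymbol{n}=0$ and $\boldsymbol{\mathrm{curl}}\,\boldsymbol{u}\times\boldsymbol{n}=\boldsymbol{0}$ on $\Gamma$, together with the flux conditions $\langle\boldsymbol{u}\cdot\boldsymbol{n},1\rangle_{\Sigma_j}=0$, so by definition \eqref{A'} and Remark \ref{cdflux} it lies in $\mathbf{D}(A'_p)\subset\boldsymbol{X}_p$. Consequently $\lambda I + A'_p:\mathbf{D}(A'_p)\to\boldsymbol{X}_p$ is a bijection, i.e. $\lambda\in\rho(-A'_p)$.

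Next I would read off the resolvent bound. Since the $\boldsymbol{X}_p$-norm coincides with the $\boldsymbol{L}^p(\Omega)$-norm, estimate \eqref{estimlpf} of Theorem \ref{exislpA'} reads
\begin{equation*}
\Vert R(\lambda,-A'_p)\boldsymbol{f}\Vert_{\boldsymbol{X}_p}=\Vert\boldsymbol{u}\Vert_{\boldsymbol{L}^p(\Omega)}\leq\frac{\kappa_1(\Omega,p)}{\vert\lambda\vert}\Vert\boldsymbol{f}\Vert_{\boldsymbol{L}^p(\Omega)}=\frac{\kappa_1(\Omega,p)}{\vert\lambda\vert}\Vert\boldsymbol{f}\Vert_{\boldsymbol{X}_p},
\end{equation*}
valid for every $\lambda\in\mathbb{C}^*$ with $\mathrm{Re}\,\lambda\geq 0$, with $M=\kappa_1(\Omega,p)$ independent of $\lambda$ and $\boldsymbol{f}$. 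Applying Proposition \ref{pr2} with $X=\boldsymbol{X}_p$ and $\mathcal{A}=-A'_p$ shows that $-A'_p$ is sectorial in the sense of \eqref{ç}, and by \cite[Chapter 2, Theorem 4.6, page 101]{En} a sectorial operator generates a bounded analytic semigroup; this gives the claim for all $1<p<\infty$.

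The one genuinely delicate point, which I would treat most carefully, is the invariance of $\boldsymbol{X}_p$: that $-A'_p$ actually acts within $\boldsymbol{X}_p$ and that its resolvent maps $\boldsymbol{X}_p$ into $\mathbf{D}(A'_p)\subset\boldsymbol{X}_p$. This rests on the equivalence, under Condition H, between the orthogonality condition \eqref{compcond} against $\boldsymbol{K}_\tau(\Omega)$ and the flux condition \eqref{concomp} (Remark \ref{cdflux}), and on the fact already exploited in Proposition \ref{dd2} that $\int_\Omega \Delta\boldsymbol{u}\cdot\overline{\boldsymbol{v}}\,\mathrm{d}x=0$ for every $\boldsymbol{v}\in\boldsymbol{K}_\tau(\Omega)$ (via Lemma \ref{fg1}), which guarantees $A'_p\boldsymbol{u}\in\boldsymbol{X}_p$. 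Once this invariance is secured, everything else is a direct transcription of the $\boldsymbol{L}^p$-argument already used for $A_p$.
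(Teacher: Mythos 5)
Your proposal is correct and follows essentially the same route as the paper, which deduces Theorem \ref{analsemi3} directly from the resolvent estimate of Theorem \ref{exislpA'} via the Yosida criterion of Proposition \ref{pr2}, exactly as was done for $A_p$ in Theorem \ref{analsemi2}. Your additional care in verifying that the resolvent maps $\boldsymbol{X}_p$ into $\mathbf{D}(A'_p)\subset\boldsymbol{X}_p$ (via Remark \ref{cdflux} and the orthogonality of $\Delta\boldsymbol{u}$ to $\boldsymbol{K}_\tau(\Omega)$, as in Proposition \ref{dd2}) is a useful explicit supplement to an argument the paper leaves implicit, but it is not a different method.
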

\begin{rmk}
\rm{
Let $(S(t))_{t\geqslant0}$ be the semi-group generated by $-A'_{p}$ on $\boldsymbol{X}_{p}$. We notice that $S(t)=T(t)_{|\boldsymbol{X}_{p}}$ where $(T(t))_{t\geq
0}$ is the analytic semi-group generated by the operator $-A_{p}$ on
$\boldsymbol{L}^{p}_{\sigma,\tau}(\Omega)$.}
\end{rmk} 
\begin{rmk}
\rm{ Thanks to Proposition \ref{eva} we conclude that the space $\boldsymbol{X}_{2}$ has a Hilbertian basis formed from the
eigenfunctions of the operator $A'_{p}$. Moreover $
\sigma(A_{p})=\sigma(A'_{p})\cup\{0\}$ 
and $\boldsymbol{X}_{2}\,=\,\bigoplus_{k=1}^{+\infty}\boldsymbol{\mathrm{Ker}}(\lambda_{k}\,I\,-\,A_{2})$.
}
\end{rmk}
In a similar way, we now define and give some properties of the Stokes operators with flux boundary conditions defined on the subspaces of  $[\boldsymbol{H}^{p'}_{0}(\mathrm{div},\Omega)]'_{\sigma,\tau}$ and $[\boldsymbol{T}^{p'}(\Omega)]'_{\sigma,\tau}$. Since the proof of these properties are completely similar to those for the operator $A'_p$ we do not write any detail. \\

\textbf{(i)} Consider the space
\begin{equation}\label{Yp}
\boldsymbol{Y}_{p}\,=\,\left\lbrace\boldsymbol{f}\in[\boldsymbol{H}^{p'}_{0}(\mathrm{div},\Omega)]'_{\sigma,\tau};\,\,\forall\,\boldsymbol{v}\in\boldsymbol{K}_{\tau}(\Omega),\,\,\,\langle\boldsymbol{f},\,\boldsymbol{v}\rangle_{\Omega}\,=\,0  \right\rbrace, 
\end{equation} 
where $\langle .\,,\,.\rangle_{\Omega}=\langle .\,,\,.\rangle_{[\boldsymbol{H}^{p'}_{0}(\mathrm{div},\Omega)]'\times\boldsymbol{H}^{p'}_{0}(\mathrm{div},\Omega)}$.\\
We define the operator $B'_{p}\,:\,\mathbf{D}(B'_{p})\subset\boldsymbol{Y}_{p}\longmapsto\boldsymbol{Y}_{p}$
by:
\begin{equation}\label{B'p}
\mathbf{D}(B'_{p})=\big\{\boldsymbol{u}\in\mathbf{D}(B_{p});\,\,\langle\boldsymbol{u}\cdot\boldsymbol{n}\,,\,1\rangle_{\Sigma_{j}}=0,\,\,1\leq
j\leq J\big\}
\end{equation}
and $B'_{p}\boldsymbol{u}=B_{p}\boldsymbol{u}$, for all $\boldsymbol{u}\in\mathbf{D}(B'_{p})$. We recall that $\mathbf{D}(B_{p})$ is given by \eqref{Dpb}.
Observe that, the operator $B'_{p}$ is the restriction of the Stokes
operator to the space $\boldsymbol{Y}_{p}$. It is clear that when $\Omega$ is simply connected
the Stokes operator $B_{p}$ coincides with the operator $B'_{p}$.

We easily verify that $\boldsymbol{f}\in\boldsymbol{Y}_{p}$ and for all $\lambda\in\mathbb{C}^{\ast}$ such that $\mathrm{Re}\lambda\geq0$ the Problem \eqref{resolA'} has a unique solution $\boldsymbol{u}\in\boldsymbol{W}^{1,p}(\Omega)$ satisfying the estimate \eqref{estwsp1}. In other words, the operator $B'_{p}$ is a well defined densely defined operator and $-B'_{p}$ generates a bounded analytic semi-group on $\boldsymbol{Y}_{p}$.\\

\textbf{\textbf{(ii)}} Consider the space
\begin{equation}\label{Zp}
\boldsymbol{Z}_{p}\,=\,\left\lbrace\boldsymbol{f}\in[\boldsymbol{T}^{p'}(\Omega)]'_{\sigma,\tau};\,\,\forall\,\boldsymbol{v}\in\boldsymbol{K}_{\tau}(\Omega),\,\,\,\langle\boldsymbol{f},\,\boldsymbol{v}\rangle_{\Omega}\,=\,0  \right\rbrace, 
\end{equation} 
where $\langle .\,,\,.\rangle_{\Omega}=\langle .\,,\,.\rangle_{[\boldsymbol{T}^{p'}(\Omega)]'\times\boldsymbol{T}^{p'}(\Omega)}$.\\
We define the operator $C'_{p}\,:\,\mathbf{D}(C'_{p})\subset\boldsymbol{Z}_{p}\longmapsto\boldsymbol{Z}_{p}$
by:
\begin{equation}\label{C'p}
\mathbf{D}(C'_{p})=\big\{\boldsymbol{u}\in\mathbf{D}(C_{p});\,\,\langle\boldsymbol{u}\cdot\boldsymbol{n}\,,\,1\rangle_{\Sigma_{j}}=0,\,\,1\leq
j\leq J\big\}
\end{equation}
and $C'_{p}\boldsymbol{u}=C_{p}\boldsymbol{u}$, for all $\boldsymbol{u}\in\mathbf{D}(C'_{p})$. We recall that $\mathbf{D}(C_{p})$ is given by \eqref{Dpc}.
Notice that, the operator $C'_{p}$ is the restriction of the Stokes
operator to the space $\boldsymbol{Z}_{p}$. Similarly, when $\Omega$ is simply connected
the Stokes operator $C_{p}$ coincides with the operator $C'_{p}$.

We  verify that $\boldsymbol{f}\in\boldsymbol{Z}_{p}$ and for all $\lambda\in\mathbb{C}^{\ast}$ such that $\mathrm{Re}\lambda\geq0$ the Problem \eqref{resolA'} has a unique solution $\boldsymbol{u}\in\boldsymbol{L}^{p}(\Omega)$ satisfying the estimate \eqref{estveryweak}. In other words, the operator $C'_{p}$ is a well defined densely defined operator and $-C'_{p}$ generates a bounded analytic semi-group on $\boldsymbol{Z}_{p}$.

\section{Complex and fractional powers of the Stokes operator}
\label{powers}
In this section we are interested in the  study of the complex and the fractional powers of the Stokes operators $A_{p}$ and $A'_{p}$ on $\boldsymbol{L}^{p}_{\sigma,\tau}(\Omega)$ and $\boldsymbol{X}_{p}$ respectively. Since theses operators generates  bounded analytic semi-groups in their corresponding Banach spaces (see Theorems \ref{analsemi2} and \ref{analsemi3}), they are in particular non-negative operators. It then follows from the results in \cite{Ko} and in \cite{Tri} that their powers $A^{\alpha}_{p}$ and $(A'_{p})^{\alpha}$, $\alpha\in\mathbb{C}$, are well, densely defined and  closed linear operators on $\boldsymbol{L}^{p}_{\sigma,\tau}(\Omega)$ and $\boldsymbol{X}_{p}$ with domain $\mathbf{D}(A^{\alpha}_{p})$ and $\mathbf{D}((A'_{p})^{\alpha})$ respectively.

The purpose of this section is to prove some properties and estimates for these operators $A^{\alpha}_{p}$ and $(A'_{p})^{\alpha}$. Since it will be needed, we  also obtain in this section a result on the purely imaginary powers of  $(I+A_p)$ that easily follows  from previous results in \cite{Geissert}.

The fractional powers of the Stokes operator with Dirichlet boundary conditions  on a  bounded domains are studied in detail in  \cite{GiGa2}. In that case the Stokes operator is bijective with bounded inverse. This is  still true for the Stokes operator with Navier-type and flux boundary conditions $A'_{p}$ but not true for the Stokes operator with Navier-type boundary conditions $A_p$.
 
\subsection{Pure imaginary powers.}
\label{Imaginary}
In this section we prove that the pure imaginary powers of the operators $(I+L)$ and  $L'$, for $L=A_p, B_p, C_p$,  are bounded.The proofs are based on Lemma A2 in \cite{GiGa4} and some results in \cite{Geissert}  about the operator $\Delta _M$ defined as follows : 
$$\Delta_{M}:\mathbf{D}(\Delta_{M})\subset\boldsymbol{L}^{p}(\Omega)\longmapsto\boldsymbol{L}^{p}(\Omega),\,$$ where 
\begin{equation}\label{DomainHodgeLaplacian}
\mathbf{D}(\Delta_{M})=\big\{
\boldsymbol{u}\in\boldsymbol{W}^{2,p}(\Omega);\,
\boldsymbol{u}\cdot\boldsymbol{n}\,=\,0,\,\,\,\boldsymbol{\mathrm{curl}}\,\boldsymbol{u}\times\boldsymbol{n}\,=\,\boldsymbol{0}\,\,\textrm{on}\,\,\Gamma\big\}
\end{equation}
and 
\begin{equation}\label{HodgeLaplacian}
\forall\,\boldsymbol{u}\in\mathbf{D}(\Delta_{M}),\quad\Delta_{M}\boldsymbol{u}=\Delta\boldsymbol{u}\,\,\,\mathrm{in}\,\,\Omega.
\end{equation}
As it was noticed in \cite{Geissert}, (see also \cite{Albaba}):
\begin{equation}\label{DlapDAP}
\mathbf{D}(\Delta_{M})\cap\boldsymbol{L}^{p}_{\sigma,\tau}(\Omega)=\mathbf{D}(A_{p}),
\end{equation}
\begin{equation}\label{restrictionDeltaM}
\forall \, \boldsymbol{u}\in\mathbf{D}(A_{p}),\quad A_{p}\boldsymbol{u}=-\Delta_{M}\,\boldsymbol{u}\quad\mathrm{in}\,\,\Omega
\end{equation}
and
\begin{equation}\label{relationDeltaMAp}
R(\lambda,\,\Delta_{M})(\boldsymbol{L}^{p}_{\sigma,\tau}(\Omega))\subset\boldsymbol{L}^{p}_{\sigma,\tau}(\Omega).
\end{equation}

Similarly we have:
\begin{equation}\label{DeltaMDA'p}
\mathbf{D}(\Delta_{M})\cap\boldsymbol{X}_{p}=\mathbf{D}(A'_{p}),
\end{equation}
\begin{equation}\label{restrictionDeltaMA'p}
\forall \, \boldsymbol{u}\in\mathbf{D}(A'_{p}),\quad A_{p}\boldsymbol{u}=-\Delta_{M}\,\boldsymbol{u}\quad\mathrm{in}\,\,\Omega
\end{equation}
and
\begin{equation}\label{relationDeltaMA'p}
R(\lambda,\,\Delta_{M})(\boldsymbol{X}_{p})\subset\boldsymbol{X}_{p}.
\end{equation}

Our first result in this section is the following:

\begin{theo}\label{pureimg1+lap}
There exists an angle $0<\theta_{0}<\pi/2$ and a constant $M>0$ such that for all $s\in\mathbb{R}$ we have
\begin{equation}\label{estimpur1+lap}
\Vert(I+A_{p})^{i\,s}\Vert_{\mathcal{L}(\boldsymbol{L}^{p}_{\sigma,\tau}(\Omega))}\,\leq\,M\,e^{\vert s\vert\,\theta_{0}}.
\end{equation}
\end{theo}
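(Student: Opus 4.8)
The plan is to reduce the estimate for $(I+A_{p})^{is}$ to the corresponding estimate for the Hodge Laplacian $\Delta_{M}$ on the whole space $\boldsymbol{L}^{p}(\Omega)$, exploiting that $\boldsymbol{L}^{p}_{\sigma,\tau}(\Omega)$ is invariant under the resolvent of $\Delta_{M}$ and that $I+A_{p}$ is precisely the part of $I-\Delta_{M}$ in this subspace. First I would record that, by Theorem \ref{analsemi2}, $-A_{p}$ generates a bounded analytic semigroup on $\boldsymbol{L}^{p}_{\sigma,\tau}(\Omega)$, so $A_{p}$ is a non-negative operator; consequently $-1\in\rho(A_{p})$ and hence $0\in\rho(I+A_{p})$. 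Thus $I+A_{p}$ has bounded inverse and its complex powers $(I+A_{p})^{is}$ are well defined through the Dunford integral \eqref{forintimpur}. The same holds for $I-\Delta_{M}$: since $-\Delta_{M}$ is non-negative on $\boldsymbol{L}^{p}(\Omega)$, one has $0\in\rho(I-\Delta_{M})$ and $(I-\Delta_{M})^{is}$ is given by the analogous integral.

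The crucial analytic input comes from \cite{Geissert}: the operator $\Delta_{M}$ admits a bounded $\mathcal{H}^{\infty}$-calculus on $\boldsymbol{L}^{p}(\Omega)$ of some angle strictly smaller than $\pi/2$, which in particular yields bounded imaginary powers of $I-\Delta_{M}$ satisfying $\Vert(I-\Delta_{M})^{is}\Vert_{\mathcal{L}(\boldsymbol{L}^{p}(\Omega))}\leq M\,e^{\theta_{0}\vert s\vert}$ for some $0<\theta_{0}<\pi/2$ and $M>0$ (a bounded $\mathcal{H}^{\infty}$-calculus of angle $\phi$ gives such an exponential bound for any $\theta_{0}>\phi$). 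I would stress that this step does \emph{not} require $\Omega$ to be simply connected: $\Delta_{M}$ acts on the full space $\boldsymbol{L}^{p}(\Omega)$, where $I-\Delta_{M}$ is boundedly invertible, so the difficulty caused by the kernel $\boldsymbol{K}_{\tau}(\Omega)$ of the divergence-free operator $A_{p}$ is entirely bypassed.

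Next I would transfer this bound to the subspace. By \eqref{DlapDAP}, \eqref{restrictionDeltaM} and \eqref{relationDeltaMAp}, the closed subspace $\boldsymbol{L}^{p}_{\sigma,\tau}(\Omega)$ is invariant under $(\lambda I-\Delta_{M})^{-1}$, and for every $\boldsymbol{f}\in\boldsymbol{L}^{p}_{\sigma,\tau}(\Omega)$ one has $(\lambda I+I-\Delta_{M})^{-1}\boldsymbol{f}=(\lambda I+I+A_{p})^{-1}\boldsymbol{f}$; that is, $I+A_{p}$ is the part of $I-\Delta_{M}$ in $\boldsymbol{L}^{p}_{\sigma,\tau}(\Omega)$. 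Choosing the contour $\Gamma_{\theta}$ in \eqref{forintimpur} inside $\rho\bigl(-(I-\Delta_{M})\bigr)$, which is then also contained in $\rho\bigl(-(I+A_{p})\bigr)$ since the spectrum of a part is contained in that of the whole operator, the two Dunford integrals coincide on $\boldsymbol{L}^{p}_{\sigma,\tau}(\Omega)$. Hence $(I+A_{p})^{is}$ is the restriction of $(I-\Delta_{M})^{is}$ to $\boldsymbol{L}^{p}_{\sigma,\tau}(\Omega)$, and \eqref{estimpur1+lap} follows with the same $\theta_{0}$ and $M$; equivalently, $I+A_{p}$ satisfies the condition of Definition \ref{EthetaK}.

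I expect the main obstacle to be the rigorous justification of this last identification, namely that the imaginary power of the part of an operator equals the part of its imaginary power. This can be handled either through the coincidence of the resolvents on the invariant subspace inside the Dunford integral, as sketched above, or, more efficiently, by invoking Lemma A2 of \cite{GiGa4}, which is precisely the abstract restriction principle for bounded imaginary powers to resolvent-invariant closed subspaces. The remaining verifications (invariance of the subspace, boundedness of the inverse, admissibility of the contour) are routine consequences of \eqref{DlapDAP}--\eqref{relationDeltaMAp} together with Theorem \ref{analsemi2}.
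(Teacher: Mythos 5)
Your proposal is correct and follows essentially the same route as the paper: Theorem 3.1 and Remark 3.2 of \cite{Geissert} give the bounded $\mathcal{H}^{\infty}$-calculus, hence bounded imaginary powers, of $I-\Delta_{M}$ on $\boldsymbol{L}^{p}(\Omega)$, and the estimate is then transferred to $(I+A_{p})^{i s}$ on $\boldsymbol{L}^{p}_{\sigma,\tau}(\Omega)$ via \eqref{DlapDAP}--\eqref{relationDeltaMAp}, your Dunford-integral justification of the restriction step merely making explicit what the paper leaves implicit. One small caveat: Lemma A2 of \cite{GiGa4} concerns the convergence of $\bigl(\mu^{-2}I+\mathcal{A}\bigr)^{i s}f$ as $\mu\to\infty$ rather than a subspace-restriction principle, so your primary argument via coincidence of the resolvents on the invariant subspace is the one to retain.
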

\begin{proof} 
Using Theorem 3.1 and Remark 3.2 in  \cite{Geissert} with $\lambda=1,\,$ we deduce that $(I-\Delta_{M})$ has a bounded $\mathcal{H}^{\infty}$- calculus on $\boldsymbol{L}^{p}(\Omega)$. Then, there exist an angle $0<\theta_{0}<\pi/2$ and a constant $M>0$ such that for all $s\in\mathbb{R}$
\begin{equation*}
\Vert(I-\Delta_{M})^{i\,s}\Vert_{\mathcal{L}(\boldsymbol{L}^{p}(\Omega))}\,\leq\,M\,e^{\vert s\vert\,\theta_{0}}.
\end{equation*}
(For the definition of $\mathcal{H}^{\infty}$- calculus of an operator in a Banach space and its relation with the pure imaginary powers of this operator see \cite[Section 2]{Hinfiniecalculus} for instance). Using now \eqref{DlapDAP}-\eqref{relationDeltaMAp}, the estimate \eqref{estimpur1+lap} follows.
\end{proof}

\begin{rmk}
\rm{ The results in \cite{Geissert} are proved  under the hypothesis that the domain $\Omega$ is bounded with a uniform $C^{3}$-  boundary. On the other hand, it is well known in elliptic theory (cf. Grsivard \cite{Gris}) that the same regularity results hold if  the $C^k$ regularity is replaced by the regularity $C^{k-1, 1}$. Notice that this is precisely our hypothesis with $k=3$.
}
\end{rmk}

In the following Proposition we prove that the pure imaginary powers of the operators $(I+B_{p})$ and $(I+C_{p})$ are bounded on $[\boldsymbol{H}^{p'}_{0}(\mathrm{div},\Omega)]'_{\sigma,\tau}$ (given by \eqref{hpdivsigmatau}) and on $[\boldsymbol{T}^{p'}(\Omega)]'_{\sigma,\tau}$ (given by \eqref{tp'sigmatau}) respectively.  We recall that the operators $B_{p}$ and $C_{p}$ given by  \eqref{b2} and \eqref{C2} respectively are the extensions of the Stokes operator to the spaces $[\boldsymbol{H}^{p'}_{0}(\mathrm{div},\Omega)]'_{\sigma,\tau}$ and  $[\boldsymbol{T}^{p'}(\Omega)]'_{\sigma,\tau}$ respectively.

\begin{prop}\label{PureimgI+Bp+Cp}
There exists $0<\theta_{0}<\pi/2$ and a constant $C>0$ such that for all $s\in\mathbb{R}$
\begin{equation}\label{pureimphdiv}
\Vert(I+B_{p})^{i\,s}\Vert_{\mathcal{L}([\boldsymbol{H}^{p'}_{0}(\mathrm{div},\Omega)]'_{\sigma,\tau})}\,\leq\,C\,e^{\vert s\vert\,\theta_{0}} 
\end{equation}
and
\begin{equation}\label{pureimptp}
\Vert(I+C_{p})^{i\,s}\Vert_{\mathcal{L}([\boldsymbol{T}^{p'}(\Omega)]'_{\sigma,\tau})}\,\leq\,C\,e^{\vert s\vert\,\theta_{0}} .
\end{equation}
\end{prop}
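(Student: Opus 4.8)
The plan is to prove \eqref{pureimphdiv} and \eqref{pureimptp} by a duality argument that transfers the estimate \eqref{estimpur1+lap} of Theorem \ref{pureimg1+lap}, applied with $p$ replaced by $p'$, from the operator $(I+A_{p'})$ on $\boldsymbol{L}^{p'}_{\sigma,\tau}(\Omega)$ to the operators $(I+B_{p})$ and $(I+C_{p})$ on the respective dual spaces. Since only the exponential growth in $|s|$ matters, and this bound is even in $s$, it will suffice to identify $(I+B_{p})^{is}$ (respectively $(I+C_{p})^{is}$) as the Banach-space adjoint of $(I+A_{p'})^{-is}$ and then invoke the fact that an operator and its adjoint have the same norm. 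I would first record that, $A_{p'}$ being non-negative with spectrum in $[0,+\infty[$, the shifted operator $I+A_{p'}$ has $0$ in its resolvent set and bounded inverse, so that its imaginary powers are given by the Dunford integral \eqref{forintimpur} along a contour $\Gamma_{\theta}$ symmetric about the real axis; the same holds for $I+B_{p}$ and $I+C_{p}$, and in particular the reduced space $X_{\mathcal{A}}$ in which the powers are defined is the whole space.

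The key step is the resolvent duality. For $\mathrm{Re}\,\lambda\geq 0$ the solution operator $R(\lambda,-B_{p})$ furnished by Theorem \ref{weakexistenceth} is, with respect to the anti-duality $\langle\cdot,\cdot\rangle_{\Omega}$ between $[\boldsymbol{H}^{p'}_{0}(\mathrm{div},\Omega)]'$ and $\boldsymbol{H}^{p'}_{0}(\mathrm{div},\Omega)$, the adjoint of $R(\lambda,-A_{p'})$ precomposed with the Helmholtz projection $P$. This is exactly the identity
\begin{equation*}
\langle R(\lambda,-B_{p})\boldsymbol{f},\,\boldsymbol{F}\rangle_{\Omega}\,=\,\langle \boldsymbol{f},\,R(\lambda,-A_{p'})\,P\boldsymbol{F}\rangle_{\Omega},
\end{equation*}
which is established in the course of the proof of Theorem \ref{weakexistenceth} (where the test field $\boldsymbol{F}\in\boldsymbol{H}^{p'}_{0}(\mathrm{div},\Omega)$ is split into its divergence-free part $P\boldsymbol{F}$ and a gradient, and $\boldsymbol{v}$ solves the resolvent problem \eqref{adjwsp1}). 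Together with $(A_{p'})^{*}=A_{p}$, this shows that $I+B_{p}$ is the adjoint of $I+A_{p'}$ in this duality.

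I would then insert this resolvent identity into the Dunford integral \eqref{forintimpur} for $(I+B_{p})^{is}$, pair against a test field $\boldsymbol{F}$, and interchange the integral with the anti-duality. Because the contour $\Gamma_{\theta}$ is invariant under complex conjugation and the conjugate-linearity of $\langle\cdot,\cdot\rangle_{\Omega}$ turns the branch $(-\lambda)^{is}$ into $(-\overline{\lambda})^{-is}$, the change of variable $\mu=\overline{\lambda}$ reconstructs precisely the Dunford integral for $(I+A_{p'})^{-is}$, whence
\begin{equation*}
\langle (I+B_{p})^{is}\boldsymbol{f},\,\boldsymbol{F}\rangle_{\Omega}\,=\,\langle \boldsymbol{f},\,(I+A_{p'})^{-is}\,P\boldsymbol{F}\rangle_{\Omega}.
\end{equation*}
Taking the supremum over $\boldsymbol{F}$ and recalling that the norm on $[\boldsymbol{H}^{p'}_{0}(\mathrm{div},\Omega)]'_{\sigma,\tau}$ is the one induced by this anti-duality gives
\begin{equation*}
\Vert (I+B_{p})^{is}\Vert_{\mathcal{L}([\boldsymbol{H}^{p'}_{0}(\mathrm{div},\Omega)]'_{\sigma,\tau})}\,\leq\,\Vert (I+A_{p'})^{-is}\Vert_{\mathcal{L}(\boldsymbol{L}^{p'}_{\sigma,\tau}(\Omega))},
\end{equation*}
and Theorem \ref{pureimg1+lap} with $p'$ in place of $p$ bounds the right-hand side by $M\,e^{|s|\,\theta_{0}}$, which is \eqref{pureimphdiv}. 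The estimate \eqref{pureimptp} is obtained verbatim, replacing Theorem \ref{weakexistenceth} by Theorem \ref{veryweakexistth} and the pair $([\boldsymbol{H}^{p'}_{0}(\mathrm{div},\Omega)]',\boldsymbol{H}^{p'}_{0}(\mathrm{div},\Omega))$ by $([\boldsymbol{T}^{p'}(\Omega)]',\boldsymbol{T}^{p'}(\Omega))$.

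The main obstacle I anticipate is the rigorous justification of passing the duality through the Dunford integral: one must verify convergence of the contour integral in the dual-space norm, legitimacy of exchanging the integral with the anti-duality pairing, and the exact bookkeeping of the conjugation of $\lambda$ and of the branch of $(-\lambda)^{is}$ so that the adjoint power emerges as $(I+A_{p'})^{-is}$ (which, the bound being even in $s$, is harmless for the final estimate). The use of the invertible shifts $I+B_{p}$ and $I+C_{p}$, rather than $B_{p}$ and $C_{p}$ themselves, is exactly what circumvents the non-injectivity of the Stokes operator on a non simply connected domain and places us in the setting where \eqref{forintimpur} applies on the whole space.
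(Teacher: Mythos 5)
Your argument is sound in outline but follows a genuinely different route from the paper. The paper's proof restricts $(I+B_{p})^{is}$ to the dense subspace $\boldsymbol{L}^{p}_{\sigma,\tau}(\Omega)$, where it coincides with $(I+A_{p})^{is}$, uses the continuous embedding of $\boldsymbol{L}^{p}(\Omega)$ into $[\boldsymbol{H}^{p'}_{0}(\mathrm{div},\Omega)]'$ together with Theorem \ref{pureimg1+lap} to obtain a bound in terms of $\Vert\boldsymbol{f}\Vert_{\boldsymbol{L}^{p}(\Omega)}$, and then appeals to the density of $\boldsymbol{\mathcal{D}}_{\sigma}(\Omega)$ in $[\boldsymbol{H}^{p'}_{0}(\mathrm{div},\Omega)]'_{\sigma,\tau}$ to extend. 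You instead identify $I+B_{p}$ as the adjoint of $I+A_{p'}$ through the anti-duality with $\boldsymbol{H}^{p'}_{0}(\mathrm{div},\Omega)$ --- the resolvent identity you quote is indeed what the computation in the proof of Theorem \ref{weakexistenceth} establishes --- and transfer the bound of Theorem \ref{pureimg1+lap}, applied with $p'$ in place of $p$, by equality of the norms of an operator and its adjoint. Your route has a real advantage: it delivers the estimate directly in the dual norm $\Vert\boldsymbol{f}\Vert_{[\boldsymbol{H}^{p'}_{0}(\mathrm{div},\Omega)]'}$, whereas the paper's displayed inequality controls $(I+B_{p})^{is}\boldsymbol{f}$ only by the stronger norm $\Vert\boldsymbol{f}\Vert_{\boldsymbol{L}^{p}(\Omega)}$, so that its density-extension step requires an additional justification not visible in the text. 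The one step you should not carry out as described is the manipulation of the Dunford integral \eqref{forintimpur} at $z=is$: that integral does not converge absolutely for purely imaginary exponents, so ``interchanging the integral with the anti-duality'' is not available verbatim. The clean fix is to establish the adjoint relation for $\mathcal{A}^{z}$ with $\mathrm{Re}\,z<0$, where \eqref{forintimpur} does converge, and pass to the boundary, or simply to invoke Komatsu's theorem that for a densely defined non-negative operator with dense range on a reflexive space one has $(\mathcal{A}^{z})^{*}=(\mathcal{A}^{*})^{\overline{z}}$ --- the same fact the paper uses when writing $((A'_{p})^{\frac{1}{2}})^{*}=(A'_{p'})^{\frac{1}{2}}$ in the proof of Theorem \ref{DA1/2}. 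Finally, keep the harmless constant coming from the Helmholtz projection in your last displayed inequality, since $\Vert P\boldsymbol{F}\Vert_{\boldsymbol{L}^{p'}(\Omega)}\leq C\,\Vert\boldsymbol{F}\Vert_{\boldsymbol{H}^{p'}_{0}(\mathrm{div},\Omega)}$ holds with a constant that need not be one.
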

\begin{proof}
We will prove estimate \eqref{pureimphdiv}, estimate \eqref{pureimptp} follows in the same way .  Consider the operator $B_{p}$ defined in \eqref{b2} and let $\boldsymbol{f}\in\boldsymbol{L}^{p}_{\sigma,\tau}(\Omega)$. Notice that
\begin{eqnarray*}
\Vert(I+B_{p})^{i\,s}\boldsymbol{f}\Vert_{[\boldsymbol{H}^{p'}_{0}(\mathrm{div},\Omega)]'}=\Vert(I+A_{p})^{i\,s}\boldsymbol{f}\Vert_{[\boldsymbol{H}^{p'}_{0}(\mathrm{div},\Omega)]'}&\leq & \Vert(I+A_{p})^{i\,s}\boldsymbol{f}\Vert_{\boldsymbol{L}^{p}(\Omega)}\\
&\leq & C\,e^{\vert s\vert\,\theta_{0}}\Vert\boldsymbol{f}\Vert_{\boldsymbol{L}^{p}(\Omega)}.
\end{eqnarray*}
This means that for all $s\in\mathbb{R},\,$  the operator $(I+B_{p})^{i\,s}$ is bounded from $\boldsymbol{L}^{p}_{\sigma,\tau}(\Omega)$ into $[\boldsymbol{H}^{p'}_{0}(\mathrm{div},\Omega)]'_{\sigma,\tau}$. Next, observe that 
$$\boldsymbol{\mathcal{D}}_{\sigma}(\Omega)\subset\boldsymbol{L}^{p}_{\sigma,\tau}(\Omega)\subset[\boldsymbol{H}^{p'}_{0}(\mathrm{div},\Omega)]'_{\sigma,\tau}.$$
As a result, using the density of $\boldsymbol{\mathcal{D}}_{\sigma}(\Omega)$ in $[\boldsymbol{H}^{p'}_{0}(\mathrm{div},\Omega)]'_{\sigma,\tau}$ (see Proposition \ref{denshpdiv}) and the Hahn-Banach theorem we can extend $(I+B_{p})^{i\,s}$ to a bounded linear operator on $[\boldsymbol{H}^{p'}_{0}(\mathrm{div},\Omega)]'_{\sigma,\tau}$ and we deduce deduce estimate \eqref{pureimphdiv}.
\end{proof}

We consider now the Stokes operators with flux  condition $A'_{p}$, $B'_{p}$, $C'_p$ on $\boldsymbol{X_{p}}$, $\boldsymbol{Y_{p}}$, $\boldsymbol{Z_{p}}$ respectively. Using that the operator these operators are densely defined and invertible with bounded inverse, we prove that their pure imaginary powers are bounded in  $\boldsymbol{X_{p}}$, $\boldsymbol{Y_{p}}$, $\boldsymbol{Z_{p}}$ respectively. To this end, we first show the following auxiliary Proposition for whose proof we use  again the results of \cite{Geissert}. 

\begin{prop}\label{Lapimpowerproposition}
Suppose that $\Omega$ is strictly star shaped with respect to one of its points and let $1<p<\infty$.  There exist an angle $\theta_{0}$ and a constant $M>0$ such that, for all $s\in  \mathbb{R}$:
\begin{eqnarray}
&&\Vert(A'_{p})^{i\,s}\Vert_{\mathcal{L}(\boldsymbol{X}_{p})}+\Vert(B'_{p})^{i\,s}\Vert_{\mathcal{L}(\boldsymbol{Y}_{p})}+\Vert(C'_{p})^{i\,s}\Vert_{\mathcal{L}(\boldsymbol{Z}_{p})}\,\leq\,M\,e^{\vert s\vert\,\theta_{0}}.\label{estimpurlapproposition}\\
&&\Big\Vert\Big(\lambda\,I-\Delta_{M}\Big)^{i\,s}\Big\Vert_{\mathcal{L}(\boldsymbol{L}^{p}(\Omega))}\,\leq\,M\,e^{\vert s\vert\,\theta_{0}},
\,\,\,\forall \lambda >0. \label{estnuDeltaM*}
\end{eqnarray}
\end{prop}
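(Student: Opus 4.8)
The plan is to prove the auxiliary uniform estimate \eqref{estnuDeltaM*} first, and then to obtain \eqref{estimpurlapproposition} from it by restricting to the invariant subspaces $\boldsymbol{X}_p,\boldsymbol{Y}_p,\boldsymbol{Z}_p$ and letting the shift parameter tend to zero, following \cite[Theorem A1]{GiGa4}.

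To establish \eqref{estnuDeltaM*} I would use the dilation invariance of the Hodge Laplacian, which is exactly where strict star-shapedness enters. Assume $\Omega$ is strictly star-shaped with respect to the origin, and for $\mu>0$ denote by $\Delta_M^{\mu\Omega}$ the Hodge Laplacian on the dilated domain $\mu\Omega$, carrying the same homogeneous boundary conditions $\boldsymbol{u}\cdot\boldsymbol{n}=0$, $\boldsymbol{\mathrm{curl}}\,\boldsymbol{u}\times\boldsymbol{n}=\boldsymbol{0}$. The map $(U_\mu\boldsymbol{u})(x)=\mu^{-3/p}\boldsymbol{u}(x/\mu)$ is an isometry of $\boldsymbol{L}^p(\Omega)$ onto $\boldsymbol{L}^p(\mu\Omega)$, and since $\Delta(U_\mu\boldsymbol{u})=\mu^{-2}U_\mu(\Delta\boldsymbol{u})$ while the boundary conditions are preserved, one has the intertwining $\lambda I-\Delta_M^\Omega=\lambda\,U_{\lambda^{1/2}}^{-1}\bigl(I-\Delta_M^{\lambda^{1/2}\Omega}\bigr)U_{\lambda^{1/2}}$. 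Because $|\lambda^{is}|=1$ and $U_{\lambda^{1/2}}$ is an $\boldsymbol{L}^p$-isometry, the Dunford representation \eqref{forintimpur} of the imaginary power gives
\[\bigl\|(\lambda I-\Delta_M^\Omega)^{is}\bigr\|_{\mathcal{L}(\boldsymbol{L}^p(\Omega))}=\bigl\|(I-\Delta_M^{\lambda^{1/2}\Omega})^{is}\bigr\|_{\mathcal{L}(\boldsymbol{L}^p(\lambda^{1/2}\Omega))}.\]
By Theorem 3.1 and Remark 3.2 of \cite{Geissert}, $I-\Delta_M$ has a bounded $\mathcal{H}^\infty$-calculus, and hence bounded imaginary powers, on each dilated domain. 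Since $\{\lambda^{1/2}\Omega\}_{\lambda>0}$ is the nested family of all dilations of one strictly star-shaped domain, the resulting bound $M e^{|s|\theta_0}$ can be taken independent of $\lambda$, which yields \eqref{estnuDeltaM*}. I expect this $\lambda$-uniformity to be the main obstacle: one must check that the $\mathcal{H}^\infty$-angle and constant furnished by \cite{Geissert} depend on the geometry only through quantities that remain controlled along the dilation family, and strict star-shapedness is what makes this possible.

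With \eqref{estnuDeltaM*} in hand I would transfer the bound to the flux operators. By \eqref{DeltaMDA'p}--\eqref{relationDeltaMA'p} the resolvent of $\Delta_M$ leaves $\boldsymbol{X}_p$ invariant and its restriction there is $-A'_p$; feeding this invariance into the Dunford integral \eqref{forintimpur} shows that $(\lambda I+A'_p)^{is}$ is precisely the restriction of $(\lambda I-\Delta_M)^{is}$ to $\boldsymbol{X}_p\subset\boldsymbol{L}^p(\Omega)$, so \eqref{estnuDeltaM*} gives $\|(\lambda I+A'_p)^{is}\|_{\mathcal{L}(\boldsymbol{X}_p)}\le M e^{|s|\theta_0}$ uniformly in $\lambda>0$. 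For $B'_p$ and $C'_p$ I would argue as in Proposition \ref{PureimgI+Bp+Cp}: the operator $(\lambda I+B'_p)^{is}$ coincides with $(\lambda I+A'_p)^{is}$ on $\boldsymbol{X}_p$ and maps it boundedly into $\boldsymbol{Y}_p$ (the $\boldsymbol{Y}_p$-norm being dominated by the $\boldsymbol{L}^p$-norm), and then, using the density of $\boldsymbol{\mathcal{D}}_\sigma(\Omega)$ in $\boldsymbol{Y}_p$ from Proposition \ref{denshpdiv} together with Hahn--Banach, it extends to a bounded operator on $\boldsymbol{Y}_p$ with the same uniform bound; the case of $C'_p$ on $\boldsymbol{Z}_p$ is identical, using Proposition \ref{densttp}.

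Finally I would let $\lambda\to 0^+$. Each of $A'_p$, $B'_p$, $C'_p$ is densely defined and invertible with bounded inverse on $\boldsymbol{X}_p$, $\boldsymbol{Y}_p$, $\boldsymbol{Z}_p$ respectively, because the flux condition \eqref{condition2} removes the kernel $\boldsymbol{K}_\tau(\Omega)$; thus $0$ lies in their resolvent sets. The abstract limiting result \cite[Theorem A1]{GiGa4} then applies: from the uniform-in-$\lambda$ bounds just obtained and the invertibility of the operator, the strong limit $(A'_p)^{is}=\lim_{\lambda\to 0^+}(\lambda I+A'_p)^{is}$ exists and inherits the bound $M e^{|s|\theta_0}$, and likewise for $B'_p$ and $C'_p$. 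Summing the three estimates gives \eqref{estimpurlapproposition}. Once the $\lambda$-uniformity in \eqref{estnuDeltaM*} is secured, the remaining steps are routine restriction, duality/density, and the standard Giga--Sohr passage to the limit.
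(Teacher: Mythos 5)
Your proposal follows essentially the same route as the paper: the uniform shifted estimate \eqref{estnuDeltaM*} is obtained from the $\mathcal{H}^{\infty}$-calculus result of \cite{Geissert} via the dilation identity made available by strict star-shapedness, the bound is transferred to $A'_{p}$, $B'_{p}$, $C'_{p}$ by the invariance \eqref{DeltaMDA'p}--\eqref{relationDeltaMA'p} together with the restriction/density argument of Proposition \ref{PureimgI+Bp+Cp}, and the shift is removed by the Giga--Sohr limiting lemma using the density of the domains and ranges (equivalently, bounded invertibility). The only difference is the direction in which you read the scaling identity --- you place the unit shift on the dilated domain $\lambda^{1/2}\Omega$ and must therefore track the $\lambda$-uniformity of the constant from \cite{Geissert} along the dilation family (which you correctly flag as the delicate point), whereas the paper conjugates so that \cite{Geissert} is invoked only on the fixed domain $\Omega$ and the uniformity comes for free from $|\mu^{2is}|=1$ and the ($L^{p}$-normalized) isometry of the scaling map.
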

\begin{rmk}
\rm{
It follows from the results in  \cite{Geissert} that the imaginary powers of $(\lambda\,I-\Delta_{M})$ are bounded in $\mathcal{L}(\boldsymbol{L}^{p}(\Omega))$. We   explicitly write the estimate (\ref{estnuDeltaM*}) in the statement of Proposition \ref{Lapimpowerproposition} in order to emphasize that the constants $M$ and $\theta_0$ are independent of $\lambda >0$.}
\end{rmk}
\begin{proof} Estimate  \eqref{estimpurlapproposition} is proved  by showing that it holds separately for each of the three terms in its left hand side. Since the proof is the same for the three terms, we only write the details for  $\Vert(A'_{p})^{i\,s}\Vert_{\mathcal{L}(\boldsymbol{X}_{p})}$.

The first step of the proof is to show the existence of  constants $C>0$ and $\theta_0\in (0, \pi /2)$ such that:
\begin{equation}\label{estnuDeltaMBIS}
\Big\Vert\Big(\frac{1}{\mu^{2}}\,I-\Delta_{M}\Big)^{i\,s}\Big\Vert_{\mathcal{L}(\boldsymbol{L}^{p}(\Omega))}\,\leq\,C\,e^{\vert s\vert\,\theta_{0}}
\end{equation}
and
\begin{equation}\label{estnu*}
\Big\Vert\Big(\frac{1}{\mu^{2}}\,I+A'_{p}\Big)^{i\,s}\Big\Vert_{\mathcal{L}(\boldsymbol{X}_{p})}\,\leq\,C\,e^{\vert s\vert\,\theta_{0}},
\end{equation}
for all  $\mu>0$. \\
Since $\Omega$ is strictly star shaped with respect to one of its points, then after translation in $\mathbb{R}^{3}$, we can suppose that this point is $0$. This amounts to say that 
\begin{equation*}
\forall\,\mu>1,\qquad\mu\,\overline{\Omega}\subset\Omega,\,\,\,\forall\,0\leq\mu<1\qquad\textrm{and}\,\,\,\,\overline{\Omega}\subset\mu\,\Omega.
\end{equation*} 
Here we take $\mu>1$ and we set $\Omega_{\mu}=\mu\,\Omega$. 

\noindent The proof is based on the scaling transformation 
\begin{equation}\label{SMU}
\forall\,x\in\Omega_{\mu},\qquad(S_{\mu}\boldsymbol{f})(x)=\boldsymbol{f}(x/\mu),\qquad\boldsymbol{f}\in\boldsymbol{L}^{p}(\Omega).
\end{equation}
As in the proof of Theorem A1 in \cite{GiGa4} we can easily verify that 
\begin{equation*}
-\mu^{2}\Delta_{M}\,=\,S_{\mu}(-\Delta_{M})\,S_{\mu}^{-1},\qquad I-\mu^{2}\Delta_{M}\,=\,S_{\mu}(I-\Delta_{M})\,S^{-1}_{\mu}.
\end{equation*}
We recall that the operator $\Delta_{M}$ is defined by \eqref{DomainHodgeLaplacian}-\eqref{HodgeLaplacian}.

\noindent Similarly we can also verify that
\begin{equation*}
\mu^{2}\,A'_{p}\,=\,S_{\mu}\,A'_{p}\,S_{\mu}^{-1},\qquad I+\mu^{2}A'_{p}\,=\,S_{\mu}(I+A'_{p})\,S^{-1}_{\mu}.
\end{equation*}
As a result for all $z\in\mathbb{C}$ using \eqref{forintimpur} we have,
\begin{equation*}
(I-\mu^{2}\Delta_{M})^{z}\,=\,S_{\mu}
(I-\Delta_{M})^{z}S^{-1}_{\mu} \quad  \mathrm{and} \quad (I+\mu^{2}\,A'_{p})^{z}\,=\,S_{\mu}
(I+A'_{p})^{z}S^{-1}_{\mu}.
\end{equation*}
Thus for all $z\in\mathbb{C}$ we have
$$\Vert(I-\mu^{2}\Delta_{M})^{z}\Vert_{\mathcal{L}(\boldsymbol{L}^{p}(\Omega))}=\Vert S_{\mu}
(I-\Delta_{M})^{z}S^{-1}_{\mu}\Vert_{\mathcal{L}(\boldsymbol{L}^{p}(\Omega))}\leq\Vert(I-\Delta_{M})^{z}\Vert_{\mathcal{L}(\boldsymbol{L}^{p}(\Omega))}$$
and
$$\Vert(I+\mu^{2}\,A'_{p})^{z}\Vert_{\mathcal{L}(\boldsymbol{X}_{p})}=\Vert S_{\mu}
(I+A'_{p})^{z}S^{-1}_{\mu}\Vert_{\mathcal{L}(\boldsymbol{X}_{p})}\leq\Vert(I+A'_{p})^{z}\Vert_{\mathcal{L}(\boldsymbol{X}_{p})}.$$
Using Theorem 3.1 and Remark 3.2 in  \cite{Geissert}, respectively to Theorem \ref{pureimg1+lap}, we deduce that there exist $0<\theta_{1},\,\theta_{2} <\pi/2$ and constants $M_{1},\,M_{2}>0$ such that : 
\begin{equation}\label{complexpowerDeltaM}
\forall\,s\in\mathbb{R},\quad\Vert(I-\mu^{2}\Delta_{M})^{i\,s}\Vert_{\mathcal{L}(\boldsymbol{L}^{p}(\Omega))}\,\leq\,M_{1}\,e^{\vert s\vert\,\theta_{1}},
\end{equation}
and
\begin{equation}\label{complexpower4}
\forall\,s\in\mathbb{R},\quad\Vert(I+\mu^{2}\,A'_{p})^{i\,s}\Vert_{\mathcal{L}(\boldsymbol{X}_{p})}\,\leq\,M_{2}\,e^{\vert s\vert\,\theta_{2}},
\end{equation}
where the constants $M_{1}$ in \eqref{complexpowerDeltaM} and $M_{2}$ in \eqref{complexpower4} are independents of $\mu$. Since
\begin{equation*}
\Big(\frac{1}{\mu^{2}}\,I-\Delta_{M}\Big)^{i\,s}\,=\,\frac{1}{\mu^{2\,i\,s}}\,(I-\mu^{2}\Delta_{M})^{i\,s}\quad\mathrm{and}\quad\Big(\frac{1}{\mu^{2}}\,I+A'_{p}\Big)^{i\,s}\,=\,\frac{1}{\mu^{2\,i\,s}}\,(I+\mu^{2}A'_{p})^{i\,s}
\end{equation*}
\eqref{estnuDeltaMBIS} and  \eqref{estnu*} follow.\\
Of course,  \eqref{estnuDeltaM*} follows from  \eqref{estnuDeltaMBIS}.  On the other hand, since by  Proposition 4.3 in \cite{Am3} and Proposition \ref{dd2} above,  the range and the domains  of $A'_p$, $B'_p$ and $C'_p$ are dense in $\boldsymbol{X}_{p}$, $\boldsymbol{Y}_{p}$, $\boldsymbol{Z}_{p}$ respectively, we may apply Lemma A2 of \cite{GiGa4} and obtain that, for all $\boldsymbol{f}\in\mathbf{D}(A'_{p})$
\begin{equation}\label{estnu**}
\Vert(A'_{p})^{i\,s}\boldsymbol{f}\Vert_{\boldsymbol{L}^{p}(\Omega)}\,=\,\lim _{\mu\rightarrow+\infty}\Big\Vert\Big(\frac{1}{\mu^{2}}\,I+A'_{p}\Big)^{i\,s}\boldsymbol{f}\Big\Vert_{\boldsymbol{L}^{p}(\Omega)}.
\end{equation}
As a result we deduce from  \eqref{estnu*} and \eqref{estnu**} that \eqref{estimpurlapproposition} holds for all $\boldsymbol{f}\in\mathbf{D}(A'_{p})$. By the density of $\mathbf{D}(A'_{p})$ in $\boldsymbol{X}_{p}$ (see Proposition \ref{dd2})  it then follows that  \eqref{estimpurlapproposition} holds for all  $\boldsymbol{f}$ in $\boldsymbol{X}_{p}$. 
\end{proof}
\begin{rmk}
\rm{\textbf{(i)} Notice that  estimate  \eqref{estnu*} is also true if we replace  $A'_{p}$  by $A_p$. However, since  the range of $A_{p}$ is not dense in  $\boldsymbol{L}^{p}_{\sigma,\tau}(\Omega)$ it is not possible to apply Lemma A2 of \cite{GiGa4} to pass to the limit as $\mu \to \infty$.\\

\noindent\textbf{(ii)} When $\Omega$ is simply connected the operator $A_{p}$ coincides with the operator $A'_{p}$ and it is invertible with bounded inverse in $\boldsymbol{L}^{p}_{\sigma,\tau}(\Omega)$. As a result we recover the result of \cite{Geissert} and we deduce the boundedness of the pure imaginary powers of $A_{p}$ in $\boldsymbol{L}^{p}_{\sigma,\tau}(\Omega)$.
}
\end{rmk}

For a general domain $\Omega\,$ of Class $C^{2,1}$, not necessarily strictly star shaped with respect to one of its points, we use that (see \cite{Ber} for instance), a bounded Lipschitz-Continuous open set is the union of a finite number of star-shaped, Lipschitz-continuous open sets. The idea is then to apply the argument above to each of these sets in order to derive the desired result on the entire domain. However, the divergence-free condition of a function $\boldsymbol{f}\in\boldsymbol{L}^{p}_{\sigma,\tau}(\Omega)$ is not preserved under the cut-off procedure and this process is non-trivial. This is done in the following Theorem.
\begin{theo}\label{Lapimpower}
There exist an angle $0<\theta_{0}<\pi/2$  and a constant $M>0$ such that for all $s\in\mathbb{R}$ we have
\begin{eqnarray}\label{estimpurlap}
\Vert(A'_{p})^{i\,s}\Vert_{\mathcal{L}(\boldsymbol{X}_{p})}+\Vert(B'_{p})^{i\,s}\Vert_{\mathcal{L}(\boldsymbol{Y}_{p})}
+\Vert(C'_{p})^{i\,s}\Vert_{\mathcal{L}(\boldsymbol{Z}_{p})}\,\leq\,M\,e^{\vert s\vert\,\theta_{0}}
\end{eqnarray}
\end{theo}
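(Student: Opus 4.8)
The plan is to reduce the whole statement to a single uniform-in-$\lambda$ bound for the Hodge Laplacian $\Delta_{M}$ on the full space $\boldsymbol{L}^{p}(\Omega)$, and then to establish that bound for a general $C^{2,1}$ domain by localising to star-shaped pieces. First I would observe that, thanks to the resolvent-invariance relations \eqref{relationDeltaMAp}--\eqref{relationDeltaMA'p}, the imaginary powers of the flux operators are nothing but restrictions of those of $\Delta_{M}$: for $\boldsymbol{f}\in\boldsymbol{X}_{p}$ the Dunford integral \eqref{forintimpur} gives $(\tfrac{1}{\mu^{2}}I+A'_{p})^{is}\boldsymbol{f}=(\tfrac{1}{\mu^{2}}I-\Delta_{M})^{is}\boldsymbol{f}$, because the two resolvents agree on the invariant subspace $\boldsymbol{X}_{p}$. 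Hence $\Vert(\tfrac{1}{\mu^{2}}I+A'_{p})^{is}\Vert_{\mathcal{L}(\boldsymbol{X}_{p})}\leq\Vert(\tfrac{1}{\mu^{2}}I-\Delta_{M})^{is}\Vert_{\mathcal{L}(\boldsymbol{L}^{p}(\Omega))}$ for every $\mu>0$. Granting \eqref{estnuDeltaM*} for general $\Omega$, this gives the bound $M\,e^{|s|\theta_{0}}$ uniformly in $\mu$; since the domain and the range of $A'_{p}$ are dense in $\boldsymbol{X}_{p}$ (Proposition \ref{dd2} and Proposition~4.3 of \cite{Am3}), Lemma~A2 of \cite{GiGa4} applies exactly as in \eqref{estnu**}, and letting $\mu\to+\infty$ yields the estimate for $(A'_{p})^{is}$. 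The bounds for $(B'_{p})^{is}$ and $(C'_{p})^{is}$ then follow from the one for $(A'_{p})^{is}$ by the density/duality transfer already used in the proof of Proposition \ref{PureimgI+Bp+Cp}: the operators coincide on $\boldsymbol{\mathcal{D}}_{\sigma}(\Omega)$, one bounds $(A'_{p})^{is}\boldsymbol{f}$ in the weaker norms of $\boldsymbol{Y}_{p}$ and $\boldsymbol{Z}_{p}$, and extends by density. In this way the whole theorem is reduced to proving \eqref{estnuDeltaM*} with constants $M,\theta_{0}$ independent of $\lambda$ for an arbitrary bounded domain of class $C^{2,1}$.

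Next I would prove this reduced statement by localisation. Using \cite{Ber} I would cover $\overline{\Omega}$ by finitely many open sets $\Omega_{1},\dots,\Omega_{N}$, each strictly star-shaped (and, after the usual smoothing, of class $C^{2,1}$), and fix a subordinate partition of unity $\{\varphi_{k}\}_{1\le k\le N}$ with $\sum_{k}\varphi_{k}=1$ on $\overline{\Omega}$ and $\mathrm{supp}\,\varphi_{k}$ meeting $\partial\Omega_{k}$ only along $\partial\Omega$. On each piece Proposition \ref{Lapimpowerproposition}, applied to $\Omega_{k}$, already provides the bound $\Vert(\lambda I-\Delta_{M}^{(k)})^{is}\Vert_{\mathcal{L}(\boldsymbol{L}^{p}(\Omega_{k}))}\leq M_{k}\,e^{|s|\theta_{k}}$ with $M_{k},\theta_{k}$ independent of $\lambda$. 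To patch these local bounds I would insert the partition of unity inside the integral \eqref{forintimpur} for $(\lambda I-\Delta_{M})^{is}$ and commute each $\varphi_{k}$ past the resolvents $R(z,\Delta_{M})$. The commutators $[\Delta,\varphi_{k}]=2\,\nabla\varphi_{k}\cdot\nabla+(\Delta\varphi_{k})$ are of lower order, and the cut-off $\varphi_{k}\boldsymbol{u}$ fails the boundary condition $\curl(\varphi_{k}\boldsymbol{u})\times\boldsymbol{n}=\boldsymbol{0}$ only through the term $(\nabla\varphi_{k}\times\boldsymbol{u})\times\boldsymbol{n}$; both produce remainders that, thanks to the uniform resolvent estimates and the analyticity of $\Delta_{M}$, can be estimated uniformly in $\lambda$ along $\Gamma_{\theta}$ and absorbed after integration, at the cost of slightly enlarging $\theta_{0}$.

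The main obstacle is precisely this patching step, because the imaginary power is a nonlocal operator and the local bounds cannot simply be summed: one must control, uniformly in both $\lambda$ and $s$, the commutators between the cut-offs and the resolvents appearing in \eqref{forintimpur}, keeping the exponential factor $e^{|s|\theta_{0}}$ under control when integrating over the contour. A further genuinely non-trivial point arises if, instead of passing through $\Delta_{M}$, one localises the solenoidal operators directly: the cut-off $\varphi_{k}\boldsymbol{f}$ of a function $\boldsymbol{f}\in\boldsymbol{L}^{p}_{\sigma,\tau}(\Omega)$ is no longer divergence free, since $\mathrm{div}(\varphi_{k}\boldsymbol{f})=\nabla\varphi_{k}\cdot\boldsymbol{f}$, so one must correct it by a Bogovskii-type field $\boldsymbol{w}_{k}$ supported in $\Omega_{k}$ with $\mathrm{div}\,\boldsymbol{w}_{k}=\nabla\varphi_{k}\cdot\boldsymbol{f}$ and show that these correctors contribute only uniformly controlled error terms. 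Routing the argument through $\Delta_{M}$ on the full space $\boldsymbol{L}^{p}(\Omega)$, and recovering the flux operators only afterwards through the resolvent-invariance relations, is what allows this last difficulty to be circumvented.
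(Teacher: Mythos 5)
Your overall architecture coincides with the paper's: reduce everything to the Hodge Laplacian $\Delta_{M}$ through \eqref{DeltaMDA'p}--\eqref{relationDeltaMA'p}, obtain a bound for $\big(\tfrac{1}{\mu^{2}}I+A'_{p}\big)^{is}$ that is uniform in $\mu$ by covering $\Omega$ with finitely many star-shaped pieces (via \cite{Ber}) and invoking Proposition \ref{Lapimpowerproposition} on each, then let $\mu\to+\infty$ using Lemma A2 of \cite{GiGa4} exactly as in \eqref{estnu**}, and conclude by density of $\mathbf{D}(A'_{p})$ in $\boldsymbol{X}_{p}$. Where you genuinely diverge is the patching step. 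The paper never commutes cut-offs with resolvents: it puts the partition of unity on the \emph{data}, writing $\boldsymbol{f}=\sum_{j}\varphi_{j}\boldsymbol{f}$, applies $\big(\tfrac{1}{\mu^{2}}I-\Delta_{M}\big)^{is}$ to each piece by linearity, identifies the resulting $\boldsymbol{L}^{p}(\Omega)$-norm with the norm over $\Omega_{j}$ and applies \eqref{estnuDeltaM*} there, so no commutator $[\Delta,\varphi_{j}]$ and no boundary defect $(\nabla\varphi_{j}\times\boldsymbol{u})\times\boldsymbol{n}$ ever enters the argument. Your alternative --- inserting the cut-offs inside the Dunford integral \eqref{forintimpur} and controlling the commutators with the resolvents uniformly in $\lambda$ and $s$ along $\Gamma_{\theta}$ --- confronts the nonlocality of the imaginary powers head-on, but you leave precisely that estimate as an acknowledged obstacle rather than proving it; as written, the exponential factor $e^{|s|\theta_{0}}$ is asserted to survive the contour integration of the remainders, not shown to. So in its present form your localisation is not closed, whereas the paper's (admittedly much terser) version simply never generates those remainder terms. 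Two smaller points: your observation that routing through $\Delta_{M}$ avoids the Bogovskii correction of the non-solenoidal cut-offs is exactly the remark the paper makes before Theorem \ref{Lapimpower}; and for $B'_{p}$ and $C'_{p}$, where the paper only asserts that ``the proof is the same'', your explicit transfer through the weaker norms of $\boldsymbol{Y}_{p}$ and $\boldsymbol{Z}_{p}$ followed by density, in the style of Proposition \ref{PureimgI+Bp+Cp}, is a concrete and correct way of realising that claim.
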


\begin{proof}
As in the proof of Proposition \ref{Lapimpowerproposition},  we  prove Theorem  \ref{Lapimpower} by showing  that estimate \eqref{estimpurlap} holds separately for each term in the left hand side. Since the proof is the same for the three terms, we only write the details for  $\Vert(A'_{p})^{i\,s}\Vert_{\mathcal{L}(\boldsymbol{X}_{p})}$.

Let $(\Theta_{j})_{j\in J}$ be an open covering of $\Omega$ by a finite number of star-shaped open sets and let us consider a partition of unity $(\varphi_{j})_{j\in J}$ subordinated to the covering $(\Omega_{j})_{j\in J}$ where for all $j\in J,\,$ $\Omega_{j}=\Theta_{j}\cap\Omega$. This means that
\begin{equation*}
\forall j\in J,\qquad \mathrm{Supp}\varphi_{j}\subset\Omega_{j}
\end{equation*}
and
\begin{equation*}
\sum_{j\in J}\varphi_{j}=1,\qquad\varphi_{j}\in\mathcal{D}(\Omega_{j}).
\end{equation*}
Let $\boldsymbol{f}\in\boldsymbol{X}_{p}$, then $\boldsymbol{f}$ can be written as 
\begin{equation*}
\boldsymbol{f}=\sum_{j\in J}\boldsymbol{f}_{j}, \qquad\forall j\in J,\quad\boldsymbol{f}_{j}=\varphi_{j}\boldsymbol{f}.
\end{equation*}
Notice that for all $j\in J,\,$ $\boldsymbol{f}_{j}$ is not necessarily a divergence free function.

Let $\mu>0\,$ and let $\,s\in\mathbb{R}.\,$  From \eqref{DeltaMDA'p}-\eqref{relationDeltaMA'p} we know that
\begin{equation*}
\Big(\frac{1}{\mu^{2}}\,I+A'_{p}\Big)^{i s}\boldsymbol{f}\,=\,\Big(\frac{1}{\mu^{2}}\,I-\Delta_{M}\Big)^{i s}\boldsymbol{f}\,=\,\sum_{j\in J}\Big(\frac{1}{\mu^{2}}\,I-\Delta_{M}\Big)^{i s}\boldsymbol{f}_{j}.
\end{equation*}
As a result, one has
\begin{eqnarray}
\Big\Vert\Big(\frac{1}{\mu^{2}}\,I+A'_{p}\Big)^{i s}\boldsymbol{f}\Big\Vert_{\boldsymbol{L}^{p}(\Omega)}&\leq&\sum_{j\in J}\Big\Vert\Big(\frac{1}{\mu^{2}}\,I-\Delta_{M}\Big)^{i s}\boldsymbol{f}_{j}\Big\Vert_{\boldsymbol{L}^{p}(\Omega)}\nonumber\\
&=&\sum_{j\in J}\Big\Vert\Big(\frac{1}{\mu^{2}}\,I-\Delta_{M}\Big)^{i s}\boldsymbol{f}_{j}\Big\Vert_{\boldsymbol{L}^{p}(\Omega_{j})}\nonumber
\end{eqnarray}
Since for all $j\in J,\,$ the domain $\Omega_{j}\,$ is strictly star shaped with respect to one of its points, then using \eqref{estnuDeltaM*} we have
\begin{eqnarray*}
\Big\Vert\Big(\frac{1}{\mu^{2}}\,I+A'_{p}\Big)^{i s}\boldsymbol{f}\Big\Vert_{\boldsymbol{L}^{p}(\Omega)}&\leq&\,e^{\vert s\vert\,\theta_{0}}\sum_{j\in J}\,C_{j}\,\Vert\boldsymbol{f}_{j}\Vert_{\boldsymbol{L}^{p}(\Omega_{j})}\nonumber\\
&\leq& C(\Omega,p)\,e^{\vert s\vert\,\theta_{0}}\Vert\boldsymbol{f}\Vert_{\boldsymbol{L}^{p}(\Omega)}\label{estimI+Apzdomainenonétoilé}
\end{eqnarray*}
with a constant $C(\Omega,p)$ independent of $\mu$ and $\boldsymbol{f}$. As a result one has
\begin{equation*}\label{estimI+Apnonetoile2}
\Big\Vert\Big(\frac{1}{\mu^{2}}\,I+A'_{p}\Big)^{i s}\Big\Vert_{\mathcal{L}(\boldsymbol{X}_{p})}\,\leq\,C(\Omega,p)\,e^{\vert s\vert\,\theta_{0}}.
\end{equation*}

Thus as in the proof of Theorem \ref{pureimg1+lap}, using \cite[Lemma A2]{GiGa4} we deduce that for all $\boldsymbol{f}\in\mathbf{D}(A'_{p})$
\begin{equation}\label{estnu**nonetoile}
\Vert(A'_{p})^{i\,s}\boldsymbol{f}\Vert_{\boldsymbol{L}^{p}(\Omega)}\,=\,\lim _{\mu\rightarrow+\infty}\Big\Vert\Big(\frac{1}{\mu^{2}}\,I+A'_{p}\Big)^{i\,s}\boldsymbol{f}\Big\Vert_{\boldsymbol{L}^{p}(\Omega)}.
\end{equation}
This means that \eqref{estimpurlap} hold for all $\boldsymbol{f}\in\mathbf{D}(A'_{p})$.
Using the density of $\mathbf{D}(A'_{p})$ in $\boldsymbol{X}_{p}$ we deduce our result in $\boldsymbol{X}_{p}$. 
\end{proof}

\begin{rmk}
\rm{We can also prove that there exists $0<\theta_{0}<\pi/2$ and a constant $C>0$ such that for all $s\in\mathbb{R}$
\begin{equation*}\label{pureimpB'p}
\Vert(B'_{p})^{i\,s}\Vert_{\mathcal{L}(\boldsymbol{Y}_{p})}\,\leq\,C(\Omega,p)\,e^{\vert s\vert\,\theta_{0}} 
\end{equation*}
and
\begin{equation*}\label{pureimpC'p}
\Vert(C'_{p})^{i\,s}\Vert_{\mathcal{L}(\boldsymbol{Z}_{p})}\,\leq\,C(\Omega,p)\,e^{\vert s\vert\,\theta_{0}} .
\end{equation*}
We recall that the operator $B'_{p}$ and $C'_{p}$ given by  \eqref{B'p} and \eqref{C'p} respectively are the extensions of the Stokes operator to the spaces $\boldsymbol{Y}_{p}$ (defined by \eqref{Yp}) and  $\boldsymbol{Z}_{p}$ (defined by \eqref{Zp}) respectively and they are invertible with bounded inverses.
}
\end{rmk}

\subsection{Domains of fractional powers.}
\label{Domains of fractional powers of the Stokes operator}
For all $\alpha\in\mathbb{R}$, the map $\boldsymbol{v}\longmapsto\Vert (A'_{p})^{\alpha}\,\boldsymbol{v}\Vert_{\boldsymbol{L}^{p}(\Omega)}$ is a norm on $\mathbf{D}((A'_{p})^{\alpha})$. This is due to the fact that  (cf. \cite[Theorem 1.15.2, part (e)]{Tri}), the operator $A'_{p}$ has a bounded inverse and thus for all $\alpha\in\mathbb{C}^{\ast}$, the operator $(A'_{p})^{\alpha}$ is an isomorphism from $\mathbf{D}((A'_{p})^{\alpha})$ to $\boldsymbol{X}_{p}$.

Consider the space 
\begin{equation}\label{vptflux}
\boldsymbol{V}^{p}_{\sigma,\tau}(\Omega)=\{\boldsymbol{v}\in\boldsymbol{X}^{p}_{\sigma,\tau}(\Omega);\,\langle\boldsymbol{v}\cdot\boldsymbol{n}\,,\,1\rangle_{\Sigma_{j}}=0,\,\,1\leq
j\leq J\},
\end{equation}
with $\boldsymbol{X}^{p}_{\sigma,\tau}(\Omega)$ is defined by \eqref{vpt}. Thanks to the work of \cite{Am3, Am4} we know that, for all $\boldsymbol{v}\in\boldsymbol{V}^{p}_{\sigma,\tau}(\Omega)$ the norm of $\Vert\boldsymbol{v}\Vert_{\boldsymbol{W}^{1,p}(\Omega)}$ is equivalent to the norm $\Vert\boldsymbol{\mathrm{curl}}\,\boldsymbol{u}\Vert_{\boldsymbol{L}^{p}(\Omega)}$.
The following theorem characterizes the domain of $(A'_{p})^{\frac{1}{2}}$.
\begin{theo}\label{DA1/2}
For all $1<p<\infty$,
$\mathbf{D}((A'_{p})^{\frac{1}{2}})\,=\,\boldsymbol{V}^{p}_{\sigma,\tau}(\Omega)$ with equivalent norms. Furthermore, for every $\boldsymbol{u}\in\mathbf{D}((A'_{p})^{\frac{1}{2}})$, the norm $\Vert (A'_{p})^{\frac{1}{2}}\boldsymbol{u}\Vert_{\boldsymbol{L}^{p}(\Omega)}$ is a norm on $\mathbf{D}((A'_{p})^{\frac{1}{2}})$ which is equivalent to the norm $\Vert\boldsymbol{\mathrm{curl}}\,\boldsymbol{u}\Vert_{\boldsymbol{L}^{p}(\Omega)}$. In other words, there exists two constants $C_{1}$ and $C_{2}$ such that for all $\boldsymbol{u}\in\mathbf{D}((A'_{p})^{\frac{1}{2}})$ 
\begin{equation*}
\Vert (A'_{p})^{\frac{1}{2}}\boldsymbol{u}\Vert_{\boldsymbol{L}^{p}(\Omega)}\leq C_{1}\Vert\boldsymbol{\mathrm{curl}}\,\boldsymbol{u}\Vert_{\boldsymbol{L}^{p}(\Omega)}\leq C_{2}\Vert (A'_{p})^{\frac{1}{2}}\boldsymbol{u}\Vert_{\boldsymbol{L}^{p}(\Omega)}.
\end{equation*}
\end{theo}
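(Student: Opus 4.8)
The plan is to reduce the computation of $\mathbf{D}((A'_{p})^{\frac{1}{2}})$ to a complex interpolation problem, and then to an interpolation of Sobolev spaces with boundary conditions. First I would record that $A'_{p}$ is invertible with bounded inverse on $\boldsymbol{X}_{p}$ and, by Theorem~\ref{Lapimpower}, has bounded pure imaginary powers; in particular $\Vert(A'_{p})^{is}\Vert_{\mathcal{L}(\boldsymbol{X}_{p})}$ is uniformly bounded for $\vert s\vert\le\varepsilon$. Hence the hypotheses of Theorem~\ref{domfracpower} are met, and applying it with $\alpha=1$, $\theta=\frac{1}{2}$ gives
\begin{equation*}
\mathbf{D}\big((A'_{p})^{\frac{1}{2}}\big)\,=\,\big[\boldsymbol{X}_{p},\,\mathbf{D}(A'_{p})\big]_{\frac{1}{2}},
\end{equation*}
with equivalence of the graph norm and the interpolation norm. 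Since $(A'_{p})^{\frac{1}{2}}$ is an isomorphism of $\mathbf{D}((A'_{p})^{\frac{1}{2}})$ onto $\boldsymbol{X}_{p}$, it remains only to identify this interpolation space.

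Next I would use that $A'_{p}$ is the part of $-\Delta_{M}$ in $\boldsymbol{X}_{p}$: by \eqref{DeltaMDA'p}--\eqref{relationDeltaMA'p} the subspace $\boldsymbol{X}_{p}$ is invariant under the resolvent of $\Delta_{M}$ and $A'_{p}=-\Delta_{M}$ on $\mathbf{D}(A'_{p})$. Inserting this invariance into the Dunford representation \eqref{forintimpur} shows that the complex powers of $A'_{p}$ are the restrictions to $\boldsymbol{X}_{p}$ of those of $-\Delta_{M}$, whence
\begin{equation*}
\mathbf{D}\big((A'_{p})^{\frac{1}{2}}\big)\,=\,\mathbf{D}\big((-\Delta_{M})^{\frac{1}{2}}\big)\cap\boldsymbol{X}_{p}.
\end{equation*}
To compute the first factor I would again invoke Theorem~\ref{domfracpower}, this time for the shifted operator $\lambda I-\Delta_{M}$ with $\lambda>0$, which is invertible and, by estimate \eqref{estnuDeltaM*}, has bounded imaginary powers with constants independent of the shift; the lemma of Komatsu recalled above identifies $\mathbf{D}((\lambda I-\Delta_{M})^{\frac{1}{2}})$ with $\mathbf{D}((-\Delta_{M})^{\frac{1}{2}})$, so that
\begin{equation*}
\mathbf{D}\big((-\Delta_{M})^{\frac{1}{2}}\big)\,=\,\big[\boldsymbol{L}^{p}(\Omega),\,\mathbf{D}(\Delta_{M})\big]_{\frac{1}{2}}.
\end{equation*}

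The hard part will be this last interpolation: identifying $[\boldsymbol{L}^{p}(\Omega),\mathbf{D}(\Delta_{M})]_{\frac{1}{2}}$, with $\mathbf{D}(\Delta_{M})$ given by \eqref{DomainHodgeLaplacian}, as the space $\{\boldsymbol{u}\in\boldsymbol{W}^{1,p}(\Omega);\ \boldsymbol{u}\cdot\boldsymbol{n}=0\ \text{on}\ \Gamma\}$. One must track which of the two boundary conditions defining $\mathbf{D}(\Delta_{M})$ survives at the interpolation level $\frac{1}{2}$. Appealing to the interpolation theory of Sobolev spaces with boundary conditions (Grisvard, Seeley), the zeroth-order condition $\boldsymbol{u}\cdot\boldsymbol{n}=0$ persists, its trace remaining meaningful in $\boldsymbol{W}^{1,p}(\Omega)$ for $1<p<\infty$, whereas the first-order condition $\curl\,\boldsymbol{u}\times\boldsymbol{n}=\boldsymbol{0}$ drops out, being of order strictly above the regularity $1$ reached by the interpolation. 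This is the step demanding the most care, since the trace $\boldsymbol{u}\cdot\boldsymbol{n}$ must be controlled uniformly along the interpolation scale.

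Finally I would intersect with $\boldsymbol{X}_{p}$: combining the two displays with the definitions \eqref{Xp} and \eqref{vptflux} gives
\begin{equation*}
\mathbf{D}\big((A'_{p})^{\frac{1}{2}}\big)=\{\boldsymbol{u}\in\boldsymbol{W}^{1,p}(\Omega);\ \mathrm{div}\,\boldsymbol{u}=0,\ \boldsymbol{u}\cdot\boldsymbol{n}=0,\ \langle\boldsymbol{u}\cdot\boldsymbol{n},1\rangle_{\Sigma_{j}}=0\}=\boldsymbol{V}^{p}_{\sigma,\tau}(\Omega),
\end{equation*}
with equivalent norms. The norm equivalence $\Vert(A'_{p})^{\frac{1}{2}}\boldsymbol{u}\Vert_{\boldsymbol{L}^{p}(\Omega)}\simeq\Vert\boldsymbol{u}\Vert_{\boldsymbol{W}^{1,p}(\Omega)}$ then follows from the isomorphism property of $(A'_{p})^{\frac{1}{2}}$, and the recalled equivalence $\Vert\boldsymbol{u}\Vert_{\boldsymbol{W}^{1,p}(\Omega)}\simeq\Vert\curl\,\boldsymbol{u}\Vert_{\boldsymbol{L}^{p}(\Omega)}$ on $\boldsymbol{V}^{p}_{\sigma,\tau}(\Omega)$ (from \cite{Am3,Am4}) yields the stated two-sided inequality between $\Vert(A'_{p})^{\frac{1}{2}}\boldsymbol{u}\Vert_{\boldsymbol{L}^{p}(\Omega)}$ and $\Vert\curl\,\boldsymbol{u}\Vert_{\boldsymbol{L}^{p}(\Omega)}$.
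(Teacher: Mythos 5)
Your first reduction, $\mathbf{D}((A'_{p})^{\frac{1}{2}})=[\boldsymbol{X}_{p},\mathbf{D}(A'_{p})]_{\frac{1}{2}}$ via Theorem \ref{domfracpower} and Theorem \ref{Lapimpower}, is exactly how the paper's proof begins, and the passage to $\Delta_{M}$ through the invariance of $\boldsymbol{X}_{p}$ under its resolvent is legitimate. The problem is the step you yourself flag as ``the hard part'': the identification of $[\boldsymbol{L}^{p}(\Omega),\mathbf{D}(\Delta_{M})]_{\frac{1}{2}}$ with $\{\boldsymbol{u}\in\boldsymbol{W}^{1,p}(\Omega);\ \boldsymbol{u}\cdot\boldsymbol{n}=0\}$. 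This is not a step that can be delegated to the classical Grisvard--Seeley interpolation theory as stated: those results concern scalar elliptic operators with normal systems of boundary conditions (typically Dirichlet-type), whereas $\Delta_{M}$ is a vector-valued Hodge Laplacian whose domain mixes a zeroth-order condition on the normal component with a first-order condition on the tangential part of the curl. The heuristic ``conditions of order below the interpolated regularity survive, the others drop'' gives the right answer here, but it is precisely the content of the theorem, not a citation; as written, the proposal asserts the conclusion at its only nontrivial point. (A secondary, fixable issue: you must also check that the interpolation space $[\boldsymbol{W}^{1,p},\mathbf{D}(\Delta_M)\text{-type}]$ only reaches regularity $1$ and not, say, a $\boldsymbol{W}^{1,p}$ space with an extra residual trace condition; controlling the trace $\boldsymbol{u}\cdot\boldsymbol{n}$ ``uniformly along the scale'' is again the whole difficulty.)

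For comparison, the paper circumvents any interpolation of boundary conditions by splitting the two inclusions. For $\mathbf{D}((A'_{p})^{\frac{1}{2}})\hookrightarrow\boldsymbol{V}^{p}_{\sigma,\tau}(\Omega)$ it interpolates the map $\boldsymbol{u}\mapsto\boldsymbol{U}=(\boldsymbol{u},\boldsymbol{\mathrm{curl}}\,\boldsymbol{u})$, which sends $\mathbf{D}(A'_{p})$ into $\boldsymbol{X}_{p}\times\boldsymbol{W}^{1,p}(\Omega)$ and $\boldsymbol{X}_{p}$ into $\boldsymbol{X}_{p}\times\boldsymbol{W}^{-1,p}(\Omega)$; the interpolation $[\boldsymbol{W}^{1,p}(\Omega),\boldsymbol{W}^{-1,p}(\Omega)]_{1/2}=\boldsymbol{L}^{p}(\Omega)$ involves no boundary conditions at all, and Lemma \ref{con1} then upgrades $\boldsymbol{\mathrm{curl}}\,\boldsymbol{u}\in\boldsymbol{L}^{p}(\Omega)$ to $\boldsymbol{u}\in\boldsymbol{W}^{1,p}(\Omega)$. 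The reverse inclusion is obtained by duality: using $((A'_{p})^{\frac{1}{2}})^{\ast}=(A'_{p'})^{\frac{1}{2}}$ and the identity $\langle A'_{p}\boldsymbol{u},\boldsymbol{v}\rangle=\int_{\Omega}\boldsymbol{\mathrm{curl}}\,\boldsymbol{u}\cdot\boldsymbol{\mathrm{curl}}\,\overline{\boldsymbol{v}}\,\mathrm{d}x$, one bounds $\Vert(A'_{p})^{\frac{1}{2}}\boldsymbol{u}\Vert_{\boldsymbol{X}_{p}}$ by $C\Vert\boldsymbol{u}\Vert_{\boldsymbol{W}^{1,p}(\Omega)}$ for $\boldsymbol{u}\in\mathbf{D}(A'_{p})$ and concludes by density of $\mathbf{D}(A'_{p})$ in $\boldsymbol{V}^{p}_{\sigma,\tau}(\Omega)$. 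If you want to salvage your route you would need to actually prove the interpolation identity for $\mathbf{D}(\Delta_{M})$, which is at least as hard as the theorem itself; otherwise you should adopt a two-inclusion argument of the above type.
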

\begin{proof}
Thanks to Theorem \ref{Lapimpower} we know that that the pure imaginary powers of $A'_{p}$ are bounded on $\boldsymbol{X}_{p} $ and satisfy estimate (\ref{estimpurlap}). As a result thanks to Theorem \ref{domfracpower} we have
\begin{equation*}
\mathbf{D}((A'_{p})^{\frac{1}{2}})\,=\,\left[\mathbf{D}(A'_{p});\,\boldsymbol{X}_{p} \right]_{1/2}.
\end{equation*}
Consider now a function $\boldsymbol{u}\in\mathbf{D}(A'_{p})$, set $\boldsymbol{z}=\boldsymbol{\mathrm{curl}}\,\boldsymbol{u}$ and $\boldsymbol{U}=(\boldsymbol{u},\boldsymbol{z})$. It is clear that $\boldsymbol{z}\in\boldsymbol{H}^{p}_{0}(\boldsymbol{\mathrm{curl}},\Omega)$ and using Lemma \ref{con1} we deduce that $\boldsymbol{z}\in\boldsymbol{X}^{p}_{N}(\Omega)\hookrightarrow\boldsymbol{W}^{1,p}(\Omega)$ and $\boldsymbol{U}\in\boldsymbol{X}_{p}\times\boldsymbol{W}^{1,p}(\Omega)$. On the other hand if $\boldsymbol{u}\in\boldsymbol{X}_{p}$, thanks to \cite{Am3, Am4}, we know that $\boldsymbol{U}\in\boldsymbol{X}_{p}\times[\boldsymbol{H}^{p}_{0}(\boldsymbol{\mathrm{curl}},\Omega)]'\hookrightarrow\boldsymbol{X}_{p}\times\boldsymbol{W}^{-1,p}(\Omega)$. Next let $\boldsymbol{u}\in\mathbf{D}((A'_{p})^{\frac{1}{2}})$ then $\boldsymbol{U}\in\boldsymbol{X}_{p}\times[\boldsymbol{W}^{1,p}(
 \Omega),
 \boldsymbol{W}^{-1,p}(\Omega)]_{1/2}=\boldsymbol{X}_{p}\times\boldsymbol{L}^{p}(\Omega)$. Thus using Lemma \ref{con1} we deduce that $\boldsymbol{u}\in\boldsymbol{V}^{p}_{\sigma,\tau}(\Omega)$. This amount to say that
\begin{equation}
\mathbf{D}((A'_{p})^{\frac{1}{2}})\hookrightarrow\boldsymbol{V}^{p}_{\sigma,\tau}(\Omega).
\end{equation}
It remains to prove the second inclusion. First we recall that $(\boldsymbol{X}_{p})'=\boldsymbol{X}_{p'}$ and  the adjoint operator $((A'_{p})^{\frac{1}{2}})^{*}$ is equal to $(A'_{p'})^{\frac{1}{2}}$. Observe that thanks to \cite[Theorem 1.15.2, part (e)]{Tri}, since $A'_{p}$ has a bounded inverse, then for all $1<p<\infty$, $(A'_{p})^{\frac{1}{2}}$ is an isomorphism from $\mathbf{D}((A'_{p})^{\frac{1}{2}})$ to $\boldsymbol{X}_{p}$. This means that for all $\boldsymbol{F}\in\boldsymbol{X}_{p'}$ there exists a unique $\boldsymbol{v}\in\mathbf{D}((A'_{p'})^{\frac{1}{2}})$ solution of 
\begin{equation}\label{ap'1/2}
(A'_{p'})^{\frac{1}{2}}\boldsymbol{v}\,=\,\boldsymbol{F}.
\end{equation}
As a result for all $\boldsymbol{u}\in\mathbf{D}(A'_{p})$ we have
\begin{eqnarray}
\Vert (A'_{p})^{\frac{1}{2}}\boldsymbol{u}\Vert_{\boldsymbol{X}_{p}}&=&\sup _{\boldsymbol{F}\in\boldsymbol{X}_{p'},\,\boldsymbol{F}\neq\boldsymbol{0}}\frac{\Big\vert\langle (A'_{p})^{\frac{1}{2}}\boldsymbol{u}\,,\,\boldsymbol{F}\rangle_{\boldsymbol{X}_{p}\times\boldsymbol{X}_{p'}}\Big\vert}{\Vert\boldsymbol{F}\Vert_{\boldsymbol{L}^{p'}(\Omega)}}\nonumber\\
&=&\sup _{\boldsymbol{F}\in\boldsymbol{X}_{p'},\,\boldsymbol{F}\neq\boldsymbol{0}}\frac{\Big\vert\langle (A'_{p})^{\frac{1}{2}}\boldsymbol{u}\,,\, (A'_{p'})^{\frac{1}{2}}\boldsymbol{v}\rangle_{\boldsymbol{X}_{p}\times\boldsymbol{X}_{p'}}\Big\vert}{\Vert\boldsymbol{F}\Vert_{\boldsymbol{L}^{p'}(\Omega)}},\nonumber
\end{eqnarray}
where $\boldsymbol{v}$ is the unique solution of (\ref{ap'1/2}) and $\boldsymbol{X}_{p'}$ is a closed subspace of $\boldsymbol{L}^{p'}(\Omega)$ equipped with the norm of $\boldsymbol{L}^{p'}(\Omega)$.

As a result, 
\begin{eqnarray}
\Vert (A'_{p})^{\frac{1}{2}}\boldsymbol{u}\Vert_{\boldsymbol{X}_{p}}&=&\sup _{\boldsymbol{v}\in\mathbf{D}(A_{p'}^{1/2}),\,\boldsymbol{v}\neq\boldsymbol{0}}\frac{\Big\vert\langle A'_{p}\boldsymbol{u}\,,\,\boldsymbol{v}\rangle_{\boldsymbol{X}_{p}\times\boldsymbol{X}_{p'}}\Big\vert}{\Vert (A'_{p'})^{\frac{1}{2}}\boldsymbol{v}\Vert_{\boldsymbol{L}^{p'}(\Omega)}}\nonumber\\
&=&\sup _{\boldsymbol{v}\in\mathbf{D}(A_{p'}^{1/2}),\,\boldsymbol{v}\neq\boldsymbol{0}}\frac{\Big\vert\int_{\Omega}\boldsymbol{\mathrm{curl}}\,\boldsymbol{u}\cdot\boldsymbol{\mathrm{curl}}\,\overline{\boldsymbol{v}}\,\textrm{d}\,x\Big\vert}{\Vert (A'_{p'})^{\frac{1}{2}}\boldsymbol{v}\Vert_{\boldsymbol{L}^{p'}(\Omega)}}\nonumber\\
&\leq&C(\Omega,p)\,\Vert\boldsymbol{u}\Vert_{\boldsymbol{W}^{1,p}(\Omega)}\label{dpaw1p}.
\end{eqnarray}
Now since $\mathbf{D}(A'_{p})$ is dense in $\boldsymbol{V}^{p}_{\sigma,\tau}(\Omega)$ one gets inequality (\ref{dpaw1p}) for all $\boldsymbol{u}\in\boldsymbol{V}^{p}_{\sigma,\tau}(\Omega)$ and then
\begin{equation*}
\boldsymbol{V}^{p}_{\sigma,\tau}(\Omega)\hookrightarrow\mathbf{D}((A'_{p})^{\frac{1}{2}})
\end{equation*}
and the result is prove.
\end{proof}

The following proposition shows an embedding of Sobolev type for the domains of fractional powers of the Stokes operator with flux boundary conditions. This embedding give us the $\boldsymbol{L}^{p}-\boldsymbol{L}^{q}$ estimates for the corresponding homogeneous problem.
\begin{prop}\label{Soboembdalpha}
For all $1<p<\infty$ and for all $0<\alpha\leq 1$ we define $\beta=\max(\alpha,1-\alpha)$ then
\begin{equation}\label{sbf}
\mathbf{D}((A'_{p})^{\alpha})\hookrightarrow\boldsymbol{L}^{q}(\Omega)
\end{equation}
for all $q$ such that:\\
\textbf{(i)} For $1<p<\frac{3}{2\beta}$, $q\in\left[p,\,\frac{3p}{3-2\beta p}\right].$

\noindent\textbf{(ii)} For $p=\frac{3}{2\beta}$, $ q\in\left[1,\,+\infty \right[.$

\noindent\textbf{(iii)} For $p>\frac{3}{2\beta}$, $ q=+\infty.$

Moreover for such $q$, the following estimate holds
\begin{equation}\label{q}
\forall\boldsymbol{u}\in\mathbf{D}((A'_{p})^{\alpha}),\qquad\Vert\boldsymbol{u}\Vert_{\boldsymbol{L}^{q}(\Omega)}\,\leq\,C(\Omega,p)\,\Vert (A'_{p})^{\alpha}\boldsymbol{u}\Vert_{\boldsymbol{L}^{p}(\Omega)}.
\end{equation}
\end{prop}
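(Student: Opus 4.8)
The plan is to obtain the embedding \eqref{sbf} by complex interpolation of a few endpoint embeddings and then to invoke the classical Sobolev inequality on $\Omega$. First I would reduce the estimate \eqref{q} to a pure embedding statement. Since $A'_{p}$ is invertible with bounded inverse on $\boldsymbol{X}_{p}$, Theorem 1.15.2(e) of \cite{Tri} (already quoted at the opening of this subsection) shows that $(A'_{p})^{\alpha}$ is an isomorphism of $\mathbf{D}((A'_{p})^{\alpha})$ onto $\boldsymbol{X}_{p}$, so that $\boldsymbol{u}\mapsto\Vert(A'_{p})^{\alpha}\boldsymbol{u}\Vert_{\boldsymbol{L}^{p}(\Omega)}$ is a norm on $\mathbf{D}((A'_{p})^{\alpha})$ equivalent to the graph norm. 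Hence \eqref{q} is exactly the continuity of \eqref{sbf} measured in this norm, and it suffices to establish the continuous inclusion $\mathbf{D}((A'_{p})^{\alpha})\hookrightarrow\boldsymbol{L}^{q}(\Omega)$.

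Next I would record the endpoint embeddings at $\alpha=\tfrac12$ and $\alpha=1$. For $\alpha=\tfrac12$, Theorem \ref{DA1/2} identifies $\mathbf{D}((A'_{p})^{1/2})=\boldsymbol{V}^{p}_{\sigma,\tau}(\Omega)$ and gives $\Vert\boldsymbol{u}\Vert_{\boldsymbol{W}^{1,p}(\Omega)}\le C\Vert\boldsymbol{\mathrm{curl}}\,\boldsymbol{u}\Vert_{\boldsymbol{L}^{p}(\Omega)}\le C\Vert(A'_{p})^{1/2}\boldsymbol{u}\Vert_{\boldsymbol{L}^{p}(\Omega)}$, so that $\mathbf{D}((A'_{p})^{1/2})\hookrightarrow\boldsymbol{W}^{1,p}(\Omega)$. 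For $\alpha=1$, Proposition \ref{sl} gives $A'_{p}\boldsymbol{u}=-\Delta\boldsymbol{u}$, and the norm equivalence of Remark \ref{rmkequivnorm}(ii) (applicable because the flux condition in \eqref{A'} holds on $\mathbf{D}(A'_{p})\subset\mathbf{D}(A_{p})$, cf. \eqref{c2dpa}) yields $\mathbf{D}(A'_{p})\hookrightarrow\boldsymbol{W}^{2,p}(\Omega)$ with $\Vert\boldsymbol{u}\Vert_{\boldsymbol{W}^{2,p}(\Omega)}\le C\Vert A'_{p}\boldsymbol{u}\Vert_{\boldsymbol{L}^{p}(\Omega)}$. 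Finally $\boldsymbol{X}_{p}\hookrightarrow\boldsymbol{L}^{p}(\Omega)$ trivially.

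Now I would interpolate. By Theorem \ref{Lapimpower} the pure imaginary powers $(A'_{p})^{is}$ are bounded on $\boldsymbol{X}_{p}$, so Theorem \ref{domfracpower} realises $\mathbf{D}((A'_{p})^{\alpha})$ as a complex interpolation space between $\boldsymbol{X}_{p}$ and $\mathbf{D}(A'_{p})$; by reiteration I may instead interpolate the pair $(\boldsymbol{X}_{p},\mathbf{D}((A'_{p})^{1/2}))$ for $\alpha\le\tfrac12$ and the pair $(\mathbf{D}((A'_{p})^{1/2}),\mathbf{D}(A'_{p}))$ for $\tfrac12\le\alpha\le1$. Applying the interpolation functor to the \emph{inclusion maps} of the previous paragraph (not to the domains themselves) gives a bounded inclusion of $\mathbf{D}((A'_{p})^{\alpha})$ into the complex interpolate of the corresponding integer Sobolev spaces, i.e. into the Bessel potential space $\boldsymbol{H}^{2\alpha,p}(\Omega)$. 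The classical Sobolev embedding of $\boldsymbol{H}^{2\alpha,p}(\Omega)$ into $\boldsymbol{L}^{q}(\Omega)$ in space dimension three then splits into the three regimes $2\alpha p<3$, $2\alpha p=3$, $2\alpha p>3$, which reproduce cases (i)--(iii) and the upper exponent $\tfrac{3p}{3-2\beta p}$; combined with the norm identification of the first paragraph this gives \eqref{q}.

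The delicate step, and the one I would treat most carefully, is the identification of the interpolated space with a fractional Sobolev space to which the classical embedding applies: the domains $\mathbf{D}((A'_{p})^{\alpha})$ carry the divergence-free, tangential-curl and flux conditions built into \eqref{c2dpa}--\eqref{A'}, and the point is precisely that by interpolating only the \emph{boundedness of the inclusions} into $\boldsymbol{L}^{p}$, $\boldsymbol{W}^{1,p}$ and $\boldsymbol{W}^{2,p}$ one avoids having to describe the interpolated domain intrinsically, so that no trace-compatibility of the boundary conditions is required. What remains to be pinned down is that the Sobolev threshold produced by this procedure is governed by the index $\beta=\max(\alpha,1-\alpha)$ as stated; for $\alpha\ge\tfrac12$ this is the index $2\alpha$ coming directly from the interpolation above, while for $\alpha<\tfrac12$ reconciling the claimed range with the embedding obtained this way (where the natural gain is $2\alpha$ derivatives) is exactly the subtle point, and I would verify it by comparing the interpolation exponent with the two possible choices of endpoints before reading off the Sobolev index.
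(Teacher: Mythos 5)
Your route is essentially the paper's: identify $\mathbf{D}((A'_{p})^{\alpha})$ as the complex interpolation space between $\boldsymbol{X}_{p}$ and $\mathbf{D}(A'_{p})$ via the boundedness of the imaginary powers (Theorem \ref{Lapimpower} together with Theorem \ref{domfracpower}), push the inclusion maps through the interpolation functor so as to land in $[\boldsymbol{L}^{p}(\Omega),\boldsymbol{W}^{2,p}(\Omega)]_{\alpha}$, and finish with the classical Sobolev embedding. The reduction of \eqref{q} to the embedding \eqref{sbf} via the isomorphism property of $(A'_{p})^{\alpha}$, and your observation that interpolating the \emph{inclusions} avoids any intrinsic description of the interpolated domain (hence any discussion of trace compatibility of the boundary and flux conditions), are both correct and are exactly what the paper does; the paper merely passes through a fractional Sobolev space and \cite[Theorem 7.57]{Adams} instead of naming the Bessel potential space, and does not take your (harmless but unnecessary) detour through $\mathbf{D}((A'_{p})^{1/2})$.

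The ``subtle point'' you rightly decline to paper over cannot in fact be closed: the interpolation yields $\mathbf{D}((A'_{p})^{\alpha})\hookrightarrow\boldsymbol{W}^{2\alpha,p}(\Omega)$, i.e.\ a Sobolev index $2\alpha$, and for $\alpha<1/2$ the index $2\beta=2(1-\alpha)>2\alpha$ appearing in the statement is not only unattainable by this method but false. For instance, with $p=2$ and $\alpha=1/4$ one has $\beta=3/4=3/(2p)$, so case (ii) would assert $\mathbf{D}((A'_{2})^{1/4})\hookrightarrow\boldsymbol{L}^{q}(\Omega)$ for every finite $q$, whereas this domain carries exactly $\boldsymbol{H}^{1/2}$-regularity (by the spectral characterization) and hence embeds into $\boldsymbol{L}^{q}(\Omega)$ only for $q\leq 3$. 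The paper's own proof is afflicted by the same confusion: it first writes $[\mathbf{D}(A'_{p});\boldsymbol{X}_{p}]_{\alpha}\hookrightarrow\boldsymbol{W}^{2(1-\alpha),p}(\Omega)$ with a reversed interpolation convention, then switches to $\boldsymbol{W}^{2\alpha,p}(\Omega)$, and invokes the inclusion $\mathbf{D}((A'_{p})^{\alpha})\hookrightarrow\mathbf{D}((A'_{p})^{1-\alpha})$ ``for all $0<\alpha\leq1$'' although it holds only for $\alpha\geq1/2$. What is actually used downstream is only the version with exponent $2\alpha$ (Corollary \ref{SoboembAS} reads part (i) exactly in that form), and that is precisely what your argument establishes. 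So the discrepancy you flag for $\alpha<1/2$ should be attributed to an error in the statement of the proposition, not to a missing step in your proof.
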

\begin{proof}
As described in the proof of Theorem \ref{DA1/2} we know that $\mathbf{D}((A'_{p})^{\alpha})\,=\,\left[\mathbf{D}(A'_{p})\,;\,\boldsymbol{X}_{p} \right]_{\alpha}$.
Moreover, we know that, $\left[\mathbf{D}(A'_{p})\,;\,\boldsymbol{X}_{p}\right]_{\alpha}\hookrightarrow\left[\boldsymbol{W}^{2,p}(\Omega)\,;\,\boldsymbol{L}^{p}(\Omega) \right] _{\alpha}\,=\,\boldsymbol{W}^{2(1-\alpha),p}(\Omega).$
It is clear that for $0<\alpha<1/2$, we have $1-\alpha>\alpha$ and 
\begin{equation*}
\mathbf{D}((A'_{p})^{\alpha})\hookrightarrow\boldsymbol{W}^{2\alpha,p}(\Omega).
\end{equation*}
Similarly, for $1/2\leq\alpha\leq1$, we have $\alpha\geq 1-\alpha$ and
\begin{equation*}
\mathbf{D}((A'_{p})^{\alpha})\hookrightarrow\mathbf{D}((A'_{p})^{1-\alpha})\hookrightarrow\boldsymbol{W}^{2\alpha,p}(\Omega).
\end{equation*}
Thus one has, for all $0<\alpha\leq1$
\begin{equation*}
\mathbf{D}((A'_{p})^{\alpha})\hookrightarrow\mathbf{D}((A'_{p})^{1-\alpha})\hookrightarrow\boldsymbol{W}^{2\beta,p}(\Omega).
\end{equation*}
Now using the result of \cite[Theorem 7.57]{Adams} we deduce the Sobolev embedding \eqref{sbf} with $p$ and $q$ satisfying (i), (ii) and (iii). Finally, estimate \eqref{q} is a direct consequence of the Sobolev embedding \eqref{sbf}, since $\mathbf{D}((A'_{p})^{\alpha})$ is equipped with the graph norm of the operator $ (A'_{p})^{\alpha}$.
\end{proof}

The following Corollary extends Proposition \ref{Soboembdalpha} to any real $\alpha$ such that $0<\alpha<3/2p$. This result is similar to the result of  Borchers and Miyakawa \cite{Bor2} who proved the same result for the Stokes operator with Dirichlet boundary conditions in exterior domains for $1<p<3$.  
\begin{coro}\label{SoboembAS}
for all $1<p<\infty$ and for all $\alpha\in\mathbb{R}$ such that $0<\alpha<3/2p$ the following Sobolev embedding holds 
\begin{equation}\label{sbas}
\mathbf{D}((A'_{p})^{\alpha})\hookrightarrow\boldsymbol{L}^{q}(\Omega),\qquad\frac{1}{q}=\frac{1}{p}-\frac{2\alpha}{3}.
\end{equation}
Moreover for all $\boldsymbol{u}\in\mathbf{D}((A'_{p})^{\alpha})$ the following estimate holds
\begin{equation}\label{estas}
\Vert\boldsymbol{u}\Vert_{\boldsymbol{L}^{q}(\Omega)}\,\leq\,C(\Omega,p)\,\Vert (A'_{p})^{\alpha}\boldsymbol{u}\Vert_{\boldsymbol{L}^{p}(\Omega)}.
\end{equation}
\end{coro}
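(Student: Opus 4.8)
The plan is to reduce \eqref{estas} to a norm estimate for the negative fractional power $(A'_p)^{-\alpha}$ and then to bootstrap the case $0<\alpha\le 1$ already contained in Proposition \ref{Soboembdalpha}. Since $A'_p$ is invertible with bounded inverse on $\boldsymbol{X}_p$ (the flux condition \eqref{condition2} removes the kernel $\boldsymbol{K}_{\tau}(\Omega)$), the power $(A'_p)^{-\alpha}$ is a bounded isomorphism from $\boldsymbol{X}_p$ onto $\mathbf{D}((A'_p)^{\alpha})$ given by the Dunford integral \eqref{forintimpur}; writing $\boldsymbol{f}=(A'_p)^{\alpha}\boldsymbol{u}\in\boldsymbol{X}_p$, estimate \eqref{estas} reads $\|(A'_p)^{-\alpha}\boldsymbol{f}\|_{\boldsymbol{L}^q(\Omega)}\le C\,\|\boldsymbol{f}\|_{\boldsymbol{L}^p(\Omega)}$ with $1/q=1/p-2\alpha/3$. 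First I observe that $\alpha<3/(2p)\le 3/2$, so only the range $1<\alpha<3/2$ is new: for $0<\alpha\le 1$ Proposition \ref{Soboembdalpha} already yields an embedding into some $\boldsymbol{L}^{q'}(\Omega)$ at least as strong as the one claimed (in each of its cases (i)--(iii) one has $q'\ge q$), and $\boldsymbol{L}^{q'}(\Omega)\hookrightarrow\boldsymbol{L}^{q}(\Omega)$ on the bounded domain $\Omega$.

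For $1<\alpha<3/2$ I would split $\alpha=\alpha_1+\alpha_2$ with $\alpha_1=\alpha_2=\alpha/2\in(1/2,3/4)$ and introduce the intermediate exponent $r$ defined by $1/r=1/p-2\alpha_2/3$. Here $1/r>0$ because $\alpha_2<\alpha<3/(2p)$, and $3/(2r)=3/(2p)-\alpha_2>\alpha_1$, again because $\alpha_1+\alpha_2=\alpha<3/(2p)$; hence the exponent constraint of Proposition \ref{Soboembdalpha} is met at both steps. The chain I want is
$$\mathbf{D}((A'_p)^{\alpha})\ \xrightarrow{\ (A'_p)^{\alpha_1}\ }\ \boldsymbol{X}_r\ \xrightarrow{\ (A'_r)^{-\alpha_1}\ }\ \boldsymbol{L}^q(\Omega),\qquad \frac{1}{q}=\frac{1}{r}-\frac{2\alpha_1}{3}=\frac{1}{p}-\frac{2\alpha}{3}.$$
Concretely, for $\boldsymbol{u}\in\mathbf{D}((A'_p)^{\alpha})$ set $\boldsymbol{g}=(A'_p)^{\alpha_1}\boldsymbol{u}=(A'_p)^{-\alpha_2}\boldsymbol{f}$. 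By the base case at level $p$, $\boldsymbol{g}\in\boldsymbol{L}^r(\Omega)$ with $\|\boldsymbol{g}\|_{\boldsymbol{L}^r(\Omega)}\le C\,\|\boldsymbol{f}\|_{\boldsymbol{L}^p(\Omega)}$; moreover $\boldsymbol{g}\in\boldsymbol{X}_p$ (every power of $A'_p$ maps into $\boldsymbol{X}_p$), so $\boldsymbol{g}$ is divergence free, $\boldsymbol{g}\cdot\boldsymbol{n}=0$ on $\Gamma$ and satisfies \eqref{condition2}; combined with $\boldsymbol{g}\in\boldsymbol{L}^r(\Omega)$ this gives $\boldsymbol{g}\in\boldsymbol{X}_r$. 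Applying Proposition \ref{Soboembdalpha} at level $r$ to $\boldsymbol{u}=(A'_r)^{-\alpha_1}\boldsymbol{g}$ then yields $\boldsymbol{u}\in\boldsymbol{L}^q(\Omega)$ with $\|\boldsymbol{u}\|_{\boldsymbol{L}^q(\Omega)}\le C\,\|\boldsymbol{g}\|_{\boldsymbol{L}^r(\Omega)}\le C\,\|(A'_p)^{\alpha}\boldsymbol{u}\|_{\boldsymbol{L}^p(\Omega)}$, which is \eqref{estas}.

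The point that must be justified carefully is that the middle arrow may be read with $A'_r$ rather than $A'_p$, i.e.\ that $(A'_p)^{-\alpha_1}$ and $(A'_r)^{-\alpha_1}$ coincide on $\boldsymbol{X}_r$. For this I would first prove resolvent consistency: for $\lambda$ with $\mathrm{Re}\,\lambda\ge 0$ and $\boldsymbol{f}\in\boldsymbol{X}_r\subset\boldsymbol{X}_p$, the problem \eqref{resolA'} has a solution unique in $\boldsymbol{W}^{1,p}(\Omega)$ by Theorem \ref{exislpA'}, and the $\boldsymbol{W}^{1,r}(\Omega)$-solution is a fortiori a $\boldsymbol{W}^{1,p}(\Omega)$-solution, so the two agree; hence $R(\lambda,-A'_r)\boldsymbol{f}=R(\lambda,-A'_p)\boldsymbol{f}$ and likewise $(A'_r)^{-1}\boldsymbol{f}=(A'_p)^{-1}\boldsymbol{f}$. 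Since the contour $\Gamma_{\theta}$ in \eqref{forintimpur} lies in the common resolvent set, the Dunford integrals defining $(A'_p)^{-\alpha_1}$ and $(A'_r)^{-\alpha_1}$ then agree on $\boldsymbol{X}_r$.

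The main obstacle is exactly this cross-scale identification together with the verification $\boldsymbol{g}\in\boldsymbol{X}_r$: the Sobolev step only produces $\boldsymbol{g}\in\boldsymbol{L}^r(\Omega)$, and one must know that the side constraints (divergence free, vanishing normal trace, vanishing fluxes) are inherited at the higher integrability level so that $\boldsymbol{g}$ is a genuine element of $\boldsymbol{X}_r$ on which $(A'_r)^{-\alpha_1}$ acts. Everything else is bookkeeping of exponents, which closes because $\alpha<3/(2p)$ forces $\alpha_1<3/(2r)$. Since $\alpha<3/2$ throughout, a single split suffices and no iteration is needed.
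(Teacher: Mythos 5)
Your proof is correct and follows essentially the same route as the paper's: both bootstrap Proposition \ref{Soboembdalpha} by composing two embeddings through an intermediate Lebesgue exponent (your $r$; the paper's $q_{0}$, obtained from the splitting $\alpha=k+\theta$ with $k\in\mathbb{N}$, $\theta\in(0,1)$ --- and, as you observe, $p>1$ forces $k\leq 1$, so a single intermediate step always suffices). The only real difference is how the cross-exponent identification is handled: the paper runs the chain of estimates on $\mathbf{D}((A'_{p})^{\infty})$ and concludes by density of $\mathbf{D}((A'_{p})^{\infty})$ in $\mathbf{D}((A'_{p})^{\alpha})$, whereas you identify $(A'_{p})^{-\alpha_{1}}$ with $(A'_{r})^{-\alpha_{1}}$ on $\boldsymbol{X}_{r}$ directly via resolvent consistency and the Dunford integral; both are legitimate.
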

\begin{proof}
First observe that for $0<\alpha<\min(1,3/2p)$ the Sobolev embedding \eqref{sbas} is a consequence of Proposition \ref{Soboembdalpha} part (i). Next, for any real $\alpha$ such that $0<\alpha<3/2p$ we write $\alpha=k+\theta$, where $k$ is a non negative integer and $0<\theta<1$. 

Next we set
\begin{equation}
\frac{1}{q_{0}}=\frac{1}{p}-\frac{2\theta}{3}\qquad\textrm{and}\qquad\frac{1}{q_{j}}=\frac{1}{q_{0}}-\frac{2j}{3},\,\,\,\,\, j=0,1,....,k.
\end{equation}
It is clear that $\frac{1}{q_{j}}=\frac{1}{q_{j-1}}-\frac{2}{3}$ and that $q_{k}=q$.
Moreover, by assumptions on $p$ and $\alpha$ we have for $j=0,1,....,k$, $\theta+j<3/2p$. As a consequence of Proposition \ref{Soboembdalpha} part (i) it follows that
\begin{equation*}
\mathbf{D}((A'_{p})^{\theta})\hookrightarrow\boldsymbol{L}^{q_{0}}(\Omega)
\end{equation*}
and for all $1\leq j\leq k$
\begin{equation*}
\mathbf{D}(A'_{q_{j-1}})\hookrightarrow\boldsymbol{L}^{q_{j}}(\Omega).
\end{equation*}
 It thus follows that for all $\boldsymbol{u}\in\mathbf{D}((A'_{p})^{\infty})=\cap_{m\in\mathbb{N}}\mathbf{D}((A'_{p})^{m})$
\begin{equation}\label{estdsigma}
\Vert\boldsymbol{u}\Vert_{\boldsymbol{L}^{q}(\Omega)}\leq \Vert A'_{q_{k-1}}\boldsymbol{u}\Vert_{\boldsymbol{L}^{q_{k-1}}(\Omega)}\leq ... \leq \Vert (A'_{q_{0}})^{k}\boldsymbol{u}\Vert_{\boldsymbol{L}^{q_{0}}(\Omega)}\leq \Vert (A'_{p})^{\alpha}\boldsymbol{u}\Vert_{\boldsymbol{L}^{p}(\Omega)}.
\end{equation} 
By density of $\mathbf{D}((A'_{p})^{\infty})$ in $\mathbf{D}((A'_{p})^{\alpha})$ on gets the Sobolev  embeddings \eqref{sbas} and estimate \eqref{estdsigma}. Finally, estimate \eqref{estas} is a direct consequence of \eqref{sbas}.
\end{proof}

\begin{rmk}
\rm{
Using that $(I+A_p)$ is bijective with bounded inverse, the same arguments give similar results for  $\mathbf{D}(A_{p}^{\alpha})$.  More precisely, using  Lemma \ref{domfracpower}, we have in that case, for all $0<\alpha<1$
\begin{equation*}
\mathbf{D}(A_{p}^{\alpha})\,=\,\mathbf{D}((I\,+\,A_{p})^{\alpha})\,=\,\left[\mathbf{D}(I\,+\,A_{p});\boldsymbol{L}^{p}_{\sigma,\tau}(\Omega) \right]_{\alpha}\,=\, \left[\mathbf{D}(A_{p});\boldsymbol{L}^{p}_{\sigma,\tau}(\Omega) \right]_{\alpha},
\end{equation*}
where the space $\mathbf{D}(A_{p}^{\alpha})$ is endowed with $\Vert(I\,+\,A_{p})^{\alpha}\boldsymbol{\cdot}\Vert_{\boldsymbol{L}^{p}(\Omega)}$. Arguing as in the proof of Theorem \ref{DA1/2}  and Corollary \ref{SoboembAS}, we obtain\\
\noindent \textbf{(i)} For all $1<p<\infty$,
$\mathbf{D}(A_{p}^{1/2})\,=\,\boldsymbol{X}^{p}_{\sigma,\tau}(\Omega)$ with equivalent norms, (see \eqref{vpt} for the definition of $\boldsymbol{X}^{p}_{\sigma,\tau}(\Omega)$). \\
\noindent \textbf{(ii)} For all $1<p<\infty$ and for all $\alpha\in\mathbb{R}$ such that $0<\alpha<3/2p$, we have
\begin{equation*}
\mathbf{D}(A^{\alpha}_{p})\hookrightarrow\boldsymbol{L}^{q}(\Omega),\qquad\frac{1}{q}=\frac{1}{p}-\frac{2\alpha}{3},
\end{equation*}
\begin{equation*}
\forall\,\boldsymbol{u}\in\mathbf{D}(A^{\alpha}_{p}),\quad\Vert\boldsymbol{u}\Vert_{\boldsymbol{L}^{q}(\Omega)}\,\leq\,C(\Omega,p)\,\Vert (I+A_{p})^{\alpha}\boldsymbol{u}\Vert_{\boldsymbol{L}^{p}(\Omega)}.
\end{equation*}
Moreover using  that 
\begin{equation*}
\forall\alpha\in\mathbb{C},\qquad\boldsymbol{\mathcal{D}}_{\sigma}(\Omega)\hookrightarrow\mathbf{D}(A^{\alpha}_{p})\hookrightarrow\boldsymbol{L}^{p}_{\sigma,\tau}(\Omega),
\end{equation*}
one has the density of $\boldsymbol{\mathcal{D}}_{\sigma}(\Omega)$ in $\mathbf{D}(A^{\alpha}_{p})$ for all $\alpha\in\mathbb{C}$. }
\end{rmk}

\section{The time dependent Stokes problem}
\label{time}
In this section we solve the time dependent Stokes Problem (\ref{lens}) with the boundary condition (\ref{nbc}) using the semi-group theory. As described above, due to the boundary conditions (\ref{nbc}) the Stokes operator coincides with the $-\Delta$ operator. 
\subsection{The homogeneous problem}
Consider the problem:
\begin{equation}\label{henp}
 \left\{
\begin{array}{cccc}
\frac{\partial\boldsymbol{u}}{\partial t} - \Delta \boldsymbol{u
}=\boldsymbol{0}, & \mathrm{div}\,\boldsymbol{u}=0&\textrm{in}& 
\Omega\times (0,T), \\
\boldsymbol{u}\cdot\boldsymbol{n}=0, & \boldsymbol{\mathrm{curl}}\,\boldsymbol{u}\times \boldsymbol{n} = \boldsymbol{0}&\textrm{on}&
\Gamma\times (0,T),\\
&\boldsymbol{u}(0)= \boldsymbol{u}_{0} & \textrm{in} &
\Omega.
\end{array}
\right.
\end{equation}
Usually in the Problem (\ref{henp}) where figures the constraint $\mathrm{div}\,\boldsymbol{u}=0$ in $\Omega$,  a gradient of pressure appears. However, thanks to our boundary conditions, the pressure is constant in our case. For this reason, the Problem (\ref{henp}) is equivalent to the homogeneous Stokes problem
\begin{equation}\label{hensp}
 \left\{
\begin{array}{cccc}
\frac{\partial\boldsymbol{u}}{\partial t} - \Delta \boldsymbol{u
}+\nabla\pi=\boldsymbol{0}, & \mathrm{div}\,\boldsymbol{u}=0&\textrm{in}& 
\Omega\times (0,T), \\
\boldsymbol{u}\cdot\boldsymbol{n}=0, & \boldsymbol{\mathrm{curl}}\,\boldsymbol{u}\times \boldsymbol{n} = \boldsymbol{0}&\textrm{on}&
\Gamma\times (0,T),\\
&\boldsymbol{u}(0)= \boldsymbol{u}_{0} & \textrm{in} &
\Omega.
\end{array}
\right.
\end{equation}
We start with the following result for initial data in $L^p _{ \sigma , \tau  }(\Omega )$ that follows easily from the classiacl semi group theory for the operator $A_p$ on the space $L^p _{ \sigma , \tau  }(\Omega )$.
\begin{theo}\label{exishenp}
Let $\boldsymbol{u}_{0}\in\boldsymbol{L}^{p}_{\sigma,\tau}(\Omega)$,
then Problem (\ref{henp}) has a unique solution $\boldsymbol{u}(t)$ satisfying
\begin{equation}\label{Reghenp1}
\boldsymbol{u}\in
C([0,\,+\infty[,\,\boldsymbol{L}^{p}_{\sigma,\tau}(\Omega))\cap
C(]0,\,+\infty[,\,\mathbf{D}(A_{p}))\cap
C^{1}(]0,\,+\infty[,\,\boldsymbol{L}^{p}_{\sigma,\tau}(\Omega)),
\end{equation}
\begin{equation}\label{Reghenp2}
\boldsymbol{u}\in C^{k}(]0,\,+\infty[,\,\mathbf{D}(A^{\ell}_{p})),\qquad
\forall\,k\in\mathbb{N},\,\,\forall\,\ell\in\mathbb{N^{\ast}}.
\end{equation}
Moreover we have the estimates
\begin{eqnarray}\label{esthenp1}
\|\boldsymbol{u}(t)\|_{\boldsymbol{L}^{p}(\Omega)}\leq\,C_{1}(\Omega,p)\,\|\boldsymbol{u}_{0}\|_{\boldsymbol{L}^{p}(\Omega)}\\
\label{esthenp2}
\Big\|\frac{\partial\boldsymbol{u}(t)}{\partial t}\Big\|_{\boldsymbol{L}^{p}(\Omega)}\leq\frac{C_{2}(\Omega,p)}{t}\,\|\boldsymbol{u}_{0}\|_{\boldsymbol{L}^{p}(\Omega)}.\\
\label{esthenp4}
\Vert\boldsymbol{\mathrm{curl}}\,\boldsymbol{u}\Vert_{\boldsymbol{L}^{p}(\Omega)}\,\leq\,\frac{C_{3}(\Omega,p)}{\sqrt{t}}\,\Vert\boldsymbol{u}_{0}\Vert_{\boldsymbol{L}^{p}(\Omega)}
\end{eqnarray}
and
\begin{equation}\label{esthenp3}
\Vert\boldsymbol{u}(t)\Vert_{\boldsymbol{W}^{2,p}(\Omega)}\,\leq\,C_{4}(\Omega,p)\,(1+\frac{1}{t})\,\Vert\boldsymbol{u}_{0}\Vert_{\boldsymbol{L}^{p}(\Omega)}.
\end{equation}
\end{theo}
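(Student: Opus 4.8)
The plan is to recast Problem~(\ref{henp}) as the abstract homogeneous Cauchy problem governed by $A_p$ on $\boldsymbol{L}^{p}_{\sigma,\tau}(\Omega)$ and to read off every assertion from the analytic semigroup theory. By Proposition~\ref{sl} the Navier-type boundary conditions force the pressure to be constant, so that $A_p\boldsymbol{u}=-\Delta\boldsymbol{u}$ and Problem~(\ref{henp}) (equivalently (\ref{hensp})) is the same as $\boldsymbol{u}'(t)+A_p\boldsymbol{u}(t)=\boldsymbol{0}$, $\boldsymbol{u}(0)=\boldsymbol{u}_0$. Since by Theorem~\ref{analsemi2} the operator $-A_p$ generates a bounded analytic semigroup $(T(t))_{t\geq0}$ on $\boldsymbol{L}^{p}_{\sigma,\tau}(\Omega)$, the unique solution is $\boldsymbol{u}(t)=T(t)\boldsymbol{u}_0$, uniqueness being the standard uniqueness for the abstract Cauchy problem attached to a semigroup generator.

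The regularity statements (\ref{Reghenp1})--(\ref{Reghenp2}) are then consequences of analyticity. For an analytic semigroup one has $T(t)\boldsymbol{u}_0\in\mathbf{D}(A_p^{\ell})$ for every $t>0$ and every $\ell$, the map $t\mapsto T(t)\boldsymbol{u}_0$ is $C^\infty$ from $]0,+\infty[$ into each $\mathbf{D}(A_p^{\ell})$ with $\tfrac{d}{dt}T(t)=-A_pT(t)$, and strong continuity at $t=0$ gives $\boldsymbol{u}\in C([0,+\infty[,\boldsymbol{L}^{p}_{\sigma,\tau}(\Omega))$. Combining these yields (\ref{Reghenp1}) and (\ref{Reghenp2}).

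For the estimates, (\ref{esthenp1}) is just the uniform bound $\sup_{t\geq0}\|T(t)\|_{\mathcal L(\boldsymbol{L}^{p}_{\sigma,\tau}(\Omega))}<\infty$. Since $\partial_t\boldsymbol{u}=-A_pT(t)\boldsymbol{u}_0$, Proposition~\ref{t-alpha} applied with $\alpha=1$ gives $\|A_pT(t)\|_{\mathcal L(\boldsymbol{L}^{p}_{\sigma,\tau}(\Omega))}\leq C\,t^{-1}$, which is (\ref{esthenp2}). The $\boldsymbol{W}^{2,p}$ bound (\ref{esthenp3}) follows from Remark~\ref{rmkequivnorm}(i), which reduces $\|\boldsymbol{u}(t)\|_{\boldsymbol{W}^{2,p}(\Omega)}$ to $\|T(t)\boldsymbol{u}_0\|_{\boldsymbol{L}^{p}(\Omega)}+\|A_pT(t)\boldsymbol{u}_0\|_{\boldsymbol{L}^{p}(\Omega)}\leq C\,(1+t^{-1})\,\|\boldsymbol{u}_0\|_{\boldsymbol{L}^{p}(\Omega)}$.

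The curl estimate (\ref{esthenp4}) is the delicate point and the one I expect to be the main obstacle: because $A_p$ has the nontrivial kernel $\boldsymbol{K}_\tau(\Omega)$ one cannot simply bound $\|\boldsymbol{\mathrm{curl}}\,\boldsymbol{u}\|_{\boldsymbol{L}^p}$ by $\|A_p^{1/2}\boldsymbol{u}\|_{\boldsymbol{L}^p}$. The cleanest route is to use the Dunford representation
\begin{equation*}
T(t)=\frac{1}{2\pi i}\int_{\Gamma}e^{\lambda t}\,(\lambda I+A_p)^{-1}\,\mathrm{d}\lambda,
\end{equation*}
along a sectorial contour $\Gamma$ lying in the resolvent set, apply $\boldsymbol{\mathrm{curl}}$ under the integral, and invoke the resolvent estimate (\ref{curlestlp}) from Theorem~\ref{estlpnavier}, namely $\|\boldsymbol{\mathrm{curl}}\,(\lambda I+A_p)^{-1}\boldsymbol{u}_0\|_{\boldsymbol{L}^{p}(\Omega)}\leq \kappa_2\,|\lambda|^{-1/2}\,\|\boldsymbol{u}_0\|_{\boldsymbol{L}^{p}(\Omega)}$. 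A standard scaling of the contour, using $\int_{\Gamma}|e^{\lambda t}|\,|\lambda|^{-1/2}\,|\mathrm{d}\lambda|\sim t^{-1/2}$, then produces (\ref{esthenp4}); this contour argument sidesteps the kernel obstruction that a fractional-power computation would run into.
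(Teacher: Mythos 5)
Your argument coincides with the paper's for everything except the curl estimate \eqref{esthenp4}: existence, uniqueness and the regularity \eqref{Reghenp1}--\eqref{Reghenp2} come from the analyticity of the semigroup generated by $-A_p$ (Theorem \ref{analsemi2}), \eqref{esthenp1} from the uniform bound on $T(t)$, \eqref{esthenp2} from $\Vert A_pT(t)\Vert\leq C/t$, and \eqref{esthenp3} from the norm equivalence of Remark \ref{rmkequivnorm}(i) --- exactly as in the paper. For \eqref{esthenp4} you diverge: the paper simply re-runs, at the level of the evolution problem, the direct argument used to prove the resolvent bound \eqref{curlestlp} (it refers back to \cite[Theorem 4.11]{Albaba}), whereas you deduce the semigroup bound from the already-established resolvent bound via the Dunford representation of $T(t)$ and a scaled contour. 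Your route is more systematic and correctly sidesteps the fractional-power obstruction caused by the nontrivial kernel $\boldsymbol{K}_{\tau}(\Omega)$; its only cost is one step you should make explicit. The estimate \eqref{curlestlp} is stated for $\mathrm{Re}\,\lambda\geq 0$ only, while the integration contour (two rays at angle $\pm\theta$ with $\theta>\pi/2$, joined by an arc of radius $1/t$ around the origin) lies in the open left half-plane. You must therefore first propagate the curl bound into a sector $\vert\arg\lambda\vert\leq\pi-\theta_{0}$, which is done exactly as in Proposition \ref{pr2}: writing $R(\lambda,-A_{p})=R(\pm ir,-A_{p})\bigl[I+(\lambda\mp ir)R(\pm ir,-A_{p})\bigr]^{-1}$ and applying $\boldsymbol{\mathrm{curl}}$ to the first factor gives $\Vert\boldsymbol{\mathrm{curl}}\,R(\lambda,-A_{p})\boldsymbol{f}\Vert_{\boldsymbol{L}^{p}(\Omega)}\leq 2\kappa_{2}\,r^{-1/2}\Vert\boldsymbol{f}\Vert_{\boldsymbol{L}^{p}(\Omega)}\leq C\vert\lambda\vert^{-1/2}\Vert\boldsymbol{f}\Vert_{\boldsymbol{L}^{p}(\Omega)}$ there. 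With that extension, both the rays and the small arc contribute $O(t^{-1/2})$ and your contour computation closes the argument; the interchange of $\boldsymbol{\mathrm{curl}}$ with the integral is justified by the absolute convergence of the curled integrand in $\boldsymbol{L}^{p}(\Omega)$ together with the closedness of $\boldsymbol{\mathrm{curl}}$.
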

\begin{proof}
Since the operator $-A_{p}$ generates a bounded analytic semi-group $(T(t))_{t\geq 0}$ on $\boldsymbol{L}^{p}_{\sigma,\tau}(\Omega)$, the Problem (\ref{henp}) has a unique solution $\boldsymbol{u}(t)=T(t)\,\boldsymbol{u}_{0}$ . Thanks to \cite[Chapter 2, Proposition 4.3]{En} we know that $\Vert T(t)\Vert_{\mathcal{L}(\boldsymbol{L}^{p}_{\sigma,\tau}(\Omega))}\leq C_{1}(\Omega,p)$, where $C_{1}(\Omega,p)=M_{1}\,\kappa_{1}(\Omega,p)$ for some constant $M_{1}>0$. We recall that $\kappa_{1}(\Omega,p)$ is the constant in (\ref{estimlpf}). As a result one has estimate (\ref{esthenp1}). We also know thanks to \cite[Chapter 2, Theorem 4.6]{En} that this solution belongs to $\mathbf{D}(A_{p})$ thus one has (\ref{Reghenp1}). Now using the fact that $T(t)\,\boldsymbol{u}_{0}\in\mathbf{D}(A^{\infty}_{p})$ and the same argument of \cite[Chapitre 7, Theorem 7.5, Theorem 7.7]{Br} one gets the regularity (\ref{Reghenp2}). 
We recall that $\mathbf{D}(A^{\infty}_{p})=\cap_{n\in\mathbb{N}}\mathbf{D}(A^{n}_{p})$.

 Moreover, thanks to \cite[Chapter 2, Theorem 4.6, page 101]{En} we know that 
\begin{equation*}
\Vert A_{p}T(t)\Vert_{\boldsymbol{L}^{p}(\Omega)}\leq\,\frac{C_{2}(\Omega,p)}{t},
\end{equation*}
where $C_{2}(\Omega,p)=M_{2}\,\kappa_{1}(\Omega,p)$
for some constant $M_{2}>0$, which gives us estimate (\ref{esthenp2}). 

\noindent Next, to prove estimate (\ref{esthenp4}) we proceed in the same way as in the proof of the estimate (\ref{curlestlp}) (see \cite[Theorem 4.11]{Albaba} for the proof).

Since the norm of $\boldsymbol{W}^{2,p}(\Omega)$ is equivalent to the graph norm of the stokes operator $A_{p}$ one has estimate (\ref{esthenp3}). 
\end{proof}
Estimates (\ref{esthenp1}) and  (\ref{esthenp4}) allow to deduce the following Corollary:
\begin{coro}[Weak Solutions for the Stokes Problem]\label{corolrlpw1p}
Let $\boldsymbol{u}_{0}\in\boldsymbol{L}^{p}_{\sigma,\tau}(\Omega)$ and $\boldsymbol{u}$ be the unique solution of Problem (\ref{henp}) given by Theorem \ref{exishenp}. Then  $\boldsymbol{u}$ satisfies
\begin{equation}\label{lrw1p}
\forall\,1\leq q<2,\qquad\boldsymbol{u}\in L^{q}(0,T;\,\boldsymbol{W}^{1,p}(\Omega))\,\,\,\,\mathrm{and}\qquad\frac{\partial\boldsymbol{u}}{\partial t}\in L^{q}(0,T;\,[\boldsymbol{H}^{p'}_{0}(\mathrm{div},\Omega)]'),
\end{equation}
for all $T>0$.
\end{coro}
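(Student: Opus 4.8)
The plan is to read off the regularity from the explicit solution $\boldsymbol{u}(t)=T(t)\boldsymbol{u}_0$ furnished by Theorem \ref{exishenp}, which for every $t>0$ lies in $\mathbf{D}(A_p)\subset\boldsymbol{X}^p_{\sigma,\tau}(\Omega)$, and to feed the two decay bounds \eqref{esthenp1} and \eqref{esthenp4} into a time integration. The guiding observation is that both quantities to be estimated, $\|\boldsymbol{u}(t)\|_{\boldsymbol{W}^{1,p}(\Omega)}$ and $\|\partial_t\boldsymbol{u}(t)\|_{[\boldsymbol{H}^{p'}_0(\mathrm{div},\Omega)]'}$, should behave like $t^{-1/2}$ as $t\to 0$, and that $t\mapsto t^{-q/2}$ is integrable on $(0,T)$ exactly when $q<2$; this is what produces the threshold in the statement.

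First I would handle the spatial regularity. Since $\boldsymbol{u}(t)\in\boldsymbol{X}^p_{\sigma,\tau}(\Omega)$ is divergence free, the $\boldsymbol{X}^p_\tau(\Omega)$-norm collapses to $\|\boldsymbol{u}(t)\|_{\boldsymbol{L}^p(\Omega)}+\|\curl\boldsymbol{u}(t)\|_{\boldsymbol{L}^p(\Omega)}$, and on $\boldsymbol{X}^p_{\sigma,\tau}(\Omega)$ this norm is equivalent to the $\boldsymbol{W}^{1,p}(\Omega)$-norm, as recalled just after \eqref{vpt}. Combining this with \eqref{esthenp1} and \eqref{esthenp4} yields $\|\boldsymbol{u}(t)\|_{\boldsymbol{W}^{1,p}(\Omega)}\le C\,(1+t^{-1/2})\,\|\boldsymbol{u}_0\|_{\boldsymbol{L}^p(\Omega)}$. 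Raising to the power $q$ and integrating over $(0,T)$, the only possibly singular contribution near $t=0$ is $t^{-q/2}$, which is integrable iff $q<2$, giving $\boldsymbol{u}\in L^q(0,T;\boldsymbol{W}^{1,p}(\Omega))$ for all $1\le q<2$.

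For the time derivative I would use $\partial_t\boldsymbol{u}(t)=-A_p\boldsymbol{u}(t)=\Delta\boldsymbol{u}(t)$ (Proposition \ref{sl}) and estimate it in $[\boldsymbol{H}^{p'}_0(\mathrm{div},\Omega)]'$ through the Green formula of Lemma \ref{fg1}. Since $\boldsymbol{u}(t)\in\mathbf{D}(A_p)$ satisfies $\curl\boldsymbol{u}(t)\times\boldsymbol{n}=\boldsymbol{0}$ on $\Gamma$, the boundary term vanishes and, for every divergence free $\boldsymbol{\varphi}\in\boldsymbol{X}^{p'}_\tau(\Omega)$, one gets $-\langle\Delta\boldsymbol{u}(t),\boldsymbol{\varphi}\rangle_\Omega=\int_\Omega\curl\boldsymbol{u}(t)\cdot\curl\overline{\boldsymbol{\varphi}}\,\mathrm{d}x$. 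This is meant to replace the two derivatives in $\Delta\boldsymbol{u}(t)$ by the single derivative $\curl\boldsymbol{u}(t)$, producing $\|\partial_t\boldsymbol{u}(t)\|_{[\boldsymbol{H}^{p'}_0(\mathrm{div},\Omega)]'}\le C\,\|\curl\boldsymbol{u}(t)\|_{\boldsymbol{L}^p(\Omega)}\le C\,t^{-1/2}\|\boldsymbol{u}_0\|_{\boldsymbol{L}^p(\Omega)}$ by \eqref{esthenp4}; integrating the $q$-th power again gives $q<2$.

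The hard part will be exactly this last estimate. The crude route, using the continuous embedding $\boldsymbol{L}^p(\Omega)\hookrightarrow[\boldsymbol{H}^{p'}_0(\mathrm{div},\Omega)]'$, only gives $\|\partial_t\boldsymbol{u}(t)\|_{[\boldsymbol{H}^{p'}_0(\mathrm{div},\Omega)]'}\le C\|\Delta\boldsymbol{u}(t)\|_{\boldsymbol{L}^p(\Omega)}$, which by \eqref{esthenp2} decays like $t^{-1}$ and would restrict the range to $q<1$; this is why the statement invokes \eqref{esthenp1} and \eqref{esthenp4} but deliberately \emph{not} \eqref{esthenp2}. Reaching the full range $q<2$ therefore hinges on genuinely exploiting the weakness of the norm $[\boldsymbol{H}^{p'}_0(\mathrm{div},\Omega)]'$ to gain the extra half derivative. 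The subtle point is that Lemma \ref{fg1} only supplies the curl–curl identity for divergence free test functions $\boldsymbol{\varphi}\in\boldsymbol{X}^{p'}_\tau(\Omega)$, whereas the dual norm runs over all of $\boldsymbol{H}^{p'}_0(\mathrm{div},\Omega)$; I would try to close this gap by a Helmholtz-type splitting $\boldsymbol{\varphi}=\boldsymbol{\varphi}_\sigma+\nabla\pi$, in which the gradient part pairs trivially with the divergence free field $\Delta\boldsymbol{u}(t)$, and by invoking the density results of Section \ref{operators}, while checking carefully that the $t^{-1/2}$ bound survives the passage from the divergence free test functions to the full space. This is the step where I would expect the real work to lie, and where the precise meaning given to $\partial_t\boldsymbol{u}$ in the weak space must be pinned down.
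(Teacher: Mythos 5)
Your treatment of the first assertion is correct and is exactly the paper's argument: $\boldsymbol{u}(t)\in\mathbf{D}(A_p)\subset\boldsymbol{X}^{p}_{\sigma,\tau}(\Omega)$, Lemma \ref{con1} gives $\Vert\boldsymbol{u}(t)\Vert_{\boldsymbol{W}^{1,p}(\Omega)}\le C\big(\Vert\boldsymbol{u}(t)\Vert_{\boldsymbol{L}^{p}(\Omega)}+\Vert\boldsymbol{\mathrm{curl}}\,\boldsymbol{u}(t)\Vert_{\boldsymbol{L}^{p}(\Omega)}\big)$, and \eqref{esthenp1}, \eqref{esthenp4} plus the integrability of $t^{-q/2}$ near $t=0$ for $q<2$ give $\boldsymbol{u}\in L^{q}(0,T;\boldsymbol{W}^{1,p}(\Omega))$.

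For the second assertion your plan does not close, and the obstruction is structural rather than a matter of unfinished detail. After the Helmholtz splitting $\boldsymbol{\varphi}=\boldsymbol{\varphi}_\sigma+\nabla\pi$ of a test function $\boldsymbol{\varphi}\in\boldsymbol{H}^{p'}_{0}(\mathrm{div},\Omega)$, the gradient part does pair to zero with $\Delta\boldsymbol{u}(t)\in\boldsymbol{L}^{p}_{\sigma,\tau}(\Omega)$, but the remaining part $\boldsymbol{\varphi}_\sigma=P\boldsymbol{\varphi}$ is merely an element of $\boldsymbol{L}^{p'}_{\sigma,\tau}(\Omega)$, on which the $\boldsymbol{H}^{p'}_{0}(\mathrm{div},\Omega)$-norm collapses to the plain $\boldsymbol{L}^{p'}$-norm and which carries no control on $\boldsymbol{\mathrm{curl}}\,\boldsymbol{\varphi}_\sigma$. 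The curl--curl identity of Lemma \ref{fg1} requires $\boldsymbol{\varphi}\in\boldsymbol{X}^{p'}_{\tau}(\Omega)$, and although such fields are dense in $\boldsymbol{L}^{p'}_{\sigma,\tau}(\Omega)$, the factor $\Vert\boldsymbol{\mathrm{curl}}\,\boldsymbol{\varphi}\Vert_{\boldsymbol{L}^{p'}(\Omega)}$ is not controlled along any approximating sequence, so the estimate does not survive the density passage you invoke. In fact your own reduction shows that
\begin{equation*}
\Vert\Delta\boldsymbol{u}(t)\Vert_{[\boldsymbol{H}^{p'}_{0}(\mathrm{div},\Omega)]'}\;\ge\;\sup_{\boldsymbol{\varphi}_\sigma\in\boldsymbol{L}^{p'}_{\sigma,\tau}(\Omega)\setminus\{0\}}\frac{\big|\int_\Omega\Delta\boldsymbol{u}(t)\cdot\overline{\boldsymbol{\varphi}_\sigma}\,\mathrm{d}\,x\big|}{\Vert\boldsymbol{\varphi}_\sigma\Vert_{\boldsymbol{L}^{p'}(\Omega)}},
\end{equation*}
and by $L^{p}$--$L^{p'}$ duality together with the boundedness of the projection \eqref{helmholtzproj2} this supremum already dominates $c\,\Vert\Delta\boldsymbol{u}(t)\Vert_{\boldsymbol{L}^{p}(\Omega)}$. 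A duality computation over divergence-free tangential test functions therefore cannot detect anything weaker than the $\boldsymbol{L}^{p}$-norm of $\Delta\boldsymbol{u}(t)$, whose decay is only $t^{-1}$ by \eqref{esthenp2}; the half-derivative gain you are after cannot be produced by this route.

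The paper proceeds differently. It sets $\widetilde{\boldsymbol{u}}(t)=\boldsymbol{u}(t)-\sum_{j=1}^{J}\langle\boldsymbol{u}(t)\cdot\boldsymbol{n},1\rangle_{\Sigma_j}\widetilde{\boldsymbol{\mathrm{grad}}}\,q_j^{\tau}$, observes $\Delta\widetilde{\boldsymbol{u}}=\Delta\boldsymbol{u}$, and quotes the a priori equivalence for the stationary weak Stokes problem, $\Vert\Delta\widetilde{\boldsymbol{u}}\Vert_{[\boldsymbol{H}^{p'}_{0}(\mathrm{div},\Omega)]'}\simeq\Vert\widetilde{\boldsymbol{u}}\Vert_{\boldsymbol{W}^{1,p}(\Omega)}$ from \cite[Proposition 4.3]{Am3}, valid for fields with vanishing fluxes through the cuts; together with $|\langle\boldsymbol{u}(t)\cdot\boldsymbol{n},1\rangle_{\Sigma_j}|\le C\Vert\boldsymbol{u}(t)\Vert_{\boldsymbol{L}^{p}(\Omega)}$ this yields $\Vert\partial_t\boldsymbol{u}(t)\Vert_{[\boldsymbol{H}^{p'}_{0}(\mathrm{div},\Omega)]'}\le C\Vert\boldsymbol{u}(t)\Vert_{\boldsymbol{W}^{1,p}(\Omega)}\lesssim (1+t^{-1/2})\Vert\boldsymbol{u}_0\Vert_{\boldsymbol{L}^{p}(\Omega)}$ and hence the range $q<2$. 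Two ingredients are missing from your proposal: the subtraction of the kernel component, without which the cited equivalence cannot hold (each $\widetilde{\boldsymbol{\mathrm{grad}}}\,q_j^{\tau}$ is harmonic yet has nonzero $\boldsymbol{W}^{1,p}$-norm), and the external elliptic isomorphism itself, which is the step that genuinely trades the two derivatives in $\Delta$ for one; it is not recoverable from Lemma \ref{fg1} and a Helmholtz splitting alone.
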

\begin{proof}
Let $\boldsymbol{u}(t)$ be the unique solution of Problem (\ref{henp}). By hypothesis we know that $\boldsymbol{u}$ satisfies the estimates (\ref{esthenp1})-(\ref{esthenp3}). Now thanks to Lemma \ref{con1} we know that $$\Vert\boldsymbol{u}(t)\Vert_{\boldsymbol{W}^{1,p}(\Omega)}\simeq\Vert\boldsymbol{u}(t)\Vert_{\boldsymbol{L}^{p}(\Omega)}+\Vert\boldsymbol{\mathrm{curl}}\,\boldsymbol{u}(t)\Vert_{\boldsymbol{L}^{p}(\Omega)}.$$ Thus one deduces directly that $\boldsymbol{u}\in L^{q}(0,T;\,\boldsymbol{W}^{1,p}(\Omega))$ for all $1\leq q<2$ and for all $0<T<\infty
$.\\ 
Next, let us prove that $\frac{\partial\boldsymbol{u}}{\partial t}\in L^{q}(0,T;\,[\boldsymbol{H}^{p'}_{0}(\mathrm{div},\Omega)]'),$ 
 set $$\widetilde{\boldsymbol{u}}(t)=\boldsymbol{u}(t)-\sum_{j=1}^{J}\langle\boldsymbol{u}(t)\cdot\boldsymbol{n}\,,\,1\rangle_{\Sigma_{j}}\widetilde{\boldsymbol{\mathrm{grad}}}\,q_{j}^{\tau}.$$ It is clear that $\boldsymbol{u}(t)\,=\,\widetilde{\boldsymbol{u}}(t)\,+\,\sum_{j=1}^{J}\langle\boldsymbol{u}(t)\cdot\boldsymbol{n}\,,\,1\rangle_{\Sigma_{j}}\widetilde{\boldsymbol{\mathrm{grad}}}\,q_{j}^{\tau}.$  Moreover thanks to \cite[Proposition 4.3]{Am3} we know that 
\begin{equation*}
\Vert\Delta\boldsymbol{u}\Vert_{[\boldsymbol{H}^{p'}_{0}(\mathrm{div},\Omega)]'}\,=\,\Vert\Delta\widetilde{\boldsymbol{u}}\Vert_{[\boldsymbol{H}^{p'}_{0}(\mathrm{div},\Omega)]'}\,\simeq\,\Vert\widetilde{\boldsymbol{u}}\Vert_{\boldsymbol{W}^{1,p}(\Omega)}\,\leq\, \Vert\boldsymbol{u}\Vert_{\boldsymbol{W}^{1,p}(\Omega)}.
\end{equation*}
The last inequality comes from the fact (see \cite[Lemma 3.2]{Am4}) 
\begin{equation*}
\vert\langle\boldsymbol{u}\cdot\boldsymbol{n}\,,\,1\rangle_{\Sigma_{j}}\vert\,\leq\,C(\Omega,p)\,\Vert\boldsymbol{u}\Vert_{\boldsymbol{L}^{p}(\Omega)}.
\end{equation*}
Thus 
 $\frac{\partial\boldsymbol{u}}{\partial t}=\Delta\boldsymbol{u}\in L^{q}(0,T;\,[\boldsymbol{H}^{p'}_{0}(\mathrm{div},\Omega)]')$ and the result is proved.
\end{proof}

We observe the following remark:
\begin{rmk}\label{rmkhenp}
\rm{
\textbf{(i)} In the Hilbertian case ($\boldsymbol{u}_{0}\in\boldsymbol{L}^{2}_{\sigma,\tau}(\Omega)$), the properties (\ref{esthenp1})-(\ref{esthenp3}) are immediate. We will prove estimate (\ref{esthenp4}). Observe that, thanks to Propositon \ref{eva} and Remark \ref{remarkeva},  on $\boldsymbol{L}^{2}_{\sigma,\tau}(\Omega)$  we can express $\boldsymbol{u}(t)$
explicitly in the form
\begin{equation}
\boldsymbol{u}(t)\,=\,\sum_{j=1}^{J}\alpha_{j}\,\widetilde{\boldsymbol{\mathrm{grad}}}\,q^{\tau}_{j}\,+\,\sum_{k=1}^{+\infty}\beta_{k}\,e^{-\lambda_{k}\,t}\,\boldsymbol{z}_{k},
\end{equation}
where
\begin{equation*}
\alpha_{j}\,=\,\int_{\Omega}\boldsymbol{u}_{0}\cdot\widetilde{\boldsymbol{\mathrm{grad}}}\,\overline{q^{\tau}_{j}}\,\textrm{d}\,x\qquad
\textrm{and}\qquad
\beta_{k}\,=\,\int_{\Omega}\boldsymbol{u}_{0}\cdot\overline{\boldsymbol{z}_{k}}\,\textrm{d}\,x.
\end{equation*}
As a result, using the fact that $A_{2}\boldsymbol{z}_{k}=\lambda_{k}\,\boldsymbol{z}_{k}$ and the fact that
\begin{equation*}
\int_{\Omega}\vert\boldsymbol{\mathrm{curl}}\,\boldsymbol{z}_{k}\vert^{2}\mathrm{d}\,x=\lambda_{k}\,\Vert\boldsymbol{z}_{k}\Vert^{2}_{\boldsymbol{L}^{2}(\Omega)}=\lambda_{k}
\end{equation*}
one has
\begin{equation*}
\Vert\boldsymbol{\mathrm{curl}}\,\boldsymbol{u}(t)\Vert^{2}_{\boldsymbol{L}^{2}(\Omega)}\,=\,\sum_{k=1}^{+\infty}\beta_{k}^{2}\,e^{-2\lambda_{k}\,t}\lambda_{k}.
\end{equation*}
Finally, since 
\begin{equation*}
\Vert\boldsymbol{u}_{0}\Vert^{2}_{\boldsymbol{L}^{2}(\Omega)}=\sum_{j=1}^{J}\alpha^{2}_{j}\,+\,\sum^{+\infty}_{k=1}\beta^{2}_{k}
\end{equation*} estimate (\ref{esthenp4}) follows directly.  Similarly 
 one gets directly
 estimates (\ref{esthenp1})-(\ref{esthenp3}). We recall that $(\boldsymbol{z}_{k})_{k}$ are eigenvectors for the Stokes operator 
associated to the eigenvalues $(\lambda_{k})_{k}$ and they form with $(\widetilde{\boldsymbol{\mathrm{grad}}}\,q^{\tau}_{j})_{1\leq j\leq J}$ an orthonormal basis for $\boldsymbol{L}^{2}_{\sigma,\tau}(\Omega)$ .

\noindent\textbf{(ii)} For $p=2,$ the solution $\boldsymbol{u}$ satisfies (see \cite[Theorem 6.4]{Am5}) \begin{equation}\label{lrw12}
\boldsymbol{u}\in L^{2}(0,T;\,\boldsymbol{H}^{1}(\Omega))\,\,\,\,\textrm{and}\qquad\frac{\partial\boldsymbol{u}}{\partial t}\in L^{2}(0,T;\,[\boldsymbol{H}^{2}_{0}(\mathrm{div},\Omega)]')
\end{equation} 
and
\begin{equation*}
\frac{1}{2}\,\frac{\mathrm{d}}{\mathrm{d\,t}}\,\Vert\boldsymbol{u}(t)\Vert^{2}_{\boldsymbol{L}^{2}(\Omega)}\,+\,\int_{\Omega}\vert\boldsymbol{\mathrm{curl}}\,\boldsymbol{u}(t)\vert^{2}\,\mathrm{d}\,x\,=\,0.
\end{equation*}
In other words for $p=2$, Corollary \ref{corolrlpw1p} still holds true for $q=2$ included.

}
\end{rmk}

We consider now the case where the initial data $\boldsymbol{u}_{0}\in\boldsymbol{X}_{p}$, (see \ref{Xp} for the definition of $\boldsymbol{X}_{p}$).
\begin{theo}\label{exishenpA'}
 Suppose that $\boldsymbol{u}_{0}\in\boldsymbol{X}_{p}$ and let $\boldsymbol{u}$ be the unique solution to Problem
 (\ref{henp}). Then $\boldsymbol{u}$ satisfies the following:
\begin{equation}\label{Reghenp1A'}
\boldsymbol{u}\in
C([0,\,+\infty[,\,\boldsymbol{X}_{p})\cap
C(]0,\,+\infty[,\,\mathbf{D}(A'_{p}))\cap
C^{1}(]0,\,+\infty[,\,\boldsymbol{X}_{p}),
\end{equation}
\begin{equation}\label{Reghenp2A'}
\boldsymbol{u}\in C^{k}(]0,\,+\infty[,\,\mathbf{D}((A'_{p})^{\ell})),\qquad
\forall\,k,\,\ell\in\mathbb{N}.
\end{equation}
Moreover, for all $q\in [p, \infty)$, for all  integers $m\ge 0$,  $n\ge 0$ and for all $\mu \in (0, \lambda _1)$ there exists a constant $M>0$ such that  the solution $\boldsymbol{u}$ satisfies, for all $t>0$:
\begin{equation}\label{estlplqutxp}
 \Vert\boldsymbol{u}(t)\Vert_{\boldsymbol{L}^{q}(\Omega)}\,\leq\,M\,e^{-\mu \,t}\,t^{-3/2(1/p-1/q)}\Vert \boldsymbol{u}_{0}\Vert_{\boldsymbol{L}^{p}(\Omega)},
 \end{equation}
 \begin{equation}\label{estlplqcurlutxp}
  \Vert\boldsymbol{\mathrm{curl}}\,\boldsymbol{u}(t)\Vert_{\boldsymbol{L}^{q}(\Omega)}\,\leq\,M\,e^{-\mu \,t}\,t^{-3/2(1/p-1/q)-1/2}\Vert \boldsymbol{u}_{0}\Vert_{\boldsymbol{L}^{p}(\Omega)}
 \end{equation}
 and
 \begin{equation}\label{estlplqlaputxp}
 \Big\Vert\frac{\partial^{m}}{\partial t^{m}}\Delta^{n}\boldsymbol{u}(t)\Big\Vert_{\boldsymbol{L}^{q}(\Omega)}\,\leq\,M\,e^{-\mu \,t}\,t^{-(m+n)-3/2(1/p-1/q)}\Vert \boldsymbol{u}_{0}\Vert_{\boldsymbol{L}^{p}(\Omega)},
 \end{equation}  
where $\lambda_{1}$ is the first non zero eigenvalue of the Stokes operator defined above.
\end{theo}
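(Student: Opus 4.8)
The plan is to realise the solution of \eqref{henp} as $\boldsymbol{u}(t)=S(t)\boldsymbol{u}_0$, where $(S(t))_{t\ge0}$ is the bounded analytic semigroup generated by $-A'_p$ on $\boldsymbol{X}_p$ (Theorem \ref{analsemi3}), and then to combine its analyticity, its exponential stability, and the Sobolev embeddings of Corollary \ref{SoboembAS}. The regularity statements \eqref{Reghenp1A'} and \eqref{Reghenp2A'} are obtained word for word as in Theorem \ref{exishenp}: analyticity gives $\boldsymbol{u}\in C([0,\infty[,\boldsymbol{X}_p)\cap C(]0,\infty[,\mathbf{D}(A'_p))\cap C^1(]0,\infty[,\boldsymbol{X}_p)$, and since $S(t)\boldsymbol{u}_0\in\mathbf{D}((A'_p)^\infty)=\cap_\ell\mathbf{D}((A'_p)^\ell)$ for $t>0$, the higher regularity \eqref{Reghenp2A'} follows. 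The only genuinely new content is the exponentially weighted $\boldsymbol{L}^p$--$\boldsymbol{L}^q$ bounds \eqref{estlplqutxp}--\eqref{estlplqlaputxp}.

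The first key step is exponential stability: for every $\mu\in(0,\lambda_1)$ there is $M>0$ with $\Vert S(t)\Vert_{\mathcal{L}(\boldsymbol{X}_p)}\le M\,e^{-\mu t}$. Since $A'_p$ is invertible with compact inverse, its spectrum consists of isolated eigenvalues of finite multiplicity; by elliptic regularity the eigenfunctions are smooth, hence lie in $\boldsymbol{X}_2\cap\boldsymbol{X}_p$, so the spectrum is independent of $p$ and equals $\{\lambda_k\}_{k\ge1}$ from Proposition \ref{eva}, whose infimum is $\lambda_1>0$. For the analytic semigroup $(S(t))$ the growth bound equals the spectral bound $-\lambda_1$, whence the claim. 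For $p=2$ this is also immediate from the eigenfunction expansion in Remark \ref{remarkeva}.

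The second key step is the auxiliary estimate
\[
\Vert(A'_p)^{\beta}S(t)\Vert_{\mathcal{L}(\boldsymbol{X}_p)}\le M\,e^{-\mu t}\,t^{-\beta},\qquad t>0,\ \beta\ge0,\ \mu\in(0,\lambda_1),
\]
together with its analogue on $\boldsymbol{X}_q$. For $0<t\le1$ it follows from Proposition \ref{t-alpha} since $e^{-\mu t}\simeq1$ there; for $t\ge1$ I write $(A'_p)^\beta S(t)=S(t-1)(A'_p)^\beta S(1)$, bound the factors by $M'e^{-\mu'(t-1)}$ (with $\mu<\mu'<\lambda_1$, from step one) and by a constant (Proposition \ref{t-alpha} at time $1$), and absorb the gap using $\sup_{t\ge1}t^\beta e^{-(\mu'-\mu)t}<\infty$. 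With this in hand, \eqref{estlplqutxp} is immediate: setting $\alpha=\tfrac32(1/p-1/q)\in[0,3/2p)$ so that $\tfrac1q=\tfrac1p-\tfrac{2\alpha}{3}$, Corollary \ref{SoboembAS} gives $\Vert\boldsymbol{u}(t)\Vert_{\boldsymbol{L}^q}\le C\Vert(A'_p)^\alpha S(t)\boldsymbol{u}_0\Vert_{\boldsymbol{L}^p}\le CM\,e^{-\mu t}t^{-\alpha}\Vert\boldsymbol{u}_0\Vert_{\boldsymbol{L}^p}$, the case $q=p$, $\alpha=0$ being trivial.

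For \eqref{estlplqcurlutxp} and \eqref{estlplqlaputxp} I use the semigroup splitting $S(t)=S(t/2)S(t/2)$. Writing $\boldsymbol{w}=S(t/2)\boldsymbol{u}_0\in\mathbf{D}((A'_q)^\infty)$, the already proved estimate \eqref{estlplqutxp} at time $t/2$ controls $\Vert\boldsymbol{w}\Vert_{\boldsymbol{L}^q}$. For the vorticity, Theorem \ref{DA1/2} at exponent $q$ and the auxiliary estimate on $\boldsymbol{X}_q$ give $\Vert\boldsymbol{\mathrm{curl}}\,\boldsymbol{u}(t)\Vert_{\boldsymbol{L}^q}\le C\Vert(A'_q)^{1/2}S(t/2)\boldsymbol{w}\Vert_{\boldsymbol{L}^q}\le CM\,e^{-\mu t/2}(t/2)^{-1/2}\Vert\boldsymbol{w}\Vert_{\boldsymbol{L}^q}$, which multiplies out to \eqref{estlplqcurlutxp}. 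For \eqref{estlplqlaputxp} I first observe, from Proposition \ref{sl} and $\partial_t\boldsymbol{u}=\Delta\boldsymbol{u}=-A'_p\boldsymbol{u}$, that $\partial_t^m\Delta^n\boldsymbol{u}(t)=(-1)^{m+n}(A'_p)^{m+n}S(t)\boldsymbol{u}_0$; the same splitting with the auxiliary estimate on $\boldsymbol{X}_q$ for the integer power $m+n$ then yields $\Vert(A'_q)^{m+n}S(t/2)\boldsymbol{w}\Vert_{\boldsymbol{L}^q}\le CM\,e^{-\mu t/2}(t/2)^{-(m+n)}\Vert\boldsymbol{w}\Vert_{\boldsymbol{L}^q}$, giving \eqref{estlplqlaputxp}. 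The main obstacle is the first step, namely proving that the decay rate on $\boldsymbol{X}_p$ is governed by the same $\lambda_1$ as in the Hilbert case, i.e. the $p$-independence of the spectrum; once that and the combined short-time/long-time bound on $(A'_p)^\beta S(t)$ are secured, the three estimates follow mechanically.
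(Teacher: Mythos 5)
Your proposal is correct and follows essentially the same route as the paper: the semigroup generated by $-A'_p$ on $\boldsymbol{X}_p$, the spectral bound $-\lambda_1$ yielding the exponential factor, the bound $\Vert(A'_p)^\beta S(t)\Vert_{\mathcal{L}(\boldsymbol{X}_p)}\le M e^{-\mu t}t^{-\beta}$, the Sobolev embedding of $\mathbf{D}((A'_p)^\alpha)$ for the $L^p$--$L^q$ smoothing, and the splitting $S(t)=S(t/2)S(t/2)$ for the curl and higher derivatives. The only (harmless) deviations are that you apply Corollary \ref{SoboembAS} directly with $\alpha=\tfrac{3}{2}(1/p-1/q)$ where the paper interpolates between $\boldsymbol{L}^{p_0}(\Omega)$ and $\boldsymbol{L}^{p}(\Omega)$ with an auxiliary exponent $s>\alpha$, and that you use the smoothing $S(t/2)\boldsymbol{u}_0\in\boldsymbol{X}_q$ where the paper first assumes $\boldsymbol{u}_0\in\boldsymbol{X}_p\cap\boldsymbol{X}_q$ and concludes by density.
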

\begin{proof}
Applying the semi-group theory to the operator $A'_{p}$, one gets the existence and uniqueness of a solution to the homogeneous Stokes Problem \eqref{henp} given by $\boldsymbol{v}(t)=T(t)_{\vert\boldsymbol{X}_{p}}\boldsymbol{u}_{0}$ and satisfying (\ref{Reghenp1A'})-(\ref{Reghenp2A'}). We recall that $(T(t)_{\vert\boldsymbol{X}_{p}})_{t\geq 0}$ is the semi-group generated by the Stokes operator with flux boundary conditions on $\boldsymbol{X}_{p}$. Moreover, since $\boldsymbol{X}_{p}\subset\boldsymbol{L}^{p}_{\sigma,\tau}(\Omega)$,  by the uniqueness of solution $\boldsymbol{u}$ in Theorem \ref{exishenp}, we deduce that $\boldsymbol{v}(t)=\boldsymbol{u}(t)=T(t)\boldsymbol{u}_{0}$, the unique solution to Problem \eqref{henp}.  Let us prove estimates \eqref{estlplqutxp}--\eqref{estlplqlaputxp}.  To this end observe first that, by Theorem \ref{exislpA'} and to \cite{Am3}, we have:
\begin{equation*}
S(-A'_{p})=\sup \{\mathrm{Re}\,\lambda\in\sigma(-A'_{p})\}=-\lambda_{1}<0.
\end{equation*}
As a result, thanks to \cite[Chapitre 4, Theorem 4.3, page 118]{Pa}, there is a constant $M>0$ such that for all $0<\mu<\lambda_{1},\,$ $\Vert T(t)_{\vert\boldsymbol{X}_{p}}\Vert_{\mathcal{L}(\boldsymbol{X}_{p})}\leq\,M\,\kappa_{1}(\Omega,p)\,e^{-\mu\,t}$. 

The estimates  \eqref{estlplqutxp}--\eqref{estlplqlaputxp}  follow for the cases where  $q=p$ and  $m=1, n=0$ or $m=0, n=0, 1, 2$ using the classical semi-group theory.

Suppose that $p\neq q$, the proof is similar to the proof of \cite[Corollary 4.6]{Bor2}. Let $s\in\mathbb{R}$ such that $\frac{3}{2}(\frac{1}{p}-\frac{1}{q})<s<\frac{3}{2p}$ and set $\frac{1}{p_{0}}=\frac{1}{p}-\frac{2s}{3}$. It is clear that $p<q<p_{0}$. Let $\boldsymbol{u}(t)$ be the unique solution of Problem (\ref{henp}). Since for all $t>0$, $\boldsymbol{u}(t)\in\mathbf{D}((A'_{p})^{\infty})$, then thanks to Corollary \ref{SoboembAS}, $\boldsymbol{u}(t)\in\mathbf{D}((A'_{p})^{s})\hookrightarrow\boldsymbol{L}^{p_{0}}(\Omega)$. Now set $\alpha=\frac{1/p-1/q}{1/p-1/p_{0}}\in\left] 0,1\right[ $, we can easily verify that $\frac{1}{q}=\frac{\alpha}{p_{0}}+\frac{1-\alpha}{p}$. Thus $\boldsymbol{u}(t)\in\boldsymbol{L}^{q}(\Omega)$ and
 \begin{eqnarray}
 \Vert\boldsymbol{u}(t)\Vert_{\boldsymbol{L}^{q}(\Omega)}&\leq&C \Vert\boldsymbol{u}(t)\Vert^{\alpha}_{\boldsymbol{L}^{p_{0}}(\Omega)}
 \Vert\boldsymbol{u}(t)\Vert^{1-\alpha}_{\boldsymbol{L}^{p}(\Omega)}\nonumber\\
 &\leq&C \Vert (A'_{p})^{s}T(t)\boldsymbol{u}_{0}\Vert^{\alpha}_{\boldsymbol{L}^{p}(\Omega)} \Vert T(t)\boldsymbol{u}_{0}\Vert^{1-\alpha}_{\boldsymbol{L}^{p}(\Omega)}\nonumber\\
 &\leq& C\,e^{-\mu t}t^{-\alpha s} \Vert\boldsymbol{u}_{0}\Vert_{\boldsymbol{L}^{p}(\Omega)}.\label{expodec}\\
 &=&C\,e^{-\mu t}t^{-3/2(1/p-1/q)} \Vert\boldsymbol{u}_{0}\Vert_{\boldsymbol{L}^{p}(\Omega)}.
\end{eqnarray}
Estimate \eqref{expodec} follows from the fact that, (cf. \cite[Chapter 2, Theorem 6.13, page 76]{Pa}),
\begin{equation}\label{semigroupproperty1}
\Vert (A'_{p})^\alpha \,T(t)_{\vert\boldsymbol{X}_{p}}\Vert_{\mathcal{L}(\boldsymbol{X}_{p})}\,\leq\,M\,\kappa_{1}(\Omega,p)\,\frac{e^{-\mu\,t}}{t^\alpha }.
\end{equation}
 
  Next, let $\boldsymbol{u}_{0}\in\boldsymbol{X}_{p}\cap\boldsymbol{X}_{q}$ then $\boldsymbol{\mathrm{curl}}\,\boldsymbol{u}(t)\in\boldsymbol{L}^{q}(\Omega)$ and
  \begin{eqnarray*}
   \Vert\boldsymbol{\mathrm{curl}}\,\boldsymbol{u}(t)\Vert_{\boldsymbol{L}^{q}(\Omega)}\,\leq\,C\,\Vert (A'_{q})^{\frac{1}{2}}\boldsymbol{u}(t)\Vert_{\boldsymbol{L}^{q}(\Omega)}&=&\Vert(A'_{q})^{\frac{1}{2}}\,T(t/2)T(t/2)\boldsymbol{u}_{0}\Vert_{\boldsymbol{L}^{q}(\Omega)}\\
   &\leq&C\,e^{-\mu t}t^{-1/2}\,\Vert T(t/2)\boldsymbol{u}_{0}\Vert_{\boldsymbol{L}^{q}(\Omega)}\\
   &\leq&C\,e^{-\mu t}t^{-1/2}\,t^{-3/2(1/p-1/q)}\Vert \boldsymbol{u}_{0}\Vert_{\boldsymbol{L}^{p}(\Omega)}.
  \end{eqnarray*}
  Now let $\boldsymbol{u}_{0}\in\boldsymbol{X}_{p}$, using the density of $\boldsymbol{X}_{p}\cap\boldsymbol{X}_{q}$ in $\boldsymbol{X}_{p}$ we know that there exists a sequence $(\boldsymbol{u}_{0_{m}})_{m\geq0}$ in  $\boldsymbol{X}_{p}\cap\boldsymbol{X}_{q}$ that converges to $\boldsymbol{u}_{0}$ in $\boldsymbol{X}_{p}$. For all $m\in\mathbb{N}$ we set $\boldsymbol{u}_{m}(t)=T(t)\boldsymbol{u}_{0_{m}}$, as a result the sequences $(\boldsymbol{u}_{m}(t))_{m\geq0}$ and $(\boldsymbol{\mathrm{curl}}\,\boldsymbol{u}_{m}(t))_{m\geq0}$ converges to $\boldsymbol{u}(t)$ and $\boldsymbol{\mathrm{curl}}\,\boldsymbol{u}(t)$ respectively in $\boldsymbol{L}^{p}(\Omega)$, where $u(t)=T(t)\boldsymbol{u}_{0}$. On the other hand, for all $m,n\in\mathbb{N}$ one has
 \begin{equation*}
  \Vert\boldsymbol{\mathrm{curl}}(\boldsymbol{u}_{n}(t)-\boldsymbol{u}_{m}(t))\Vert_{\boldsymbol{L}^{q}(\Omega)}\leq C\,e^{-\mu t}t^{-1/2}\,t^{-3/2(1/p-1/q)}\Vert \boldsymbol{u}_{0_{n}}-\boldsymbol{u}_{0_{m}}\Vert_{\boldsymbol{L}^{p}(\Omega)}.
 \end{equation*}
 Thus $(\boldsymbol{\mathrm{curl}}\,\boldsymbol{u}_{m}(t))_{m\geq0}$ is a Cauchy sequence in $\boldsymbol{L}^{q}(\Omega)$ and converges to $\boldsymbol{\mathrm{curl}}\,\boldsymbol{u}(t)$
 in $\boldsymbol{L}^{q}(\Omega)$. This means that $\boldsymbol{\mathrm{curl}}\,\boldsymbol{u}(t)\in\boldsymbol{L}^{q}(\Omega)$ and by passing to the limit as $m\rightarrow\infty$ one gets estimate \eqref{estlplqcurlut}.
   
 Finally, using \eqref{Reghenp1A'}-\eqref{Reghenp2A'}, we have for all $m,n\in\mathbb{N},\,$ $\frac{\partial^{m}}{\partial t^{m}}\Delta^{n}\boldsymbol{u}\in C^{\infty}((0,\infty),\,\mathbf{D}(A'_{p}))$. Thus  $\frac{\partial^{m}}{\partial t^{m}}\Delta^{n}\boldsymbol{u}(t)$ belongs to $\boldsymbol{L}^{q}(\Omega)$ and
  \begin{equation*}
  \Big\Vert\frac{\partial^{m}}{\partial t^{m}}\Delta^{n}\boldsymbol{u}(t)\Big\Vert_{\boldsymbol{L}^{q}(\Omega)}=\Vert (A'_{p})^{(m+n)}\,T(t)\boldsymbol{u}_{0}\Vert_{\boldsymbol{L}^{q}(\Omega)}\leq C\,e^{-\mu t}t^{-(m+n)-3/2(1/p-1/q)}\Vert \boldsymbol{u}_{0}\Vert_{\boldsymbol{L}^{p}(\Omega)}.
 \end{equation*} 
\end{proof}

 Using now the results of Theorem \ref{exishenpA'} we will  extend estimates (\ref{esthenp1})-(\ref{esthenp3}) and obtain the following  $L^{p}-L^{q}$ estimates
 \begin{theo}\label{theoestlplq}
 Let $1<p\leq q<\infty$ and $\boldsymbol{u}_{0}\in\boldsymbol{L}^{p}_{\sigma,\tau}(\Omega)$. 
 The unique solution $\boldsymbol{u}$ to  Problem (\ref{henp}) given by Theorem \ref{exishenp} belongs to $\boldsymbol{L}^{q}(\Omega)$ and  satisfies, for all $t>0$:
 \begin{equation}\label{estlplqut}
 \Vert\boldsymbol{u}(t)-\boldsymbol{w}_{0}\Vert_{\boldsymbol{L}^{q}(\Omega)}\,\leq\,C\,e^{-\mu \,t}\,t^{-3/2(1/p-1/q)}\Vert \widetilde{\boldsymbol{u}}_{0}\Vert_{\boldsymbol{L}^{p}(\Omega)},
 \end{equation}
 with $\boldsymbol{w}_{0}$ and $\widetilde{\boldsymbol{u}}_{0}$ are given by \eqref{W0introduction} and \eqref{widetildeu0} respectively. Moreover, the following estimates hold 
 \begin{equation}\label{estlplqcurlut}
  \Vert\boldsymbol{\mathrm{curl}}\,\boldsymbol{u}(t)\Vert_{\boldsymbol{L}^{q}(\Omega)}\,\leq\,C\,e^{-\mu t}t^{-3/2(1/p-1/q)-1/2}\Vert \widetilde{\boldsymbol{u}}_{0}\Vert_{\boldsymbol{L}^{p}(\Omega)},
 \end{equation}
 \begin{equation}\label{estlplqlaput}
\forall\,m,n\in\mathbb{N},\,\,m+n>0,\quad\Big\Vert\frac{\partial^{m}}{\partial t^{m}}\Delta^{n}\boldsymbol{u}(t)\Big\Vert_{\boldsymbol{L}^{q}(\Omega)}\,\leq\,C\,e^{-\mu t}t^{-(m+n)-3/2(1/p-1/q)}\Vert \widetilde{\boldsymbol{u}}_{0}\Vert_{\boldsymbol{L}^{p}(\Omega)}.
 \end{equation}
 \end{theo}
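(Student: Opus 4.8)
The plan is to reduce everything to the estimates already established in Theorem \ref{exishenpA'} by exploiting the direct sum decomposition $\boldsymbol{L}^{p}_{\sigma,\tau}(\Omega)=\boldsymbol{K}_{\tau}(\Omega)\oplus\boldsymbol{X}_{p}$ recorded in \eqref{realtionorthogo}. First I would check that the splitting $\boldsymbol{u}_0 = \widetilde{\boldsymbol{u}}_0 + \boldsymbol{w}_0$ given by \eqref{widetildeu0}--\eqref{W0introduction} is exactly this decomposition: by construction $\boldsymbol{w}_0$ is a linear combination of the $\widetilde{\boldsymbol{\mathrm{grad}}}\,q_j^\tau$, hence $\boldsymbol{w}_0 \in \boldsymbol{K}_\tau(\Omega)$; and using the normalization $\langle \partial_n q_j^\tau, 1\rangle_{\Sigma_k} = \delta_{jk}$ from \eqref{gradqj} together with $\widetilde{\boldsymbol{\mathrm{grad}}}\,q_j^\tau \cdot \boldsymbol{n} = \partial_n q_j^\tau$ on the cuts, one computes $\langle \widetilde{\boldsymbol{u}}_0 \cdot \boldsymbol{n}, 1\rangle_{\Sigma_k} = 0$ for every $k$, so that $\widetilde{\boldsymbol{u}}_0 \in \boldsymbol{X}_p$ by Remark \ref{cdflux}.

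Next, since $A_p \boldsymbol{w}_0 = \boldsymbol{0}$, the bounded analytic semigroup $(T(t))_{t\geq0}$ generated by $-A_p$ leaves $\boldsymbol{w}_0$ invariant, $T(t)\boldsymbol{w}_0 = \boldsymbol{w}_0$, while on $\boldsymbol{X}_p$ it restricts to the semigroup $T(t)_{|\boldsymbol{X}_p}$ generated by $-A'_p$ (the Remark following Theorem \ref{analsemi3}). By linearity the solution furnished by Theorem \ref{exishenp} is therefore $\boldsymbol{u}(t) = T(t)\boldsymbol{u}_0 = \boldsymbol{w}_0 + T(t)_{|\boldsymbol{X}_p}\widetilde{\boldsymbol{u}}_0$. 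Writing $\boldsymbol{v}(t) = T(t)_{|\boldsymbol{X}_p}\widetilde{\boldsymbol{u}}_0$, I would then apply Theorem \ref{exishenpA'} with the initial datum $\widetilde{\boldsymbol{u}}_0 \in \boldsymbol{X}_p$, which yields precisely the three bounds \eqref{estlplqutxp}--\eqref{estlplqlaputxp} for $\boldsymbol{v}$; note that $\boldsymbol{v}(t) \in \boldsymbol{L}^q(\Omega)$ and $\boldsymbol{w}_0 \in \boldsymbol{K}_\tau(\Omega) \subset \boldsymbol{L}^q(\Omega)$, whence $\boldsymbol{u}(t) \in \boldsymbol{L}^q(\Omega)$.

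Finally I would transfer each estimate through the identity $\boldsymbol{u}(t) - \boldsymbol{w}_0 = \boldsymbol{v}(t)$. The $\boldsymbol{L}^q$ bound \eqref{estlplqut} is then immediate. For \eqref{estlplqcurlut} and \eqref{estlplqlaput} the decisive point is that $\boldsymbol{w}_0 \in \boldsymbol{K}_\tau(\Omega)$ is divergence-free and curl-free, hence harmonic: $\boldsymbol{\mathrm{curl}}\,\boldsymbol{w}_0 = \boldsymbol{0}$ and $\Delta \boldsymbol{w}_0 = \boldsymbol{\mathrm{grad}}(\mathrm{div}\,\boldsymbol{w}_0) - \boldsymbol{\mathrm{curl}}\,\boldsymbol{\mathrm{curl}}\,\boldsymbol{w}_0 = \boldsymbol{0}$, and moreover $\boldsymbol{w}_0$ is independent of $t$. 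Consequently $\boldsymbol{\mathrm{curl}}\,\boldsymbol{u}(t) = \boldsymbol{\mathrm{curl}}\,\boldsymbol{v}(t)$ and, for $m+n>0$, $\frac{\partial^m}{\partial t^m}\Delta^n \boldsymbol{u}(t) = \frac{\partial^m}{\partial t^m}\Delta^n \boldsymbol{v}(t)$, so that \eqref{estlplqcurlut}--\eqref{estlplqlaput} follow directly from \eqref{estlplqcurlutxp}--\eqref{estlplqlaputxp}.

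The argument is essentially structural, so there is no hard analytic core once Theorem \ref{exishenpA'} is in hand; the only steps requiring genuine care are verifying that the explicit projection \eqref{widetildeu0}--\eqref{W0introduction} realizes the abstract splitting $\boldsymbol{K}_\tau(\Omega) \oplus \boldsymbol{X}_p$ (in particular that $\widetilde{\boldsymbol{u}}_0$ satisfies the flux condition), and the observation that the kernel component is annihilated by both $\boldsymbol{\mathrm{curl}}$ and $\Delta$ and fixed by the flow. This last fact is exactly what makes the three estimates depend only on $\widetilde{\boldsymbol{u}}_0$ rather than on the full datum $\boldsymbol{u}_0$, and explains why the $\boldsymbol{L}^q$ estimate for $\boldsymbol{u}$ must be stated for $\boldsymbol{u}(t)-\boldsymbol{w}_0$ (the constant part $\boldsymbol{w}_0$ does not decay).
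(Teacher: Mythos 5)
Your proposal is correct and follows essentially the same route as the paper's own proof: decompose $\boldsymbol{u}_0=\boldsymbol{w}_0+\widetilde{\boldsymbol{u}}_0$ along $\boldsymbol{K}_\tau(\Omega)\oplus\boldsymbol{X}_p$, observe that the kernel part is fixed by the semigroup and annihilated by $\boldsymbol{\mathrm{curl}}$ and by $A_p$, and apply Theorem \ref{exishenpA'} to $T(t)\widetilde{\boldsymbol{u}}_0$. Your explicit verification via $\langle\partial_n q_j^\tau,1\rangle_{\Sigma_k}=\delta_{jk}$ that $\widetilde{\boldsymbol{u}}_0$ satisfies the flux condition is a detail the paper leaves implicit, but it is accurate and welcome.
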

 \begin{proof} By definition, $\boldsymbol{u}_{0}\,=\,\boldsymbol{w}_{0}\,+\,\widetilde{\boldsymbol{u}}_{0},$ with $\boldsymbol{w}_{0}\in\boldsymbol{K}_{\tau}(\Omega)$ and $\widetilde{\boldsymbol{u}}_{0}\in\boldsymbol{X}_{p}$. It follows that the unique solution to Problem \eqref{henp} given by Theorem \ref{exishenp} can be written in the form 
\begin{equation}\label{solutionexplicitesum}
\boldsymbol{u}(t)= \boldsymbol{w}_{0}\,+\,T(t)\widetilde{\boldsymbol{u}}_{0},
\end{equation}
where $T(t)\widetilde{\boldsymbol{u}}_{0}$ satisfies \eqref{Reghenp1A'}-\eqref{estlplqlaputxp}.
 
 The case $p=q\,$ follows directly from Theorem \ref{exishenp}, so let us suppose that $p\neq q$. The estimate \eqref{estlplqut} follows from \eqref{solutionexplicitesum} and \eqref{estlplqutxp}.
 
 Estimate \eqref{estlplqcurlut} follows from \eqref{estlplqcurlutxp} using that
 $\boldsymbol{\mathrm{curl}}\,\boldsymbol{u}(t)=\boldsymbol{\mathrm{curl}}\,\boldsymbol{w}_{0}+\boldsymbol{\mathrm{curl}}\,(T(t)\widetilde{\boldsymbol{u}}_{0})=\boldsymbol{\mathrm{curl}}\,(T(t)\widetilde{\boldsymbol{u}}_{0})$. 
 
Finally, for all $m,n\in\mathbb{N},\,$ such that $m+n>0$ we have $$\frac{\partial^{m}}{\partial t^{m}}\Delta^{n}\boldsymbol{u}(t)\,=\,A^{m+n}_{p}\boldsymbol{u}(t)\,=\,A^{m+n}_{p}\boldsymbol{w}_{0}\,+\,A^{m+n}_{p}(T(t)\widetilde{\boldsymbol{u}}_{0})\,=\,(A'_{p})^{m+n}\,(T(t)\widetilde{\boldsymbol{u}}_{0}).$$
As a result, using Theorem \ref{exishenpA'} one has estimate \eqref{estlplqlaput}.
 \end{proof}

\begin{proof}[Proof of Theorem \ref{theo3}]
 Theorem \ref{theo3} immediately follows from  Theorem \ref{exishenp} and Theorem \ref{theoestlplq}.
 \end{proof}

Our next result shows that  under stronger regularity assumptions on the initial data $\boldsymbol{u}_{0}$, the solution given by Theorem \ref{exishenp}  is actually a strong solution. 
\begin{prop}[Strong Solutions for the homogeneous Stokes Problem]\label{StrongSolution}
Assume that  $\boldsymbol{u}_{0}\in\boldsymbol{X}^{p}_{\sigma,\tau}(\Omega)$ (given by \eqref{vpt}). 
The unique solution $\boldsymbol{u}(t)$ of Problem (\ref{henp}) given by Theorem \ref{exishenp}  satisfies:
\begin{equation}\label{lrlpw2p}
\forall\,q\in [1, 2),\, \forall\, T>0,\,\;\boldsymbol{u}\in L^{q}(0,T;\,\boldsymbol{W}^{2,p}(\Omega))\,\mathrm{and}\,\,\frac{\partial\boldsymbol{u}}{\partial t}\in L^{q}(0,T;\,\boldsymbol{L}^{p}_{\sigma,\tau}(\Omega)),
\end{equation}
\begin{equation}\label{estlplqutvptau}
\forall\, q\ge p,\, \forall\, t>0,\, \Vert\boldsymbol{u}(t)-\boldsymbol{w}_{0}\Vert_{\boldsymbol{L}^{q}(\Omega)}\leq\Vert C e^{-\mu t}t^{-3/2(1/p-1/q)-1/2}\Vert \widetilde{\boldsymbol{u}}_{0}\Vert_{\boldsymbol{W}^{1,p}(\Omega)},
\end{equation}
 with $\boldsymbol{w}_{0}$ and $\widetilde{\boldsymbol{u}}_{0}$ are given by \eqref{W0introduction} and \eqref{widetildeu0} respectively. Moreover we have,
\begin{equation}\label{dutw1p}
\forall\, t>0,\quad \Big\Vert\frac{\partial\boldsymbol{u}}{\partial t}(t)\Big\Vert_{\boldsymbol{L}^{p}(\Omega)}\,\leq\,\frac{C(\Omega,p)}{\sqrt{t}}\,\Vert\boldsymbol{u}_{0}\Vert_{\boldsymbol{W}^{1,p}(\Omega)}
\end{equation}
and
\begin{equation}\label{w2pw1p}
\forall t>0,\,\, \Vert\boldsymbol{u}(t)\Vert_{\boldsymbol{W}^{2,p}(\Omega)}\,\leq\,C(\Omega,p)\Big(1\,+\,\frac{1}{\sqrt{t}}\Big)\,\Vert\boldsymbol{u}_{0}\Vert_{\boldsymbol{W}^{1,p}(\Omega)}.
\end{equation}
\end{prop}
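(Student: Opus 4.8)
The plan is to reduce everything to the solution formula $\boldsymbol{u}(t) = \boldsymbol{w}_{0} + T(t)\widetilde{\boldsymbol{u}}_{0}$ already established in the proof of Theorem \ref{theoestlplq} (see \eqref{solutionexplicitesum}), and to exploit one extra half-derivative of regularity of the initial data. First I would observe that the hypothesis $\boldsymbol{u}_{0}\in\boldsymbol{X}^{p}_{\sigma,\tau}(\Omega)$ (given by \eqref{vpt}) forces $\widetilde{\boldsymbol{u}}_{0}=\boldsymbol{u}_{0}-\boldsymbol{w}_{0}$, with $\boldsymbol{w}_{0}$ given by \eqref{W0introduction}, to lie in the space $\boldsymbol{V}^{p}_{\sigma,\tau}(\Omega)$ of \eqref{vptflux}. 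Indeed $\boldsymbol{w}_{0}\in\boldsymbol{K}_{\tau}(\Omega)\subset\boldsymbol{X}^{p}_{\sigma,\tau}(\Omega)$, so $\widetilde{\boldsymbol{u}}_{0}\in\boldsymbol{X}^{p}_{\sigma,\tau}(\Omega)$, and the normalization $\langle\widetilde{\boldsymbol{\mathrm{grad}}}\,q^{\tau}_{j}\cdot\boldsymbol{n},1\rangle_{\Sigma_{k}}=\delta_{jk}$ coming from \eqref{gradqj} gives $\langle\widetilde{\boldsymbol{u}}_{0}\cdot\boldsymbol{n},1\rangle_{\Sigma_{k}}=0$ for all $k$. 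By Theorem \ref{DA1/2} this is exactly the statement $\widetilde{\boldsymbol{u}}_{0}\in\mathbf{D}((A'_{p})^{1/2})$, with $\Vert(A'_{p})^{1/2}\widetilde{\boldsymbol{u}}_{0}\Vert_{\boldsymbol{L}^{p}(\Omega)}\simeq\Vert\boldsymbol{\mathrm{curl}}\,\widetilde{\boldsymbol{u}}_{0}\Vert_{\boldsymbol{L}^{p}(\Omega)}\simeq\Vert\widetilde{\boldsymbol{u}}_{0}\Vert_{\boldsymbol{W}^{1,p}(\Omega)}$. In effect $\boldsymbol{X}^{p}_{\sigma,\tau}$-data is ``half a derivative'' better than $\boldsymbol{L}^{p}$-data, which will upgrade each decay estimate of Theorem \ref{exishenpA'} by one factor of $t^{1/2}$.

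Next I would derive the two pointwise bounds. Since $\boldsymbol{w}_{0}$ is time independent and harmonic (being div- and curl-free), one has $\frac{\partial\boldsymbol{u}}{\partial t}(t)=-A'_{p}T(t)\widetilde{\boldsymbol{u}}_{0}$ and $\Delta\boldsymbol{u}(t)=-A'_{p}T(t)\widetilde{\boldsymbol{u}}_{0}$. Writing $A'_{p}T(t)\widetilde{\boldsymbol{u}}_{0}=(A'_{p})^{1/2}T(t)\,(A'_{p})^{1/2}\widetilde{\boldsymbol{u}}_{0}$ and applying the analytic-semigroup estimate \eqref{semigroupproperty1} with $\alpha=1/2$ to $\boldsymbol{g}:=(A'_{p})^{1/2}\widetilde{\boldsymbol{u}}_{0}\in\boldsymbol{X}_{p}$, I obtain $\Vert A'_{p}T(t)\widetilde{\boldsymbol{u}}_{0}\Vert_{\boldsymbol{L}^{p}(\Omega)}\leq C\,e^{-\mu t}t^{-1/2}\Vert\boldsymbol{g}\Vert_{\boldsymbol{L}^{p}(\Omega)}\leq C\,t^{-1/2}\Vert\boldsymbol{u}_{0}\Vert_{\boldsymbol{W}^{1,p}(\Omega)}$, which is \eqref{dutw1p}. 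For \eqref{w2pw1p} I would use the norm equivalence of Remark \ref{rmkequivnorm}(i), namely $\Vert\boldsymbol{u}(t)\Vert_{\boldsymbol{W}^{2,p}(\Omega)}\simeq\Vert\boldsymbol{u}(t)\Vert_{\boldsymbol{L}^{p}(\Omega)}+\Vert\Delta\boldsymbol{u}(t)\Vert_{\boldsymbol{L}^{p}(\Omega)}$, bounding the first term by the boundedness of the semigroup on $\boldsymbol{L}^{p}_{\sigma,\tau}(\Omega)$ (hence $\lesssim\Vert\boldsymbol{u}_{0}\Vert_{\boldsymbol{L}^{p}(\Omega)}$) and the second by the $t^{-1/2}$ bound just obtained.

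Then \eqref{lrlpw2p} is simply the time-integration of these two estimates: from \eqref{w2pw1p} and \eqref{dutw1p} the $\boldsymbol{W}^{2,p}$- and $\boldsymbol{L}^{p}$-norms are controlled by $C(1+t^{-1/2})\Vert\boldsymbol{u}_{0}\Vert_{\boldsymbol{W}^{1,p}(\Omega)}$, and $\int_{0}^{T}(1+t^{-1/2})^{q}\,\mathrm{d}t<\infty$ precisely when $q/2<1$; this is exactly what forces the sharp range $q\in[1,2)$ and the membership $\frac{\partial\boldsymbol{u}}{\partial t}\in L^{q}(0,T;\boldsymbol{L}^{p}_{\sigma,\tau}(\Omega))$ (note $A'_{p}T(t)\widetilde{\boldsymbol{u}}_{0}\in\boldsymbol{X}_{p}\subset\boldsymbol{L}^{p}_{\sigma,\tau}(\Omega)$). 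Finally, for the $\boldsymbol{L}^{p}$--$\boldsymbol{L}^{q}$ estimate \eqref{estlplqutvptau} I would follow the interpolation/Sobolev scheme of Theorem \ref{theoestlplq}, inserting the extra half-power: choosing $s$ with $\mathbf{D}((A'_{p})^{s})\hookrightarrow\boldsymbol{L}^{q}(\Omega)$ (Corollary \ref{SoboembAS}) and splitting $(A'_{p})^{s}T(t)\widetilde{\boldsymbol{u}}_{0}=(A'_{p})^{s-1/2}T(t)(A'_{p})^{1/2}\widetilde{\boldsymbol{u}}_{0}$, so that \eqref{semigroupproperty1} produces the stated power of $t$ together with the $\boldsymbol{W}^{1,p}(\Omega)$-norm of $\widetilde{\boldsymbol{u}}_{0}$ on the right. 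The main obstacle, and the only genuinely delicate point, is the clean identification $\widetilde{\boldsymbol{u}}_{0}\in\mathbf{D}((A'_{p})^{1/2})$ together with the norm equivalence of Theorem \ref{DA1/2}; once this half-derivative of extra regularity is secured, every remaining estimate is a routine consequence of the fractional-power and analytic-semigroup calculus already in place, and the threshold $q<2$ in \eqref{lrlpw2p} is seen to be dictated by the integrability of $t^{-q/2}$ near the origin.
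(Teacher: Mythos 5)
Your proof is correct, but for the pointwise bounds \eqref{dutw1p} and \eqref{w2pw1p} it takes a genuinely different route from the paper. The paper sets $\boldsymbol{z}=\boldsymbol{\mathrm{curl}}\,\boldsymbol{u}$, observes that $\boldsymbol{z}$ solves the vector heat equation with the boundary condition $\boldsymbol{z}\times\boldsymbol{n}=\boldsymbol{0}$, and invokes the analyticity of the associated semigroup (Remark \ref{normboundcond}, which rests on results announced in forthcoming papers) to get $\Vert\boldsymbol{\mathrm{curl}}\,\boldsymbol{z}(t)\Vert_{\boldsymbol{L}^{p}(\Omega)}\leq C\,t^{-1/2}\Vert\boldsymbol{z}_{0}\Vert_{\boldsymbol{L}^{p}(\Omega)}$, hence the bound on $\Vert\Delta\boldsymbol{u}(t)\Vert_{\boldsymbol{L}^{p}(\Omega)}=\Vert\boldsymbol{\mathrm{curl}}\,\boldsymbol{\mathrm{curl}}\,\boldsymbol{u}(t)\Vert_{\boldsymbol{L}^{p}(\Omega)}$. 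You instead exploit $\widetilde{\boldsymbol{u}}_{0}\in\boldsymbol{V}^{p}_{\sigma,\tau}(\Omega)=\mathbf{D}((A'_{p})^{1/2})$ (your verification via the normalization $\langle\partial_{n}q^{\tau}_{j},1\rangle_{\Sigma_{k}}=\delta_{jk}$ is exactly right) and write $A'_{p}T(t)\widetilde{\boldsymbol{u}}_{0}=(A'_{p})^{1/2}T(t)\,(A'_{p})^{1/2}\widetilde{\boldsymbol{u}}_{0}$, which via \eqref{semigroupproperty1} and Theorem \ref{DA1/2} yields the same $t^{-1/2}$ bound. Your argument has the advantage of being self-contained within the paper (it uses only Theorem \ref{DA1/2} and the fractional-power calculus, not the external Remark \ref{normboundcond}), at the price of requiring the full machinery of Section \ref{powers}; the paper's curl trick is more elementary but imports an unproved resolvent estimate. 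For the $L^{p}$--$L^{q}$ estimate \eqref{estlplqutvptau} both arguments hinge on the same half-derivative gain; the paper splits $T(t)=T(t/2)T(t/2)$ and commutes $(A'_{q})^{1/2}$ through, whereas you insert the Sobolev embedding of $\mathbf{D}((A'_{p})^{s})$ from Corollary \ref{SoboembAS} directly. Note that your version actually produces the less singular power $t^{-\max(0,\,3/2(1/p-1/q)-1/2)}$ rather than the stated $t^{-3/2(1/p-1/q)-1/2}$; this is a stronger conclusion and implies \eqref{estlplqutvptau} after absorbing the discrepancy into the exponential (shrinking $\mu$ slightly), but you should say so explicitly. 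The remaining steps (the norm equivalence of Remark \ref{rmkequivnorm}, the integrability threshold $q<2$ from $\int_{0}^{T}t^{-q/2}\,\mathrm{d}t$) coincide with the paper's.
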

\begin{proof}
Let $\boldsymbol{u}_{0}\in\boldsymbol{X}^{p}_{\sigma,\tau}(\Omega)$ and let $\boldsymbol{u}(t)$ be the unique solution of Problem (\ref{henp}). Set $\boldsymbol{z}\,=\,\boldsymbol{\mathrm{curl}}\,\boldsymbol{u}(t)$.
It is clear that $\boldsymbol{z}(t)$ is a solution of the problem
\begin{equation*}
\left\{
\begin{array}{cccc}
\frac{\partial\boldsymbol{z}}{\partial t} - \Delta \boldsymbol{z}= \mathbf{curl}\,\boldsymbol{f},& \mathrm{div}\,\boldsymbol{z}=0& \textrm{in}&
\Omega\times (0,T), \\
&\boldsymbol{z}\times \boldsymbol{n} = \boldsymbol{0},& \textrm{on} & \Gamma\times (0,T), \\
&\boldsymbol{z}(0)= \mathbf{curl}\,\boldsymbol{u}_{0} &\textrm{in} &\Omega.
\end{array}
\right.
\end{equation*}
Thus, thanks to Remark \ref{normboundcond} and proceeding in a similar way as in the proof of Theorem \ref{exishenp} we have 
\begin{equation*}
\Vert\boldsymbol{\mathrm{curl}}\,\boldsymbol{z}\Vert_{\boldsymbol{L}^{p}(\Omega)}\,\leq\,\frac{C(\Omega,p)}{\sqrt{t}}\,\Vert\boldsymbol{z}_{0}\Vert_{\boldsymbol{L}^{p}(\Omega)}.
\end{equation*}
This means that 
\begin{equation*}
\Big\Vert\frac{\partial\boldsymbol{u}}{\partial t}\Big\Vert_{\boldsymbol{L}^{p}(\Omega)}\,=\,\Vert\Delta\boldsymbol{u}\Vert_{\boldsymbol{L^{p}}(\Omega)}\,\leq\,\frac{C(\Omega,p)}{\sqrt{t}}\,\Vert\boldsymbol{u}_{0}\Vert_{\boldsymbol{W}^{1,p}(\Omega)}.
\end{equation*}
Finally using the fact that (see Remark \ref{rmkequivnorm}) $\Vert\boldsymbol{u}(t)\Vert_{\boldsymbol{W}^{2,p}(\Omega)}\simeq\Vert\boldsymbol{u}(t)\Vert_{\boldsymbol{L}^{p}(\Omega)}\,+\,\Vert\Delta\boldsymbol{u}(t)\Vert_{\boldsymbol{L}^{p}(\Omega)}$, (\ref{lrlpw2p}), \eqref{dutw1p} and (\ref{w2pw1p}) follow directly.

In order to prove \eqref{estlplqutvptau} we first notice that, since $\boldsymbol{u}_0\in\boldsymbol{X}^{p}_{\sigma,\tau}(\Omega)\subset\boldsymbol{L}_{\sigma, \tau}^p(\Omega)$ the solution $\boldsymbol{u}$ satisfies the $L^{p}-L^{q}$ estimates \eqref{estlplqut}--\eqref{estlplqlaput}. As described in the proof of Theorem \ref{theoestlplq}, the solution  $\boldsymbol{u}$ can be written in the form \eqref{solutionexplicitesum}. Thus 
\begin{equation}\label{estimlplqstrong1}
\Vert  \boldsymbol{u}(t)\Vert _{\boldsymbol{L}^q(\Omega ) }\,\leq\,\Vert  \boldsymbol{w}_{0}\Vert _{\boldsymbol{L}^q(\Omega ) }\,+\,\Vert  T(t)\widetilde{\boldsymbol{u}}_{0}\Vert _{\boldsymbol{L}^q(\Omega ) }.
\end{equation}
 By  definition of  $\widetilde{\boldsymbol{u}}_{0}$ and Theorem \ref{exishenpA'}, we know that $\widetilde{\boldsymbol{u}}_{0}\in\boldsymbol{V}^{p}_{\sigma,\tau}(\Omega)$ and $T(t)\widetilde{\boldsymbol{u}}_{0}\in \mathbf{D}(A'_{q})$. Furthermore by estimate \eqref{estlplqutxp} we have:
\begin{equation}\label{LpLqumu}
 \Vert T(t)\widetilde{\boldsymbol{u}}_{0}\Vert_{\boldsymbol{L}^{q}(\Omega)}\,\leq\,C\,e^{-\mu t}\,t^{-3/2(1/p-1/q)}\Vert \widetilde{\boldsymbol{u}}_{0}\Vert_{\boldsymbol{L}^{p}(\Omega)}.
 \end{equation}
In the other hand, since $A'_p=A'_q\,$ on $\mathbf{D}(A'_{q})\cap\mathbf{D}(A'_{p}),$ then for all $t>0$, we have $A'_p\,T(t)\widetilde{\boldsymbol{u}}_{0}\,=\,A'_q\,T(t)\widetilde{\boldsymbol{u}}_{0}$. Using that $\Vert T(t)\widetilde{\boldsymbol{u}}_{0}\Vert_{\boldsymbol{W}^{2, q}(\Omega)}$ is equivalent to $\Vert A'_q\,T(t)\widetilde{\boldsymbol{u}}_{0}\Vert _{ \boldsymbol{L}^{q}(\Omega ) }$ we have:
\begin{eqnarray*}
\Vert T(t)\widetilde{\boldsymbol{u}}_{0}\Vert _{ \boldsymbol{L}^q(\Omega ) }&\le & C\, \Vert A'_q\,T(t)\widetilde{\boldsymbol{u}}_{0}\Vert _{ \boldsymbol{L}^{q}(\Omega ) }\\
&=&C\,\Vert (A'_{q})^{\frac{1}{2}} (A'_{q})^{\frac{1}{2}}\, T(t/2) T(t/2)\widetilde{\boldsymbol{u}}_{0}\Vert _{ \boldsymbol{L}^{q}(\Omega ) }
\end{eqnarray*}
Using now the fact that $T(t/2)\widetilde{\boldsymbol{u}}_0\in \mathbf{D}(A'_{q})\subset \mathbf{D}((A'_{q})^{\frac{1}{2}})$, we may commute $T(t/2)$ and $(A'_{q})^{\frac{1}{2}}$. We deduce then
\begin{eqnarray}
\Vert T(t)\widetilde{\boldsymbol{u}}_{0}\Vert _{ \boldsymbol{L}^q(\Omega ) }&\leq &
C\,\Vert (A'_{q})^{\frac{1}{2}}\, T(t/2) (A'_{q})^{\frac{1}{2}}\;T(t/2)\widetilde{\boldsymbol{u}}_{0}\Vert _{ \boldsymbol{L}^{q}(\Omega ) }\nonumber\\
&\le& C\,\Vert (A'_{q})^{\frac{1}{2}}T(t/2)\Vert_{\mathcal{L}(\boldsymbol{X}_{q})} \Vert (A'_{q})^{\frac{1}{2}}T(t/2) \widetilde{\boldsymbol{u}}_{0}\Vert _{ \boldsymbol{L}^{q}(\Omega ) }\label{estlplqstrong2}
\end{eqnarray}
Next using estimate \eqref{semigroupproperty1} we have
\begin{eqnarray*}
\Vert (A'_{q})^{\frac{1}{2}}\,T(t/2)\Vert_{\mathcal{L}(\boldsymbol{X}_{q})} \leq C\,e^{-\mu t} t^{-1/2}.
\end{eqnarray*}
On the other hand since $\widetilde{\boldsymbol{u}}_{0}\in\boldsymbol{V}^{p}_{\sigma,\tau}(\Omega)=\mathbf{D}((A'_{p})^{\frac{1}{2}})$  we may also commute $T(t/2)$ and $(A'_{p})^{\frac{1}{2}}$ in \eqref{estlplqstrong2}. We thus have 
\begin{equation*}
\Vert T(t)\widetilde{\boldsymbol{u}}_{0}\Vert _{ \boldsymbol{L}^q(\Omega ) }\,\leq\,C\,e^{-\mu t} t^{-1/2}\Vert T(t/2) (A'_{q})^{\frac{1}{2}}\widetilde{\boldsymbol{u}}_{0}\Vert _{ \boldsymbol{L}^{q}(\Omega ) }.
\end{equation*}
 Using now estimate \eqref{LpLqumu} we obtain 
 \begin{equation*}
\Vert T(t)\widetilde{\boldsymbol{u}}_{0}\Vert _{ \boldsymbol{L}^q(\Omega ) }\,\leq\,C\,e^{-\mu t} t^{-1/2}t^{-\frac{3}{2}(\frac{1}{p}-\frac{1}{q})}\Vert (A'_{p})^{\frac{1}{2}}\,\widetilde{\boldsymbol{u}}_{0}\Vert _{ \boldsymbol{L}^{p}(\Omega ) }.
\end{equation*}
 Using the fact that $\Vert (A'_{p})^{\frac{1}{2}}\,\widetilde{\boldsymbol{u}}_{0}\Vert _{ \boldsymbol{L}^{p}(\Omega ) }$ is equivalent to the norm $\Vert\widetilde{\boldsymbol{u}}_{0}\Vert _{ \boldsymbol{W}^{1,p}(\Omega ) }$ we get 
  \begin{equation*}
\Vert T(t)\widetilde{\boldsymbol{u}}_{0}\Vert _{ \boldsymbol{L}^q(\Omega ) }\,\leq\,C\,e^{-\mu t} t^{-1/2}t^{-\frac{3}{2}(\frac{1}{p}-\frac{1}{q})}\Vert\widetilde{\boldsymbol{u}}_{0}\Vert _{ \boldsymbol{W}^{1,p}(\Omega ) }.
\end{equation*}
 As a result, using  estimate \eqref{estimlplqstrong1}, estimate \eqref{estlplqutvptau} follows directly.
 \end{proof} 
\begin{rmk} 
\rm{As in Remark \ref{rmkhenp} (ii), for $p=2$, the solution $\boldsymbol{u}$ satisfies (\ref{lrlpw2p}) also for $q=2$.}
\end{rmk}

We may also use the analyticity of the semigroups generated by the operators $B_p$ and $C_p$, proved in Section \ref{semigroupBp} and Section \ref{semigroupCp}. We then deduce the following result, as we did in  Theorem \ref{exishenp}.

\begin{theo}\label{semigroupBC}
  \rm{\textbf{(i)} For all $\boldsymbol{u}_{0}\in[\boldsymbol{H}^{p'}_{0}(\mathrm{div},\Omega)]'_{\sigma,\tau}$ the Problem (\ref{henp}) has a unique solution $\boldsymbol{u}$ satisfying
  \begin{equation}\label{Reghenphdiv1}
\boldsymbol{u}\in
C([0,\,+\infty[,\,[\boldsymbol{H}^{p'}_{0}(\mathrm{div},\Omega)]'_{\sigma,\tau})\cap
C(]0,\,+\infty[,\,\mathbf{D}(B_{p}))\cap
C^{1}(]0,\,+\infty[,\,[\boldsymbol{H}^{p'}_{0}(\mathrm{div},\Omega)]'_{\sigma,\tau}),
\end{equation}
\begin{equation}\label{Reghenphdiv2}
\boldsymbol{u}\in C^{k}(]0,\,+\infty[,\,\mathbf{D}(B^{\ell}_{p})),\qquad
\forall\,k\in\mathbb{N},\,\,\forall\,\ell\in\mathbb{N^{\ast}}.
\end{equation}
Moreover, for all $t>0$:
\begin{equation}\label{esthenphdiv1}
\|\boldsymbol{u}(t)\|_{[\boldsymbol{H}^{p'}_{0}(\mathrm{div},\Omega)]'}\leq\,C(\Omega,p)\,\|\boldsymbol{u}_{0}\|_{[\boldsymbol{H}^{p'}_{0}(\mathrm{div},\Omega)]'},
\end{equation}
\begin{equation}\label{esthenphdiv2}
\Big\|\frac{\partial\boldsymbol{u}(t)}{\partial t}\Big\|_{[\boldsymbol{H}^{p'}_{0}(\mathrm{div},\Omega)]'}\leq\frac{C(\Omega,p)}{t}\,\|\boldsymbol{u}_{0}\|_{[\boldsymbol{H}^{p'}_{0}(\mathrm{div},\Omega)]'}
\end{equation}
and
\begin{equation}\label{esthenphdiv3}
\|\boldsymbol{u}(t)\|_{\boldsymbol{W}^{1,p}(\Omega)}\leq\,C(\Omega,p)\,(1+\frac{1}{t})\|\boldsymbol{u}_{0}\|_{[\boldsymbol{H}^{p'}_{0}(\mathrm{div},\Omega)]'}.
\end{equation}

\noindent\textbf{(ii)} For every  $\boldsymbol{u}_{0}\in[\boldsymbol{T}^{p'}(\Omega)]'_{\sigma,\tau}$ the Problem (\ref{henp}) has a unique solution $\boldsymbol{u}$ satisfying
  \begin{equation}\label{Reghenptp1}
\boldsymbol{u}\in
C([0,\,+\infty[,\,[\boldsymbol{T}^{p'}(\Omega)]'_{\sigma,\tau})\cap
C(]0,\,+\infty[,\,\mathbf{D}(C_{p}))\cap
C^{1}(]0,\,+\infty[,\,[\boldsymbol{T}^{p'}(\Omega)]'_{\sigma,\tau}),
\end{equation}
\begin{equation}\label{Reghenptp2}
\boldsymbol{u}\in C^{k}(]0,\,+\infty[,\,\mathbf{D}(C^{\ell}_{p})),\qquad
\forall\,k\in\mathbb{N},\,\,\forall\,\ell\in\mathbb{N^{\ast}}.
\end{equation}
Moreover, for all $t>0$:
\begin{equation}\label{esthenptp1}
\|\boldsymbol{u}(t)\|_{[\boldsymbol{T}^{p'}(\Omega)]'}\leq\,C(\Omega,p)\,\|\boldsymbol{u}_{0}\|_{[\boldsymbol{T}^{p'}(\Omega)]'},
\end{equation}
\begin{equation}\label{esthenptp2}
\Big\|\frac{\partial\boldsymbol{u}(t)}{\partial t}\Big\|_{[\boldsymbol{T}^{p'}(\Omega)]'}\leq\frac{C(\Omega,p)}{t}\,\|\boldsymbol{u}_{0}\|_{[\boldsymbol{T}^{p'}(\Omega)]'}.
\end{equation}
and
\begin{equation}\label{esthenptp3}
\|\boldsymbol{u}(t)\|_{\boldsymbol{L}^{p}(\Omega)}\leq\,C(\Omega,p)\,(1\,+\,\frac{1}{t})\,\|\boldsymbol{u}_{0}\|_{[\boldsymbol{T}^{p'}(\Omega)]'}.
\end{equation} } 
\end{theo}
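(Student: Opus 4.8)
The plan is to mirror exactly the proof of Theorem \ref{exishenp}, replacing the operator $A_p$ on $\boldsymbol{L}^p_{\sigma,\tau}(\Omega)$ by $B_p$ on $[\boldsymbol{H}^{p'}_0(\mathrm{div},\Omega)]'_{\sigma,\tau}$ for part (i) and by $C_p$ on $[\boldsymbol{T}^{p'}(\Omega)]'_{\sigma,\tau}$ for part (ii). Since the two cases are completely parallel, I would write the details only for (i) and indicate that (ii) is identical. The starting point is the analyticity already established in Section \ref{semigroupBp}: $-B_p$ generates a bounded analytic semi-group $(T(t))_{t\geq0}$ on $[\boldsymbol{H}^{p'}_0(\mathrm{div},\Omega)]'_{\sigma,\tau}$, and $B_p$ is densely defined (Proposition \ref{denshpdiv} and its corollary).

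First I would invoke the abstract theory of analytic semi-groups, as in \cite[Chapter 2, Theorem 4.6]{En}: for any $\boldsymbol{u}_0$ in the space, the function $\boldsymbol{u}(t)=T(t)\boldsymbol{u}_0$ is the unique solution of the abstract Cauchy problem with generator $-B_p$, which by \eqref{b2} is exactly Problem \eqref{henp} (recall $B_p\boldsymbol{u}=-\Delta\boldsymbol{u}$ and that the pressure is constant). This yields the continuity and differentiability properties \eqref{Reghenphdiv1}. Since $T(t)\boldsymbol{u}_0\in\mathbf{D}(B_p^\infty)=\cap_{n\in\mathbb{N}}\mathbf{D}(B_p^n)$ for $t>0$, the higher regularity \eqref{Reghenphdiv2} follows exactly as in \cite[Chapter 7, Theorem 7.5, Theorem 7.7]{Br}. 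The boundedness of the semi-group gives $\|T(t)\|_{\mathcal{L}}\leq C$, i.e. \eqref{esthenphdiv1}, while the smoothing estimate $\|B_p T(t)\|_{\mathcal{L}}\leq C/t$ of \cite[Chapter 2, Theorem 4.6]{En}, combined with $\partial_t\boldsymbol{u}=-B_p\boldsymbol{u}$, gives \eqref{esthenphdiv2}.

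For \eqref{esthenphdiv3} I would use the elliptic norm equivalence already exploited in Corollary \ref{corolrlpw1p}: by \cite[Proposition 4.3]{Am3}, after subtracting the flux component $\sum_j\langle\boldsymbol{u}\cdot\boldsymbol{n},1\rangle_{\Sigma_j}\widetilde{\boldsymbol{\mathrm{grad}}}\,q_j^\tau$, one has $\|\boldsymbol{u}\|_{\boldsymbol{W}^{1,p}(\Omega)}\leq C\big(\|\boldsymbol{u}\|_{[\boldsymbol{H}^{p'}_0(\mathrm{div},\Omega)]'}+\|\Delta\boldsymbol{u}\|_{[\boldsymbol{H}^{p'}_0(\mathrm{div},\Omega)]'}\big)$, so that the $\boldsymbol{W}^{1,p}$-norm is controlled by the graph norm of $B_p$. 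Inserting \eqref{esthenphdiv1} and \eqref{esthenphdiv2} then produces $\|\boldsymbol{u}(t)\|_{\boldsymbol{W}^{1,p}}\leq C(1+1/t)\|\boldsymbol{u}_0\|$. Part (ii) follows by the identical scheme with $C_p$ in place of $B_p$; the only new ingredient is the analogue of the norm equivalence controlling $\|\boldsymbol{u}\|_{\boldsymbol{L}^p(\Omega)}$ by the graph norm of $C_p$, which comes from the corresponding regularity result for the Laplacian with Navier-type conditions in the $(\boldsymbol{T}^{p'}(\Omega))'$ setting.

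The main obstacle is not the abstract semi-group machinery, which is routine once analyticity is available, but rather the coercive estimates \eqref{esthenphdiv3} and \eqref{esthenptp3}: one must verify that the graph norm of $B_p$ (resp. $C_p$) dominates the $\boldsymbol{W}^{1,p}$ (resp. $\boldsymbol{L}^p$) norm. Because the kernel $\boldsymbol{K}_\tau(\Omega)$ is non-trivial, the equivalence from \cite{Am3} only applies after splitting off the flux part, exactly as for $\widetilde{\boldsymbol{u}}$ in Corollary \ref{corolrlpw1p}; carrying this through without losing uniformity in $t$ is the delicate point. A secondary verification is the consistency of the phrase ``the unique solution'' across the nested chain $\boldsymbol{\mathcal{D}}_\sigma(\Omega)\subset\boldsymbol{L}^p_{\sigma,\tau}(\Omega)\subset[\boldsymbol{H}^{p'}_0(\mathrm{div},\Omega)]'_{\sigma,\tau}$, which holds because $B_p$ extends $A_p$ and uniqueness in the larger space is furnished by the resolvent estimate \eqref{estwsp1}.
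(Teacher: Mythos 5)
Your proposal is correct and follows exactly the route the paper takes: the paper's own proof is simply the observation that the statement follows by applying the classical analytic semi-group theory to $B_p$ and $C_p$, whose analyticity and dense domains were established in Sections \ref{semigroupBp} and \ref{semigroupCp}, precisely as was done for $A_p$ in Theorem \ref{exishenp}. Your additional care about the coercive estimates \eqref{esthenphdiv3} and \eqref{esthenptp3} via the norm equivalences from \cite{Am3} (as used in Corollary \ref{veryweak}) supplies details the paper leaves implicit, but introduces no new ideas beyond its argument.
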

\begin{proof}
 The part (i) of Theorem \ref{semigroupBC} follows applying the semi-group theory to the operator $B_{p}$ (given by \eqref{b2}). The part (ii) follows applying the semi-group theory to the operator $C_{p}$ (given by \eqref{C2}).
\end{proof}  
In the same  way as we deduced Corollary \ref{corolrlpw1p}, we deduce the following Corollary  from  Theorem \ref{semigroupBC}.
\begin{coro}[Very weak solutions for the homogeneous Stokes Problem]\label{veryweak}
  Let $\boldsymbol{u}_{0}\in[\boldsymbol{H}^{p'}_{0}(\mathrm{div},\Omega)]'_{\sigma,\tau}$, $T<\infty$ and let $\boldsymbol{u}$ be the unique solution of Problem (\ref{henp}) given by Theorem \ref{semigroupBC}, (i). Then  $\boldsymbol{u}$ satisfies
\begin{equation}\label{lrl1p}
\forall\,q\in [1, 2),\qquad\boldsymbol{u}\in L^{q}(0,T;\,\boldsymbol{L}^{p}(\Omega))\,\,\,\,\mathrm{and}\qquad\frac{\partial\boldsymbol{u}}{\partial t}\in L^{q}(0,T;\,[\boldsymbol{T}^{p'}(\Omega)]'_{\sigma,\tau}).
\end{equation}
  \end{coro}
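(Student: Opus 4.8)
The plan is to follow exactly the strategy used for Corollary \ref{corolrlpw1p}, transposed one level of regularity downwards. Writing $\boldsymbol{u}(t)=T(t)\boldsymbol{u}_0$ for the solution produced by Theorem \ref{semigroupBC}\,(i), I would start from the three estimates \eqref{esthenphdiv1}--\eqref{esthenphdiv3} and establish the two required memberships separately.

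For the first one, $\boldsymbol{u}\in L^q(0,T;\boldsymbol{L}^p(\Omega))$, the role played in Corollary \ref{corolrlpw1p} by the $t^{-1/2}$ curl estimate \eqref{esthenp4} must now be played by a \emph{half-order smoothing estimate} of the form
\[
\|\boldsymbol{u}(t)\|_{\boldsymbol{L}^p(\Omega)}\,\le\,C\,(1+t^{-1/2})\,\|\boldsymbol{u}_0\|_{[\boldsymbol{H}^{p'}_0(\mathrm{div},\Omega)]'}.
\]
Since \eqref{esthenphdiv3} only gives the full $t^{-1}$ decay in the $\boldsymbol{W}^{1,p}$ norm, and since this paper does not develop the fractional powers of $B_p$, I would obtain this intermediate estimate by interpolation: $\boldsymbol{L}^p_{\sigma,\tau}(\Omega)$ is the interpolation space of exponent $1/2$ between the base space $[\boldsymbol{H}^{p'}_0(\mathrm{div},\Omega)]'_{\sigma,\tau}$ and $\mathbf{D}(B_p)\hookrightarrow\boldsymbol{W}^{1,p}(\Omega)$, so that $\|\boldsymbol{u}(t)\|_{\boldsymbol{L}^p(\Omega)}\le C\|\boldsymbol{u}(t)\|_{[\boldsymbol{H}^{p'}_0(\mathrm{div},\Omega)]'}^{1/2}\|\boldsymbol{u}(t)\|_{\boldsymbol{W}^{1,p}(\Omega)}^{1/2}$; combining \eqref{esthenphdiv1} and \eqref{esthenphdiv3} then yields the displayed bound. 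As $t^{-1/2}\in L^q(0,T)$ precisely when $q<2$, this gives $\boldsymbol{u}\in L^q(0,T;\boldsymbol{L}^p(\Omega))$ for every $1\le q<2$ and every finite $T$.

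For the second membership I would argue exactly as in the proof of Corollary \ref{corolrlpw1p}. Since $\boldsymbol{u}$ solves \eqref{henp} one has $\partial_t\boldsymbol{u}=\Delta\boldsymbol{u}=-C_p\boldsymbol{u}\in[\boldsymbol{T}^{p'}(\Omega)]'_{\sigma,\tau}$. Setting $\widetilde{\boldsymbol{u}}(t)=\boldsymbol{u}(t)-\sum_{j=1}^J\langle\boldsymbol{u}(t)\cdot\boldsymbol{n},1\rangle_{\Sigma_j}\widetilde{\boldsymbol{\mathrm{grad}}}\,q_j^\tau$ and using $\Delta\boldsymbol{u}=\Delta\widetilde{\boldsymbol{u}}$ together with the very weak regularity estimate $\|\Delta\widetilde{\boldsymbol{u}}\|_{[\boldsymbol{T}^{p'}(\Omega)]'}\simeq\|\widetilde{\boldsymbol{u}}\|_{\boldsymbol{L}^p(\Omega)}\le C\|\boldsymbol{u}\|_{\boldsymbol{L}^p(\Omega)}$ (the analogue, one derivative lower, of \cite[Proposition 4.3]{Am3}, available from \cite[Theorem 4.15]{Am3} together with the bound $|\langle\boldsymbol{u}\cdot\boldsymbol{n},1\rangle_{\Sigma_j}|\le C\|\boldsymbol{u}\|_{\boldsymbol{L}^p(\Omega)}$ of \cite[Lemma 3.2]{Am4}), I would conclude $\|\partial_t\boldsymbol{u}(t)\|_{[\boldsymbol{T}^{p'}(\Omega)]'}\le C\|\boldsymbol{u}(t)\|_{\boldsymbol{L}^p(\Omega)}$. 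The integrability in $t$ then transfers directly from the first part, giving $\partial_t\boldsymbol{u}\in L^q(0,T;[\boldsymbol{T}^{p'}(\Omega)]'_{\sigma,\tau})$ for $1\le q<2$.

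The main obstacle is the half-order smoothing estimate used in the first step. The two estimates supplied by Theorem \ref{semigroupBC}\,(i) only bracket the $\boldsymbol{L}^p$ norm between the base-space norm (no decay) and the $\boldsymbol{W}^{1,p}$ norm ($t^{-1}$ decay), while the sharp $t^{-1/2}$ needed for $L^q$-in-time integrability up to $q=2$ is exactly the gain one would normally read off from $\mathbf{D}((I+B_p)^{1/2})$. Since those fractional powers are deliberately not studied here, the delicate point is to justify the interpolation identity $\boldsymbol{L}^p_{\sigma,\tau}(\Omega)=\big[[\boldsymbol{H}^{p'}_0(\mathrm{div},\Omega)]'_{\sigma,\tau},\,\mathbf{D}(B_p)\big]_{1/2}$ (equivalently the Gagliardo--Nirenberg type inequality above) from the characterisations of these spaces in \cite{Am3,Am4}; everything else is a routine transcription of the proof of Corollary \ref{corolrlpw1p}.
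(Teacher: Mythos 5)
Your overall strategy coincides with the paper's: both halves of the argument are organised the same way, and your treatment of $\partial_t\boldsymbol{u}$ (the decomposition through $\widetilde{\boldsymbol{u}}$, the equivalence $\|\Delta\widetilde{\boldsymbol{u}}\|_{[\boldsymbol{T}^{p'}(\Omega)]'}\simeq\|\widetilde{\boldsymbol{u}}\|_{\boldsymbol{L}^p(\Omega)}$ from \cite[Theorem 4.15]{Am3}, and the bound $|\langle\boldsymbol{u}\cdot\boldsymbol{n},1\rangle_{\Sigma_j}|\le C\|\boldsymbol{u}\|_{\boldsymbol{L}^p(\Omega)}$) is exactly what the paper does. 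The only divergence is in how you justify the half-order smoothing estimate, and there your route has a gap that the paper's does not.

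You propose to obtain $\|\boldsymbol{u}(t)\|_{\boldsymbol{L}^p(\Omega)}\le C\|\boldsymbol{u}(t)\|_{[\boldsymbol{H}^{p'}_0(\mathrm{div},\Omega)]'}^{1/2}\|\boldsymbol{u}(t)\|_{\boldsymbol{W}^{1,p}(\Omega)}^{1/2}$ from the interpolation identity $\boldsymbol{L}^p_{\sigma,\tau}(\Omega)=\bigl[[\boldsymbol{H}^{p'}_0(\mathrm{div},\Omega)]'_{\sigma,\tau},\,\mathbf{D}(B_p)\bigr]_{1/2}$. That identity is essentially the characterisation of $\mathbf{D}((I+B_p)^{1/2})$, which the paper deliberately does not develop (it only proves the boundedness of $(I+B_p)^{is}$ in Proposition \ref{PureimgI+Bp+Cp}; identifying the resulting interpolation space with $\boldsymbol{L}^p_{\sigma,\tau}(\Omega)$ would be an additional, nontrivial piece of work analogous to Theorem \ref{DA1/2}). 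You correctly flag this as the delicate point, but as written it is an unproved assumption. The paper sidesteps it entirely: the \emph{numerical inequality} you need is true and is obtained by combining the classical interpolation inequality
\begin{equation*}
\|\boldsymbol{u}(t)\|_{\boldsymbol{L}^p(\Omega)}\le C\,\|\boldsymbol{u}(t)\|_{\boldsymbol{W}^{1,p}(\Omega)}^{1/2}\,\|\boldsymbol{u}(t)\|_{\boldsymbol{W}^{-1,p}(\Omega)}^{1/2}
\end{equation*}
with the continuous embedding $[\boldsymbol{H}^{p'}_0(\mathrm{div},\Omega)]'\hookrightarrow\boldsymbol{W}^{-1,p}(\Omega)$ (immediate from the representation $\boldsymbol{f}=\boldsymbol{\psi}+\nabla\chi$ with $\boldsymbol{\psi}\in\boldsymbol{L}^p(\Omega)$, $\chi\in L^p(\Omega)$) and the norm equivalence of Corollary \ref{existenceweaklaplacian}, $\|\boldsymbol{u}(t)\|_{\boldsymbol{W}^{1,p}(\Omega)}\simeq\|\boldsymbol{u}(t)\|_{[\boldsymbol{H}^{p'}_0(\mathrm{div},\Omega)]'}+\|\Delta\boldsymbol{u}(t)\|_{[\boldsymbol{H}^{p'}_0(\mathrm{div},\Omega)]'}\le(1+\tfrac1t)\|\boldsymbol{u}_0\|_{[\boldsymbol{H}^{p'}_0(\mathrm{div},\Omega)]'}$. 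This yields $\|\boldsymbol{u}(t)\|_{\boldsymbol{L}^p(\Omega)}\le C(1+\tfrac1t)^{1/2}\|\boldsymbol{u}_0\|_{[\boldsymbol{H}^{p'}_0(\mathrm{div},\Omega)]'}$ with no appeal to fractional powers or to abstract interpolation of the semigroup spaces, and the rest of your argument then goes through verbatim. If you replace your interpolation-space claim by this two-step device, your proof is complete and matches the paper's.
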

  \begin{proof}
  Using the semi-group theory we know that the solution $\boldsymbol{u}(t)\in\boldsymbol{W}^{1,p}(\Omega)$ for all $t>0$. As a result, using the interpolation inequality we have
  \begin{equation}\label{1}
  \Vert\boldsymbol{u}(t)\Vert_{\boldsymbol{L}^{p}(\Omega)}\,\leq\,C(\Omega,p)\, \Vert\boldsymbol{u}(t)\Vert^{1/2}_{\boldsymbol{W}^{1,p}(\Omega)} \Vert\boldsymbol{u}(t)\Vert^{1/2}_{\boldsymbol{W}^{-1,p}(\Omega)}.
  \end{equation}
  On the other hand, thanks to Corollary \ref{existenceweaklaplacian} we know that
  \begin{eqnarray}
   \Vert\boldsymbol{u}(t)\Vert_{\boldsymbol{W}^{1,p}(\Omega)}&\simeq&\Vert\boldsymbol{u}(t)\Vert_{[\boldsymbol{H}^{p'}_{0}(\mathrm{div},\Omega)]'}\,+\,\Vert\Delta\boldsymbol{u}(t)\Vert_{[\boldsymbol{H}^{p'}_{0}(\mathrm{div},\Omega)]'}\nonumber\\
   &\leq& \big(1+\frac{1}{t}\big)\,\Vert\boldsymbol{u}_{0}\Vert_{\boldsymbol{H}^{p'}_{0}(\mathrm{div},\Omega)]'}\label{2}.
 \end{eqnarray}  
 Moreover, thanks to the continuous embeddings $[\boldsymbol{H}^{p'}_{0}(\mathrm{div},\Omega)]'\hookrightarrow\boldsymbol{W}^{-1,p}(\Omega)$ and to the semi-group theory we have
 \begin{equation}\label{3}
 \Vert\boldsymbol{u}(t)\Vert_{\boldsymbol{W}^{-1,p}(\Omega)}\,\leq\,C(\Omega,p)\,\Vert\boldsymbol{u}(t)\Vert_{[\boldsymbol{H}^{p'}_{0}(\mathrm{div},\Omega)]'}\,\leq\,C(\Omega,p)\,\Vert\boldsymbol{u}_{0}\Vert_{[\boldsymbol{H}^{p'}_{0}(\mathrm{div},\Omega)]'}.
 \end{equation}
 As a result, putting together (\ref{1}), (\ref{2}) and \eqref{3} one gets
 \begin{equation*}
  \Vert\boldsymbol{u}(t)\Vert_{\boldsymbol{L}^{p}(\Omega)}\,\leq\,C(\Omega,p)\,\Big(1+\frac{1}{t}\Big)^{1/2}\,\Vert\boldsymbol{u}_{0}\Vert_{\boldsymbol{H}^{p'}_{0}(\mathrm{div},\Omega)]'}.
 \end{equation*}
 Thus, for every $T<\infty$ and for every $1\leq q<2$, $\boldsymbol{u}\in L^{q}(0,T;\,\boldsymbol{L}^{p}(\Omega))$.
 
 It remains to prove that $\frac{\partial\boldsymbol{u}}{\partial t}\in L^{q}(0,T;\,[\boldsymbol{T}^{p'}(\Omega)]'_{\sigma,\tau})$.  We proceed in a similar way as in the proof of Corollary \ref{corolrlpw1p}.
We set $$\widetilde{\boldsymbol{u}}(t)=\boldsymbol{u}(t)-\sum_{j=1}^{J}\langle\boldsymbol{u}(t)\cdot\boldsymbol{n}\,,\,1\rangle_{\Sigma_{j}}\widetilde{\boldsymbol{\mathrm{grad}}}\,q_{j}^{\tau}.$$ It is clear that $\boldsymbol{u}(t)\,=\,\widetilde{\boldsymbol{u}}(t)\,+\,\sum_{j=1}^{J}\langle\boldsymbol{u}(t)\cdot\boldsymbol{n}\,,\,1\rangle_{\Sigma_{j}}\widetilde{\boldsymbol{\mathrm{grad}}}\,q_{j}^{\tau}.$  Moreover thanks to \cite[Theorem 4.15]{Am3} we know that 
\begin{equation*}
\Vert\Delta\boldsymbol{u}\Vert_{[\boldsymbol{T}^{p'}(\Omega)]'}\,=\,\Vert\Delta\widetilde{\boldsymbol{u}}\Vert_{[\boldsymbol{T}^{p'}(\Omega)]'}\,\simeq\,\Vert\widetilde{\boldsymbol{u}}\Vert_{\boldsymbol{L}^{p}(\Omega)}\,\leq\, \Vert\boldsymbol{u}\Vert_{\boldsymbol{L}^{p}(\Omega)}.
\end{equation*}
The last inequality comes from the fact (see \cite[Lemma 3.2]{Am4}) 
\begin{equation*}
\vert\langle\boldsymbol{u}\cdot\boldsymbol{n}\,,\,1\rangle_{\Sigma_{j}}\vert\,\leq\,C(\Omega,p)\,\Vert\boldsymbol{u}\Vert_{\boldsymbol{L}^{p}(\Omega)}.
\end{equation*}
Thus 
 $\frac{\partial\boldsymbol{u}}{\partial t}=\Delta\boldsymbol{u}\in L^{q}(0,T;\,[\boldsymbol{T}^{p'}(\Omega)]')$ and the result is proved.
  \end{proof}
 
  \begin{rmk}[Optimal initial value]
\rm{
It is may be an important question to know what is the optimal (weakest possible) initial value to obtain a unique strong, weak or very weak solution to Problem \eqref{henp}.

\noindent\textbf{(i)} A unique solution $\boldsymbol{u}$ of Problem \eqref{henp} is said to be a strong solution if it satisfies 
$$1<p,q<\infty,\,\,\, T\leq\infty,\,\,\,\boldsymbol{u}\in L^{q}(0,T;\,\boldsymbol{W}^{2,p}(\Omega)),\,\,\,\textrm{and}\,\,\,\, \frac{\partial\boldsymbol{u}}{\partial t}\in L^{q}(0,T;\,\boldsymbol{L}^{p}(\Omega)).$$  

\noindent The assumption $\boldsymbol{u}_{0}\in\boldsymbol{X}^{p}_{\sigma,\tau}(\Omega)$ is not optimal and may be replaced by the properties 
\begin{equation}\label{opinitval}
\boldsymbol{u}_{0}\in\boldsymbol{L}^{p}_{\sigma,\tau}(\Omega),\qquad\int_{0}^{\infty}\Vert A_{p}T(t)\boldsymbol{u}_{0}\Vert^{q}_{\boldsymbol{L}^{p}(\Omega)}\textrm{d}\,t<\infty,
\end{equation}
where $1<p,q<\infty$ and $(T(t))_{\geq0}$ is the semi-group generated by the Stokes operator on $\boldsymbol{L}^{p}_{\sigma,\tau}(\Omega)$. 
With an initial value $\boldsymbol{u}_{0}$ satisfying \eqref{opinitval} the unique solution $\boldsymbol{u}$ of Problem \eqref{henp} satisfies \eqref{lrlpw2p} for all $1<p,q<\infty$
and for all $T\leq\infty$ (see Proposition \ref{StrongSolution}).

\noindent\textbf{(ii)} A unique solution $\boldsymbol{u}$ of Problem \eqref{henp} is said to be a weak solution if it satisfies
\begin{equation}\label{maxreginithdiv}
1<p,q<\infty,\,\,\, T\leq\infty,\,\,\,\boldsymbol{u}\in L^{q}(0,T;\,\boldsymbol{W}^{1,p}(\Omega)),\,\,\,\textrm{and}\,\,\,\, \frac{\partial\boldsymbol{u}}{\partial t}\in L^{q}(0,T;\,[\boldsymbol{H}^{p'}_{0}(\mathrm{div},\Omega)]').
\end{equation} 
The optimal choice of the initial value $\boldsymbol{u}_{0}$ to obtain a unique weak solution to Problem \eqref{henp} satisfying the maximal regularity \eqref{maxreginithdiv} is 
\begin{equation}\label{optinitvalhdiv}
\boldsymbol{u}_{0}\in[\boldsymbol{H}^{p'}_{0}(\mathrm{div},\Omega)]'_{\sigma,\tau},\qquad\int_{0}^{\infty}\Vert B_{p}T(t)\boldsymbol{u}_{0}\Vert^{q}_{[\boldsymbol{H}^{p'}_{0}(\mathrm{div},\Omega)]'}\textrm{d}\,t<\infty,
\end{equation}
where $1<p,q<\infty$ and $(T(t))_{t\geq0}$ is the semi-group generated by the Stokes operator on $[\boldsymbol{H}^{p'}_{0}(\mathrm{div},\Omega)]'_{\sigma,\tau}$. Observe that for the weak solution, the choice of an initial value  $\boldsymbol{u}_{0}$ satisfying \eqref{optinitvalhdiv} is better than $\boldsymbol{u}_{0}\in\boldsymbol{L}^{p}_{\sigma,\tau}(\Omega)$ (see Corollary \ref{corolrlpw1p}), since it allows us to obtain a unique solution satisfying \eqref{lrw1p} for all $1<p,q<\infty$ and for $T=\infty$ included.

\noindent\textbf{(iii)} A unique solution $\boldsymbol{u}$ of Problem \eqref{henp} is said to be a very weak solution if it satisfies 
\begin{equation}\label{maxreginittp}
1<p,q<\infty,\,\,\, T\leq\infty,\,\,\,\boldsymbol{u}\in L^{q}(0,T;\,\boldsymbol{L}^{p}(\Omega)),\,\,\,\textrm{and}\,\,\,\, \frac{\partial\boldsymbol{u}}{\partial t}\in L^{q}(0,T;\,[\boldsymbol{T}^{p'}(\Omega)]').
\end{equation} 
The optimal choice of the initial value $\boldsymbol{u}_{0}$ to obtain a unique very weak solution to Problem \eqref{henp} satisfying the maximal regularity \eqref{maxreginittp} is 
\begin{equation}\label{optinitvaltp}
\boldsymbol{u}_{0}\in[\boldsymbol{T}^{p'}(\Omega)]'_{\sigma,\tau},\qquad\int_{0}^{\infty}\Vert C_{p}T(t)\boldsymbol{u}_{0}\Vert^{q}_{[\boldsymbol{T}^{p'}(\Omega)]'}\textrm{d}\,t<\infty,
\end{equation}
where $1<p,q<\infty$ and $(T(t))_{t\geq0}$ is the semi-group generated by the Stokes operator on $[\boldsymbol{T}^{p'}(\Omega)]'_{\sigma,\tau}$. Notice that for the very weak solution, the choice of an initial value  $\boldsymbol{u}_{0}$ satisfying \eqref{optinitvaltp} is better than $\boldsymbol{u}_{0}\in[\boldsymbol{H}^{p'}_{0}(\mathrm{div}),\Omega)]'_{\sigma,\tau}$ (see Theorem \ref{veryweak}), since it allows us to obtain a unique solution satisfying \eqref{lrl1p} for all $1<p,q<\infty$ and for $T=\infty$ included.
} 
\end{rmk}

We present now the remaining results for the homogeneous Stokes system with flux conditions. As it was said in the Introduction, they  are very similar, although with some differences, to those for the  problem without flux condition that are described just above. As for the proofs, they are also very similar and actually simpler to those without flux condition, reason for which we will not give all of them in detail.

\begin{rmk}
\label{solflux}
\rm{By \eqref{Reghenp1A'}, the function $\boldsymbol{u}$ that is obtained in Theorem \ref{exishenpA'} solves Problem (\ref{henp}) and also satisfies  condition \eqref{condition2}. Then, for all $\pi \in \R$, $(\boldsymbol{u}, \pi )$ is a solution of the Stokes problem with flux  (\ref{lens}), (\ref{nbc}), \eqref{condition2}.}
\end{rmk}

\begin{rmk}
\rm{Notice that the decay rates The estimates  \eqref{estlplqutxp}--\eqref{estlplqlaputxp} for the solution $\boldsymbol{u}(t)$ are exponential, and not algebraic as in (\ref{esthenp1})-(\ref{esthenp3}) of Theorem \ref{exishenp}.}
\end{rmk}

{\begin{rmk}
\rm{
For $p=2$, the solution $\boldsymbol{u}$
can be written explicitly in the form
\begin{equation*}
\boldsymbol{u}(t)\,=\,\sum_{k=1}^{+\infty}\beta_{k}\,e^{-\lambda_{k}\,t}\,\boldsymbol{z}_{k},\qquad
\beta_{k}=\int_{\Omega}\boldsymbol{u}_{0}\cdot\overline{\boldsymbol{z}_{k}}\,\textrm{d}\,x
\end{equation*}
and the exponential decay with respect to time can be obtained directly. Moreover, contrary to the case $p\neq2$ one has  
\begin{equation}\label{7.9}
\|\boldsymbol{u}(t)\|_{\boldsymbol{L}^{2}(\Omega)}\leq\,e^{-\lambda_{1}t}\,\|\boldsymbol{u}_{0}\|_{\boldsymbol{L}^{2}(\Omega)}.
\end{equation}
It is clear that estimate (\ref{7.9}) yields a faster decay rate than (\ref{estlplqutxp}).
We recall that $\lambda_{1}$ is the first eigenvalue for the operator $A'_{p}$ and it is equal to $\frac{1}{C_{2}(\Omega)}$ where $C_{2}(\Omega)$ is the constant of the Poincaré-type inequality (\ref{pc1}).
}
\end{rmk}
In our next Theorem we consider initial data $\boldsymbol{u}_{0}$ belonging to $\boldsymbol{Y}_{p}$ and to $\boldsymbol{Z}_{p}$.
  \begin{theo}
  \label{brpimecprime}
  \rm{\textbf{(i)} For all $\boldsymbol{u}_{0}\in\boldsymbol{Y}_{p}$ the Problem
 (\ref{henp}) has a unique solution $\boldsymbol{u}$ satisfying
\begin{equation}\label{Reghenp1hdivB'}
\boldsymbol{u}\in
C([0,\,+\infty[,\,\boldsymbol{Y}_{p})\cap
C(]0,\,+\infty[,\,\mathbf{D}(B'_{p}))\cap
C^{1}(]0,\,+\infty[,\,\boldsymbol{Y}_{p}),
\end{equation}
\begin{equation}\label{Reghenp2hdivB'}
\boldsymbol{u}\in C^{k}(]0,\,+\infty[,\,\mathbf{D}(B'^{\ell}_{p})),\qquad
\forall\,k,\,\ell\in\mathbb{N}.
\end{equation}
Moreover there exists a constant $C(\Omega,p)$ and a constant $\mu>0$, such that, for all $t>0$:
\begin{equation}\label{esthenp1hdivB'}
\|\boldsymbol{u}(t)\|_{[\boldsymbol{H}^{p'}_{0}(\mathrm{div},\,\Omega)]'}\leq\,C(\Omega,p)\,e^{-\mu\,t}\,\|\boldsymbol{u}_{0}\|_{[\boldsymbol{H}^{p'}_{0}(\mathrm{div},\,\Omega)]'},
\end{equation}
\begin{equation}\label{esthenp2hdivB'}
\Big\Vert\frac{\partial\boldsymbol{u}(t)}{\partial t}\Big\Vert_{[\boldsymbol{H}^{p'}_{0}(\mathrm{div},\,\Omega)]'}\,\leq\,C(\Omega,p)\,\frac{e^{-\mu\,t}}{t}\,\|\boldsymbol{u}_{0}\|_{[\boldsymbol{H}^{p'}_{0}(\mathrm{div},\,\Omega)]'}
\end{equation}
and
\begin{equation}\label{esthenp3hdivB'}
\|\boldsymbol{u}(t)\|_{\boldsymbol{W}^{1,p}(\Omega)}\leq\,C(\Omega,p)\,\frac{e^{-\mu\,t}}{t}\,\|\boldsymbol{u}_{0}\|_{[\boldsymbol{H}^{p'}_{0}(\mathrm{div},\,\Omega)]'}.
\end{equation}

\noindent \textbf{(ii)} For all  $\boldsymbol{u}_{0}\in\boldsymbol{Z}_{p}$ the Problem
 (\ref{henp}) has a unique solution $\boldsymbol{u}$ satisfying
\begin{equation}\label{Reghenp1tpC'}
\boldsymbol{u}\in
C([0,\,+\infty[,\,\boldsymbol{Z}_{p})\cap
C(]0,\,+\infty[,\,\mathbf{D}(C'_{p}))\cap
C^{1}(]0,\,+\infty[,\,\boldsymbol{Z}_{p}),
\end{equation}
\begin{equation}\label{Reghenp2tpC'}
\boldsymbol{u}\in C^{k}(]0,\,+\infty[,\,\mathbf{D}(C'^{\ell}_{p})),\qquad
\forall\,k,\,\ell\in\mathbb{N}.
\end{equation}
Moreover there exists a constant $C(\Omega,p)$ and a constant $\mu>0$, such that, for all $t>0$:
\begin{equation}\label{esthenp1tpC'}
\|\boldsymbol{u}(t)\|_{[\boldsymbol{T}^{p'}(\Omega)]'}\leq\,C(\Omega,p)\,e^{-\mu\,t}\,\|\boldsymbol{u}_{0}\|_{[\boldsymbol{T}^{p'}(\Omega)]'},
\end{equation}
\begin{equation}\label{esthenp2tpC'}
\Big\Vert\frac{\partial\boldsymbol{u}(t)}{\partial t}\Big\Vert_{[\boldsymbol{T}^{p'}(\Omega)]'}\,\leq\,C(\Omega,p)\,\frac{e^{-\mu\,t}}{t}\,\|\boldsymbol{u}_{0}\|_{[\boldsymbol{T}^{p'}(\Omega)]'}
\end{equation}
and
\begin{equation}\label{esthenp3tpC'}
\|\boldsymbol{u}(t)\|_{\boldsymbol{L}^{p}(\Omega)}\leq\,C(\Omega,p)\,\frac{e^{-\mu\,t}}{t}\,\|\boldsymbol{u}_{0}\|_{[\boldsymbol{T}^{p'}(\Omega)]'}.
\end{equation}
}
\end{theo}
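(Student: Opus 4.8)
The plan is to prove Theorem \ref{brpimecprime} by applying the abstract theory of analytic semigroups to the operators $B'_{p}$ on $\boldsymbol{Y}_{p}$ and $C'_{p}$ on $\boldsymbol{Z}_{p}$, following verbatim the scheme used for $A'_{p}$ in Theorem \ref{exishenpA'} and for $B_{p},C_{p}$ in Theorem \ref{semigroupBC}. The only genuinely new feature, compared with Theorem \ref{semigroupBC}, is that here $0$ lies in the resolvent set: the operators $B'_{p}$ and $C'_{p}$ are invertible with bounded inverse (cf. the construction in Section \ref{Stokes operator with flux boundary  conditions}), which is precisely what upgrades the merely algebraic bounds of Theorem \ref{semigroupBC} to the exponential decay claimed in \eqref{esthenp1hdivB'}--\eqref{esthenp3tpC'}. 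I will write the details only for part (i); part (ii) is obtained by the identical argument after substituting $C'_{p},\boldsymbol{Z}_{p},[\boldsymbol{T}^{p'}(\Omega)]'$ for $B'_{p},\boldsymbol{Y}_{p},[\boldsymbol{H}^{p'}_{0}(\mathrm{div},\Omega)]'$.

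First I would set up existence, uniqueness and smoothing. Since $-B'_{p}$ generates a bounded analytic semigroup $(T(t)_{\vert\boldsymbol{Y}_{p}})_{t\geq0}$ on $\boldsymbol{Y}_{p}$ (Section \ref{Stokes operator with flux boundary  conditions}), the homogeneous Cauchy problem associated to \eqref{henp} with initial datum $\boldsymbol{u}_{0}\in\boldsymbol{Y}_{p}$ admits the unique solution $\boldsymbol{u}(t)=T(t)_{\vert\boldsymbol{Y}_{p}}\boldsymbol{u}_{0}$. The regularity statements \eqref{Reghenp1hdivB'}--\eqref{Reghenp2hdivB'} then follow exactly as in Theorem \ref{exishenp}: by analyticity $T(t)\boldsymbol{u}_{0}\in\mathbf{D}((B'_{p})^{\infty})=\cap_{n}\mathbf{D}((B'_{p})^{n})$ for every $t>0$, and the continuity in the graph norms of the powers $(B'_{p})^{\ell}$ is a consequence of \cite[Chapter 2, Theorem 4.6, page 101]{En}.

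The core of the proof is the exponential decay. The key observation is that, $B'_{p}$ being invertible with bounded inverse, one has $0\in\rho(B'_{p})$; combined with the sectoriality coming from analyticity, the spectrum $\sigma(B'_{p})$ is contained in a sector around the positive real axis and is bounded away from the origin, so that $s(-B'_{p})=\sup\{\mathrm{Re}\,\lambda:\lambda\in\sigma(-B'_{p})\}<0$ (in fact $\sigma(B'_{p})=\sigma(A'_{p})$, whose smallest element is $\lambda_{1}>0$). Since analytic semigroups are eventually norm-continuous, the spectral bound equals the growth bound, so by \cite[Chapitre 4, Theorem 4.3, page 118]{Pa} there exist, for every $0<\mu<\lambda_{1}$, a constant $M>0$ with $\Vert T(t)_{\vert\boldsymbol{Y}_{p}}\Vert_{\mathcal{L}(\boldsymbol{Y}_{p})}\leq M\,e^{-\mu t}$; this gives \eqref{esthenp1hdivB'}. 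Writing $T(t)=T(t/2)T(t/2)$ and combining this decay with the analytic-semigroup estimate $\Vert B'_{p}\,T(t/2)\Vert_{\mathcal{L}(\boldsymbol{Y}_{p})}\leq C/t$ yields $\Vert B'_{p}\,T(t)\Vert_{\mathcal{L}(\boldsymbol{Y}_{p})}\leq C\,e^{-\mu t}/t$; since $\partial_{t}\boldsymbol{u}=-B'_{p}\boldsymbol{u}=-B'_{p}T(t)\boldsymbol{u}_{0}$, estimate \eqref{esthenp2hdivB'} follows. For the $\boldsymbol{W}^{1,p}$-bound \eqref{esthenp3hdivB'} I would invoke the norm equivalence $\Vert\boldsymbol{u}\Vert_{\boldsymbol{W}^{1,p}(\Omega)}\simeq\Vert\Delta\boldsymbol{u}\Vert_{[\boldsymbol{H}^{p'}_{0}(\mathrm{div},\Omega)]'}$ valid on $\mathbf{D}(B'_{p})$ because of the flux condition (this is \cite[Proposition 4.3]{Am3}, used exactly as in Corollary \ref{corolrlpw1p}), which converts the bound on $\Vert B'_{p}T(t)\boldsymbol{u}_{0}\Vert$ into the stated estimate. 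For part (ii) the corresponding norm equivalence is $\Vert\boldsymbol{u}\Vert_{\boldsymbol{L}^{p}(\Omega)}\simeq\Vert\Delta\boldsymbol{u}\Vert_{[\boldsymbol{T}^{p'}(\Omega)]'}$, namely \cite[Theorem 4.15]{Am3}, employed as in Corollary \ref{veryweak}.

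The main obstacle, and the one point deserving care, is the justification of a strictly negative growth bound with a uniform rate $\mu$: this rests on identifying the spectral bound with the semigroup growth bound, which requires both the invertibility of $B'_{p}$ and $C'_{p}$ (spectrum bounded away from zero) and the spectral mapping theorem for the eventually norm-continuous analytic semigroups. Once the exponential bound $\Vert T(t)\Vert\leq M e^{-\mu t}$ is secured, everything else is a routine combination of this decay with the standard analytic-semigroup estimates and the $\emph{a priori}$ norm equivalences carried over from \cite{Am3}.
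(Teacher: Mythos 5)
Your proposal is correct and follows exactly the route the paper intends: its own proof of this theorem is the one-line remark that the result ``follows by the classical semigroup theory applied to the analytic semigroups generated by $B'_{p}$ and $C'_{p}$,'' and what you write out (exponential decay from the invertibility of $B'_{p}$, $C'_{p}$ together with the spectral/growth-bound identification via \cite[Chapitre 4, Theorem 4.3]{Pa}, plus the norm equivalences of \cite{Am3} under the flux condition) is precisely the template already used for $A'_{p}$ in Theorem \ref{exishenpA'} and for $B_{p}$, $C_{p}$ in Theorem \ref{semigroupBC}. No gaps.
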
 
\begin{proof}
The theorem follows by the classical semigroup theory applied to the analytic  semigroups  generated by the operators $B'_p$ and $C'_p$.
\end{proof}

\begin{rmk}
\label{solfluxBIS}
\rm{By \eqref{Reghenp1hdivB'} and \eqref{Reghenp2tpC'}, the functions $\boldsymbol{u}$ obtained in Theorem \ref{brpimecprime} solve Problem (\ref{henp}) and  satisfy  condition \eqref{condition2}. Then, for all $\pi \in \R$, $(\boldsymbol{u}, \pi )$ is a solution of the Stokes problem with flux  (\ref{lens}), (\ref{nbc}), \eqref{condition2}.}
\end{rmk}

\subsection{The inhomogeneous problem}
Given the  Cauchy-Problem:
\begin{equation}\label{abstcauchprob}
\left\{
\begin{array}{cc}
\frac{\partial u}{\partial t}\,+\,\mathcal{A}\,u(t)=\textit{f}(t)& 0\leq t\leq T\\
u(0)=0,&
\end{array}
\right.
\end{equation}
where $-\mathcal{A}$ is the infinitesimal generator of an analytic semi-group on a Banach space $X$  and $\textit{f}\in L^{p}(0,T;\,X)$,
the analyticity of $-\mathcal{A}$ is not enough in general  to ensure that  solutions to Problem (\ref{abstcauchprob}) satisfy
\begin{equation}\label{regulacp}
u\in W^{1,p}(0,T;\,X)\cap L^{p}(0,T;\,D(\mathcal{A})).
\end{equation}
Although it is enough when  $X$ is a Hilbert space, (see \cite{Ben, Sob} for instance), 
in general it is necessary to impose some further regularity condition on $\textit{f}$  such as H$\mathrm{\ddot{o}}$lder continuity, (see \cite{Pa} for instance). However, using the concept of $\zeta$-convexity and a perturbation argument,  the existence of a solution to Problem (\ref{abstcauchprob}) satisfying (\ref{regulacp}), when the pure imaginary powers of $\mathcal{A}$ satisfy estimate (\ref{estimpur}) is proved in \cite{DV, GiGa4}. Moreover,  \cite[Theorem 2.1]{GiGa4} extends  \cite[Theorem 3.2]{DV} in two directions: First, the operator $\mathcal{A}$ may not have bounded inverse and second, the maximal interval of time  $T$ may be infinite. In the case of a Hilbert space it was proved in  \cite{Ka1, Ka2}  that the pure imaginary powers of a maximal accretive operator are bounded and satisfy estimates of type (\ref{estimpur}).
 
For the sake of completeness we state  the following theorem that is proved in \cite{GiGa4} (cf. Theorem 2.1). 
\begin{theo}\label{existabscp}
Let $X$ be a $\zeta$-convex Banach space. Assume that $0<T\leq\infty$, $1<p<\infty$ and that $\mathcal{A}\in\mathcal{E}^{\theta}_{K}(X)$ for some $K\geq 1$, $0\leq\theta<\pi/2$ and $\mathcal{E}^{\theta}_{K}(X)$ as in Definition \ref{EthetaK}. Then for every $\textit{f}\in L^{p}(0,T;\,X)$ there exists a unique solution $\boldsymbol{u}$ of the  Cauchy-Problem (\ref{abstcauchprob}) satisfying the properties:
\begin{equation*}
u\in L^{p}(0,T_{0};\,D(\mathcal{A})),\,\,\,\,  T_{0}\leq T\,\,\,\textrm{if}\,\,\,T<\infty\,\,\,\, \mathrm{and }\,\,\,\,T_{0}<T\,\,\,\textrm{if}\,\,\,T=\infty,
\end{equation*}
\begin{equation*}
\frac{\partial u}{\partial t}\in L^{p}(0,T;\,X)
\end{equation*}
and
\begin{equation*}
\int_{0}^{T}\Big\Vert\frac{\partial u}{\partial t}\Big\Vert^{p}_{X}\,\mathrm{d}\,t\,+\,\int_{0}^{T}\Vert\mathcal{A}u(t)\Vert^{p}_{X}\,\mathrm{d}\,t\,\leq\,C\,\int_{0}^{T}\Vert\textit{f}(t)\Vert_{X}^{p}\,\mathrm{d}\,t
\end{equation*}
with $C=C(p,\theta,K,X)$ independent of $\textit{f}$ and $T$.
\end{theo}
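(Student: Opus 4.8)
The plan is to recast the Cauchy problem \eqref{abstcauchprob} as an abstract \emph{sum of two commuting operators} on a vector-valued Lebesgue space and then to apply the Dore--Venni theorem in the extended form of \cite{GiGa4}. First I would set $E=L^{p}(0,T;\,X)$. Since $1<p<\infty$ and $X$ is $\zeta$-convex, $E$ is again $\zeta$-convex: this follows from the Hilbert-transform characterization of Theorem \ref{Hilberttransf}, because the vector-valued truncated Hilbert transform on $L^{p}(0,T;\,X)$ is controlled once $X$ itself satisfies the estimate of that theorem (exactly as in the proof of Proposition \ref{Hpdivtp'zetaconx}). On $E$ I would introduce two operators: the \emph{pointwise realization} $\mathcal{B}$ of $\mathcal{A}$, defined by $(\mathcal{B}u)(t)=\mathcal{A}\,u(t)$ with domain $L^{p}(0,T;\,D(\mathcal{A}))$, and the \emph{time-derivative} operator $\mathcal{D}_{t}=\mathrm{d}/\mathrm{d}t$ with domain $\{u\in W^{1,p}(0,T;\,X);\,u(0)=0\}$. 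With these notations the problem \eqref{abstcauchprob} becomes the single operator equation $(\mathcal{D}_{t}+\mathcal{B})\,u=\textit{f}$, and the sought regularity \eqref{regulacp} together with the final estimate is exactly the statement that $\mathcal{D}_{t}+\mathcal{B}$ is closed on $D(\mathcal{D}_{t})\cap D(\mathcal{B})$ and satisfies a two-sided coercive bound.

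Next I would verify the hypotheses of the abstract sum theorem. The resolvents of $\mathcal{D}_{t}$ and $\mathcal{B}$ commute, since the two operators act in independent variables. The operator $\mathcal{B}$ inherits the class of $\mathcal{A}$: its pure imaginary powers act pointwise in $t$, so $\mathcal{B}\in\mathcal{E}^{\theta}_{K}(E)$ with the same $K$ and the same angle $\theta<\pi/2$ as in Definition \ref{EthetaK}. The decisive point is to show that $\mathcal{D}_{t}$ is a non-negative operator whose pure imaginary powers are bounded on $E$ with power angle $\pi/2$, i.e. $\|\mathcal{D}_{t}^{\,is}\|_{\mathcal{L}(E)}\leq C\,e^{(\pi/2)|s|}$. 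This is where the $\zeta$-convexity of $E$ is genuinely used: the imaginary powers of $\mathrm{d}/\mathrm{d}t$ are expressed through the Hilbert transform in the time variable, and their boundedness is equivalent to the estimate of Theorem \ref{Hilberttransf}. Since $\theta+\pi/2<\pi$, the angle condition required by Dore--Venni holds strictly.

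I would then invoke \cite[Theorem 2.1]{GiGa4} (which extends \cite[Theorem 3.2]{DV}) to conclude that $\mathcal{D}_{t}+\mathcal{B}$ is closed with domain $D(\mathcal{D}_{t})\cap D(\mathcal{B})$ and boundedly invertible onto $E$, with
\begin{equation*}
\|\mathcal{D}_{t}u\|_{E}+\|\mathcal{B}u\|_{E}\,\leq\,C\,\|(\mathcal{D}_{t}+\mathcal{B})u\|_{E}.
\end{equation*}
Reading this back in terms of $u$ gives, for every $\textit{f}\in L^{p}(0,T;\,X)$, a unique $u$ with $\partial u/\partial t\in L^{p}(0,T;\,X)$ and $u\in L^{p}(0,T_{0};\,D(\mathcal{A}))$, together with the claimed maximal-regularity inequality, the constant depending only on $p,\theta,K,X$ and not on $\textit{f}$ or $T$. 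Uniqueness is immediate from the invertibility of the sum.

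The hard part will be precisely the two features in which the present situation goes beyond the classical Dore--Venni hypotheses, and which constitute the extension carried out in \cite{GiGa4}. First, $\mathcal{A}$ (hence $\mathcal{B}$) need not have a bounded inverse, so one cannot assume $0\in\rho(\mathcal{B})$; this is handled using the density of $D(\mathcal{A})$ and $R(\mathcal{A})$ built into the class $\mathcal{E}^{\theta}_{K}(X)$, by working with the shifted operators $\mathcal{A}+\varepsilon I$ and passing to the limit $\varepsilon\to 0^{+}$ (as was done, in the stationary setting, to obtain \eqref{estnu**}). Second, the time interval may be infinite, in which case $\mathcal{D}_{t}$ is no longer invertible and the integration operator $\mathcal{D}_{t}^{-1}$ fails to be bounded on $L^{p}(0,\infty;\,X)$; this is dealt with by approximating on finite intervals $(0,T_{0})$ and checking that the constant in the coercive estimate remains uniform in $T_{0}$, which produces the statement for $T=\infty$ with the restriction $T_{0}<T$ in the regularity class.
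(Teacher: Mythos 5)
The paper offers no proof of this statement: it is quoted verbatim from Giga--Sohr \cite{GiGa4} (Theorem 2.1), and your proposal is a faithful outline of precisely the argument on which that citation rests, namely the Dore--Venni theorem for sums of commuting operators with bounded imaginary powers applied to $\mathcal{D}_{t}+\mathcal{B}$ on $L^{p}(0,T;X)$, together with the two genuine extensions (non-invertible $\mathcal{A}$ via the shift $\mathcal{A}+\varepsilon I$ and the case $T=\infty$ via exhaustion by finite intervals with uniform constants) that separate \cite[Theorem 2.1]{GiGa4} from the original result of \cite{DV}. I see no gap at the level of detail given, so your sketch is consistent with the route the paper relies on.
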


Let us consider now the non homogeneous  Problem:
 \begin{equation}
 \label{inhensp}
 \left\{
\begin{array}{cccc}
\frac{\partial\boldsymbol{u}}{\partial t} - \Delta \boldsymbol{u
}=\boldsymbol{f},& 
\mathrm{div}\,\boldsymbol{u}= 0 &\mathrm{in}&\Omega\times (0,T), \\
\boldsymbol{u}\cdot\boldsymbol{n}=0,& 
\boldsymbol{\mathrm{curl}}\,\boldsymbol{u}\times \boldsymbol{n} = \boldsymbol{0} &\mathrm{on} & \Gamma\times (0,T), \\
&\boldsymbol{u}(0)=\boldsymbol{0} &\mathrm{in}&
\Omega,
\end{array}
\right.
\end{equation}
where $\boldsymbol{f}\in
 L^{q}(0,T;\,\boldsymbol{L}^{p}_{\sigma,\tau}(\Omega))$ and $1<p,q<\infty$. 
 
 In a first step, applying  Theorem \ref{existabscp} to  $(I+A_{p})$ we  obtain strong-solutions to the Stokes Problem (\ref{lens}), (\ref{nbc}) and  maximal regularity  $L^{p}-L^{q}$ estimates. This is done in the   following Theorem.

 \begin{theo}\label{existinhensp}Let $1<p,q<\infty$ and $0<T\leq\infty$. Then for every $\boldsymbol{f}\in L^{q}(0,T;\,\boldsymbol{L}^{p}_{\sigma,\tau}(\Omega))$ there exists a unique solution $\boldsymbol{u}$ of  \eqref{inhensp} satisfying
\begin{equation}\label{reglplqlap1}
\boldsymbol{u}\in L^{q}(0,T_{0};\,\boldsymbol{W}^{2,p}(\Omega)),\,\,\,\,  T_{0}\leq T\,\,\,\textrm{if}\,\,\,T<\infty\,\,\,\, \mathrm{and }\,\,\,\,T_{0}<T\,\,\,\textrm{if}\,\,\,T=\infty,
\end{equation}
\begin{equation}\label{reglplqlap2}
\frac{\partial\boldsymbol{u}}{\partial t}\in L^{q}(0,T;\,\boldsymbol{L}^{p}_{\sigma,\tau}(\Omega))
\end{equation}
and
\begin{equation}\label{estlplqlap}
\int_{0}^{T}\Big\Vert\frac{\partial\boldsymbol{u}}{\partial t}\Big\Vert^{q}_{\boldsymbol{L}^{p}(\Omega)}\,\mathrm{d}\,t\,+\,\int_{0}^{T}\Vert\Delta\boldsymbol{u}(t)\Vert^{q}_{\boldsymbol{L}^{p}(\Omega)}\,\mathrm{d}\,t\,\leq\,C(p,q,\Omega)\,\int_{0}^{T}\Vert\boldsymbol{f}(t)\Vert^{q}_{\boldsymbol{L}^{p}(\Omega)}\,\mathrm{d}\,t.
\end{equation}
\end{theo}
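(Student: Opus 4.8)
The plan is to recast \eqref{inhensp} as the abstract Cauchy problem \eqref{abstcauchprob} and to invoke Theorem \ref{existabscp}. First I would note that for $\boldsymbol{u}(t)\in\mathbf{D}(A_{p})$ one has $-\Delta\boldsymbol{u}=A_{p}\boldsymbol{u}\in\boldsymbol{L}^{p}_{\sigma,\tau}(\Omega)$ by Proposition \ref{sl}; since the data satisfies $\boldsymbol{f}(t)\in\boldsymbol{L}^{p}_{\sigma,\tau}(\Omega)$, no pressure gradient appears and \eqref{inhensp} is \emph{exactly} $\partial_{t}\boldsymbol{u}+A_{p}\boldsymbol{u}=\boldsymbol{f}$, $\boldsymbol{u}(0)=\boldsymbol{0}$. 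The state space $\boldsymbol{L}^{p}_{\sigma,\tau}(\Omega)$ is a closed subspace of $\boldsymbol{L}^{p}(\Omega)$, hence $\zeta$-convex for $1<p<\infty$ by Proposition \ref{zetaconvexsubsp}, and so is $\boldsymbol{X}_{p}$. Thus the $\zeta$-convexity hypothesis of Theorem \ref{existabscp} is granted.

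The central difficulty is that $A_{p}$ is \emph{not} invertible: its kernel is the nontrivial space $\boldsymbol{K}_{\tau}(\Omega)$ and its range is not dense (Proposition \ref{densitédurang}), so $A_{p}\notin\mathcal{E}^{\theta}_{K}$ in the sense of Definition \ref{EthetaK} and Theorem \ref{existabscp} cannot be applied to it directly. To bypass this I would use the topological decomposition \eqref{realtionorthogo}, namely $\boldsymbol{L}^{p}_{\sigma,\tau}(\Omega)=\boldsymbol{K}_{\tau}(\Omega)\oplus\boldsymbol{X}_{p}$, and write $\boldsymbol{f}=\tilde{\boldsymbol{f}}+\boldsymbol{f}_{0}$ with $\boldsymbol{f}_{0}=\sum_{j=1}^{J}\langle\boldsymbol{f}\cdot\boldsymbol{n},1\rangle_{\Sigma_{j}}\widetilde{\boldsymbol{\mathrm{grad}}}\,q_{j}^{\tau}\in\boldsymbol{K}_{\tau}(\Omega)$ and $\tilde{\boldsymbol{f}}\in\boldsymbol{X}_{p}$, both projections being bounded and commuting with time integration. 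On $\boldsymbol{X}_{p}$ the restriction of the Stokes operator is $A'_{p}$, which is invertible with bounded inverse and whose pure imaginary powers satisfy the bound \eqref{estimpurlap} of Theorem \ref{Lapimpower}; together with the density of $\mathbf{D}(A'_{p})$ (Proposition \ref{dd2}) and of $R(A'_{p})=\boldsymbol{X}_{p}$, this yields $A'_{p}\in\mathcal{E}^{\theta_{0}}_{M}(\boldsymbol{X}_{p})$. Equivalently, on the whole space the shift $I+A_{p}$ restores invertibility and lies in $\mathcal{E}^{\theta_{0}}_{M}$ by Theorem \ref{pureimg1+lap}; it is precisely this shift that legitimizes the application of Theorem \ref{existabscp}.

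I would then apply Theorem \ref{existabscp} to $A'_{p}$ on $\boldsymbol{X}_{p}$ with right-hand side $\tilde{\boldsymbol{f}}$, obtaining a unique $\tilde{\boldsymbol{u}}$ with $\tilde{\boldsymbol{u}}\in L^{q}(0,T_{0};\mathbf{D}(A'_{p}))$, $\partial_{t}\tilde{\boldsymbol{u}}\in L^{q}(0,T;\boldsymbol{X}_{p})$ and $\int_{0}^{T}\|\partial_{t}\tilde{\boldsymbol{u}}\|^{q}_{\boldsymbol{L}^{p}(\Omega)}\,\mathrm{d}t+\int_{0}^{T}\|A'_{p}\tilde{\boldsymbol{u}}\|^{q}_{\boldsymbol{L}^{p}(\Omega)}\,\mathrm{d}t\leq C\int_{0}^{T}\|\tilde{\boldsymbol{f}}\|^{q}_{\boldsymbol{L}^{p}(\Omega)}\,\mathrm{d}t$ with $C$ independent of $T$ and $\boldsymbol{f}$. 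On the kernel, since $A_{p}$ vanishes on $\boldsymbol{K}_{\tau}(\Omega)$ and $\Delta$ annihilates it, the component equation reduces to $\partial_{t}\boldsymbol{u}_{0}=\boldsymbol{f}_{0}$, solved by $\boldsymbol{u}_{0}(t)=\int_{0}^{t}\boldsymbol{f}_{0}(s)\,\mathrm{d}s$, with $\Delta\boldsymbol{u}_{0}=\boldsymbol{0}$. Setting $\boldsymbol{u}=\tilde{\boldsymbol{u}}+\boldsymbol{u}_{0}$ gives a solution of \eqref{inhensp}: it is divergence free, satisfies the boundary conditions \eqref{nbc} (both summands do), and $\partial_{t}\boldsymbol{u}-\Delta\boldsymbol{u}=\partial_{t}\tilde{\boldsymbol{u}}+A'_{p}\tilde{\boldsymbol{u}}+\boldsymbol{f}_{0}=\tilde{\boldsymbol{f}}+\boldsymbol{f}_{0}=\boldsymbol{f}$. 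Because $\Delta\boldsymbol{u}=\Delta\tilde{\boldsymbol{u}}=-A'_{p}\tilde{\boldsymbol{u}}$, the kernel part contributes nothing to the second-order term and estimate \eqref{estlplqlap} follows with a $T$-independent constant; the regularity \eqref{reglplqlap1}--\eqref{reglplqlap2} then follows from the equivalence of the graph norm of $A_{p}$ with the $\boldsymbol{W}^{2,p}$ norm (Remark \ref{rmkequivnorm}). Uniqueness follows since the difference of two solutions solves the homogeneous problem with zero initial data, which admits only the trivial solution by the $C_{0}$-semigroup generated by $-A_{p}$ (Theorem \ref{analsemi2}).

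The step I expect to be the main obstacle is securing the $T$-\emph{independent} constant in \eqref{estlplqlap}, including the case $T=\infty$. A naive bounded perturbation argument treating $A_{p}=(I+A_{p})-I$ would only produce constants growing with $T$; the decomposition onto the genuinely invertible operator $A'_{p}$, which enjoys maximal regularity on all of $(0,\infty)$, is what makes the global estimate work, the point being that $\Delta$ kills the kernel so that $\|\Delta\boldsymbol{u}\|=\|A'_{p}\tilde{\boldsymbol{u}}\|$. A related subtlety to handle carefully is that, when $T=\infty$, the kernel primitive $\boldsymbol{u}_{0}$ need not belong to $L^{q}(0,\infty;\boldsymbol{K}_{\tau}(\Omega))$, which is exactly why the $\boldsymbol{W}^{2,p}$-regularity \eqref{reglplqlap1} can only be asserted on bounded subintervals $(0,T_{0})$ while the derivative and Laplacian estimates remain global.
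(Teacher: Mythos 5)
Your argument is correct, but it follows a genuinely different route from the paper's. The paper does \emph{not} split off the kernel: it sets $\boldsymbol{u}_{\mu}(t)=e^{-t/\mu^{2}}\boldsymbol{u}(t)$, so that $\boldsymbol{u}_{\mu}$ solves the Cauchy problem for the invertible operator $\frac{1}{\mu^{2}}I+A_{p}$ on all of $\boldsymbol{L}^{p}_{\sigma,\tau}(\Omega)$ with datum $e^{-t/\mu^{2}}\boldsymbol{f}$, applies Theorem \ref{existabscp} to that operator with a constant \emph{independent of} $\mu$ (this uniformity is exactly what the dilation argument behind Theorem \ref{pureimg1+lap} and Proposition \ref{Lapimpowerproposition} provides), and then lets $\mu\to\infty$, using the equivalence of $\Vert(\frac{1}{\mu^{2}}I+A_{p})\boldsymbol{v}\Vert_{\boldsymbol{L}^{p}(\Omega)}$ with the graph norm of $A_{p}$ to recover \eqref{estlplqlap}. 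Your decomposition $\boldsymbol{L}^{p}_{\sigma,\tau}(\Omega)=\boldsymbol{K}_{\tau}(\Omega)\oplus\boldsymbol{X}_{p}$ instead reduces the problem to the operator $A'_{p}$, which genuinely belongs to $\mathcal{E}^{\theta_{0}}_{M}(\boldsymbol{X}_{p})$ by Theorem \ref{Lapimpower}, plus a trivial finite-dimensional ODE on the kernel; this is essentially the mechanism the paper itself uses for the flux problem in Theorem \ref{Theorem10}, extended to the whole space by the splitting. What each approach buys: the paper's rescaling avoids any decomposition and works directly on $\boldsymbol{L}^{p}_{\sigma,\tau}(\Omega)$, at the price of a limiting procedure $\mu\to\infty$ whose validity hinges on the $\mu$-uniform imaginary-power bounds; your version avoids the limit altogether and makes structurally transparent both why the constant in \eqref{estlplqlap} is independent of $T$ (the Laplacian kills the kernel component, so $\Vert\Delta\boldsymbol{u}\Vert=\Vert A'_{p}\tilde{\boldsymbol{u}}\Vert$) and why the $\boldsymbol{W}^{2,p}$ regularity \eqref{reglplqlap1} can only be local in time when $T=\infty$ (the kernel primitive $\int_{0}^{t}\boldsymbol{f}_{0}(s)\,\mathrm{d}s$ need not decay). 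Your identification of the projection onto $\boldsymbol{K}_{\tau}(\Omega)$ via the fluxes $\langle\boldsymbol{f}\cdot\boldsymbol{n},1\rangle_{\Sigma_{j}}$ and the normalization $\langle\partial_{n}q_{j}^{\tau},1\rangle_{\Sigma_{k}}=\delta_{jk}$ is the correct one, and its boundedness on $\boldsymbol{L}^{p}_{\sigma,\tau}(\Omega)$ is exactly the estimate from \cite[Lemma 3.2]{Am4} that the paper invokes elsewhere.
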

\begin{proof}
It is well known (see \cite{Pa} for instance), that since the operator $-A_{p}$ generates a bounded analytic semi-group in $\boldsymbol{L}^{p}_{\sigma,\tau}(\Omega)$ (see Theorem \ref{analsemi2}), then Problem \eqref{inhensp}  has a unique solution $\boldsymbol{u}\in C(0,T;\,\boldsymbol{L}^{p}_{\sigma,\tau}(\Omega))$. To prove the maximal $L^{p}-L^{q}$ regularity \eqref{reglplqlap1}-\eqref{estlplqlap} we proceed as follows.  Let $\mu>0$ and set $\boldsymbol{u}_{\mu}(t)=e^{-\frac{1}{\mu^{2}}t}\boldsymbol{u}(t)$. Notice that $\boldsymbol{u}_{\mu}(t)$ is a solution of the problem
\begin{equation}\label{inhenspv(t)}
 \left\{
\begin{array}{cccc}
\frac{\partial\boldsymbol{u}_{\mu}}{\partial t} + (\frac{1}{\mu^{2}}I\,+\,A_{p})\boldsymbol{u}_{\mu}(t)=e^{-\frac{1}{\mu^{2}}t}\boldsymbol{f},& 
\mathrm{div}\,\boldsymbol{u}_{\mu}(t)= 0 &\mathrm{in}&\Omega\times (0,T), \\
\boldsymbol{u}_{\mu}(t)\cdot\boldsymbol{n}=0,& 
\boldsymbol{\mathrm{curl}}\,\boldsymbol{u}_{\mu}(t)\times \boldsymbol{n} = \boldsymbol{0} &\mathrm{on} & \Gamma\times (0,T), \\
&\boldsymbol{u}_{\mu}(0)=\boldsymbol{u}(0)=\boldsymbol{0} &\mathrm{in}&
\Omega,
\end{array}
\right.
\end{equation}
where the function $e^{-\frac{1}{\mu^{2}}t}\boldsymbol{f}\in L^{q}(0,T;\,\boldsymbol{L}^{p}_{\sigma,\tau}(\Omega))$. Since the pure imaginary powers of the operator $(\frac{1}{\mu^{2}}I\,+\,A_{p})$ are bounded in $\boldsymbol{L}^{p}_{\sigma,\tau}(\Omega)$  (see Theorem \ref{pureimg1+lap}) and since for all $1<p<\infty$, $\boldsymbol{L}^{p}_{\sigma,\tau}(\Omega)$ is $\zeta$-convex  (see Proposition \ref{zetaconvexsubsp}), we can apply Theorem \ref{existabscp} to the operator $(\frac{1}{\mu^{2}}I\,+\,A_{p})$. Thus we deduce that the solution $\boldsymbol{u}_{\mu}(t)$ of the Problem \eqref{inhenspv(t)} satisfies the following maximal $L^{p}-L^{q}$ regularity
\begin{equation}\label{reglplqlap1vt}
\boldsymbol{u}_{\mu}\in L^{q}(0,T_{0};\,\mathbf{D}(A_{p}))\cap W^{1,q}(0,T;\,\boldsymbol{L}^{p}_{\sigma,\tau}(\Omega)),
\end{equation}
with $T_{0}\leq T\,$ if $T<\infty\,$ and $T_{0}<T\,$ if $T=\infty$. Moreover the following estimate holds
\begin{multline}\label{estlplqlapvt}
\int_{0}^{T}\Big\Vert\frac{\partial\boldsymbol{u}_{\mu}}{\partial t}\Big\Vert^{q}_{\boldsymbol{L}^{p}(\Omega)}\,\mathrm{d}\,t\,+\,\int_{0}^{T}\Big\Vert\Big(\frac{1}{\mu^{2}}I\,+\,A_{p}\Big)\boldsymbol{u}_{\mu}(t)\Big\Vert^{q}_{\boldsymbol{L}^{p}(\Omega)}\,\mathrm{d}\,t\,\leq\\
C(p,q,\Omega)\,\int_{0}^{T}\Vert e^{-\frac{1}{\mu^{2}}t}\boldsymbol{f}(t)\Vert^{q}_{\boldsymbol{L}^{p}(\Omega)}\,\mathrm{d}\,t\leq C(p,q,\Omega)\,\int_{0}^{T}\Vert\boldsymbol{f}(t)\Vert^{q}_{\boldsymbol{L}^{p}(\Omega)}\,\mathrm{d}\,t,
\end{multline}
where the constant $C(p,q,\Omega)$ is independent of $\mu$.

Now observe that the solution $\boldsymbol{u}$ of the Problem \eqref{inhensp} can be written as $\boldsymbol{u}(t)=e^{\frac{1}{\mu^{2}}}\boldsymbol{u}_{\mu}(t).\,$ As a result, since for all $t\in(0,T)$ and for all $1<p,q<\infty$ the function $\boldsymbol{u}_{\mu}$ satisfies \eqref{reglplqlap1vt}, then the solution $\boldsymbol{u}$ satisfies also 
$$\boldsymbol{u}\in L^{q}(0,T_{0};\,\mathbf{D}(A_{p}))\cap W^{1,q}(0,T;\,\boldsymbol{L}^{p}_{\sigma,\tau}(\Omega)),$$
with $T_{0}\leq T\,$ if $T<\infty\,$ and $T_{0}<T\,$ if $T=\infty$.  Moreover  as $\mu\rightarrow\infty$ we have
\begin{equation}\label{convergenceumuu}
\boldsymbol{u}_{\mu}\longrightarrow\boldsymbol{u}\quad\mathrm{in}\,\,\, L^{q}(0,T_{0};\,\mathbf{D}(A_{p}))\cap W^{1,q}(0,T_{0};\,\boldsymbol{L}^{p}_{\sigma,\tau}(\Omega)).
\end{equation}
 It remains to prove estimate \eqref{estlplqlap}. First we recall that for every function $\boldsymbol{v}\in\mathbf{D}(A_{p})$ the following three quantities $\Vert \boldsymbol{v}\Vert_{\boldsymbol{W}^{2,p}(\Omega)},\,$ $\Vert \boldsymbol{v}\Vert_{\mathbf{D}(A_{p})}\,$ and 
 $\Big\Vert\Big(\frac{1}{\mu^{2}}I\,+\,A_{p}\Big)\boldsymbol{v}\Big\Vert_{\boldsymbol{L}^{p}(\Omega)}$ are equivalent norms on $\mathbf{D}(A_{p})$.
 As a result, substituting in \eqref{estlplqlapvt} we have
 \begin{multline}\label{estlplqlapvt2}
\int_{0}^{T}\Big\Vert\frac{\partial\boldsymbol{u}_{\mu}}{\partial t}\Big\Vert^{q}_{\boldsymbol{L}^{p}(\Omega)}\,\mathrm{d}\,t\,+\,\int_{0}^{T}\Vert\boldsymbol{u}_{\mu}(t)\Vert^{q}_{\mathbf{D}(A_{p})}\,\mathrm{d}\,t\,\leq
C(p,q,\Omega)\,\int_{0}^{T}\Vert\boldsymbol{f}(t)\Vert^{q}_{\boldsymbol{L}^{p}(\Omega)}\,\mathrm{d}\,t,
\end{multline}
where the constant $C(p,q,\Omega)$ is independent of $\mu$. Now using \eqref{convergenceumuu}, the dominated convergence theorem and passing to the limit as $\mu\longrightarrow\infty$ in \eqref{estlplqlapvt2} we have 
\begin{equation*}
\int_{0}^{T}\Big\Vert\frac{\partial\boldsymbol{u}}{\partial t}\Big\Vert^{q}_{\boldsymbol{L}^{p}(\Omega)}\,\mathrm{d}\,t\,+\,\int_{0}^{T}\Vert\boldsymbol{u}(t)\Vert^{q}_{\mathbf{D}(A_{p})}\,\mathrm{d}\,t\,\leq\,
C(p,q,\Omega)\,\int_{0}^{T}\Vert\boldsymbol{f}(t)\Vert^{q}_{\boldsymbol{L}^{p}(\Omega)}\,\mathrm{d}\,t.
\end{equation*}
Finally using the fact that $\Vert \boldsymbol{u}\Vert_{\mathbf{D}(A_{p})}$ is equivalent to $\Vert \boldsymbol{u}\Vert_{\boldsymbol{L}^{p}(\Omega)}\,+\,\Vert A_{p}\boldsymbol{u}\Vert_{\boldsymbol{L}^{p}(\Omega)}$,  estimate \eqref{estlplqlap} follows directly.
\end{proof}

\medskip

\medskip

We prove now Theorem \ref{Exisinhnsplp}. It extends  the previous result  to the more genera case where  the external force $\boldsymbol{f}\in L^{q}(0,T;\,\boldsymbol{L}^{p}(\Omega))$ is not necessarily divergence free. The following theorem shows that the pressure can be decoupled from the problem using the weak Neumann Problem (\ref{wn.1}). 

\begin{proof} 

[of Theorem \ref{Exisinhnsplp}]
Let
$\boldsymbol{f}\in\boldsymbol{L}^{q}(0,T;\,\boldsymbol{L}^{p}(\Omega))$, thanks to Lemma \ref{wn1} we know that for almost all $0<t<T,$ the problem
\begin{equation*}
\mathrm{div}(\boldsymbol{\mathrm{grad}}\,\pi(t)-\boldsymbol{f}(t))=0,\qquad\textrm{in}\,\Omega,\qquad
(\boldsymbol{\mathrm{grad}}\,\pi(t)-\boldsymbol{f}(t))\cdot\boldsymbol{n}=0,\qquad\textrm{on}\,\Gamma,
\end{equation*}
has a unique solution $\pi(t)\in W^{1,p}(\Omega)/\mathbb{R}$ that
satisfies the estimate
\begin{equation*}
\textrm{for}\,\,\textrm{a.e.}\,\,t\in(0,T)\qquad\|\pi(t)\|_{W^{1,p}(\Omega)/\mathbb{R}}\,\leq\,C(\Omega)\|\boldsymbol{f}(t)\|_{\boldsymbol{L}^{p}(\Omega)}.
\end{equation*}
It is clear that $\pi\in L^{q}(0,T;\,W^{1,p}(\Omega)/\mathbb{R})$ and $(\boldsymbol{f}-\boldsymbol{\mathrm{grad}}\,\pi)\in
L^{q}(0,T;\,\boldsymbol{L}^{p}_{\sigma,\tau}(\Omega))$. As a result, thanks to Theorem \ref{existinhensp},  Problem (\ref{lens}) with (\ref{nbc}) has a unique solution $(\boldsymbol{u},\pi)$ satisfying (\ref{reglplqstokes1})-(\ref{estlplqstokes}).
\end{proof}
Using now the estimate \eqref{pureimphdiv}, the $\zeta$-convexity of $[\boldsymbol{H}^{p'}_{0}(\mathrm{div,\Omega})]'$ (see Proposition \ref{Hpdivtp'zetaconx}), the Lemma \ref{wn1} (part (ii) concerning the weak Neumann Problem) and proceeding in a similar way as in the proof of Theorem \ref{existinhensp} and Theorem \ref{Exisinhnsplp}, one gets the weak solutions for the inhomogeneous Stokes problem. We will skip the proof because it is similar to the proof of Theorems \ref{existinhensp}-\ref{Exisinhnsplp}.
\begin{theo}[Weak Solutions for the inhomogeneous Stokes Problem]\label{Existinhsphdiv}
Let $1<p,q<\infty$, 
$\boldsymbol{u}_{0}=0$ and let $\boldsymbol{f}\in L^{q}(0,T;\,[\boldsymbol{H}^{p'}_{0}(\mathrm{div},\Omega)]')$, $0<T\leq\infty$. The Problem  (\ref{lens}) with (\ref{nbc}) has a unique solution $(\boldsymbol{u},\pi)$ satisfying
\begin{equation*}
\boldsymbol{u}\in L^{q}(0,T_{0};\,\,\boldsymbol{W}^{1,p}(\Omega)),\,\,\,\,  T_{0}\leq T\,\,\,\textrm{if}\,\,\,T<\infty\,\,\,\, \mathrm{and }\,\,\,\,T_{0}<T\,\,\,\textrm{if}\,\,\,T=\infty,
\end{equation*}
\begin{equation*}
\pi\in L^{q}(0,T;\,\,L^{p}(\Omega)/\mathbb{R}),\qquad\frac{\partial\boldsymbol{u}}{\partial t}\in L^{q}(0,T;\,\in[\boldsymbol{H}^{p'}_{0}(\mathrm{div}\Omega)]'_{\sigma,T})
\end{equation*}
and
\begin{multline*}
\int_{0}^{T}\Big\Vert\frac{\partial\boldsymbol{u}}{\partial t}\Big\Vert^{q}_{[\boldsymbol{H}^{p'}_{0}(\mathrm{div}\Omega)]'}\,\mathrm{d}\,t\,+\,\int_{0}^{T}\Vert\Delta\boldsymbol{u}(t)\Vert^{q}_{[\boldsymbol{H}^{p'}_{0}(\mathrm{div}\Omega)]'}\,\mathrm{d}\,t\,+\,\int_{0}^{T}\Vert\pi(t)\Vert^{q}_{L^{p}(\Omega)/\mathbb{R}}\,\mathrm{d}\,t\\
\leq\,C(p,q,\Omega)\,\int_{0}^{T}\Vert\boldsymbol{f}(t)\Vert^{q}_{[\boldsymbol{H}^{p'}_{0}(\mathrm{div}\Omega)]'}\,\mathrm{d}\,t.
\end{multline*}
 \end{theo}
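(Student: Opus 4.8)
The plan is to mirror the argument used for the strong solutions in Theorem \ref{existinhensp} and Theorem \ref{Exisinhnsplp}, replacing the operator $A_p$ on $\boldsymbol{L}^p_{\sigma,\tau}(\Omega)$ by the operator $B_p$ on $[\boldsymbol{H}^{p'}_0(\mathrm{div},\Omega)]'_{\sigma,\tau}$. First I would decouple the pressure. Given $\boldsymbol{f}\in L^q(0,T;[\boldsymbol{H}^{p'}_0(\mathrm{div},\Omega)]')$, for a.e.\ $t\in(0,T)$ Lemma \ref{wn1}(ii) provides a unique $\pi(t)\in L^p(\Omega)/\mathbb{R}$ solving the weak Neumann problem \eqref{wn.1} for the data $\boldsymbol{f}(t)$, with $\|\pi(t)\|_{L^p(\Omega)/\mathbb{R}}\le C\,\|\boldsymbol{f}(t)\|_{[\boldsymbol{H}^{p'}_0(\mathrm{div},\Omega)]'}$; integrating in $t$ gives $\pi\in L^q(0,T;L^p(\Omega)/\mathbb{R})$. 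By construction $\boldsymbol{g}:=\boldsymbol{f}-\nabla\pi$ satisfies $\mathrm{div}\,\boldsymbol{g}=0$ in $\Omega$ and $\boldsymbol{g}\cdot\boldsymbol{n}=0$ on $\Gamma$, so $\boldsymbol{g}\in L^q(0,T;[\boldsymbol{H}^{p'}_0(\mathrm{div},\Omega)]'_{\sigma,\tau})$, and the Stokes problem reduces to the abstract Cauchy problem $\partial_t\boldsymbol{u}+B_p\boldsymbol{u}=\boldsymbol{g}$, $\boldsymbol{u}(0)=\boldsymbol{0}$, since $B_p\boldsymbol{u}=-\Delta\boldsymbol{u}$.

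Next I would apply the abstract maximal regularity result Theorem \ref{existabscp}. Its two hypotheses hold here: the underlying space is $\zeta$-convex, because $[\boldsymbol{H}^{p'}_0(\mathrm{div},\Omega)]'$ is $\zeta$-convex by Proposition \ref{Hpdivtp'zetaconx} and $[\boldsymbol{H}^{p'}_0(\mathrm{div},\Omega)]'_{\sigma,\tau}$ is a closed subspace, hence $\zeta$-convex by Proposition \ref{zetaconvexsubsp}; and the pure imaginary powers of $(I+B_p)$ are bounded on that space by Proposition \ref{PureimgI+Bp+Cp}, estimate \eqref{pureimphdiv}. Since $B_p$ is not invertible (its kernel contains $\boldsymbol{K}_\tau(\Omega)$ and, as in Proposition \ref{densitédurang}, its range is not dense), I cannot apply Theorem \ref{existabscp} to $B_p$ directly. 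Following the proof of Theorem \ref{existinhensp}, I would instead introduce the shift: for $\mu>0$ set $\boldsymbol{u}_\mu(t)=e^{-t/\mu^2}\boldsymbol{u}(t)$, which solves $\partial_t\boldsymbol{u}_\mu+(\tfrac{1}{\mu^2}I+B_p)\boldsymbol{u}_\mu=e^{-t/\mu^2}\boldsymbol{g}$ with $\boldsymbol{u}_\mu(0)=\boldsymbol{0}$. The operator $\tfrac{1}{\mu^2}I+B_p$ is invertible with dense domain and range, and its imaginary powers are bounded uniformly in $\mu$; this last point is the technical heart of the argument, and I would obtain it exactly as in the proof of Proposition \ref{PureimgI+Bp+Cp}, reducing the dual-space bound to the $\boldsymbol{L}^p$-bound for $\tfrac{1}{\mu^2}I+A_p$ together with the scaling transformation \eqref{SMU} from Proposition \ref{Lapimpowerproposition}, which produces constants independent of $\mu$. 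Theorem \ref{existabscp} then yields $\boldsymbol{u}_\mu\in L^q(0,T_0;\mathbf{D}(B_p))\cap W^{1,q}(0,T;[\boldsymbol{H}^{p'}_0(\mathrm{div},\Omega)]'_{\sigma,\tau})$ with a bound on $\partial_t\boldsymbol{u}_\mu$ and on $(\tfrac{1}{\mu^2}I+B_p)\boldsymbol{u}_\mu$ by $\|e^{-t/\mu^2}\boldsymbol{g}\|_{L^q}\le\|\boldsymbol{g}\|_{L^q}$, uniformly in $\mu$.

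The last step is to pass to the limit $\mu\to\infty$. Since $\boldsymbol{u}=e^{t/\mu^2}\boldsymbol{u}_\mu$ and the estimates on $\boldsymbol{u}_\mu$ are uniform in $\mu$, one recovers $\boldsymbol{u}\in L^q(0,T_0;\mathbf{D}(B_p))\cap W^{1,q}(0,T;[\boldsymbol{H}^{p'}_0(\mathrm{div},\Omega)]'_{\sigma,\tau})$ together with $\boldsymbol{u}_\mu\to\boldsymbol{u}$ in these spaces, and the dominated convergence theorem transfers the uniform estimate to the limit. To read off the stated regularity I would use the equivalence, on $\mathbf{D}(B_p)$, of the graph norm of $B_p$ with $\|\boldsymbol{u}\|_{[\boldsymbol{H}^{p'}_0(\mathrm{div},\Omega)]'}+\|\Delta\boldsymbol{u}\|_{[\boldsymbol{H}^{p'}_0(\mathrm{div},\Omega)]'}$ and with $\|\boldsymbol{u}\|_{\boldsymbol{W}^{1,p}(\Omega)}$ (as in Corollary \ref{existenceweaklaplacian} and \cite[Proposition 4.3]{Am3}); this gives $\boldsymbol{u}\in L^q(0,T_0;\boldsymbol{W}^{1,p}(\Omega))$ and controls $\|\Delta\boldsymbol{u}\|_{[\boldsymbol{H}^{p'}_0(\mathrm{div},\Omega)]'}$. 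Finally, adding the pressure estimate from the first step and using $\partial_t\boldsymbol{u}=\Delta\boldsymbol{u}+\boldsymbol{g}$ yields the claimed maximal $L^p$–$L^q$ estimate, while uniqueness follows from uniqueness for the abstract Cauchy problem combined with the unique solvability of the weak Neumann problem for the pressure. The main obstacle, as noted, is the non-invertibility of $B_p$: it forces the shift-and-limit device and, with it, the uniform-in-$\mu$ control of the imaginary powers of $\tfrac{1}{\mu^2}I+B_p$.
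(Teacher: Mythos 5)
Your proposal follows exactly the route the paper intends for this theorem: decoupling the pressure via Lemma \ref{wn1}(ii), reducing to the abstract Cauchy problem for $B_p$ on the $\zeta$-convex space $[\boldsymbol{H}^{p'}_0(\mathrm{div},\Omega)]'_{\sigma,\tau}$, and applying Theorem \ref{existabscp} through the shift $\boldsymbol{u}_\mu=e^{-t/\mu^2}\boldsymbol{u}$ with the uniform-in-$\mu$ bound on the imaginary powers of $\tfrac{1}{\mu^2}I+B_p$, just as in Theorems \ref{existinhensp}--\ref{Exisinhnsplp}. The paper deliberately skips these details as being similar to those proofs, and your write-up supplies precisely the indicated steps correctly, including the one genuinely delicate point (the $\mu$-independence of the imaginary-power estimate, obtained by scaling as in Proposition \ref{Lapimpowerproposition}).
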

 
 \medskip
 
 Similarly using the estimate \eqref{pureimptp}, the $\zeta$-convexity of $[\boldsymbol{T}^{p'}(\Omega)]'$ (see Proposition \ref{Hpdivtp'zetaconx}) and the Lemma \ref{wn1} (part (iii)), one has the very weak solution for the inhomogeneous Stokes problem. The proof of the following theorem is similar to the proof of Theorems \ref{existinhensp}-\ref{Exisinhnsplp}.
\begin{theo}[Very weak solutions for the inhomogeneous Stokes Problem]\label{Existinhsptp} Let $T\in (0, \infty]$, $1<p,q<\infty$,  $\boldsymbol{u}_{0}=0$ and $\boldsymbol{f}\in L^{q}(0,T;\,[\boldsymbol{T}^{p'}(\Omega)]')$. Then the time dependent Stokes Problem (\ref{lens}) with the boundary condition (\ref{nbc}) has a unique solution $(\boldsymbol{u},\pi)$ satisfying
\begin{equation*}
\boldsymbol{u}\in L^{q}(0,T_{0};\,\boldsymbol{L}^{p}(\Omega)),\,\,\,\,  T_{0}\leq T\,\,\,\textrm{if}\,\,\,T<\infty\,\,\,\, \mathrm{and }\,\,\,\,T_{0}<T\,\,\,\textrm{if}\,\,\,T=\infty,
\end{equation*}  
\begin{equation*}
\pi\in L^{q}(0,T;\,\,W^{-1,p}(\Omega)/\mathbb{R}),\qquad\frac{\partial\boldsymbol{u}}{\partial t}\in L^{q}(0,T;\,\in[\boldsymbol{T}^{p'}(\Omega)]'_{\sigma,\tau})
\end{equation*}
and
\begin{multline*}
\int_{0}^{T}\Big\Vert\frac{\partial\boldsymbol{u}}{\partial t}\Big\Vert^{q}_{[\boldsymbol{T}^{p'}(\Omega)]'}\,\mathrm{d}\,t\,+\,\int_{0}^{T}\Vert\Delta\boldsymbol{u}(t)\Vert^{q}_{[\boldsymbol{T}^{p'}(\Omega)]'}\,\mathrm{d}\,t\,+\,\int_{0}^{T}\Vert\pi(t)\Vert^{q}_{W^{-1,p}(\Omega)/\mathbb{R}}\,\mathrm{d}\,t\\
\leq\,C(p,q,\Omega)\,\int_{0}^{T}\Vert\boldsymbol{f}(t)\Vert^{q}_{[\boldsymbol{T}^{p'}(\Omega)]'}\,\mathrm{d}\,t.
\end{multline*}
 \end{theo}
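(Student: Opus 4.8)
The plan is to follow closely the strategy of Theorems \ref{existinhensp} and \ref{Exisinhnsplp}, but working now in the Banach space $X=[\boldsymbol{T}^{p'}(\Omega)]'_{\sigma,\tau}$ with the operator $C_{p}$ defined by \eqref{C2}--\eqref{Dpc}, whose negative generates a bounded analytic semi-group on $X$. Since the Navier-type boundary conditions \eqref{nbc} reduce the Stokes operator to $-\Delta$ and make the pressure behave as a constant in the reduced evolution, the first step is to decouple $\pi$. For almost every $t\in(0,T)$, given $\boldsymbol{f}(t)\in(\boldsymbol{T}^{p'}(\Omega))'$, I would invoke Lemma \ref{wn1}(iii) to solve the weak Neumann problem \eqref{wn.1} and obtain a unique $\pi(t)\in W^{-1,p}(\Omega)/\mathbb{R}$ with $\Vert\pi(t)\Vert_{W^{-1,p}(\Omega)}\leq C(\Omega,p)\,\Vert\boldsymbol{f}(t)\Vert_{(\boldsymbol{T}^{p'}(\Omega))'}$. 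By construction $\boldsymbol{g}:=\boldsymbol{f}-\nabla\pi$ satisfies $\mathrm{div}\,\boldsymbol{g}=0$ in $\Omega$ and $\boldsymbol{g}\cdot\boldsymbol{n}=0$ on $\Gamma$, so that $\boldsymbol{g}\in L^{q}(0,T;\,[\boldsymbol{T}^{p'}(\Omega)]'_{\sigma,\tau})$ and $\pi\in L^{q}(0,T;\,W^{-1,p}(\Omega)/\mathbb{R})$ with the matching norm bounds. After this reduction, solving \eqref{lens}, \eqref{nbc} with $\boldsymbol{u}_{0}=\boldsymbol{0}$ amounts to solving the abstract Cauchy problem $\partial_{t}\boldsymbol{u}+C_{p}\boldsymbol{u}=\boldsymbol{g}$, $\boldsymbol{u}(0)=\boldsymbol{0}$ in $X$.

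Next I would apply the abstract maximal regularity result of Theorem \ref{existabscp}, for which two hypotheses must be verified. First, $X$ is $\zeta$-convex: by Proposition \ref{Hpdivtp'zetaconx} the space $[\boldsymbol{T}^{p'}(\Omega)]'$ is $\zeta$-convex, and $[\boldsymbol{T}^{p'}(\Omega)]'_{\sigma,\tau}$ is a closed subspace of it, so Proposition \ref{zetaconvexsubsp} applies. Second, the pure imaginary powers must satisfy an estimate of type \eqref{estimpur}. Since $C_{p}$ has a nontrivial kernel and is therefore not invertible, I would not apply Theorem \ref{existabscp} to $C_{p}$ directly; instead, mirroring the device used in the proof of Theorem \ref{existinhensp}, I set $\boldsymbol{u}_{\mu}(t)=e^{-t/\mu^{2}}\boldsymbol{u}(t)$, which solves the same system with $C_{p}$ replaced by the invertible operator $\tfrac{1}{\mu^{2}}I+C_{p}$ and right-hand side $e^{-t/\mu^{2}}\boldsymbol{g}$. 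The boundedness of the imaginary powers of $\tfrac{1}{\mu^{2}}I+C_{p}$ on $X$, uniformly in $\mu>0$, follows from estimate \eqref{pureimptp} combined with the scaling argument already carried out in Proposition \ref{Lapimpowerproposition} and the extension argument of Proposition \ref{PureimgI+Bp+Cp}, using that $(\tfrac{1}{\mu^{2}}I+C_{p})^{is}=\mu^{-2is}(I+\mu^{2}C_{p})^{is}$ with $|\mu^{-2is}|=1$. As the domain and range of $\tfrac{1}{\mu^{2}}I+C_{p}$ are dense in $X$ (density of the domain coming from Proposition \ref{densttp}), this gives $\tfrac{1}{\mu^{2}}I+C_{p}\in\mathcal{E}^{\theta_{0}}_{K}(X)$ for constants $\theta_{0},K$ independent of $\mu$.

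Theorem \ref{existabscp} then yields $\boldsymbol{u}_{\mu}\in L^{q}(0,T_{0};\,\mathbf{D}(C_{p}))\cap W^{1,q}(0,T;\,X)$ together with a bound on the $L^{q}$-norms of $\partial_{t}\boldsymbol{u}_{\mu}$ and $(\tfrac{1}{\mu^{2}}I+C_{p})\boldsymbol{u}_{\mu}$ by $\Vert\boldsymbol{g}\Vert_{L^{q}(0,T;X)}$, with constant independent of $\mu$. Writing $\boldsymbol{u}=e^{t/\mu^{2}}\boldsymbol{u}_{\mu}$ and letting $\mu\to\infty$ exactly as in Theorem \ref{existinhensp}, using the equivalence of the graph-type norms on $\mathbf{D}(C_{p})$ and the dominated convergence theorem, transfers the maximal regularity and the estimate to $\boldsymbol{u}$. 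To land on the stated spaces I would then use $\mathbf{D}(C_{p})\hookrightarrow\boldsymbol{L}^{p}(\Omega)$ and the equivalence $\Vert\Delta\boldsymbol{u}\Vert_{[\boldsymbol{T}^{p'}(\Omega)]'}\simeq\Vert\boldsymbol{u}\Vert_{\boldsymbol{L}^{p}(\Omega)}$ on the relevant subspace, as in the proof of Corollary \ref{veryweak} via \cite[Theorem 4.15]{Am3}, so that $\boldsymbol{u}\in L^{q}(0,T_{0};\boldsymbol{L}^{p}(\Omega))$ and $\partial_{t}\boldsymbol{u}=\Delta\boldsymbol{u}+\boldsymbol{g}\in L^{q}(0,T;[\boldsymbol{T}^{p'}(\Omega)]'_{\sigma,\tau})$; adding back the pressure bound produces the combined estimate, while uniqueness comes from the uniqueness part of Theorem \ref{existabscp} (or directly from Theorem \ref{veryweakexistth}). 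The step I expect to be the main obstacle is precisely the uniform-in-$\mu$ control of the imaginary powers of $\tfrac{1}{\mu^{2}}I+C_{p}$ on the closed subspace $[\boldsymbol{T}^{p'}(\Omega)]'_{\sigma,\tau}$ and the justification of the limit $\mu\to\infty$ through Lemma A2 of \cite{GiGa4}; this is exactly where the non-invertibility of $C_{p}$ forbids a direct argument. The pressure decoupling and the norm translations are comparatively routine.
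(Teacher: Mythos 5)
Your proposal follows essentially the same route as the paper: the paper's proof of this theorem is precisely to decouple the pressure via Lemma \ref{wn1}(iii), use the $\zeta$-convexity of $[\boldsymbol{T}^{p'}(\Omega)]'_{\sigma,\tau}$ from Propositions \ref{Hpdivtp'zetaconx} and \ref{zetaconvexsubsp}, invoke the bound \eqref{pureimptp} on the imaginary powers, and then repeat the $\mu$-shift and passage to the limit of Theorems \ref{existinhensp}--\ref{Exisinhnsplp} for the operator $C_p$ on $[\boldsymbol{T}^{p'}(\Omega)]'_{\sigma,\tau}$. Your identification of the uniform-in-$\mu$ control of $(\tfrac{1}{\mu^{2}}I+C_{p})^{is}$ as the delicate point, and the final translation of norms via \cite[Theorem 4.15]{Am3}, match the paper's (only sketched) argument.
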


We treat now the Stokes problem with flux condition (\ref{lens}), (\ref{nbc}), \eqref{condition2}. 

Since the system  (\ref{inhensp})--(\ref{condition2})  is equivalent to the Stokes Problem with flux condition (\ref{lens}), (\ref{nbc}), \eqref{condition2}, we deduce in that way the existence and maximal regularity of strong, weak and very weak solution for the Stokes Problem with flux condition (\ref{lens}), (\ref{nbc}), \eqref{condition2}. 
\begin{theo}[Strong Solutions for the inhomogeneous Stokes Problem with flux]
\label{Theorem10}
Let $T\in (0, \infty]$, $1<p,q<\infty$.  For all $\boldsymbol{f}\in L^{q}(0,T;\,\boldsymbol{X}_{p})$, there exists a unique solution $\boldsymbol{u}$ of (\ref{inhensp}) such  that 
\begin{equation}\label{reglplqstokes1}
\boldsymbol{u}\in L^{q}(0,T_{0};\,\mathbf{D}(A'_{p})),\,\,\,\,  T_{0}\leq T\,\,\,\textrm{if}\,\,\,T<\infty\,\,\,\, \mathrm{and }\,\,\,\,T_{0}<T\,\,\,\textrm{if}\,\,\,T=\infty,
\end{equation}
\begin{equation}\label{reglplqstokes2}
\frac{\partial\boldsymbol{u}}{\partial t}\in L^{q}(0,T;\,\boldsymbol{X}_{p})
\end{equation}
\begin{equation}\label{estlplqstokes}
\int_{0}^{T}\Big\Vert\frac{\partial\boldsymbol{u}}{\partial t}\Big\Vert^{q}_{\boldsymbol{X}_{p}}\,\mathrm{d}\,t\,+\,\int_{0}^{T}\Vert\Delta\boldsymbol{u}(t)\Vert^{q}_{\boldsymbol{X}_{p}}\,\mathrm{d}\,t
\leq C(p,q,\Omega)\,\int_{0}^{T}\Vert\boldsymbol{f}(t)\Vert^{q}_{\boldsymbol{L}^{p}(\Omega)}\,\mathrm{d}\,t.
\end{equation}
and such that $(\boldsymbol{u}, \pi )$ is a solution of the  inhomogeneous Stokes Problem (\ref{lens}), (\ref{nbc}) , (\ref{condition2})  for all $\pi \in \R$.
\end{theo}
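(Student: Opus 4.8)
The plan is to apply the abstract maximal regularity result Theorem \ref{existabscp} directly to the operator $\mathcal{A}=A'_{p}$ on the Banach space $X=\boldsymbol{X}_{p}$. The decisive simplification with respect to the case of $A_{p}$ treated in Theorem \ref{existinhensp} is that, thanks to the flux condition, $A'_{p}$ is invertible with bounded inverse on $\boldsymbol{X}_{p}$; consequently no regularizing shift $\tfrac{1}{\mu^{2}}I+A'_{p}$ and no passage to the limit $\mu\to\infty$ are needed here, and the argument reduces to verifying the two hypotheses of Theorem \ref{existabscp} and then translating the abstract conclusion back to the PDE.

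First I would check that $\boldsymbol{X}_{p}$ is $\zeta$-convex: since $1<p<\infty$, the space $\boldsymbol{L}^{p}(\Omega)$ is $\zeta$-convex and $\boldsymbol{X}_{p}$ is a closed subspace of it (being a closed subspace of $\boldsymbol{L}^{p}_{\sigma,\tau}(\Omega)$), so Proposition \ref{zetaconvexsubsp} applies. Next I would verify that $A'_{p}\in\mathcal{E}^{\theta_{0}}_{M}(\boldsymbol{X}_{p})$ in the sense of Definition \ref{EthetaK}: Theorem \ref{Lapimpower} provides an angle $0<\theta_{0}<\pi/2$ and a constant $M>0$ with $\Vert(A'_{p})^{is}\Vert_{\mathcal{L}(\boldsymbol{X}_{p})}\le M\,e^{\vert s\vert\,\theta_{0}}$ for all $s\in\mathbb{R}$, while the density of $\mathbf{D}(A'_{p})$ in $\boldsymbol{X}_{p}$ (Proposition \ref{dd2}) together with the surjectivity of $A'_{p}$ (so that $R(A'_{p})=\boldsymbol{X}_{p}$) furnishes the required density of both domain and range.

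With these two facts in hand, Theorem \ref{existabscp} produces, for every $\boldsymbol{f}\in L^{q}(0,T;\boldsymbol{X}_{p})$, a unique solution $\boldsymbol{u}$ of the abstract Cauchy problem $\partial_{t}\boldsymbol{u}+A'_{p}\boldsymbol{u}=\boldsymbol{f}$, $\boldsymbol{u}(0)=\boldsymbol{0}$, enjoying $\boldsymbol{u}\in L^{q}(0,T_{0};\mathbf{D}(A'_{p}))$, $\partial_{t}\boldsymbol{u}\in L^{q}(0,T;\boldsymbol{X}_{p})$ and the corresponding maximal regularity estimate. It then remains to identify this abstract solution with a solution of \eqref{inhensp}. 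Since $A'_{p}$ is the restriction of the Stokes operator and $A'_{p}\boldsymbol{v}=-\Delta\boldsymbol{v}$ for $\boldsymbol{v}\in\mathbf{D}(A'_{p})$ (Proposition \ref{sl}), the abstract equation is precisely $\partial_{t}\boldsymbol{u}-\Delta\boldsymbol{u}=\boldsymbol{f}$ with $\mathrm{div}\,\boldsymbol{u}=0$ and the boundary conditions \eqref{nbc} encoded in $\mathbf{D}(A'_{p})$; this yields \eqref{reglplqstokes1}--\eqref{reglplqstokes2}, and the estimate of Theorem \ref{existabscp} becomes \eqref{estlplqstokes} after using $\Vert A'_{p}\boldsymbol{u}\Vert_{\boldsymbol{X}_{p}}=\Vert\Delta\boldsymbol{u}\Vert_{\boldsymbol{X}_{p}}$ and $\Vert\boldsymbol{f}\Vert_{\boldsymbol{X}_{p}}=\Vert\boldsymbol{f}\Vert_{\boldsymbol{L}^{p}(\Omega)}$.

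Finally, every value $\boldsymbol{u}(t)\in\mathbf{D}(A'_{p})$ satisfies the flux condition $\langle\boldsymbol{u}(t)\cdot\boldsymbol{n},1\rangle_{\Sigma_{j}}=0$ by the very definition \eqref{A'} of $\mathbf{D}(A'_{p})$, and since under the Navier-type conditions the pressure is constant, the pair $(\boldsymbol{u},\pi)$ solves \eqref{lens}, \eqref{nbc}, \eqref{condition2} for an arbitrary $\pi\in\mathbb{R}$. I expect the only genuinely delicate point to be the justification that Theorem \ref{existabscp} is applicable to $A'_{p}$ itself --- namely that $R(A'_{p})$ is dense --- which rests entirely on the invertibility granted by the flux condition and is exactly the property that fails for $A_{p}$; the remaining steps are a direct and lighter transcription of the proof of Theorem \ref{existinhensp}.
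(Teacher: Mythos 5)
Your proposal is correct and follows essentially the same route as the paper: the authors likewise invoke the $\zeta$-convexity of $\boldsymbol{X}_{p}$ and the boundedness of the imaginary powers of $A'_{p}$ from Theorem \ref{Lapimpower}, and then apply Theorem \ref{existabscp} to $A'_{p}$ itself on $\boldsymbol{X}_{p}$, exactly exploiting the invertibility granted by the flux condition to avoid the shift $\frac{1}{\mu^{2}}I+A_{p}$ used for $A_{p}$. Your write-up merely makes explicit the verification of the density of the domain and range and the translation back to the PDE, which the paper leaves implicit.
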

\begin{proof}
The space $\boldsymbol{X}_{p}$ is $\zeta$-convex, and  by  Theorem \ref{Lapimpower} the pure imaginary powers of the operators $A'_{p}$ are bounded in $\boldsymbol{X}_{p}$.  It is then possible to apply  Theorem \ref{existabscp} to the operator $A'_{p}$ itself  in $\boldsymbol{X}_{p}$ and Theorem  \ref{Theorem10} follows.\end{proof}
\begin{rmk}
\label{rmk10}
\rm{The spaces $\boldsymbol{Y}_p$ and $\boldsymbol{Z}_p$  are  also $\zeta$-convex, and  by  Theorem \ref{Lapimpower} the pure imaginary powers of the operators $B'_{p}$ and  $C'_{p}$ are bounded in $\boldsymbol{Y}_p$ and $\boldsymbol{Z}_p$ respectively.  It is then possible to apply Theorem \ref{existabscp} to the operator $B'_{p}$ and  $C'_{p}$ in $\boldsymbol{Y}_p$ and $\boldsymbol{Z}_p$.   We obtain in this way the existence, uniqueness and maxial regularity of weak and very weak solutions for the Stokes problem with flux condition  (\ref{lens}), (\ref{nbc}) , (\ref{condition2}). The corresponding Theorems are very similar to Theorem  \ref{Theorem10} and we do not write their statements in detail.}
\end{rmk}

\noindent
\textbf{Acknowledgements} The work of M. E. has been supported by DGES Grant  MTM2011-29306-C02-00 and Basque Government Grant IT641-13.
The authors wish to thank the referees for their helpful remarks. They are particularly grateful for their comments and suggestions on Section \ref{Imaginary} and Proposition \ref{densitédurang}.

 \bigskip
\noindent
\parbox[t]{.48\textwidth}{
Hind Al Baba\\
Laboratoire de Mathématiques\\ 
et de leurs applications
Pau,\\
UMR, CNRS 5142,
Batiment IPRA,\\ 
Université de Pau et des pays de L'Adour,\\ Avenue de L'université,\\ Bureau 012, BP 1155,\\
 64013 Pau cedex, France\\
hind.albaba@univ-pau.fr } \hfill
\parbox[t]{.48\textwidth}{
Ch\'erif Amrouche\\
Laboratoire de Mathématiques\\ 
et de leurs applications
Pau,\\
UMR, CNRS 5142,
Batiment IPRA,\\ 
Université de Pau et des pays de L'Adour,\\ Avenue de L'université,\\ Bureau 225, BP 1155,\\
 64013 Pau cedex, France\\
cherif.amrouche@univ-pau.fr
}

\bigskip
\noindent
\parbox[t]{.48\textwidth}{
Miguel Escobedo\\
Departamento de Matemáticas\\
Facultad de Ciencias y Tecnología\\
Universidad del País Vasco\\
Barrio Sarriena s/n,\\ 48940 Lejona (Vizcaya), Spain\\
miguel.escobedo@ehu.es
}

\begin{thebibliography}{99}
\bibitem{GiGa0}
\textsc{K. Abe, Y. Giga}: Analyticity of the Stokes semigroup in spaces of bounded functions. \emph{Acta. Math.} \textbf{211}, 1-46 (2013).
\bibitem{Adams}
\textsc{R. Adams}, \emph{Sobolev spaces}. Academic Press New York San Francisco London 1975. 
\bibitem{Ag}
\textsc{S. Agmon}: On the eigenfunctions and on the eigenvalues of general elliptic boundary value problems. \emph{Comm. Pure Appl. Math.}, \textbf{15}, 119-147, (1962).
\bibitem{Albaba1}
\textsc{H. Al Baba}: Th\'eorie des semi-groupes pour les \'equations de Stokes et de Navier Stokes avec des conditions aux limites de type Navier. PhD Thesis (2015) (electronic version at: http://www.theses.fr/2015PAUU3008)
\bibitem{Albaba}
\textsc{H. Al Baba, C. Amrouche, M. Escobedo}: Analyticity of the semi-group generated by the Stokes operator with Navier-type boundary conditions in $L^{p}$-spaces. To appear-\emph{Contemporary Mathematics}.
\bibitem{Ami}
\textsc{Y. Amirat, D. Bresch, J. Lemoine, J. Simon}: Effect of rugosity on a flow governed by stationary Navier-Stokes equations. \emph{Quart. Appl. Math.} \textbf{59}, no. 4, 769-785 (2001).
\bibitem{Amann1}
\textsc{H. Amann}: Linear and Quasilinear Parabolic Equations. Birkhauser Verlag Basel, (1995). \emph{J. Math. Fluid Mech.,} \textbf{2}, 1-98, (2000). 
\bibitem{Amann2}
\textsc{H. Amann} : On the strong solvability of the {N}avier-{S}tokes equations. \emph{J. Math. Fluid Mech.}, \textbf{2}, no. 1, 16-98 (2000).
\bibitem{Am2}
\textsc{C. Amrouche, C. Bernardi, M. Dauge, V. Girault}: Vector potential in three dimensional non-smooth domains, \emph{Math. Meth. Appl. Sci.} \textbf{21}, 823-864, (1998).
\bibitem{Am5}
\textsc{C. Amrouche, P. Penel, N. Seloula}: Some remarks on the boundary conditions in the theory of Navier-Stokes equations. \emph{Annales Mathématiques Blaise Pascal,} \textbf{20}, 133-168 (2013).
\bibitem{Am3}
\textsc{C. Amrouche, N. Seloula}: On the {S}tokes equations with the {N}avier-type boundary conditions. \emph{Differ. Equ. Appl.} \textbf{3}, 581–607, (2011).
\bibitem{Am4}
\textsc{C. Amrouche, N. Seloula}: {$L^p$}-theory for vector potentials and {S}obolev's inequalities for vector fields: application to the {S}tokes equations with pressure boundary conditions.  \emph{Math. Models Methods Appl. Sci.} \textbf{23}, 37-92, (2013).
\bibitem{Ba}
\textsc{V. Barbu}: \emph{Nonlinear semi-groups and differential equations in Banach space}. Noordhoff international publishing, (1976).
\bibitem{Bea}
\textsc{G.S. Beavers, D.D. Joseph}: Boundary conditions at a naturally permeable wall. \emph{J. Fluid. Mech.} 30-01, 197-207, (1967).
\bibitem{Ben}
\textsc{A. Benedek, A. P. Calder$\mathrm{\acute{o}}$n, R. Panzone}: Convolution operators on Banach-space valued functions. \emph{Proc. Nat. Acad. Sci. U.S.A.} \textbf{48}, 356-365, (1962).
\bibitem{Ber}
\textsc{C. Bernardi, N. Chorfi}: Spectral discretization of the vorticity, velocity and pressure formulation of the Stokes equations. \emph{SIAM J. Numer. Anal.}, Vol.\textbf{ 44} (2), 826-850, (2006).
\bibitem{Bor}
\textsc{W. Borchers, T. Miyakawa}: $L^{2}$-decay for the Navier-Stokes flows in half-spaces. \emph{Math. Ann.} \textbf{282}, 139-155, (1988).
\bibitem{Bor2}
\textsc{W. Borchers, T. Miyakawa}: Algebraic $L^{2}$ decay for Navier-Stokes flows in exterior domains. \emph{Acta. Math.} \textbf{165}, 189-227, (1990).
\bibitem{fei}
\textsc{D. Bucur, E. Feireisl, {\v{S}}. Ne{\v{c}}asov{\'a}}: Boundary behavior of viscous fluids:
Influence of wall roughness and
friction-driven boundary conditions. \emph{Arch. Rational Mech. Anal.}, \textbf{197}, no. 1, 117–138, (2010).
\bibitem{fei2}
\textsc{D. Bucur,  E. Feireisl, {\v{S}}. Ne{\v{c}}asov{\'a}, J. Wolf}: On the asymptotic limit of the Navier-Stokes system on domains with rough boundaries. \emph{J. Differential Equations} \textbf{244}, no. 11, 2890-2908 (2008).
\bibitem{Bul}
\textsc{M. Bulicek, A. Malek, K. R. Rajagopal}: Navier's slip and evolutionary Navier-Stokes-likes systems with pressure and share-rate dependent viscosity. \emph{Indiana Univ. Math. J.} \textbf{56}, no.1 51-85 (2007).
\bibitem{Bur3}
\textsc{D. L. Burkholder}: Martingales and Fourier analysis in Banach spaces, in " Probability and analysis (Varenna 1985)", 61-108; \emph{Lect. Notes Math.} 1206. Berlin, Heidelberg, New-york: Springer, (1986). 
\bibitem{Br}
\textsc{H. Brezis}: \emph{Analyse fonctio{n}elle, théorie et applications}. Masson. Paris, (1983).
\bibitem{casado}
\textsc{J. Casado-D\'iaz, E. Fern\'andez-Cara,
J. Simon}: Why viscous fluids adhere to rugose walls:
A mathematical explanation. \emph{J. Differential Equations,} 189, 526–537 (2003).
\bibitem{casado2}
\textsc{J. Casado-D\'iaz, M. Luna-Laynez, F. J. Su\'arez-Grau}: A viscous fluid in a thin domain satisfying the slip condition on a slightly
rough boundary. \emph{C. R. Acad. Sci. Paris, Ser.} I 348, 967–971 (2010).
\bibitem{Coulhon}
\textsc{T. Coulhon, D. Lamberton}: Quelques remarques sur la régularité $L^{p}$ du semi-groupe de Stokes. \emph{Commun. in Partial Differential equations}, \textbf{17}, 287-304 (1992).

\bibitem{Hinfiniecalculus}
\textsc{R. Denk, G. Dore, M. Hieber, J. Pruss, A. Venni} : New thoughts on old results of R.T. Seeley. \emph{Math. Ann.} \textbf{328}, 545-583, (2004).

\bibitem{DV}
\textsc{G. Dore, A. Venni}: On the closedness of the sum of two closed operators. \emph{Math. Z.} \textbf{196}, 189-201, (1987).
\bibitem{En}
\textsc{K. Engel, R. Nagel}: \emph{One parameter semi-groups for linear evolution equation}. Springer-Verlag, New-York, Inc, (1983).
\bibitem{FGS1}
\textsc{R. Farwig, G. P. Galdi, H. Sohr}: Very weak solutions for stationary and instationary Navier-Stokes equations with nonhomogeneous data. \emph{Progress in Nonlinear Differential Equations and Their Applications}, Birkhauser Verlag Basel, \textbf{64}, 113-136, (2005).
\bibitem{FGS}
\textsc{R. Farwig, G. P. Galdi, H. Sohr}: A new class Of week solutions of the Navier-Stokes equations with nonhomogeneous Data. \emph{J. math. fluid mech.}, \textbf{8}, no. 3, 423-444 (2006). 
\bibitem{FGS2}
\textsc{R. Farwig, G. P. Galdi, H. Sohr}: Very weak solutions and large uniqueness classes of stationary Navier-Stokes equations in bounded domains of $\mathbb{R}^{2}$. \emph{J. Differential Equations,} \textbf{227}, no. 2, 564-580, (2006).
\bibitem{FKS1}
\textsc{R. Farwig, H. Kozono, H. Sohr}: Very weak solutions of the Navier-Stokes equations in exterior domains with nonhomogeneous data. \emph{J. Math. Soc. Japan,} \textbf{59}, 127-150, (2007).
\bibitem{FS}
\textsc{R. Farwig, H. Sohr}: Generalized resolvent estimates for the Stokes system in bounded and unbounded domains. \emph{J. Math. Soc. Japan.} Vol. \textbf{46}, No. 4, 607-643 (1994).
\bibitem{FS1}
\textsc{R. Farwig, H. Sohr}: Existence, uniqueness and regularity of stationary solutions to inhomogeneous Navier-Stokes equations in $\mathbb{R}^{n}$. \emph{Czechoslovak Math. J.},  \textbf{59(134)}, no. 1, 61-79 (2009).
\bibitem{Galdi}
\textsc{G. P. Galdi, Chr. Simader, H. Sohr}: A class of solutions to stationary Stokes and Navier-Stokes equations with boundary data in $\boldsymbol{\hat{W}}^{-1/q,q}$. \emph{Math. Ann.,} \textbf{331}, no. 1, 41-74, (2005).

\bibitem{Geissert}
\textsc{M. Geissert, H. Heck, C. Trunck} : $\mathcal{H}^{\infty}$-calculus for a system of Laplace operators with mixed order boundary conditions. \emph{DCDS-Series S,} \textbf{6} (5):1259-1275, (2013).
 
\bibitem{GiGa1}
\textsc{Y. Giga}: Analyticity of the semi-group generated by the Stokes operator in $\boldsymbol{L}^{r}$-spaces, \textit{Math. Z.} \textbf{178}, no. 3, 297-329, (1981).
\bibitem{GiGa2}
\textsc{Y. Giga}: Domains of fractional powers of the Stokes in $\boldsymbol{L}^{r}$spaces. \emph{Arch. Rational Mech. Anal.} \textbf{89}, no. 3, 251-265 (1985).
\bibitem{GiGa5}
\textsc{Y. Giga}: Solutions for semilinear parabolic equations in $L^{p}$ and regularity of weak solutions of the Navier-Stokes system. \emph{Jour. Diff. Equ.}, \textbf{62}, no. 2, 186-212 (1986).
\bibitem{GiGa3}
\textsc{Y. Giga, H. Sohr}: On the Stokes operator in exterior domains. \emph{J. Fac. Sci. Univ. Tokyo Sect. IA Math.} \textbf{36}, no. 1, 103-130, (1989).
\bibitem{GiGa4}
\textsc{Y. Giga, H. Sohr}: Abstract $\boldsymbol{L}^{p}$-estimates for the Cauchy Problem with applications to the Navier-Stokes equations in Exterior Domains, \textit{J. Funct. Anal.}, \textbf{102}, no. 1, 72-94 (1991).
\bibitem{Gris}
\textsc{P. Grisvard}: \emph{Elliptic problems in nonsmooth domains.} Monogr. Stud. Math., vol. 24, Pitman, Boston (1985).
\bibitem{Jag}
\textsc{W. Jager, A. Mikelic}: On the interface boundary condition of Beavers, Joseph and {S}affman. \emph{SIAM J. Appl. Math.} \textbf{60}, no. 4, 1111-1127 (2000).
\bibitem{Jag2}
\textsc{W. Jager, A. Mikelic}: On the roughness included effective boundary conditions for an incompressible viscous flow. \emph{J. Differential Equations} \textbf{170}, no. 1, 96-122 (2001).
\bibitem{Ka1}
\textsc{T. Kato}: Fractional powers of dissipative operators. \emph{J. Math. Soc. Japan}, \textbf{13}, no. 3, 246-274 (1961).
\bibitem{Ka2}
\textsc{T. Kato}: Fractional powers of dissipative operators, II. \emph{J. Math. Soc. Japan}, \textbf{14}, no. 2, 242-248 (1962).
\bibitem{Ka3}
\textsc{T. Kato}: Strong $L^{p}$-solutions of the Navier-Stokes equation in $\mathbb{R}^{m}$, with applications to weak solutions. \emph{Math. Z.}, \textbf{187}, no. 4, 471-480 (1984).
\bibitem{Ko}
\textsc{H. Komatsu}: Fractional powers of operators. \emph{Pacific J. Math.} \textbf{19}, 285-346, (1966).
\bibitem{Lau}
\textsc{E. Lauga, M. P. Brenner,  H. A. Stone}:
 \emph{Microfluidics: The No-Slip Boundary Condition, Ch. 19 in Springer Handbook of Experimental Fluid Mechanics Springer-Verlag Berlin Heidelberg (2007) by Cameron Tropea, Alexander Yarin, John Foss.}
\bibitem{LM}
\textsc{J-L. Lions, E. Magenes}: \emph{Problèmes aux limites non homogènes et leurs applications}. Paris  Dunod, vol. \textbf{1}, 1968.
\bibitem{Mi1}
\textsc{M. Mitrea, S. Monniaux}: On the analyticity of the semi-group generated by the Stokes operator with Neumann-type boundary conditions on Lipschitz Subdomains of Riemannian Manifolds. \textit{Trans. Amer. Math. Soc.}, \textbf{361}, no. 6, 3125-3157, (2009).
\bibitem{Mi2}
\textsc{M. Mitrea, S. Monniaux}: The non-linear Hodge-Navier Stokes equations in Lipschitz domains. \textit{Differential Integral Equations}, \textbf{22}, no. 3-4, 339-356, (2009).
\bibitem{Miya}
\textsc{T. Miyakawa}: The $L^{p}$-approach to the Navier-Stokes equations with the Neumann boundary condition. \emph{Hiroshima Math. J.}, \textbf{10}, no. 3, 517-537, (1980).
\bibitem{Navier}
\textsc{C.L.M.H. Navier}: M\'emoire sur les lois du mouvement des fluides. \textit{Mem. Acad. R. Sci. Inst.} \textbf{6}, 389–416 (1823).
\bibitem{Pa}
\textsc{A. Pazy}: \emph{Semi-groups of linear operators and applications to partial differential equations}. Springer-Verlag, New-York, Inc, (1983).
\bibitem{Rub}
\textsc{J.L. Rubio de Francia}: Martingale and integral transforms on Banach space valued functions, in " Probability and Banach spaces (Proceedings, Zaragoza 1985), 195-222," Lect. Notes Math. \textbf{1221}. \emph{Berlin, Heidelberg, New-York: Springer}, (1986). 
\bibitem{Saal}
\textsc{J. Saal}: Stokes and Navier Stokes equations with Robin boundary conditions in a half space. \emph{J. Math. Fluid Mech.}, \textbf{8}, no. 2, 211-241 (2006).
\bibitem{Sch}
\textsc{K. Schumacher}: The Navier-Stokes equations with low-regularity data in weighted functions spaces. \emph{PhD thesis, FB Mathematik, TU Darmstad,} 2007. 

\bibitem{Seeley} 
\textsc{R. Seeley}: Norms and domains of the complex powers $A_{B}z$. \emph{Amer. J. Math.}, \textbf{93}, 299-309 (1971).

\bibitem{Serrin}
\textsc{J. Serrin}: Mathematical principles of classical fluid mechanics. Handbuch der Physik, \emph{Springer-Verlag}, p. 125-263, (1959).

\bibitem{Shibata1}
\textsc{Y. Shibata, R. Shimada}: On a generalized resolvent estimate for the Stokes system with Robin Boundary condition. \emph{J. Math. Soc. Japan}, \textbf{59}, no. 2, 469-519 (2007).

\bibitem{Shimada}
\textsc{R. Shimada}: On the $L^{p}-L^{q}$ maximal regularity for the Stokes equations Robin boundary condition in a bounded domain. \emph{Math. Methods Appl. Sci.}, \textbf{30}, no. 3, 257-289 (2007).


\bibitem{Si}
\textsc{C. G. Simader, H. Sohr}: A new approach to the Helmholtz decomposition and the Neumann Problem in $L^{q}$-spaces for bounded and exterior domains. \textit{Adv. Math. Appl. Sci.} \textbf{11}, 1-35, World Scientific, (1992).


\bibitem{Sob}
\textsc{P. E. Sobolevskii}: Coerciveness inequalities for abstract parabolic equations. \emph{Dokl. Akad. Nauk SSSR}, \textbf{157}, 52-55 (1964).

\bibitem{Sobolevskii2}
\textsc{P. E. Sobolevskii}: The investigation of the Navier Stokes equations by the methods of the theory of parabolic equations in Banach spaces. \emph{Dokl. Akad. Nauk SSSR,} \textbf{156}, no. 4, 745-748 (1964).

\bibitem{Solonnikov}
\textsc{V. A. Solonnikov}: Estimates for solutions of nonsatationary Navier-Stokes equation. \emph{J. Sov. Math.}, \textbf{8}, 467-529 (1977).

\bibitem{Solonnikov2}
\textsc{V. A. Solonnikov}: $L^{p}$-estimates for solutions to the initial boundary value problem for the generalized Stokes system in a bounded domain. \emph{J. Math. Sci.} \textbf{105} (2001), 2448-2484.

\bibitem{Solonnikov3}
\textsc{V. A. Solonnikov}: Estimates of the solutions of model evolution generalized Stokes problem in weighted Holder spaces. \emph{J. Math. Sci.} (N. Y.) \textbf{143}, no. 2, 2969-2986 (2007). 

\bibitem{Stokes}
\textsc{G. Stokes}: On the Theories of Internal Friction of Fluids in Motion
and of the Equilibrium and Motion of Elastic Solids. \emph{Trans. Camb.
Phil. Soc.} \textbf{8}, p. 287–319 (1845).

\bibitem{tay}
\textsc{M.E. Taylor}: Partial differential equations. \emph{Springer-Verlag, New-York,} (1996). 

\bibitem{Tri}
\textsc{H. Triebel}: \emph{Interpolation theory, functional spaces, differential operators}. Amsterdam, New-York, Oxford: North Holland, (1978).

\bibitem{Ukai}
\textsc{S. Ukai}: A solution formula for the Stokes equation in $\mathbb{R}^{n}_{+}$. \emph{Comm. Pure Appl. Math.}, \textbf{40}, no. 5, 611-621 (1987).

\bibitem{Yo}
\textsc{K. Yosida}: \emph{Functional Analysis}. Springer, Verlag, Berlin-Heidelberg-New-York, (1969).

\bibitem{Yu}
\textsc{V. I. Yudovich}: A two dimensional non-stationary problem on the flow of an ideal compressible fluid through a given region. \textit{Mat. Sb.} \textbf{4}, no. 64, 562-588, (1964).

\end{thebibliography}
\end{document}